\documentclass[twoside,12pt]{article}
\DeclareMathSizes{20}{30}{16}{12}
\usepackage{float}
\usepackage{amsmath}
\usepackage{mathtools}
\usepackage{mathrsfs}
\usepackage{stmaryrd}
\usepackage{bbm}
\usepackage{yfonts}
\usepackage{amsfonts}
\usepackage{tcolorbox}
\usepackage{tikz-cd}
\usepackage[utf8]{inputenc}

\usepackage{caption}
\usepackage{indentfirst}
\usepackage[Symbol]{upgreek}
\usepackage{enumerate}   
\usepackage{upref}

\usepackage{graphicx}
\usepackage[margin=0.8 in]{geometry}
\usepackage{fancyhdr}
\usepackage{hyperref}
\hypersetup{
    colorlinks,
    citecolor=black,
    filecolor=black,
    linkcolor=black,
    urlcolor=black
}
\newcommand{\N}{\mathbb{N}}
\newcommand{\Z}{\mathbb{Z}}

\newcommand{\ra}{\rightarrow}

\newcommand{\calA}{\mathcal{A}}
\newcommand{\calI}{\mathcal{I}}

\newcommand{\calE}{\mathcal{E}}

\newcommand{\calC}{\mathcal{C}}
\newcommand{\calT}{\mathcal{T}}
\newcommand{\calD}{\mathcal{D}}

\newcommand{\calR}{\mathcal{R}}

\newcommand{\calM}{\mathcal{M}}
\newcommand{\calN}{\mathcal{N}}
\newcommand{\calP}{\mathcal{P}}
\newcommand{\calQ}{\mathcal{Q}}
\newcommand{\calV}{\mathcal{V}}

\newcommand{\op}{\operatorname}
\newcommand{\w}{\widehat}

\newcommand{\Ox}{\mathcal{O}}
\newcommand{\F}{\mathcal{F}}
\newcommand{\calG}{\mathcal{G}}

\newcommand{\ov}{\overline}

\newcommand{\frakX}{\mathfrak{X}}
\newcommand{\frakE}{\mathfrak{E}}

\newcommand{\frakU}{\mathfrak{U}}
\newcommand{\frakV}{\mathfrak{V}}

\newcommand{\frakS}{\mathcal{S}}
\newcommand{\frakY}{\mathfrak{Y}}
\newcommand{\frakT}{\mathfrak{T}}
\newcommand{\frakZ}{\mathfrak{Z}}

\newcommand{\frakG}{\mathfrak{G}}

\newcommand{\Sp}{\operatorname{Spec}}

\long\def\/*#1*/{}

\usepackage{unicode-math}
\usepackage{fontspec}
\usepackage{sectsty}
\sectionfont{\centering}
\usepackage[toc,page]{appendix}
\usepackage{amsthm}

\newtheorem*{proof*}{Proof}

\newtheorem{theorem}[subsection]{Theorem}
\newtheorem{theorem*}{Theorem}
\newtheorem{proposition}[subsection]{Proposition}
\newtheorem{corollaire}[subsection]{Corollary}
\newtheorem{lemma}[subsection]{Lemma}
\newtheorem{lemmadef}[subsection]{Lemma and definition}

\theoremstyle{definition}

\newtheorem{definition}[subsection]{Definition}
\newtheorem{remark}[subsection]{Remarks}

\theoremstyle{definition}
\newtheorem{parag}[subsection]{}

\numberwithin{equation}{section} % default: (section.equation)

\makeatletter
\renewcommand{\theequation}{%
  \ifnum\value{subsubsection}>0
    \thesubsubsection.\arabic{equation}%
  \else
    \thesubsection.\arabic{equation}%
  \fi
}
\makeatother

\newtheorem{proposition1}[subsubsection]{Proposition}

\title{A Topos-Theoretic Approach to the Logarithmic Cartier Transform}
\author{Sami Fersi}
\date{}

\AtEndDocument{\bigskip{\footnotesize%
  \textsc{Sami Fersi, Laboratoire Alexander Grothendieck, Institut des Hautes Études Scientifiques, 35 Route de Chartres, 91440 Bures-sur-Yvette, France} \par
  \textit{E-mail address}: \texttt{fersi@ihes.fr} \par
}}
\bibliography{./references/ref}
\begin{document}
\maketitle

\begin{abstract}
This article is the second of three articles whose goal is to generalize the Cartier transform of Ogus and Vologodsky to the logarithmic setting. 
We generalize a topos-theoretic version of this transform, due to Oyama. Let $k$ be a perfect field of positive characteristic $p$ and equip $S=\op{Spec}k$ with the trivial log structure. For a log smooth morphism of logarithmic schemes $X \ra S,$
we construct crystalline-like ringed topoi $\calE'$ and $\underline{\calE}$ and subcategories of crystals of quasi-coherent modules $\calC'$ and $\underline{\calC},$ equivalent respectively, under some lifting assumption, to modules with Higgs fields and integrable connections, both satisfying certain nilpotence conditions, and a morphism of topoi $\underline{\calE} \ra \calE'.$ We then prove that the pullback functor of this morphism of topoi preserves quasi-coherent crystals and hence induces a functor $\calC' \ra \underline{\calC},$ generalizing the Cartier transform. We finally use a log flat descent theorem for morphisms, that we proved in \cite{SF1}, to prove that this functor is fully faithful.
\end{abstract}

\tableofcontents

\section{Introduction}

\begin{parag}
This article is the second in a series of three articles whose goal is to generalize the Cartier transform of Ogus and Vologodsky to the logarithmic case. In the first one \cite{SF1}, we generalized a local version of the Cartier transform, due to Shiho (\cite{Shiho}), to the log smooth setting. In this article, our goal is to generalize a topos-theoretic version of the \emph{Cartier tranform}, due to Oyama. Unlike Shiho's local construction, Oyama's crystalline-like approach has the advantage of being global and not requiring any lifting assumption on the relative Frobenius or the scheme itself. More precisely, in the classical smooth case, for a perfect field $k$ of positive characteristic, let $W(k)$ be the ring of Witt vectors of $k,$ $\frakS=\op{Spf}W(k)$ and $S=\Sp k.$ For a smooth morphism $X\ra S,$ Oyama constructed two ringed topoi $\calE'$ and $\underline{\calE}$ as well as subcategories of crystals of quasi-coherent modules $\calC'$ and $\underline{\calC}$ of these respective topoi. He proved that, if $X$ and $X'=X\times_{S,F_S}S$ lift to smooth formal schemes over $\frakS,$ then $\calC'$ and $\underline{\calC}$ are equivalent respectively to modules equipped with Higgs fields and modules equipped with integrable connections, both satisfying certain nilpotence conditions. He also constructed, without assuming any lifting conditions, a morphism of ringed topoi $\varphi:\underline{\calE} \ra \calE'.$ He proved that the inverse image functor $\varphi^{-1}$ preserves crystals and hence induces a functor $\calC' \ra \underline{\calC}$ extending the usual Cartier transform. He then proved, using a faithful flat descent argument, that it is an equivalence of categories. This is the topos-theoretic Cartier transform that we aim to generalize to log smooth schemes in this article. Namely, we construct two crystalline-like topoi, a morphism between them and subcategories of crystals $\underline{\calC}$ and $\calC'.$ We prove that the pullback functor preserves quasi-coherent crystals and so it induces a functor $\calC' \ra \underline{\calC}$ that gives the Cartier transform. We also prove that it is fully faithful. Since the relative Frobenius is not necessarily flat in the log smooth case, the faithful flat descent argument used by Oyama fails in our case and we hence encounter a problem with essential surjectivity. We solve this in the third article by using Lorenzon's indexed algebras and modules.
\end{parag}

\begin{parag}
We now describe the content of this article in more details. We fix a prime number $p.$ For a morphism $X\ra S$ of fs logarithmic schemes of characteristic $p,$ recall (\cite{SF1} 3.1) the diagram of the exact relative Frobenius :
\begin{equation}
\begin{tikzcd}
X\ar{r}{F}\ar[swap]{dr}{F_{X/S}}\ar[bend right=40]{rdd} & X'\ar{d}{G}\ar{dr}{\pi} & \\
 & X''\ar{d}\ar{r} & X\ar{d} \\
 & S \ar{r}{F_S} & S,
\end{tikzcd}
\end{equation}
where $F_S$ is the absolute Frobenius of $S,$ the square is cartesian in the category of fine logarithmic schemes, $F_{X/S}$ is the relative Frobenius, $F$ is the exact relative Frobenius and $G$ is log étale (\cite{Ogus2018} IV 3.3.9).
In section 5, we define, for an fs logarithmic scheme $T$ of characteristic $p,$ the logarithmic scheme theoretic image of the Frobenius $F_T$ \eqref{era3logimagedef}, denoted by $\underline{T}.$ We then prove, in \ref{era4prop15}, that the absolute Frobenius $F_T$ factors through $\underline{T}:$
\begin{equation}\label{IranG}
F_T:T\xrightarrow{G_T} \underline{T} \hookrightarrow T.
\end{equation}
We also prove, in \ref{propfT/S} and under the assumption that $S=\Sp k$ equipped with the trivial logarithmic structure, that the relative Frobenius $F_{T/S}:T \ra T'$ factors through $\underline{T}':$
\begin{equation}\label{Iranf}
T\xrightarrow{f_{T/S}} \underline{T}' \hookrightarrow T'.
\end{equation}
\end{parag}

\begin{parag}
Let $X \ra S$ be a log smooth morphism of fs logarithmic schemes of characteristic $p$ and denote by $\calD_{X/S}$ the algebra of differential operators (\cite{SF1} 4.1) and by $\calT_{X/S}=\mathscr{Hom}_{\Ox_X} \left ( \omega^1_{X/S}, \Ox_X \right )$ the dual of the module of logarithmic differentials $\omega^1_{X/S}.$ As shown in (\cite{Ogus94} 1.2.1), the Lie algebra of derivations $\calT_{X/S}$ can be equipped with a restricted Lie algebra structure via a $p$-operation
$$
\partial \mapsto \partial^{(p)}.
$$
Seeing derivations as differential operators of $\calD_{X/S},$ we obtain a morphism
$$
F_X^*\calT_{X/S} \ra \calD_{X/S},\ \partial \mapsto \partial^p-\partial^{(p)},
$$
where $\partial^p$ is the composition of differential operators and $\partial^{(p)}$ is the $p$-operation.
By adjunction, this induces a morphism
\begin{equation*}
\psi:\calT_{X'/S} \ra F_{*}\calD_{X/S}, 
\end{equation*}
where $F$ is the exact relative Frobenius. For any derivation $\partial,$ $\psi(\partial)$ is in the center of $F_*\calD_{X/S}$ and so we obtain a morphism of $\Ox_X$-algebras
\begin{equation}\label{intropsi}
\psi:S^{\bullet}\calT_{X'/S} \ra F_{*}\calD_{X/S}. 
\end{equation}
This morphism $\psi$ is called the \emph{$p$-curvature map}.
\end{parag}

\begin{parag}\label{intro16}
We fix a perfect field $k$ of characteristic $p$ and we denote its ring of Witt vectors by $W(k).$ We equip $\op{Spf}W(k)$ with the trivial logarithmic structure. If a gothic letter $\frakX$ denotes a logarithmic $p$-adic formal scheme over $W(k),$ then the corresponding roman letter $X=\frakX \times_{\op{Spf}W(k)} \op{Spec}k$ will denote its special fiber.
We consider a log smooth morphism $f:\frakX \ra \frakS$ of fs $p$-adic logarithmic formal schemes, flat over $\op{Spf}W(k).$ Denote by $f_1:X \ra S$ its special fiber.
We say that a morphism of logarithmic $p$-adic formal schemes over $\op{Spf}W(k)$ is log flat if it is so modulo $p^l$ for all $l\ge 1.$ We consider an fs logarithmic $p$-adic formal scheme $\frakS$ log flat (\cite{SF1} 7.18) and locally of finite type over $\op{Spf}W(k).$ Note that this implies that $\frakS$ is flat over $\op{Spf}W(k)$ \eqref{Wflat}. We also consider a log smooth morphism $f:(\frakX,Q) \ra (\frakS,P)$ of framed fs logarithmic $p$-adic formal schemes (\cite{SF1} 8). Let $\frakX \ra \frakU \ra  \frakY=\frakX \times_{\frakS,[Q]}^{\op{log}}\frakX$ be a factorization of the exact diagonal immersion (\cite{SF1} 8.10) into a closed immersion followed by an open one. Let $\calI$ be the ideal of $\frakX \ra \frakU.$
For a positive integer $n,$ we consider the formal groupoid $R_{\frakX,n}$ (\cite{SF1} 11.2) and define a similar formal groupoid $Q_{\frakX}$ (6.4). More precisely, $R_{\frakX,n}$ (resp. $Q_{\frakX}$) is the dilatation of $\calI+(p)$ with respect to $p^n$ (resp. the dilatation of the ideal locally generated by $\{p,a^p,a\in \calI \}$ with respect to $p$). We prove that $Q_{\frakX}$ has a natural structure of formal groupoid.
\begin{theorem}[\ref{thm1237}]
Let $f:(\frakX,Q) \ra (\frakS,P)$ be a log smooth morphism of framed fs logarithmic $p$-adic formal schemes such that $\frakS$ is log flat and locally of finite type over $\op{Spf}W(k).$ Denote by $X\ra S$ its special fiber. Let $\calD_{X/S}$ be the sheaf of logarithmic differential operators, $\w{\Gamma}^{\bullet}\calT_{X'/S}$ the completed PD-algebra of the tangent sheaf $\calT_{X'/S}.$ We consider the $p$-curvature map $S^{\bullet}\calT_{X'/S} \ra \calD_{X/S}$ \eqref{intropsi} and 
$$\calD^{\gamma}_{X/S}=\calD_{X/S}\otimes_{S^{\bullet} \calT_{X'/S}} \widehat{\Gamma}^{\bullet}\calT_{X'/S}.$$
The following tensor categories are canonically equivalent:
\begin{enumerate}
\item The category of $\Ox_X$-modules equipped with a stratification relative to $R_{\frakX,1}$ (resp. $Q_{\frakX}$) (\cite{DXU19} 5.4).
\item The category of locally PD-nilpotent $\widehat{\Gamma}^{\bullet}\calT_{X/S}$-modules (resp. locally PD-nilpotent $\calD^{\gamma}_{X/S}$-modules).
\end{enumerate}
\end{theorem}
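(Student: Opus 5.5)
The plan is to follow the standard dictionary between stratifications relative to a formal groupoid and modules over the dual of its ring of functions. We compute the underlying algebra of each groupoid explicitly in local coordinates, identify its (topological) dual with the stated ring of operators, and match stratifications with locally nilpotent module structures. Both sides are sheaves of categories that glue for the Zariski/étale topology. The equivalence is canonical, i.e. independent of the chosen coordinates. It therefore suffices to work locally, where the framing gives logarithmic coordinates: sections $m_1,\dots,m_d$ of $\mathcal{M}_{\frakX}$ whose $d\log$ form a basis of $\omega^1_{\frakX/\frakS}$. On $\frakU$ the ideal $\calI$ is then generated by $\xi_i=u_i-1$, where $u_i$ is the unit with $p_2^*m_i=u_i\,p_1^*m_i$. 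By the log smoothness and the exactness of the diagonal (\cite{SF1} 8.10), the completion of $\Ox_{\frakU}$ along $\calI$ is a formal power series ring $\Ox_{\frakX}[[\xi_1,\dots,\xi_d]]$.

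\textbf{Step 1 (structure of the groupoids).} For $R_{\frakX,1}$, the dilatation of $\calI+(p)$ with respect to $p$ is, using flatness of $\frakS$ over $W(k)$ (\eqref{Wflat}), the $p$-adic completion of $\Ox_{\frakX}[\xi_i/p]$. Its reduction modulo $p$ is $\Ox_X[t_1,\dots,t_d]$ with $t_i=\xi_i/p$. The groupoid law is then computed from $u_i\mapsto u_i\otimes u_i$: one gets $(1+pt)(1+pt')=1+p(t+t'+ptt')$, so modulo $p$ the coproduct on $t_i$ is additive. Hence $R_{\frakX,1}\otimes k$ is the vector group of $\omega^1_{X/S}$, more precisely of $F^*\omega^1_{X'/S}$ after the identification along the exact relative Frobenius. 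For $Q_{\frakX}$, the dilatation of $(p,\xi_i^p)$ with respect to $p$ is generated by $\xi_i$ and $\tau_i=\xi_i^p/p$. Modulo $p$ it becomes $\Ox_X\otimes\Ox_{X}[\xi]/(\xi^p)\otimes \Ox_X[\tau]$, with $\xi$ encoding the first-order (level zero PD-free) part. The coproduct of $\tau_i$ is computed from $(\xi+\xi'+\xi\xi')^p/p$, which modulo $p$ yields $\tau+\tau'$ plus the usual $p$-curvature correction terms. We must also verify that $Q_{\frakX}$ is a formal groupoid, i.e.\ that the comultiplication preserves the dilatation; this follows since $(a\otimes1+1\otimes a)^p\equiv a^p\otimes1+1\otimes a^p$ modulo $p$.

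\textbf{Step 2 (duality).} Take $\Ox_X$-linear continuous duals of the algebras from Step 1. The dual of $\Ox_X[t]$ with additive coproduct is the divided power algebra $\Gamma^\bullet\calT$, the dual basis of $t^{[n]}$ being monomials. Allowing infinite sums gives $\widehat{\Gamma}^\bullet\calT_{X/S}$. For $Q_{\frakX}$, the part $\Ox[\xi]/(\xi^p)$ dualizes to the differential operators of order $<p$ in each coordinate. The $\tau$-part dualizes to $\widehat{\Gamma}^\bullet\calT_{X'/S}$. The key identity to check is that the dual of $\tau_i$ is mapped to $\partial_i^p-\partial_i^{(p)}=\psi(\partial_i')$ inside $\calD_{X/S}$. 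This gives a ring isomorphism of the dual with $\calD_{X/S}\otimes_{S^\bullet\calT_{X'/S}}\widehat{\Gamma}^\bullet\calT_{X'/S}=\calD^\gamma_{X/S}$. We will use the local basis of $\calD_{X/S}$ over $S^\bullet\calT_{X'/S}$ given by $\partial^{\alpha}$, $0\le\alpha_i<p$, as in \cite{SF1}.

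\textbf{Step 3 (stratifications versus modules).} Given a stratification $\varepsilon:p_2^*E\simeq p_1^*E$, we write $\varepsilon(1\otimes e)=\sum_\alpha \theta_\alpha(e)\otimes t^{[\alpha]}$ (resp.\ with $\xi^\beta\tau^{[\alpha]}$). The cocycle condition is equivalent to the operators $\theta_\alpha$ defining an action of the dual algebra, by the comparison of coproduct and multiplication. Convergence of the sum in the completed algebra is exactly the local PD-nilpotence condition: for each local section $e$, $\theta_\alpha(e)=0$ for $|\alpha|\gg0$. Conversely, a locally PD-nilpotent module defines a finite sum and hence a stratification. Compatibility with tensor products follows from the diagonal maps on both sides (the multiplicative structure of the groupoid algebra). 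Coordinate independence follows because the whole construction is the intrinsic duality $\mathscr{Hom}^{\mathrm{cont}}_{\Ox}(\Ox_{R},\Ox)$.

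\textbf{Main obstacle.} The delicate point is Step 2 for $Q_{\frakX}$. We must show that the dual of $\tau_i$ corresponds to the $p$-curvature, including the term $\partial^{(p)}$, which for log derivations $\partial_i=m_i\partial/\partial m_i$ satisfies $\partial_i^{(p)}=\partial_i$. We must also show that the dilatation is flat with the expected basis modulo $p$. I would first carry out the computation for a single coordinate $u=1+\xi$: expand $(1+\xi)$ acting on $m^{j}$ to check that the pairing with $\tau$ is the operator $\partial^p-\partial$. Then I would reduce the general case to this via the framing, using log flatness of $\frakS$ to guarantee $p$-torsion freeness of the dilatations.
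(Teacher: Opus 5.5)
Your overall architecture matches the paper's. Step 1 matches the local descriptions \eqref{isoR}, \eqref{isoQ} and the Hopf structures of \ref{HopfR}, \ref{HopfQ}; you would get them from the formal neighbourhood of the diagonal, the paper from a chart reduction and Lemma \ref{lemQ}. Step 3 is exactly the paper's proof of the theorem: $\varphi\cdot x=(\op{Id}\otimes\varphi)(\epsilon(1\otimes x))$ turns a stratification into a module over the dual, and local PD-nilpotence is finiteness of the expansion. The gap is in Step 2, namely the canonical \emph{ring} isomorphism $\calQ_1^{\vee}\cong\calD^{\gamma}_{X/S}$ on which the whole $Q_{\frakX}$ half rests. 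Matching the dual basis $(\varphi_{I,J})$ of $(\eta^I\eta_{(q)}^J)$ with $(\partial_I\otimes\partial'^{[J]})$, and checking that $\varphi_{0,\epsilon_i}$ corresponds to $\psi(\partial_i')$, does not give a ring homomorphism. Multiplicativity involves all the structure constants of $\delta$ and both $\Ox_X$-structures on $\calQ_1$, not only this identity. A basis matching is also not coordinate-free. Your appeal to ``intrinsic duality'' makes the dictionary between stratifications and $\calQ_1^{\vee}$-modules canonical, but not the identification of $\calQ_1^{\vee}$ with $\calD^{\gamma}_{X/S}$, so the local isomorphisms do not glue as stated. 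Finally, $\Ox_X[\eta]/(\eta^p)$ is a sub-Hopf algebra of $\calQ_1$, so its dual is a \emph{quotient} of $\calQ_1^{\vee}$; it does not place $\calD_{X/S}$ inside $\calQ_1^{\vee}$.

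What is missing are two canonical ring maps into $\calQ_1^{\vee}$, obtained by dualizing Hopf algebra morphisms. The groupoid morphism $P_{\frakX/\frakS}\to Q_{\frakX}$ of \ref{wiw1} (universal property of the dilatation, since $x^p=p!\,x^{[p]}$) gives $u:\calQ_1\to\calP_0$ with $u(\eta_i)=\eta_i$ and $u(\eta_{i(q)})=-\eta_i^{[p]}$ \eqref{ueta}, hence $\check u:\widehat{\calD}_{X/S}\to\calQ_1^{\vee}$. The morphism $v:\calQ_1\to F_1^*\calR'_1$ of \ref{erafparag121} is built from local Frobenius lifts but is independent of them by \eqref{eq1211v}; it gives $\check v$ on $\widehat{\Gamma}^{\bullet}\calT_{X'/S}$. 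Equivalently, $v$ factors through the Hopf quotient $\calQ_1/\calI\calQ_1$, which $a\mapsto a^p/p$ identifies with $S^{\bullet}(F_1^*\omega^1_{X'/S})$ without any lift; this is the precise form of your ``$\tau$-part''. Three further facts are then needed. (i) Compatibility on $\widehat{S}^{\bullet}\calT_{X'/S}$ (\ref{prop1232}). This requires twisting by the antipode $\check\sigma$, i.e.\ the sign $\partial'\mapsto-\partial'$, because $\check u(\psi(\partial_i'))$ takes the value $-1$ on $\eta_{i(q)}$. (ii) The images of $\check u$ and $\check v\circ\check\sigma$ commute. This is an extra computation and does not follow from the centrality of $\psi(S^{\bullet}\calT_{X'/S})$ in $\calD_{X/S}$: in characteristic $p$ a divided power $\partial'^{[\beta]}$ with some $\beta_i\ge p$ is not in the completed subalgebra generated by $\calT_{X'/S}$. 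The paper checks it by evaluating both products on $\eta^K\eta_{(q)}^J$, where $v(\eta_i)=0$ kills the cross terms of $\delta(\eta_{i(q)})$. (iii) Bijectivity, via \eqref{12315}, together with \ref{Kokoprop1234} to replace $\widehat{\calD}_{X/S}\otimes_{\widehat{S}^{\bullet}\calT_{X'/S}}\widehat{\Gamma}^{\bullet}\calT_{X'/S}$ by $\calD^{\gamma}_{X/S}$.

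There are also two smaller slips. First, $\calR_1$ is canonically $S^{\bullet}\omega^1_{X/S}$ via $a\mapsto a/p$ (\ref{erafprop115}), not $S^{\bullet}(F_1^*\omega^1_{X'/S})$; this is why $\widehat{\Gamma}^{\bullet}\calT_{X/S}$ appears in the statement. Second, the dual basis of the monomials $t^{\alpha}$ consists of divided powers, not the other way round. So the expansions in Step 3 are in $t^{\alpha}$ (resp.\ $\xi^{\beta}\tau^{\alpha}$); $t^{[\alpha]}$ makes no sense in $\Ox_X[t]$ in characteristic $p$.
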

\end{parag}

\begin{parag}\label{110}
In the classical smooth case, Oyama provided a crystalline-like interpretation of the Cartier transform. We follow his work and provide a similar interpretation in the log smooth case. Let $W(k)$ be the ring of Witt vectors of a perfect field $k$ of positive characteristic $p.$ We equip $\op{Spf}W(k)$ with the trivial logarithmic structure. Let $\frakS$ be a log flat and locally of finite type fs logarithmic $p$-adic formal scheme over $\op{Spf}W(k).$ Denote by $S$ its special fiber and consider a log smooth morphism $X\ra S$ of fs logarithmic schemes.
We define categories $\calE(X/\frakS)$ and $\underline{\calE}(X/\frakS)$ \eqref{defforintro} as follows: an object of $\calE(X/\frakS)$ (resp. $\underline{\calE}(X/\frakS)$) is a triple $(U,\frakT,u)$ consisting of an étale strict morphism of logarithmic schemes $U \ra X,$ a log flat fs logarithmic $p$-adic formal $\frakS$-scheme $\frakT$ and an $S$-morphism of logarithmic schemes $u:T \ra U$ (resp. $u:\underline{T}\ra U$) which is affine as a morphism of schemes, where $\underline{T}$ is the logarithmic scheme-theoretic image of the absolute Frobenius $F_T$ of $T.$
A morphism $(U_1,\frakT_1,u_1) \ra (U_2,\frakT_2,u_2)$ in $\calE(X/\frakS)$ (resp. $\underline{\calE}(X/\frakS)$) is a pair $(f,g)$ consisting of an $\frakS$-morphism $f:\frakT_1 \ra \frakT_2$ and an $X$-morphism $g:U_1 \ra U_2$ such that $u_2\circ f_1=g\circ u_1$ (resp. $u_2 \circ \underline{f_1}=g\circ u_2$), where $f_1:T_1 \ra T_2$ is the morphism induced by $f$ by reduction modulo $p$ and $\underline{f_1}:\underline{T_1} \ra \underline{T_2}$ is induced by $f_1.$
We define, in \ref{parettop} and \ref{era4logflattop}, two topologies on each one of these two categories: the étale topology and the log flat topology and we denote by $\widetilde{\calE}(X/\frakS),$ $\widetilde{\underline{\calE}}(X/\frakS),$ $\widetilde{\calE}_{lf}(X/\frakS)$ and $\widetilde{\underline{\calE}}_{lf}(X/\frakS)$ the corresponding topoi. We equip them with the rings
$$
\Ox_{\calE(X/\frakS)}:(U,\frakT,u) \mapsto \Gamma\left (T,\Ox_T \right ),\ \Ox_{\underline{\calE}(X/\frakS)}:(U,\frakT,u) \mapsto \Gamma\left (T,\Ox_T \right ).
$$
A morphism $(f,g):(U_1,\frakT_1,u_1) \ra (U_2,\frakT_2,u_2)$ of $\widetilde{\calE}(X/\frakS)$ (resp. $\widetilde{\underline{\calE}}(X/\frakS)$) induces a morphism of ringed topoi
$$
\widetilde{f}:\left (U_{1,\text{ét}},u_{1*}\Ox_{T_1} \right ) \ra \left (U_{2,\text{ét}},u_{2*}\Ox_{T_2} \right ).
$$
We prove that the data of a module $\F$ of $\widetilde{\calE}(X/\frakS)$ (resp. $\widetilde{\underline{\calE}}(X/\frakS)$) is equivalent to the data, for every object $(U,\frakT,u),$ of a module $\F_{(U,\frakT,u)}$ of $\left (U_{\text{ét}},u_*\Ox_T \right ),$ and, for every morphism $(f,g):(U_1,\frakT_1,u_1) \ra (U_2,\frakT_2,u_2),$ a morphism
$$
c_{\F,(f,g)}:\widetilde{f}^*\F_{(U_2,\frakT_2,u_2)} \ra \F_{(U_1,\frakT_1,u_1)},
$$
satisfying certain conditions \eqref{lindescentdata}. A module $\F$ is said to be a crystal if $c_{\F,(f,g)}$ is an isomorphism for every morphism $(f,g).$ The category of crystals of $\widetilde{\calE}(X/\frakS)$ (resp. $\widetilde{\underline{\calE}}(X/\frakS)$) will be denoted $\calC(X/\frakS)$ (resp. $\underline{\calC}(X/\frakS)$). Consider the following assumption:
\begin{itemize}
\item[$(H)$] The logarithmic formal scheme $\frakS$ is equipped with a frame $\frakS \ra [P]$ and the morphism $X\ra S$ lifts to a log smooth morphism $f:(\frakX,Q) \ra (\frakS,P)$ of framed fs logarithmic $p$-adic formal schemes.
\end{itemize}
Under this assumption, we prove, in \ref{equivRQ}, that we have equivalences of categories
$$
\calC(X/\frakS) \xrightarrow{\sim} \begin{Bmatrix} \Ox_{X}\text{-modules\ with\ an}\\ \calR_1\text{-stratification} \end{Bmatrix},
$$
$$
\underline{\calC}(X/\frakS) \xrightarrow{\sim} \begin{Bmatrix} \Ox_{X}\text{-modules\ with\ a}\\ \calQ_1\text{-stratification}. \end{Bmatrix},
$$
where $\calQ_1$ and $\calR_1$ are the hopf algebras corresponding to the special fibers of $Q_{\frakX}$ and $R_{\frakX,1}$ \eqref{intro16} respectively.

For an object $(U,\frakT,u)$ of $\underline{\calE}(X/\frakS),$ we prove in \ref{Shihoparag1} that there exists a unique morphism $T \ra U''$ fitting into the commutative diagram
$$
\begin{tikzcd}
T\ar{r}{G_T} \ar{dr}\ar[bend right=30]{rdd} & \underline{T} \ar{dr}{u} \ar[hook]{r} & T\ar[bend right=-30]{dd} \\
 & U''\ar{d}\ar{r} & U\ar{d} \\
 & S \ar{r}{F_S} & S.
\end{tikzcd}
$$
where $U''$ is the base change of $U\ra S$ by $F_S$ in the category of fine logarithmic schemes and $G_T$ is given in \eqref{IranG}. We denote by $U\ra U'$ the exact relative Frobenius of $U$ with respect to $S$ and we then construct, in \ref{propShiho185}, a morphism $\Delta_u:T \ra U'$ fitting into the commutative diagram
$$
\begin{tikzcd}
U \ar{rr} & & U' \ar{d} \\
\underline{T} \ar[swap]{u}{u} \ar[hook]{r} & T \ar{ur}{\Delta_{u}} \ar{r} & U'',
\end{tikzcd}
$$
We use it to define a functor
$$
\rho:\begin{array}[t]{clc}
\underline{\calE}(X/\frakS) & \ra & \calE(X'/\frakS) \\
(U,\frakT,u) & \mapsto & \left ( U',\frakT,\Delta_u \right ).
\end{array}
$$
\begin{proposition}[\ref{thmproof2}]
If $X\ra S$ is of Cartier type (\cite{Kat89} 4.8), then $\rho$ is continuous and cocontinuous for both the étale and log flat topologies and hence induces morphisms of topoi
$$
C_{X/\frakS}:\widetilde{\underline{\calE}}(X/\frakS) \ra \widetilde{\calE}(X'/\frakS),
$$
$$
C_{X/\frakS,lf}:\widetilde{\underline{\calE}}_{lf}(X/\frakS) \ra \widetilde{\calE}_{lf}(X'/\frakS).
$$
such that $C_{X/\frakS}^{-1}$ and $C_{X/\frakS,lf}^{-1}$ are composition by $\rho.$
\end{proposition}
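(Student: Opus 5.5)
We will use the standard criterion (SGA 4 III): a functor $\rho:\calC\ra\calD$ between sites is continuous and cocontinuous if (i) it sends covering families to covering families and commutes with the relevant fibre products, and (ii) for every object $V$ of $\underline{\calE}(X/\frakS)$ and every covering sieve of $\rho(V)$, there is a covering family of $V$ whose image refines it. A continuous and cocontinuous $\rho$ yields a morphism of topoi $C=(C^{-1},C_*)$ with $C^{-1}\F=\F\circ\rho$, via $C_*=\rho_*$ (the right adjoint of restriction along $\rho$). So the plan is to check (i) and (ii) for both topologies.

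\emph{Step 1: functoriality and the shape of coverings.} First I will check that $\rho$ is a functor. A morphism $(f,g):(U_1,\frakT_1,u_1)\ra(U_2,\frakT_2,u_2)$ should go to $(f,g')$ with $g'=g\times F_S$ (the base change of $g$). The needed compatibility $\Delta_{u_2}\circ f_1=g'\circ\Delta_{u_1}$ should follow from the uniqueness statement in \ref{Shihoparag1} and the functoriality of the construction in \ref{propShiho185}. Covering families in both sites consist of families $(f_i,g_i)$ where the $\frakT_i\ra\frakT$ form an étale (resp. log flat) covering and the $U_i$ are induced compatibly. Since $\rho$ leaves $\frakT$ unchanged, images of coverings are again coverings, provided $U\mapsto U'$ preserves strict étale maps and fibre products. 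Here I will use the Cartier type hypothesis. For $U$ strict étale over $X$, the exact relative Frobenius $U\ra U'$ is obtained by base change of $X\ra X'$: since $U\ra X$ is strict étale, $U'\simeq U\times_X X'$ in a way compatible with Frobenius. Cartier type makes $X\ra X''$ exact, so $X'=X''$ and $U'=U''$ is the plain fine base change, which commutes with fibre products.

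\emph{Step 2: fibre products.} I will show that $\rho$ commutes with the fibre products used in the sites. Fibre products in $\underline{\calE}$ are computed by $\frakT_1\times_{\frakT}\frakT_2$ together with $U_1\times_U U_2$, which makes sense because the $\frakT_i\ra\frakT$ are log flat/étale and the $U_i$ étale. Since $(-)'$ commutes with these products, and $\underline{T}$ for $T=T_1\times_T T_2$ maps correctly, uniqueness of $\Delta$ gives the compatibility.

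\emph{Step 3: cocontinuity, the main obstacle.} Given $(U,\frakT,u)$ and a covering $\{(U'_i,\frakT_i,v_i)\ra(U',\frakT,\Delta_u)\}$ in $\calE(X'/\frakS)$, I must produce a covering of $(U,\frakT,u)$ refining it. The key fact is that strict étale $U'$-schemes correspond to strict étale $U$-schemes (topological invariance of the étale site under the relative Frobenius, a universal homeomorphism when it is of Cartier type), say $U'_i=(U_i)'$. I would then set $\frakT_i$ as given, $u_i:\underline{T_i}\ra U_i$ induced from $\underline{T_i}\ra\underline{T}\ra U$ and $v_i$. Concretely, $U_i=U\times_{U'}U'_i$, and $u_i$ is the pair $(\underline{T_i}\ra U,\ \underline{T_i}\hookrightarrow T_i\xrightarrow{v_i}U'_i)$; this is compatible because $\Delta_u$ restricted to $\underline{T}$ factors through $U\ra U'$. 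Finally I will check that $\Delta_{u_i}=v_i$ by the uniqueness in \ref{propShiho185}, and that $u_i$ is affine. The delicate point I expect is this identification $\Delta_{u_i}=v_i$, together with the fact that $U_i\ra U$ is strict étale, which needs $U'=U''$ and hence the Cartier type hypothesis. The log flat case is identical, since the $U$-components are always étale.
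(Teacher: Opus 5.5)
Your proposal is correct and follows essentially the paper's argument. Continuity holds because $\rho$ preserves coverings (for the étale topology this is built into Definition \ref{era3defcart}, which defines cartesian morphisms of $\underline{\calE}(X/\frakS)$ through the $\Delta$-square) and commutes with fibre products by the uniqueness of $\Delta$ once $U'=U''$ (Lemma \ref{lemfiberprodcom}); cocontinuity comes from lifting morphisms into $\rho(U,\frakT,u)$ through the equivalence of étale sites and checking $\Delta_{u_i}=v_i$ by the same uniqueness (Lemma \ref{lemcoco}).

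The only variation is that you build $u_i$ from the cartesian square $U_i\simeq U\times_{U'}U_i'$ rather than by étale lifting along the inseparable $G_Z$ (\cite{Ogus2018} IV 3.3.7), which works equally well. Note also that $\pi$ is a universal homeomorphism with no Cartier-type hypothesis (Proposition \ref{piunivhomeo}); that hypothesis enters only through $U'=U''$.
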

\end{parag}

\begin{parag}
Keep the assumptions of \ref{110} and suppose furthermore that $\frakS=\op{Spf}W(k)$ equipped with the trivial logarithmic structure. For an object $(U,\frakT,u)$ of $\underline{\calE}(X/\frakS),$ we have a commutative diagram
$$
\begin{tikzcd}
U \ar[swap]{d}{F_{U/S}} & & \underline{T}\ar[swap]{ll}{u} \ar[swap]{d}{F_{\underline{T}/S}} \ar[hook]{rr} & & T\ar{d}{F_{T/S}} \ar{dll}{f_{T/S}} \\
U' & & \underline{T}' \ar{ll}{u'} \ar[hook]{rr} & & T',
\end{tikzcd}
$$
where $f_{T/S}$ is defined in \eqref{Iranf}.
The functor $\rho$ is then given by
$$
\rho:\begin{array}[t]{clc}
\underline{\calE}(X/\frakS) & \ra & \calE(X'/\frakS) \\
(U,\frakT,u) & \mapsto & \left ( U',\frakT,u'\circ f_{T/S} \right ).
\end{array}
$$
We prove in this case that $C_{X/\frakS,lf}$ is an equivalence of topoi \eqref{era3equivlfcrystals} and deduce the following theorem:
\begin{theorem}[\ref{lemdirectimage}]
If $\frakS=\op{Spf}W(k)$ equipped with the trivial logarithmic structure, $S$ is its special fiber and $X$ is a log smooth fs logarithmic $S$-scheme, then the functor $C_{X/\frakS}^*$ induces a fully faithful functor
$$\calC^{\text{qcoh}}(X'/\frakS) \ra \underline{\calC}^{\text{qcoh}}(X/\frakS).$$
\end{theorem}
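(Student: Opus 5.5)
The plan is to compare the étale topoi with their log flat counterparts and to do the real work on the log flat side. There $C_{X/\frakS,lf}$ is already known to be an equivalence of topoi \eqref{era3equivlfcrystals}, so its pullback is fully faithful. Three ingredients are needed.

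First, I would use that $C_{X/\frakS}^{-1}$ is composition with $\rho$ (\ref{thmproof2}), together with the description of modules as compatible families $\F_{(U,\frakT,u)}$ with transition maps $c_{\F,(f,g)}$ \eqref{lindescentdata}. This gives
$$\bigl(C_{X/\frakS}^*\F\bigr)_{(U,\frakT,u)}=\F_{(U',\frakT,u'\circ f_{T/S})},$$
viewed as a module over $u_*\Ox_T$ on $U_{\text{ét}}$; the ring of $\rho(U,\frakT,u)$ is again $\Gamma(T,\Ox_T)$, so no further base change is involved. From this formula it is immediate that $C_{X/\frakS}^*$ carries quasi-coherent crystals to quasi-coherent crystals. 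It is also immediate that, on $\calC^{\text{qcoh}}$, the functor agrees with the log flat version $C_{X/\frakS,lf}^*$ under the comparison below.

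Second, I would show that a quasi-coherent crystal for the étale topology is automatically a sheaf for the log flat topology. More precisely, the restriction functor
$$\calC^{\text{qcoh}}_{lf}(X'/\frakS)\ra \calC^{\text{qcoh}}(X'/\frakS)$$
should be an equivalence, and likewise for $\underline{\calE}(X/\frakS)$. Given a log flat covering $\{(U_i,\frakT_i,u_i)\ra (U,\frakT,u)\}$, the crystal condition identifies the values of $\F$ on the $\frakT_i$, and on their fibre products, with pullbacks of $\F_{(U,\frakT,u)}$ along log flat morphisms of $p$-adic formal schemes. The sheaf condition then becomes exactly log flat descent for quasi-coherent modules, which I take from \cite{SF1}; the log flat descent theorem for morphisms is what yields full faithfulness of the descent functor. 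This works modulo each $p^l$ and then passes to the limit.

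Third, I would conclude. A morphism $C_{X/\frakS}^*\F\ra C_{X/\frakS}^*\calG$ of crystals is the same as a morphism in the log flat topos. Because $C_{X/\frakS,lf}$ is an equivalence, that morphism descends uniquely to a morphism $\F\ra\calG$ in $\widetilde{\calE}_{lf}(X'/\frakS)$, and this is a morphism of étale crystals. Hence $C_{X/\frakS}^*$ is fully faithful on $\calC^{\text{qcoh}}$.

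I expect the main obstacle to be the second step, namely that quasi-coherent crystals satisfy log flat descent. The difficulty is that log flatness is not flatness: the underlying morphisms of schemes may fail to be flat, so ordinary fpqc descent does not apply directly. Here I would try the log flat descent result of \cite{SF1} first. If it only gives full faithfulness rather than effectivity, that still suffices: full faithfulness of the descent functor for morphisms is all the above argument uses, since it only needs Hom-sets to be computed the same way in both topologies.
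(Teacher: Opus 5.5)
Your proposal is correct and follows essentially the same route as the paper. The paper first uses \ref{era3preserve}, via \ref{Kokolem1833}, to show that quasi-coherent crystals are preserved. It then uses \ref{era3lem2} and \ref{era3propequivlfetale}, resting on the log flat descent result of \cite{SF1}, to identify étale and log flat quasi-coherent crystals, and concludes from the equivalence of topoi \ref{era3equivlfcrystals}. Two small remarks: the passage to the limit over $p^l$ is unnecessary, since the structure ring is $\Gamma(T,\Ox_T)$ with $T=\frakT_1$ and descent is only needed modulo $p$; and the Cartier type hypothesis required by \ref{thmproof2} holds automatically over the trivial log point.
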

\end{parag}

\textbf{Acknowledgement.} This article is the second part of my thesis prepared at Université Paris-Saclay and IHES. I express my greatest gratitude to my PhD advisor Ahmed Abbes for introducing me to this topic, his guidance and his patience. I also thank Atsushi Shiho and Daxin Xu for their helpful comments and suggestions.

\section{Notations and conventions}

\begin{parag}\label{Wflat}
In this article, $k$ denotes a perfect field of positive characteristic $p$ and $W$ denotes its ring of Witt vectors. We equip the formal scheme $\op{Spf}W$ with the trivial logarithmic structure. If a gothic letter $\frakT$ denotes a logarithmic $p$-adic formal scheme (\cite{SF1} 7), we denote by $\frakT_n,$ for any positive integer $n,$ the logarithmic scheme obtained from $\frakT$ by reduction modulo $p^n.$ The corresponding roman letter $T$ will denote $\frakT_1.$

We say that a $p$-adic formal $\op{Spf}W$-scheme $\frakX$ is \emph{flat over $\op{Spf}W$} if multiplication by $p$ on $\Ox_{\frakX}$ is injective. This is equivalent to the fact that, for every affine open formal subscheme $U$ of $\frakX,$ the algebra $\Gamma(U,\Ox_{\frakX})$ is flat over $W.$ In this sense, a $p$-adic formal $\op{Spf}W$-scheme $\frakX$ is flat over $\op{Spf}W$ if and only if $\frakX_n$ is flat over $\Sp W/(p^n)$ for all positive integers $n$ (\cite{SF1} 2.16).
We recall that, if $\frakT$ is an fs logarithmic $p$-adic formal scheme which is log flat over $\op{Spf}W,$ then $\frakT$ is flat over $\op{Spf}W$ (\cite{SF1} 7.19).

If $R$ is a ring and $P$ is a monoid, we denote by $R[P]$ the free $R$-algebra on $P$ and by $A_R[P]$ the scheme $\Sp R[P]$ equipped with the logarithmic structure associated with the canonical chart $P\ra R[P].$ If $n\ge 1$ is an integer, we denote by $A_n[P]$ (resp. $A[P]$) the logarithmic scheme $A_{\Z/p^n\Z}[P]$ (resp. $A_{\Z}[P]$).

For morphisms of fs logarithmic schemes $X\ra S$ and $Y\ra S,$ we denote by $X\times_S^{\op{log}}Y$ the fiber product of $X\ra S$ and $Y\ra S$ in the category of fs logarithmic schemes. We keep the notation $X\times_SY$ for the fiber product of $X\ra S$ and $Y\ra S$ in the category of logarithmic schemes.
\end{parag}

\section{Preliminaries}

\begin{parag}\label{PFrob}
We start by recalling the exact relative Frobenius (\cite{SF1} 3). Let $f:X\ra S$ be a morphism of fine logarithmic schemes of characteristic $p.$ We have the following diagram :
\begin{equation}\label{diag51}
\begin{tikzcd}
X\ar{r}{F}\ar[swap]{dr}{F_{X/S}}\ar[swap,bend right=40]{rdd}{f} \ar[bend right=-60]{drr}{F_X} & X'\ar{d}{G}\ar{dr}{\pi} & \\
 & X''\ar{d}\ar{r} & X\ar{d}{f} \\
 & S \ar{r}{F_S} & S,
\end{tikzcd}
\end{equation}
where $F_S$ and $F_X$ are the absolute Frobenius morphisms of $S$ and $X$ respectively, the square is cartesian in the category of fine logarithmic schemes, $G$ is log étale, $F_{X/S}$ is the relative Frobenius and $F$ is the exact relative Frobenius.
Recall that, if $f$ is log smooth, then $F$ is log flat (\cite{SF1} 3.5). If $f$ is of Cartier type (\cite{Kat89} 4.8) then $X'=X''$ and $F=F_{X/S}.$

Suppose that $f$ underlies a morphism of framed fs logarithmic schemes $(f,\theta):(X,Q) \ra (S,P)$ (\cite{SF1} 6).
Consider the Frobenius morphism $F_P:P \ra P,\ x\mapsto px$ of $P,$ the monoid $Q''=(Q\oplus_{P,F_P}P)^{int}$ and denote by $Q'$ the inverse image of $Q$ by the homomorphism
\begin{equation}\label{eqfrobmon1}
Q''^{gp} \ra Q^{gp},\ (x,y) \mapsto px+\theta^{gp}(y).
\end{equation}
Denote by $F_{Q/P}:Q' \ra Q$ the morphism induced by \eqref{eqfrobmon1} and $\pi_{Q/P}:Q \ra Q',\ x\mapsto (x,0).$
By (\cite{SF1} 6.6), we have a canonical frame $X' \ra [Q']$ and $(F,F_{Q/P}):(X,Q) \ra (X',Q')$ and $(\pi,\pi_{Q/P}):(X',Q') \ra (X,Q)$ are morphisms of framed logarithmic schemes .
\end{parag}

\begin{proposition1}\label{piunivhomeo}
The underlying morphism of schemes of $\pi:X' \ra X$ \eqref{diag51} is a universal homeomorphism.
\end{proposition1}

\begin{proof}
The Frobenius $F_S$ is weakly inseparable (\cite{Ogus2018} III 2.4) hence so is the canonical projection $X'' \ra X$ (\cite{Ogus2018} 2.4.8). The relative Frobenius $X\ra X''$ and the exact relative Frobenius $X\ra X'$ are also weakly inseparable so they are universal homeomorphisms. It follows by (\cite{SP} \href{https://stacks.math.columbia.edu/tag/0H2M}{Lemma 0H2M}) that $\pi:X' \ra X$ is a universal homeomorphism.
\end{proof}

\begin{parag}\label{parag77}
Let $f:(\frakX,Q) \ra (\frakS,P)$ be a morphism of framed locally Noetherian fs logarithmic $p$-adic formal schemes (\cite{SF1} 8). By (\cite{SF1} 8.10), the diagonal immersion $\frakX \ra \frakX \times_{\frakS}^{\op{log}}\frakX$ factors as follows :
$$
\begin{tikzcd}
 & \frakY = \frakX\times_{\frakS,[Q]}^{\op{log}}\frakX \ar{d}{g} \\
\frakX  \ar{r} \ar{ur} & \frakX \times_{\frakS}^{\op{log}} \frakX,
\end{tikzcd}
$$
where $\frakX\ra \frakY$ is a strict immersion and $g$ is log étale and affine. In addition, the projections $\frakY \ra \frakX$ are strict (\cite{SF1} 8.9). Let $\frakX \xrightarrow{\Delta} \frakZ\ra \frakY$ be a factorization of $\frakX\ra \frakY$ into a strict closed immersion $\Delta$ followed by an open one and let $p_1,p_2:\frakZ\ra \frakX$ be the canonical projections. Denote by $\calI$ the ideal of $\Delta.$ By (\cite{SF1} 8.12.1), we have a canonical isomorphism
\begin{equation}\label{isoomega1}
\calI/\calI^2 \xrightarrow{\sim} \omega^1_{\frakX/\frakS}.
\end{equation}
Suppose that $f$ is log smooth and $m_1,\hdots,m_d \in \Gamma(U,\calM_X)$ are local coordinates (\cite{SF1} 4.4) that lift to local sections $\widetilde{m}_1,\hdots,\widetilde{m}_d\in \Gamma(\frakU,\calM_{\frakX}),$ where $\frakU$ is the unique étale formal $\frakX$-scheme satisfying $U=\frakU\times_{\frakX}X.$ By (\cite{SF1} 7.15), we have an exact sequence
\begin{equation}\label{era2exactseq}
0 \ra \Delta^{-1}(1+\calI) \xrightarrow{\lambda} \Delta^{-1}\calM_{\frakZ} \xrightarrow{\Delta^{\flat}} \calM_{\frakX} \ra 0,
\end{equation}
where $\lambda$ is induced by $\alpha_{\frakZ}^{-1}:\Ox_{\frakZ}^{\times} \ra \calM_{\frakZ}.$ It follows that there exist $\widetilde{\eta}_1,\hdots,\widetilde{\eta}_d\in \Gamma \left ( \frakU,\Delta^{-1} \calI \right )$ such that $\left (\Delta^{-1}p_1^{\flat} \right )(\widetilde{m}_i)+\lambda(1+\widetilde{\eta}_i)=\left ( \Delta^{-1} p_2^{\flat}\right )(\widetilde{m}_i).$ The local sections $\widetilde{\eta}_1,\hdots,\widetilde{\eta}_d$ locally generate $\Delta^{-1}\calI$ (\cite{SF1} 8.12).
\end{parag}

\section{The $p$-curvature map}

\begin{parag}\label{Mojtaba}
Let $X\ra S$ be a log smooth morphism of fine logarithmic schemes of characteristic $p.$ Denote by $P_0$ the log PD-envelope of $X\ra X\times_SX$ (\cite{Kat89} 5.4), by $\calP_0$ its structural ring and by $\calI_0$ its PD-ideal. We consider $\calP_0$ as a module over $\Ox_X$ via the first projection $P_0\ra X.$ Let $\calD_{X/S}=\bigcup_{n\ge 1}\mathscr{Hom}_{\Ox_X}\left (\calP_0/\calI_0^{[n]},\Ox_X\right )$ be the $\Ox_X$-algebra of differential operators. If we identify the étale sites of $X$ and $P_0$ via the nilimmersion $\iota : X\ra P_0,$ then we have an exact sequence of monoids (\cite{SF1} 4.3)
$$
0 \ra 1+\calI_0 \xrightarrow{\lambda} \calM_{P_0} \xrightarrow{\iota^{\flat}} \calM_X \ra 0.
$$
If $m_1,\hdots,m_d$ are local sections of $\calM_X$ which form a local system of coordinates and $p_1,p_2:P_0 \ra X$ are the canonical projections, then, for all $1\le i \le d,$ there exists a unique local section $\eta_i$ of $\calI_0$ such that
$$
p_2^{\flat}m_i=p_1^{\flat}m_i+\lambda(1+\eta_i).
$$
Then we have basis $\left (\eta^{[I]}=\prod_{i=1}^d\eta_i^{[I_i]}\right )_{I=(I_1,\hdots,I_d)\in \N^d}$ of $\calP_0$ (\cite{Kat89} 6.5). Denote by $\left (\partial_I\right )$ its dual basis. Note that $\partial_I:\calP_0 \ra \Ox_X$ factors through $\calP_0/\calI_0^{[|I|+1]}$ and is hence naturally a differential operator of order $\le |I|.$

Recall that the $p$-curvature map is a morphism of $\Ox_{X'}$-algebras (\cite{Ohkawa} 3.10)
\begin{equation}\label{pcurv}
\psi:S^{\bullet}\calT_{X'/S} \ra F_*\calD_{X/S}
\end{equation}
where $F:X\ra X'$ is the exact relative Frobenius and $\calT_{X'/S}=\mathscr{Hom}_{\Ox_{X'}}\left (\omega^1_{X'/S},\Ox_{X'} \right )$ is the dual of the module of logarithmic differentials $\omega^1_{X'/S}.$ Recall that the image of $\psi$ lies in the center of $F_*\calD_{X/S}$ (\cite{Ohkawa} 4.15) and that this morphism is defined locally as follows : For $1\le i\le d,$ let $m_i'=\pi^{\flat}m_i.$ The local sections $\op{dlog}m_1',\hdots,\op{dlog}m_d'$ form a basis of $\omega^1_{X'/S}.$ Let $(\partial_i')_{1\le i \le d}$ be its dual basis. By (\cite{Ohkawa} 3.11) and (\cite{SF1} (4.5.8)), for all $1\le i\le d$ and $I\in \N^d,$ we have
\begin{equation}\label{eqDoma1}
\psi\left (\partial'_i\right )=\partial_{p\epsilon_i},\quad \psi\left (\partial'^I\right )=\partial_{pI},
\end{equation}
where $\partial'^I=\prod_{i=1}^d\partial_i'^{I_i}.$
\end{parag}

\begin{comment}
It follows from that and (\cite{SF1} 2.2.4) that, for all $I\in \N^d,$ we have
\begin{equation}\label{era2Koko11}
\psi \left ( \prod_{i=1}^d\partial_i'^{I_i} \right )=\partial_{pI}.
\end{equation}
\end{comment}

\begin{lemma}\label{era4tak2}
Denote by $\widehat{S}^{\bullet}\calT_{X'/S}$ the completion of $S^{\bullet}\calT_{X'/S}$ with respect to the ideal $\bigoplus_{n\ge 1}S^n\calT_{X'/S}.$
The composition
$v:S^{\bullet}\calT_{X'/S} \xrightarrow{\psi} \calD_{X/S} \ra \widehat{\calD}_{X/S}=\mathscr{Hom}_{\Ox_X}(\calP_0,\Ox_X),$
where the first arrow $\psi$ is the $p$-curvature map \eqref{pcurv} and the second arrow is induced by the canonical projections $\calP_0 \ra \calP_0/\ov{\calI}_0^{[n]}$ for $n\ge 1,$ induces a homomorphism
\begin{equation}\label{eq13191}
\widehat{\psi}:\widehat{S}^{\bullet}\calT_{X'/S} \ra \widehat{\calD}_{X/S}=\mathscr{Hom}_{\Ox_X}(\calP_0,\Ox_X).
\end{equation}
\end{lemma}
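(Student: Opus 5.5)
The plan is to show that $v$ is continuous when $S^{\bullet}\calT_{X'/S}$ carries the adic topology defined by the augmentation ideal $J=\bigoplus_{n\ge 1}S^n\calT_{X'/S}$, and $\widehat{\calD}_{X/S}=\mathscr{Hom}_{\Ox_X}(\calP_0,\Ox_X)$ carries the topology given by the submodules
$$
\widehat{\calD}_{X/S}^{\ge n}=\mathscr{Hom}_{\Ox_X}\left(\calP_0/\ov{\calI}_0^{[n]},\Ox_X\right)^{\perp}=\left\{\phi:\ \phi|_{\ov{\calI}_0^{[n]}}\text{ arbitrary},\ \phi|_{\text{lower part}}=0\right\},
$$
namely the operators that vanish on the image of $\calP_0/\ov{\calI}_0^{[n]}$. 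More precisely, $\widehat{\calD}_{X/S}=\varprojlim_n \mathscr{Hom}(\calP_0/\ov{\calI}_0^{[n]},\Ox_X)^\vee$ is complete. It is cleaner to use the identification $\mathscr{Hom}_{\Ox_X}(\calP_0,\Ox_X)\simeq \prod_{I\in\N^d}\Ox_X\partial_I$, which holds locally via the basis $(\eta^{[I]})$ of \ref{Mojtaba}, since $\calP_0$ is free on that basis. Under this identification, an element of $\widehat{\calD}_{X/S}$ is a formal sum $\sum_I a_I\partial_I$, and such a sum is meaningful precisely because evaluating it on any section of $\calP_0$ involves only finitely many nonzero terms. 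The map $\widehat{\psi}$ is then defined on $\sum_{n} f_n$ (with $f_n\in S^n\calT_{X'/S}$) by $\sum_n v(f_n)$, and the task is to check that this sum makes sense.

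The key step is a degree estimate. Locally, $S^n\calT_{X'/S}$ is generated over $\Ox_{X'}$ by the monomials $\partial'^I$ with $|I|=n$, and by \eqref{eqDoma1} we have $\psi(\partial'^I)=\partial_{pI}$. A general element of $S^n\calT_{X'/S}$ is $\sum_{|I|=n} b_I\partial'^I$ with $b_I\in\Ox_{X'}$, acting through $F$, so $v$ sends it to $\sum_{|I|=n}F^*(b_I)\,\partial_{pI}$. Here the $\Ox_X$-module structure is left multiplication, which agrees with the $\Ox_{X'}$-structure through $F$ because $\psi$ is $\Ox_{X'}$-linear. Consequently $v(S^n\calT_{X'/S})$ lies in the closed subspace $\prod_{|J|\ge pn}\Ox_X\partial_J$, i.e. in the set of operators killing $\bigoplus_{|J|<pn}\Ox_X\eta^{[J]}$. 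Since $\ov{\calI}_0^{[m]}$ is spanned by the $\eta^{[J]}$ with $|J|\ge m$, any given section of $\calP_0$ lies, locally, in a finite sum $\bigoplus_{|J|<N}\Ox_X\eta^{[J]}$. Every $v(f_n)$ with $pn\ge N$ vanishes on that section, so $\sum_n v(f_n)$ evaluated on it is a finite sum. The family therefore defines an $\Ox_X$-linear map $\calP_0\to\Ox_X$, i.e. an element of $\widehat{\calD}_{X/S}$.

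It remains to check independence of coordinates and compatibility with the algebra structure. The intrinsic version of the estimate is that $v(S^n\calT_{X'/S})$ annihilates the image of $\calP_0^{<pn}$, that is, it lies in the kernel of $\widehat{\calD}_{X/S}\to\mathscr{Hom}(\calP_0/\ov{\calI}_0^{[pn]}... )$. Concretely, it kills the complement of $\ov{\calI}_0^{[pn]}$ in the sense above. Since the filtration by $\ov{\calI}_0^{[m]}$ is intrinsic, the map $\widehat{\psi}=\varprojlim_m \bigl(S^{\bullet}\calT_{X'/S}/J^{\lceil m/p\rceil}\to \mathscr{Hom}(\ov{\calI}_0^{[0]}/\ldots)\bigr)$ is defined globally as the limit of the induced maps on quotients, and it glues. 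Additivity and, if needed, multiplicativity follow from those of $v$ by passing to the limit, using that the composition in $\widehat{\calD}_{X/S}$ is continuous.

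The main obstacle I expect is making this intrinsic formulation precise: I need to show that $v(J^n)$ annihilates $\ov{\calI}_0^{[pn]}$'s "complement" canonically. The cleanest route is to use the order filtration. Since $\partial_{pI}$ has the dual description via $\eta^{[J]}$, I would first prove the assertion that $v(J^n)$ vanishes on $\bigoplus_{|J|<pn}\Ox_X\eta^{[J]}$ in local coordinates, and then pass to the equivalent coordinate-free statement: $v(J^n)$ lies in the annihilator of the kernel of $\calP_0\to\calP_0/\ov{\calI}_0^{[pn]}$, understood via the splitting. Because the direct-limit topology on $\widehat{\calD}_{X/S}$ defined by the filtration is independent of the choice of coordinates, the local definitions glue.
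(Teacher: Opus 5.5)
Your core argument is the paper's proof. Locally you use the basis $(\eta^{[I]})$ of $\calP_0$ and $\widehat{S}^{\bullet}\calT_{X'/S}=\prod_k S^k\calT_{X'/S}$; since $\psi(\partial'^I)=\partial_{pI}$, each $v(t_k)$ kills every $\eta^{[J]}$ with $|J|\neq pk$, so $\widehat{\psi}(t)=\sum_k v(t_k)$ is a finite sum on every section of $\calP_0$.

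The gluing worry in your last two paragraphs is unnecessary, because $\widehat{\psi}(t)(x)=\sum_k v(t_k)(x)$ is already coordinate-free and only its local finiteness is checked in coordinates. You should drop that detour, since it is wrong as stated. The ``intrinsic'' filtration you sketch there is not intrinsic: the span of the $\eta^{[J]}$ with $|J|<pn$, hence its annihilator, depends on the choice of coordinates. Also, $\varprojlim_n\mathscr{Hom}_{\Ox_X}(\calP_0/\ov{\calI}_0^{[n]},\Ox_X)^{\vee}$ is the completion of $\calP_0$, not $\widehat{\calD}_{X/S}$.
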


\begin{proof}
Suppose that we have local coordinates $m_1,\hdots,m_d \in \Gamma(X,\calM_X)$ and $\left (\eta^{[I]}:=\prod_{i=1}^d\eta_i^{[I_i]} \right )_{I\in \N^d}$
the corresponding basis for the $\Ox_X$-module $\calP_0,$ as in \ref{Mojtaba}.
First, note that
$$
\w{S}^{\bullet}\calT_{X'/S} = \lim\limits_{\substack{\longleftarrow \\ n\ge 1}} S^{\bullet}\calT_{X'/S} / \bigoplus_{k\ge n}S^k\calT_{X'/S} 
= \lim\limits_{\substack{\longleftarrow \\ n\ge 0}} \bigoplus_{k\le n}S^k\calT_{X'/S} 
= \prod_{k\in \N}S^k\calT_{X'/S}.
$$
It is sufficient to define $\w{\psi}:\calP_0\ra \Ox_X$ by
$\w{\psi}(t)=\sum_{k=0}^{\infty}v(t_k).$
for every
$t=(t_0,t_1,\hdots )\in \prod_{k\in \N}S^k\calT_{X'/S}.$
Note that this infinite sum is well-defined since, for every $I\in \N^d,$ $v(t_k)\left (\eta^{[I]} \right )=0$ for sufficiently large $k.$
\end{proof}

\section{Logarithmic scheme-theoretic image}

\begin{lemmadef}\label{Mfs}
For a monoid $M,$ let $\sim$ be the equivalence relation defined, for any $x,y\in M,$ by $x\sim y$ if $px=py.$ We equip the quotient $\underline{M}:=M_{/\sim}$ with the natural monoid structure inherited from $M$ and call it the monoidal Frobenius image of $M.$
\begin{enumerate}
\item For any monoid $M,$ there exists a canonical isomorphism
$\underline{M^{gp}} \xrightarrow{\sim} \underline{M}^{gp}.$
\item If $M$ is an integral (resp. fine, resp. saturated, resp. fs) monoid then so is $\underline{M}.$
\item If $M$ is a saturated monoid then the canonical morphism $M \ra \underline{M}$ is exact and strict.
\end{enumerate}
\end{lemmadef}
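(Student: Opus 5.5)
The relation $\sim$ is a congruence, since $px=py$ implies $p(x+z)=p(y+z)$ ($M$ is commutative). Hence $\underline{M}$ is a monoid and $M\ra\underline{M}$ is a surjective homomorphism. I will prove the three assertions in order by direct manipulation in $M$ and $M^{gp}$. The one recurring trick is the saturation argument: if $g\in M^{gp}$ and $pg\in M$, then $g\in M$.

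For (1), note that $\underline{M^{gp}}$ is simply $M^{gp}$ modulo its $p$-torsion subgroup. The composite $M\ra M^{gp}\ra\underline{M^{gp}}$ factors through $\underline{M}$, because $px=py$ forces $p(x-y)=0$ in $M^{gp}$. Since $\underline{M^{gp}}$ is a group, the universal property of $(-)^{gp}$ gives a map $\underline{M}^{gp}\ra\underline{M^{gp}}$.

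In the other direction, $M\ra\underline{M}\ra\underline{M}^{gp}$ induces $M^{gp}\ra\underline{M}^{gp}$, and I must check that it kills $p$-torsion. Let $g=a-b$ with $pg=0$. Then $pa+c=pb+c$ in $M$ for some $c\in M$. Adding $(p-1)c$ gives $p(a+c)=p(b+c)$, so $\bar a+\bar c=\bar b+\bar c$ in $\underline{M}$, and $\bar g=0$ in $\underline{M}^{gp}$. The two maps are mutually inverse on the images of elements of $M$, which generate both groups, so they are inverse isomorphisms.

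For (2), assume first that $M$ is integral. If $\bar x+\bar z=\bar y+\bar z$, then $px+pz=py+pz$, and cancellation gives $px=py$, that is $\bar x=\bar y$. Finite generation passes to quotients, so fine implies fine.

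For saturatedness, use (1) to identify $\underline{M}^{gp}$ with $M^{gp}/(p\text{-torsion})$. Let $g\in M^{gp}$ be such that $n\bar g$ lies in the image of $\underline{M}$ for some $n\ge1$. Then $ng-m$ is $p$-torsion for some $m\in M$, hence $n(pg)=pm\in M$. Saturation of $M$ gives $pg\in M$, and applying saturation again with exponent $p$ gives $g\in M$. Therefore $\bar g\in\underline{M}$. The fs case combines the two.

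For (3), exactness means that $M$ is the inverse image of $\underline{M}$ under $M^{gp}\ra\underline{M}^{gp}$. If $\bar g=\bar m$ with $m\in M$, then $p(g-m)=0$, so $pg=pm\in M$, and saturation gives $g\in M$.

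For strictness, I must show that $M/M^*\ra\underline{M}/\underline{M}^*$ is an isomorphism. Surjectivity is clear. Next, every unit of $\underline{M}$ lifts to a unit of $M$: if $\bar u+\bar v=0$, then $p(u+v)=0$, so $u+v$ is invertible with inverse $(p-1)(u+v)$, and hence $u$ is a unit. For injectivity, suppose $\bar x+\bar u=\bar y$ with $u\in M^*$. Put $h=x+u-y\in M^{gp}$; then $ph=0$. Saturation applied to $h$ and to $-h$ gives $h,-h\in M$, so $h\in M^*$ and $y=x+(u-h)$ with $u-h\in M^*$.

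The main point needing care is the torsion bookkeeping in (1), namely producing a $c$ that works simultaneously after multiplication by $p$. The rest is the repeated saturation argument.
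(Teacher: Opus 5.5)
Your proof is correct and follows the same route as the paper: the same pair of mutually inverse maps in (1), cancellation plus the saturation step ($pnx=py\in M$ forces $x\in M$) in (2), and in (3) the same exactness argument together with lifting units and applying saturation to get strictness. The one place you do more than the paper is in (1), where you check via the $(p-1)c$ trick that $M^{gp}\to\underline{M}^{gp}$ kills elements annihilated by $p$; the paper asserts this factorization without justification.
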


\begin{proof}
The composition
$M \ra \underline{M} \ra \underline{M}^{gp}$
factors through $\underline{M^{gp}}$ yielding a morphism
$$u:\underline{M^{gp}} \ra \underline{M}^{gp}.$$
The composition
$M \ra M^{gp} \ra \underline{M^{gp}}$
factors through $\underline{M}^{gp}$ yielding a morphism
$$v:\underline{M}^{gp} \ra \underline{M^{gp}}.$$
The morphisms $u$ and $v$ are inverse to each other.

It is clear that, if $M$ is finitely generated, then so is $\underline{M}.$ Suppose that $M$ is integral and let $x,y,t\in M$ such that
$\ov{x}+\ov{t}=\ov{y}+\ov{t}\in \underline{M}.$
Then
$px+pt=py+pt \in M$
and by integrality of $M,$ $px=py$ so $\ov{x}=\ov{y} \in \underline{M}.$
Now suppose that $M$ is saturated and let $x\in M^{gp}$ and $\ov{x}$ its image in $\underline{M}^{gp}.$ If there exists a positive integer $n$ such that $n\ov{x}\in \underline{M}$ then there exists $y\in M$ such that $pnx=py\in M^{gp}.$ Since $M$ is saturated and $y\in M,$ $x\in M$ and so $\ov{x} \in \underline{M}.$ In addition, if the image of an element $x\in M^{gp}$ in $\underline{M}^{gp}$ belongs to $\underline{M},$ then $px\in M$ and so $x\in M.$ This proves that the square
$$
\begin{tikzcd}
M \ar{r} \ar{d} & \underline{M} \ar{d} \\
M^{gp} \ar{r} & \underline{M}^{gp}
\end{tikzcd}
$$ 
is cartesian, hence the exactness of $M \ra \underline{M}.$

Finally, suppose that $M$ is saturated and let us prove that the canonical morphism $M\ra \underline{M}$ is strict. It is surjective so it is sufficient to prove that, if $m,m'\in M$ are such that there exists a $u\in M$ satisfying $pm=pm'+pu$ and $\ov{u}$ is invertible in $\underline{M},$ then there exists $v\in M^{\times}$ such that $m=m'+v.$ Since $\ov{u}$ is invertible in $\underline{M},$ there exists $u'\in M$ such that $pu+pu'=0.$ So $u\in M^{\times}.$ In $M^{gp},$ $p(m-m')=pu\in M$ and $M$ is saturated so $m-m'\in M.$ Since $u$ is invertible and $p(m-m')=pu,$ we have $m-m'\in M^{\times}.$ We conclude by taking $v=m-m'.$
\end{proof}

\begin{lemma}\label{Mfs2}
Let $u:M\ra N$ be a morphism of saturated monoids, $\underline{M}$ and $\underline{N}$ the monoidal Frobenius images of $M$ and $N$ respectively \eqref{Mfs} and $\underline{u}:\underline{M}\ra \underline{N}$ the morphism induced by $u.$ If $u$ is strict, then so is $\underline{u}.$
\end{lemma}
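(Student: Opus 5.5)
We will use the definition of strictness. A morphism $u:M\ra N$ is strict when the induced map $\ov{u}:M/M^{\times}\ra N/N^{\times}$ is an isomorphism. Equivalently, $u$ is strict when (i) every $n\in N$ can be written $n=u(m)+e$ with $m\in M$ and $e\in N^{\times}$, and (ii) $u(m)+e=u(m')+e'$ with $e,e'\in N^{\times}$ implies $m=m'+w$ for some $w\in M^{\times}$. The plan is to verify (i) and (ii) for $\underline{u}:\underline{M}\ra\underline{N}$. To do this, we lift to $M$ and $N$, use the strictness of $u$, and descend back.

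The first step is to identify the units of the Frobenius images. If $M$ is saturated, then the units of $\underline{M}$ are exactly the images of the units of $M$. Indeed, if $\ov{x}+\ov{y}=0$ in $\underline{M}$, then $px+py=0$ in $M$, so $x\in M^{\times}$. This is the argument already used at the end of the proof of \ref{Mfs}.

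For surjectivity (i), take $\ov{n}\in\underline{N}$ with $n\in N$. By strictness of $u$, we can write $n=u(m)+e$ with $m\in M$ and $e\in N^{\times}$. Passing to the quotient gives $\ov{n}=\underline{u}(\ov{m})+\ov{e}$, and $\ov{e}$ is a unit of $\underline{N}$.

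The main step is injectivity modulo units (ii). Suppose $\underline{u}(\ov{m})+\ov{e}=\underline{u}(\ov{m'})+\ov{e'}$ in $\underline{N}$. By the first step we may take $e,e'\in N^{\times}$. The equality means $pu(m)+pe=pu(m')+pe'$ in $N$, that is, $u(pm)+pe=u(pm')+pe'$. Strictness of $u$ then gives $w\in M^{\times}$ with $pm=pm'+w$.

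It remains to show that $m$ and $m'$ differ by a unit of $M$, so that $\ov{m}$ and $\ov{m'}$ differ by a unit of $\underline{M}$. Write $x=m-m'\in M^{gp}$. Then $px=w\in M^{\times}\subset M$, and saturation of $M$ gives $x\in M$. Similarly $p(-x)=-w\in M$, so $-x\in M$, hence $x\in M^{\times}$. Therefore $m=m'+x$ with $x\in M^{\times}$, and so $\ov{m}=\ov{m'}+\ov{x}$ with $\ov{x}\in\underline{M}^{\times}$.

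The only delicate point is the integrality needed to pass between the equation $u(pm)+pe=u(pm')+pe'$ and the differences in $M^{gp}$. Saturated monoids are integral by convention, so this is fine. I do not expect further obstacles. If one uses instead the characterization of strictness as $\ov{M}\xrightarrow{\sim}\ov{N}$, one also needs that the sharpening of $\underline{M}$ is $\underline{\ov{M}}$; this follows from the same unit identification.
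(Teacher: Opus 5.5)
Your proposal is correct and follows the paper's proof essentially step for step. The paper also identifies the units of $\underline{M}$ with the classes of units of $M$ and gets surjectivity modulo units by lifting to $N$; for injectivity it applies strictness of $u$ to the identity $u(pm)+pe=u(pm')+pe'$ in $N$, then uses saturation of $M$ to show $m-m'\in M^{\times}$ (with a single unit $t$ on one side where you use the pair $e,e'$).
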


\begin{proof}
We have to prove that the morphism
$\underline{M}/\underline{M}^{\times} \ra \underline{N}/\underline{N}^{\times},$
induced by $\underline{u},$ is an isomorphism.
First, note that an element $x\in M$ is invertible if and only if its image $\ov{x}$ in $\underline{M}$ is invertible. This, along with the surjectivity of the projection $N\ra \underline{N},$ yields the surjectivity of $\underline{u}$ modulo the invertibles. Let $x,y\in M$ and $t\in N$ such that $\ov{t}$ is invertible and
$\underline{u}(\ov{x})=\underline{u}(\ov{y})+\ov{t}.$
Then
$\ov{u(x)}=\ov{u(y)}+\ov{t}.$
So
$u(px)=u(py)+pt.$
Since $u$ is strict and $t$ is invertible, there exists $z\in M^{\times}$ such that $px=py+z.$ Set $w=x-y\in M^{gp}.$ Since $pw=z\in M^{\times}$ and $M$ is saturated, $w\in M^{\times}.$ We have $px=p(y+w)$ so $\ov{x}=\ov{y}+\ov{w}$ and $\underline{u}$ is injective modulo the invertibles.
\end{proof}

\begin{parag}\label{era3logimagedef}
Let $X$ be a logarithmic scheme of characteristic $p$ and $\underline{X}$ the scheme theoretic image of the absolute Frobenius $F_X$ i.e. the closed subscheme of $X$ defined by the ideal consisting of local sections of $\Ox_X$ whose $p^{th}$ power vanishes. The ideal of the closed immersion $\underline{X} \ra X$ is, by definition, a nilideal and $\underline{X} \ra X$ is hence a universal homeomorphism (\href{https://stacks.math.columbia.edu/tag/054M}{Lemma 054M}). We identify the small étale sites of $\underline{X}$ and $X$ via this immersion (\href{https://stacks.math.columbia.edu/tag/04DZ}{Theorem 04DZ}). For any étale $X$-scheme $U,$ let $\underline{\calM_X(U)}$ be the monoidal Frobenius image of the monoid $\calM_X(U)$ (\ref{Mfs}).
We denote by $\calM_{\underline{X}}$ the sheaf of monoids associated to the presheaf
$U \mapsto \underline{\calM_X(U)}$
and call it \emph{the monoidal Frobenius image of $\calM_X$}.
The canonical morphism $\calM_X \ra \calM_{\underline{X}}$ is an epimorphism of sheaves of sets.
For any local sections $m$ and $m'$ of $\calM_X,$ we have
$$\left (\alpha_X(m)-\alpha_X(m') \right )^p=\alpha_X(pm)-\alpha_X(pm').$$
This proves that the composition
$\calM_X\xrightarrow{\alpha_X} \Ox_X \ra \Ox_{\underline{X}},$
where the second arrow is the canonical projection, induces a morphism of monoids
$\alpha_{\underline{X}} :\calM_{\underline{X}} \ra \Ox_{\underline{X}}.$
We claim that the morphism of monoids
\begin{equation}\label{era4eq1}
\alpha_{\underline{X}}^{-1}\left (\Ox_{\underline{X}}^* \right ) \ra \Ox_{\underline{X}}^*,
\end{equation}
induced by $\alpha_{\underline{X}},$ is an isomorphism and hence that $(\calM_{\underline{X}},\alpha_{\underline{X}})$ is a logarithmic structure on $\underline{X}.$ Indeed, let $m$ and $m'$ be local sections of $\calM_X$ and $\ov{m}$ and $\ov{m'}$ their images in $\calM_{\underline{X}}.$ If $\alpha_{\underline{X}}(\ov{m})=\alpha_{\underline{X}}(\ov{m'})$ then $(\alpha_X(m)-\alpha_X(m'))^p=0$ and hence $\alpha_X(pm)=\alpha_X(pm').$ If, in addition, $\alpha_{\underline{X}}(\ov{m}),\alpha_{\underline{X}}(\ov{m'})\in \Ox_{\underline{X}}^*$ and since a local section of $\Ox_X$ is invertible if and only if its image in $\Ox_{\underline{X}}$ is invertible, then $\alpha_X(m),\alpha_X(m') \in \Ox_X^*$ and so $pm=pm'.$ This proves the injectivity of (\ref{era4eq1}). The surjectivity follows from the fact that a local section of $\Ox_X$ is invertible if and only if its image in $\Ox_{\underline{X}}$ is invertible.

We call the logarithmic scheme $(\underline{X},\calM_{\underline{X}}),$ the \emph{logarithmic scheme theoretic image of $F_X$}.
\end{parag}

\begin{remark}\label{remstalk}
For a logarithmic scheme $X$ of characteristic $p$ and a geometric point $\ov{x} \ra X,$ the stalk $\calM_{\underline{X},\ov{x}}$ is by construction canonically isomorphic to the monoidal Frobenius image $\underline{\calM_{X,\ov{x}}}.$
\end{remark}

\begin{proposition}\label{era4prop15}
Let $X$ be a logarithmic scheme of characteristic $p.$
\begin{enumerate}
\item The absolute Frobenius morphism $F_X:X\ra X$ factors uniquely through the logarithmic scheme $\underline{X}$ defined in \ref{era3logimagedef}. If we denote by $G_X:X \ra \underline{X}$ the resulting morphism, then the composition
$
\underline{X} \ra X \xrightarrow{G_X} \underline{X},
$
of the canonical immersion $\underline{X} \ra X$ with $G_X,$ is equal to the absolute Frobenius $F_{\underline{X}}$ of $\underline{X}.$
\item If $X$ is fs then the canonical immersion $\underline{X} \ra X$ is strict and inseparable (\cite{Ogus2018} III 2.4.2).
\end{enumerate}
\end{proposition}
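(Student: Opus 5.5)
For (1), I will construct $G_X$ at the level of underlying schemes and then at the level of log structures. On schemes, $F_X^\sharp:\Ox_X\ra \Ox_X,\ a\mapsto a^p$ kills exactly the ideal $\calJ=\{a : a^p=0\}$ defining $\underline{X}$. Hence it factors uniquely through $\Ox_{\underline{X}}=\Ox_X/\calJ$, giving $G_X^\sharp:\Ox_{\underline{X}}\ra \Ox_X$, $\ov{a}\mapsto a^p$. The underlying map of topological spaces is the identity, so uniqueness on schemes is clear: $\underline{X}\ra X$ is a closed immersion, hence a monomorphism.

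On log structures, I will define $G_X^\flat:G_X^{-1}\calM_{\underline{X}}\ra \calM_X$ by $\ov{m}\mapsto pm$. This is well defined on the presheaf $U\mapsto \underline{\calM_X(U)}$, since $\ov{m}=\ov{m'}$ means exactly $pm=pm'$, and it then extends to the associated sheaf. It is compatible with the structure maps because $\alpha_X(pm)=\alpha_X(m)^p=G_X^\sharp(\alpha_{\underline{X}}(\ov m))$. The composite with $\calM_X\ra \calM_{\underline{X}}$ is $m\mapsto pm$, which is $F_X^\flat$, so $F_X$ equals the composition $X\ra \underline{X}\hookrightarrow X$. For uniqueness on log structures, any factorization $G'$ must satisfy $G'^\flat(\ov m)=F_X^\flat(m)=pm$, because $\calM_X\ra \calM_{\underline{X}}$ is an epimorphism of sheaves (\ref{era3logimagedef}); so $G'^\flat=G_X^\flat$. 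For the identity $G_X\circ\iota=F_{\underline{X}}$, I will compute on each component. On rings, $\ov a\mapsto a^p\mapsto \ov{a^p}=\ov a^p$. On monoids, $\ov m\mapsto pm\mapsto \ov{pm}=p\ov m$. Both are the absolute Frobenius of $\underline{X}$.

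For (2), assume $X$ is fs. Strictness of $\iota:\underline{X}\ra X$ means that $\iota^*\calM_X\ra \calM_{\underline{X}}$ is an isomorphism, and I will check this on stalks at geometric points. By Remark \ref{remstalk}, the stalk of $\calM_{\underline{X}}$ at $\ov{x}$ is $\underline{\calM_{X,\ov x}}$. Since $\calM_{X,\ov x}$ is saturated, Lemma \ref{Mfs}(3) shows that $\calM_{X,\ov x}\ra\underline{\calM_{X,\ov x}}$ is strict, i.e.\ an isomorphism on $\ov{\calM}$. Because the two log structures on $\underline X$ have the same units $\Ox_{\underline X}^*$, this gives an isomorphism of log structures. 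The one point I will need to verify carefully is that the pullback log structure $\iota^*\calM_X$ has characteristic $\ov{\calM}_{X}$ and units $\Ox_{\underline{X}}^*$, and that the comparison map respects both pieces. This follows from the pushout description of $\iota^*$ together with the fact that $\Ox_{X}^*\ra\Ox_{\underline X}^*$ is surjective.

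For inseparability, I will use the definition in (\cite{Ogus2018} III 2.4.2): the underlying morphism must be purely inseparable / a universal homeomorphism, and the monoid map must be such that the $p$-th power of any section of the target, or of the cokernel of $\calM^{gp}$, lies in the image. The scheme part holds since $\underline{X}\ra X$ is a nilimmersion (\ref{era3logimagedef}), and the monoid part is immediate because $\iota$ is strict and $\ov{\calM}$ is unchanged. I expect the main obstacle to be matching this precise definition of inseparability, particularly any requirement on the kernel on $\Ox$-level units. I would handle that through the relation $F_X=\iota\circ G_X$ together with the weak inseparability of $F_X$ (\cite{Ogus2018} III 2.4).
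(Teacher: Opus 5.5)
Your proposal is correct and follows essentially the same route as the paper. Part (1) comes from factoring $F_X^{\#}$ through $\Ox_{\underline{X}}$ and $F_X^{\flat}$ through $\calM_{\underline{X}}$ (your uniqueness argument via the epimorphism $\calM_X\ra\calM_{\underline{X}}$ makes explicit what the paper leaves implicit), and part (2) comes from the stalkwise isomorphism $\ov{\calM}_{X,\ov{x}}\ra\ov{\calM}_{\underline{X},\ov{x}}$ given by Remark \ref{remstalk} and Lemma \ref{Mfs}(3); for inseparability the paper likewise only notes that strictness gives exactness and that the nilimmersion $\underline{X}\ra X$ is clearly weakly inseparable (\cite{Ogus2018} III 2.4.2), which is what your direct check amounts to.
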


\begin{proof}
The first assertion follows from the fact that $F_X^{\#}:\Ox_X \ra \Ox_X,\ x\mapsto x^p$ factors through $\Ox_{\underline{X}}$ and $F_X^{\flat}:\calM_X \ra \calM_X,\ m\mapsto pm$ factors through $\calM_{\underline{X}}$ and the commutativity of the diagram
$$
\begin{tikzcd}
\calM_X \ar{r}\ar[bend right=-30]{rrr}{m \mapsto pm} \ar[swap]{d}{\alpha_X} & \calM_{\underline{X}} \ar{rr}{\ov{m}\mapsto pm} \ar[swap]{d}{\alpha_{\underline{X}}} & & \calM_X \ar{d}{\alpha_X} \\
\Ox_X \ar{r} \ar[swap, bend right=30]{rrr}{x \mapsto x^p} & \Ox_{\underline{X}} \ar{rr}{\ov{x}\mapsto x^p} & & \Ox_X 
\end{tikzcd}
$$
The fact that the composition
$
\underline{X} \ra X \xrightarrow{G_X} \underline{X}
$
is equal to $F_{\underline{X}}$ is clear. 
Now suppose that $X$ is fs. Recall the remark \ref{remstalk}. The canonical morphism $\ov{\calM}_{X,\ov{x}} \ra \ov{\calM}_{\underline{X},\ov{x}}$ is an isomorphism for every geometric point $\ov{x}$ of $X,$ by \ref{Mfs}. It follows that the canonical immersion $\underline{X} \ra X$ is strict and hence exact. Since the canonical immersion $\underline{X} \ra X$ is clearly weakly inseparable (\cite{Ogus2018} III 2.4.2), this finishes the proof. 
\end{proof}

\begin{proposition}\label{era3functoriality}~ %%%
\begin{enumerate}
\item The correspondance $X\mapsto \underline{X}$ is functorial on logarithmic schemes of characteristic $p.$
\item Let $X\ra S$ and $Y\ra S$ be morphisms of logarithmic schemes of characteristic $p.$ There exists a canonical morphism
$\underline{X\times_SY}\ra \underline{X}\times_{\underline{S}}\underline{Y}.$
In addition, this morphism is a closed nilimmersion i.e. a closed immersion whose defining ideal is a nilideal.
\item Let $X\ra S$ and $Y\ra S$ be morphisms of fs logarithmic schemes of characteristic $p.$ There exists a canonical morphism
$\underline{X\times_S^{\op{log}}Y}\ra \underline{X}\times_{\underline{S}}^{\op{log}}\underline{Y}.$
In addition, this morphism is a closed nilimmersion.
\item Let $f:X\ra Y$ be a morphism of fs logarithmic schemes of characteristic $p$ and $\underline{f}:\underline{X}\ra \underline{Y}$ the morphism induced by $f.$ If $f$ is strict, then so is $\underline{f}.$
\end{enumerate}
\end{proposition}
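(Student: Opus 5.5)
For (1), I will take a morphism $f:X\ra Y$ of logarithmic schemes of characteristic $p$ and check that it induces $\underline{f}:\underline{X}\ra\underline{Y}$. On structure sheaves, $f^{\#}:f^{-1}\Ox_Y\ra\Ox_X$ sends a section $a$ with $a^p=0$ to $f^{\#}(a)$, and $f^{\#}(a)^p=0$. Hence $f^{\#}$ maps the defining ideal of $\underline{Y}$ into that of $\underline{X}$, and the underlying morphism of schemes factors through $\underline{Y}\hookrightarrow Y$.

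On monoids, $f^{\flat}:f^{-1}\calM_Y\ra\calM_X$ is additive. So $pm=pm'$ implies $p f^{\flat}(m)=pf^{\flat}(m')$, and $f^{\flat}$ descends to the presheaf quotients $\underline{\calM_Y(V)}\ra\underline{\calM_X(U)}$. Sheafifying, and using that the étale sites of $X,\underline{X}$ are identified, gives $\calM_{\underline{Y}}\ra\calM_{\underline{X}}$. This map is compatible with $\alpha$ because the diagrams before passing to the quotient commute. Compatibility with composition and identities is immediate from the construction.

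For (2), functoriality applied to the projections and to $X\times_SY\ra S$ gives the canonical map $\underline{X\times_SY}\ra\underline{X}\times_{\underline{S}}\underline{Y}$. I will show it is a closed nilimmersion on underlying schemes. The question is étale local, so I reduce to affines $X=\Sp A$, $Y=\Sp B$, $S=\Sp R$. The target is $\Sp\bigl((A/I_A)\otimes_{R/I_R}(B/I_B)\bigr)=\Sp\bigl((A\otimes_RB)/(I_A\otimes B+A\otimes I_B)\bigr)$, where $I$ denotes the ideal of $p$-nilpotent elements, i.e. those whose $p$-th power vanishes. Since $I_A\otimes B+A\otimes I_B\subset I_{A\otimes_RB}$, the target surjects onto $(A\otimes_RB)/I_{A\otimes B}$. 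The kernel of this surjection is contained in the image of $I_{A\otimes B}$, so it consists of nilpotents. This gives a closed nilimmersion. The underlying scheme of the fiber product of log schemes is the scheme fiber product, so no further work is needed there.

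For (3), I will argue the same way. The fs fiber product $X\times_S^{\op{log}}Y\ra X\times_SY$ induces a map on Frobenius images by functoriality. The composite with the map from (2) and with the universal property of the fs fiber product gives the canonical arrow, provided $\underline{X},\underline{Y},\underline{S}$ are fs, which holds by \ref{Mfs}(2) together with \ref{era4prop15}(2). For the nilimmersion claim I use \ref{era4prop15}(2). The immersions $\underline{Z}\ra Z$ are strict closed nilimmersions, hence universal homeomorphisms, and the maps $\underline{X}\ra X$ etc. are strict. Consequently $\underline{X}\times^{\op{log}}_{\underline{S}}\underline{Y}\ra X\times^{\op{log}}_SY$ is a strict closed immersion defined by a nilideal; this is the base change of strict closed immersions, and strictness lets the fs product be computed on schemes. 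The source $\underline{X\times^{\op{log}}_SY}\ra X\times^{\op{log}}_SY$ is also a strict closed nilimmersion. The canonical map lies over $X\times_S^{\op{log}}Y$, so it is a closed immersion between closed subschemes, and it is nil because both are nil-thickening-related to the same space. This step, keeping track of how the fs product interacts with strict closed immersions, is the main technical point.

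For (4), I check strictness on stalks at geometric points. By \ref{remstalk}, $\calM_{\underline{X},\ov{x}}=\underline{\calM_{X,\ov{x}}}$, and the map on stalks is $\underline{f^{\flat}_{\ov{x}}}$. The original map $\calM_{Y,f(\ov{x})}\ra\calM_{X,\ov{x}}$ is a strict morphism of saturated monoids. Lemma \ref{Mfs2} then shows the induced map on Frobenius images is strict, which gives strictness of $\underline{f}$.
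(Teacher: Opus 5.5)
Your proof is correct. Parts (1) and (4) follow the paper's argument: the ideal of $p$-nilpotent sections is preserved, $f^{\flat}$ descends to the Frobenius-image quotients, and strictness is checked on stalks via Lemma \ref{Mfs2}.

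For (2) and (3) the starting point is also the same as the paper's: both sides are compared over $X\times_SY$ (resp.\ $X\times^{\op{log}}_SY$). The paper, however, only uses that $\underline{X}\times_{\underline{S}}\underline{Y}$ and $\underline{X}\times^{\op{log}}_{\underline{S}}\underline{Y}$ are affine over these products. It then cites EGA I 4.3.6 to get a closed immersion, and the nil part is left implicit.

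You argue differently:
\begin{itemize}
\item In (2) you do an explicit ring computation.
\item In (3) you prove the stronger fact that $\underline{X}\times^{\op{log}}_{\underline{S}}\underline{Y}\ra X\times^{\op{log}}_SY$ is itself a strict closed nilimmersion. Let $\mathcal{J}_1$ and $\mathcal{J}_2$ be the ideals of $\underline{X\times^{\op{log}}_SY}$ and of $\underline{X}\times^{\op{log}}_{\underline{S}}\underline{Y}$ in $X\times^{\op{log}}_SY$. The canonical map is then just the inclusion $\mathcal{J}_2\subset\mathcal{J}_1$ of nil ideals, and nilness of $\mathcal{J}_1/\mathcal{J}_2$ is immediate.
\end{itemize}
This is a real gain. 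An affine $g$ with $g\circ f$ a surjective closed immersion does not by itself force $f$ to be surjective; the fold map $Z\sqcup Z\ra Z$ is a counterexample. So your route supplies exactly what the paper's "affine" leaves unsaid.

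Two small steps should be made explicit.
\begin{enumerate}
\item You need $\underline{X}\times^{\op{log}}_{\underline{S}}\underline{Y}\cong\underline{X}\times^{\op{log}}_{S}\underline{Y}$. This holds because $\underline{S}\ra S$ is a strict closed immersion, hence a monomorphism of fs log schemes. Only after this identification does pasting pullbacks along the strict maps $\underline{X}\ra X$ and $\underline{Y}\ra Y$ give $\underline{X}\times_X(X\times^{\op{log}}_SY)\times_Y\underline{Y}$ with the pulled-back log structure. That is what "base change of strict closed immersions" means here.
\item To use the universal property of the fs product, the source $\underline{X\times^{\op{log}}_SY}$ must be fs, not only $\underline{X}$, $\underline{Y}$, $\underline{S}$. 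This follows from \ref{era4prop15}(2), exactly as for $\underline{X}$.
\end{enumerate}
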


\begin{proof}
Let $f:X\ra Y$ be a morphism of logarithmic schemes of characteristic $p.$ By \ref{era3logimagedef}, $\underline{X}$ (resp. $\underline{Y}$) is the closed subscheme of $X$ (resp. $Y$) defined by the ideal $\calI_X=\{x\in \Ox_X, x^p=0\}$ (resp. $\calI_Y=\{ y\in \Ox_Y, y^p=0\}$). Let $i_X:\underline{X}\ra X$ and $i_Y:\underline{Y}\ra Y$ be the canonical closed immersions. For any $y\in \calI_Y,$ $f^{\#}(y)^p=0,$ so there exists a morphism of schemes $\underline{f}:\underline{X}\ra \underline{Y}$ making the following diagram commutative in the category of schemes
$$
\begin{tikzcd}
\underline{X}\ar{r}{\underline{f}} \ar{d}{i_X} & \underline{Y}\ar{d}{i_Y} \\
X \ar{r}{f} & Y 
\end{tikzcd}
$$
Note that, as maps of topological spaces, $f=\underline{f}.$
By definition of $\calM_{\underline{X}}$ and $\calM_{\underline{Y}},$ the morphism $f^{\flat}:\calM_Y \ra f_*\calM_X$ induces a morphism
$\underline{f}^{\flat}:\calM_{\underline{Y}} \ra \underline{f}_*\calM_{\underline{X}}.$
This turns $\underline{f}$ into a morphism of logarithmic schemes. The correspondances $X\mapsto \underline{X}$ and $f\mapsto \underline{f}$ clearly define a functor. By functoriality, we get the canonical morphisms $\underline{X\times_SY}\ra \underline{X}\times_{\underline{S}}\underline{Y}$ and $\underline{X\times_S^{\op{log}}Y}\ra \underline{X}\times_{\underline{S}}^{\op{log}}\underline{Y}.$ The fact that they are closed nilimmersions follows from the commutativity of the diagrams
$$
\begin{tikzcd}
\underline{X\times_SY} \ar{r} \ar{d} & \underline{X}\times_{\underline{S}}\underline{Y} \ar{dl} & \underline{X\times_S^{\op{log}}Y} \ar{r} \ar{d} & \underline{X}\times_{\underline{S}}^{\op{log}}\underline{Y} \ar{dl} \\
X\times_SY & & X\times^{\op{log}}_SY, & 
\end{tikzcd}
$$
the fact that $\underline{X\times_SY} \ra X\times_SY$ and $\underline{X\times_S^{\op{log}}Y}\ra X\times_S^{\op{log}}Y$ are closed nilimmersions and $\underline{X}\times_{\underline{S}}\underline{Y} \ra X\times_SY,$ $\underline{X}\times_{\underline{S}}^{\op{log}}\underline{Y} \ra X\times^{\op{log}}_SY$ are affine morphisms, and (\cite{EGA1} 4.3.6). The assertion on strictness follows from \ref{Mfs2}.
\end{proof}

\begin{remark}\label{rem178}
Let $k$ be a perfect field of characteristic $p,$ $S=\Sp k$ equipped with the trivial logarithmic structure and $X$ a fine logarithmic scheme over $S.$ We have $\underline{S}=S.$ The canonical morphism
$$\underline{X}'=\underline{X}\times_{S,F_S}S \ra X\times_{S,F_S}S=X'$$
factors through $\underline{X'}.$ The resulting morphism $\underline{X}' \ra \underline{X'}$ is inverse to the canonical morphism $\underline{X'}=\underline{X\times_{S,F_S}S} \ra \underline{X}\times_{\underline{S},F_{\underline{S}}}\underline{S}= \underline{X}'$ (\ref{era3functoriality}).
From now on, we will identify $\underline{X}'$ and $\underline{X'}$ via these isomorphisms.
\end{remark}

\begin{proposition}\label{propfT/S}
Let $k$ be a perfect field of characteristic $p,$ $S=\Sp k$ equipped with the trivial logarithmic structure and $g:X\ra S$ a morphism of fs logarithmic schemes. Then the exact relative Frobenius $F:X\ra X'$ factors uniquely through the canonical closed immersion $i_X':\underline{X}' \ra X'$ \eqref{rem178}. We denote by $f_{X/S}:X\ra \underline{X}'$ the unique morphism satisfying $F=i_{X}'\circ f_{X/S}.$ This morphism $f_{X/S}$ is inseparable (\cite{Ogus2018} III 2.4.2) and an epimorphism in the category of logarithmic schemes.
\end{proposition}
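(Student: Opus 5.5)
Since $S$ carries the trivial log structure and $k$ is perfect, $F_S$ is an isomorphism of log schemes. The fine (and fs) fiber product $X'=X\times_{S,F_S}S$ is then just the ordinary base change, and the projection $\pi:X'\ra X$ is an isomorphism. In particular $X\ra S$ is of Cartier type, so $X'=X''$ and $F=F_{X/S}$, and we have $F_X=\pi\circ F$. The plan is to transport the factorization $F_X=i_X\circ G_X$ of \ref{era4prop15} through $\pi$. The identification $\underline{X}'=\underline{X'}$ of \ref{rem178} is compatible with $\pi$, in the sense that $\underline{\pi}:\underline{X'}\ra\underline{X}$ is an isomorphism with $\pi\circ i_{X'}=i_X\circ\underline{\pi}$ by functoriality \ref{era3functoriality}. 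We therefore set
$$f_{X/S}:=\underline{\pi}^{-1}\circ G_X:X\ra \underline{X}'.$$
Then $i_X'\circ f_{X/S}=\pi^{-1}\circ i_X\circ G_X=\pi^{-1}\circ F_X=F$, which gives existence.

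For uniqueness, it suffices to show that $i_X':\underline{X}'\ra X'$ is a monomorphism of log schemes. It is a closed immersion of schemes. On log structures, $\calM_{X'}\ra \calM_{\underline{X}'}$ is an epimorphism of sheaves by construction \ref{era3logimagedef}. Hence two morphisms $X\ra\underline{X}'$ that agree after composing with $i_X'$ agree on the underlying schemes, and their maps $\calM_{\underline{X}'}\ra\calM_X$ agree after precomposition with a surjection, so they agree.

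Inseparability: $F$ is weakly inseparable (as used in \ref{piunivhomeo}), so its underlying morphism is a universal homeomorphism, and $i_X'$ is a homeomorphism. It follows that $f_{X/S}$ is a homeomorphism, and the same holds after base change. On monoids we must check, at stalks, that $\calM_{\underline{X}',\ov{x}}\ra\calM_{X,\ov{x}}$ is injective with cokernel killed by a power of $p$. By \ref{remstalk}, this map is $\underline{M}\ra M$, $\ov{m}\mapsto pm$ with $M=\calM_{X,\ov{x}}$ (after identifying via $\pi$). It is injective by the very definition of $\sim$, and every element of $M$ is sent by $p$ into its image. The analogous statement on $\Ox$ is that $\Ox_{\underline{X}}\ra\Ox_X$, $\ov{a}\mapsto a^p$, is injective with image containing all $p$-th powers. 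Matching these against the precise definition in (\cite{Ogus2018} III 2.4.2) is the main technical step. If that definition is the ``weakly inseparable plus exact'' version, we additionally need exactness. For this we use the cartesian-square argument of \ref{Mfs}: if $x\in \underline{M}^{gp}$ maps into $M$, write $x=\ov{y}$ with $py\in M$; saturation gives $y\in M$.

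Epimorphism: let $a,b:\underline{X}'\ra Z$ with $a\circ f_{X/S}=b\circ f_{X/S}$. On schemes, $f_{X/S}^{\#}:\Ox_{\underline{X}'}\ra F_*\Ox_X$ is injective (via $\ov{a}\mapsto a^p$) and $f_{X/S}$ is a homeomorphism, so $f_{X/S}$ is an epimorphism of schemes and the underlying maps of $a$ and $b$ agree. On log structures, the maps $\calM_Z\ra\calM_{\underline{X}'}$ agree after composing with the injective sheaf map $\calM_{\underline{X}'}\ra\calM_X$, $\ov{m}\mapsto pm$; injectivity holds stalkwise, hence for sheaves. So $a=b$. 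The step I expect to require the most care is the inseparability verification, specifically reconciling the definition in Ogus with the stalkwise monoid computation.
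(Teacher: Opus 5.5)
Your proof is correct, but it takes a different route from the paper's. Your key observation is that, because $k$ is perfect and $\calM_S=\Ox_S^*$, the Frobenius $F_S$ is an automorphism of the log scheme $S$. Hence $\pi:X'\to X$ and $\underline{\pi}$ are isomorphisms, and $f_{X/S}$ is just $G_X$ transported by $\underline{\pi}^{-1}$. Every assertion then reduces to the injectivity of $\bar a\mapsto a^p$ and $\bar m\mapsto pm$, which holds by the very definitions of $\underline{X}$ and $\calM_{\underline{X}}$.

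The paper does not exploit this. It obtains $f_{X/S}$ from the universal property of $\underline{X}\times_{S,F_S}S$. It gets inseparability formally from the cancellation property (\cite{Ogus2018} III 2.4.8 (3)), applied to $F=i_X'\circ f_{X/S}$ with $F$ and $i_X'$ inseparable. It proves the epimorphism property by explicit computation: injectivity of $(A/I)\otimes_{k,F_k}k\to A$, and of $f^\flat_{X/S}$ on the amalgamated sum $\calM_{\underline{X},\bar x}\oplus_{\calM_{S,\bar s}}\calM_{S,\bar s}$. Each time it invokes perfectness of $k$ (e.g.\ to write $t=pu$), which amounts to re-proving locally that $F_S$ is invertible.

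Your route gives a shorter and more transparent epimorphism argument. Using topological surjectivity together with sheaf-level injectivity of $f^\#_{X/S}$ also handles arbitrary targets $Z$, a point the paper's affine computation leaves implicit. The paper's route gives inseparability without any stalkwise matching against Ogus's definition. You could remove your hedge the same way, since $F$ and the strict nil-immersion $i_X'$ are both inseparable.

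One justification needs repair. The fact that $\pi$ is an isomorphism does not by itself make $X\to S$ of Cartier type: for a fine but non-saturated $X$, the map $F_X$ need not be exact. What you actually use is $X'=X''$ and $F=F_{X/S}$, i.e.\ that $F_{X/S}$ is already exact. Your isomorphism identifies $F_{X/S}$ with $F_X$, and exactness of $F_X$ holds because the stalks of $\calM_X$ are saturated ($X$ is fs). Alternatively, argue as the paper does: $g$ is saturated since $\calM_S$ is trivial modulo units, and then apply (\cite{Ogus2018} III 2.5.4).
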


\begin{proof}
Let $\ov{x}$ be a geometric point of $X$ and $\ov{s}=g(\ov{x}).$ Note that since $S$ is equipped with the trivial logarithmic structure, $\calM_{S,\ov{s}}=\Ox_{S,\ov{s}}^*.$ The morphism
$
g_{\ov{x}}^{\flat}:\calM_{S,\ov{s}} \ra \calM_{X,\ov{x}}
$
induces, modulo the invertibles, the morphism
$
\ov{g}^{\flat}_{\ov{x}}: \{1\} \ra \ov{\calM}_{X,\ov{x}}.
$
Since the direct sum of saturated morphisms is saturated, the morphism $\ov{g}^{\flat}_{\ov{x}}$ is saturated. It follows that $g$ is saturated (\cite{Ogus2018} III 2.5.1) and then that $F$ is equal to the relative Frobenius morphism $F_{X/S}$ (\cite{Ogus2018} III 2.5.4).
In addition, the logarithmic structure on $S$ is trivial so $F_S$ is strict and hence $X'=X\times_{S,F_S}S.$ Let $\underline{X}'=\underline{X}\times_{S,F_S}S.$
By \ref{era4prop15}, the absolute Frobenius $F_X$ factors uniquely through the canonical closed immersion $i_X:\underline{X} \ra X.$ Hence the existence of a morphism of logarithmic schemes $f_{X/S}:X\ra \underline{X}'$ fitting into the following commutative diagram
$$
\begin{tikzcd}
X \ar[bend right=-90]{rrdd}{F_X} \ar[swap, bend right=-30]{rrd} \ar{dr}{f_{X/S}} \ar[bend right =30]{ddr}{F} \ar[bend right=30]{dddr} &  & \\
 & \underline{X}' \ar{r} \ar{d}{i_X'} & \underline{X} \ar{d}{i_X} \\
 & X' \ar{r} \ar{d} & X\ar{d}{g} \\
 & S\ar{r}{F_S} & S
\end{tikzcd}
$$
The uniqueness follows from the fact that $i_X':\underline{X}' \ra X'$ is a closed immersion and closed immersions of logarithmic schemes are monomorphisms. The (exact) relative Frobenius morphism $F_{X/S}$ and $i_{X}'$ are inseparable (\ref{era4prop15}) so, by (\cite{Ogus2018} III 2.4.8 (3)), $f_{X/S}$ is inseparable.
We now prove that $f_{X/S}$ is an epimorphism. If $X$ is a affine, say $X=\Sp A$ for a $k$-algebra $A,$ then $\underline{X}=\Sp (A/I),$ where $I$ is the ideal of $A$ consisting of elements of $A$ whose $p^{th}$ power vanishes. Then $\underline{X}'=\Sp \left ( A/I)\otimes_{k,F_k}k\right ),$ where $F_k$ is the absolute Frobenius morphism of $k,$ and $f_{X/S}$ corresponds to
$$
h:\begin{array}[t]{clc}
(A/I)\otimes_{k,F_k}k & \ra & A \\
\ov{x} \otimes a & \mapsto & ax^p,
\end{array}
$$
for $x\in A$ and $a\in k.$ Since $k$ is perfect, $h$ is injective and hence the underlying morphism of schemes of $f_{X/S}$ is an epimorphism in the category of schemes. It remains to prove that
$f_{X/S}^{\flat}:f_{X/S}^{-1}\calM_{\underline{X}'} \ra \calM_X$
is injective. We identify the small étale sites of $X,$ $X'$ and $\underline{X}'$ via the universal homeomorphisms $X' \ra X$ and $\underline{X}'\ra X'.$ Let $\ov{x}$ be a geometric point of $X$ and $\ov{s}=g(\ov{x}).$
Then $\calM_{\underline{X}',\ov{x}}=\calM_{\underline{X},\ov{x}}\oplus_{\calM_{S,\ov{s}},F_{S,\ov{s}}^{\flat}}\calM_{S,\ov{s}}$ and $f_{X/S,\ov{x}}$ is given by
$$
f_{X/S,\ov{x}}:\begin{array}[t]{clc}
\calM_{\underline{X},\ov{x}}\oplus_{\calM_{S,\ov{s}},F_{S,\ov{s}}^{\flat}}\calM_{S,\ov{s}} & \ra & \calM_{X,\ov{x}} \\
(\ov{m},t) & \mapsto & pm+g_{\ov{x}}^{\flat}(t),
\end{array}
$$
for $m\in \calM_{X,\ov{x}}$ and $t\in\calM_{S,\ov{s}}.$ Let $m,m' \in \calM_{X,\ov{x}}$ and $t,t\in \calM_{S,\ov{s}}$ such that $f_{X/S,\ov{x}}^{\flat}(\ov{m},t)=f_{X/S,\ov{x}}^{\flat}(\ov{m'},t').$ Then
$pm+g_{\ov{x}}^{\flat}(t)=pm'+g_{\ov{x}}^{\flat}(t').$
By definition of $\calM_{\underline{X}}$ and amalgamated sums in the category of monoids, to prove that $(\ov{m},t)=(\ov{m'},t')$ in $\calM_{\underline{X},\ov{x}}\oplus_{\calM_{S,\ov{s}},F_{S,\ov{s}}^{\flat}}\calM_{S,\ov{s}},$ it is sufficient to prove that there exist $u,u'\in \calM_{S,\ov{s}}$ such that
\begin{equation}\label{era4eq4}
\begin{cases}
pm+g_{\ov{x}}^{\flat}(pu)=pm'+g_{\ov{x}}^{\flat}(pu') \\
t+pu'=t'+pu.
\end{cases}
\end{equation}
Since $S=\Sp k$ is equipped with the trivial logarithmic structure, $\calM_S=\Ox_S^*.$ Since $k$ is perfect, there exist $u,u'\in \calM_{S,\ov{s}}$ such that $t=pu$ and $t'=pu'.$ The condition (\ref{era4eq4}) is then automatically satisfied.
\end{proof}

\begin{proposition}\label{Xreduced}
Let $k$ be a perfect field of characteristic $p,$ $S=\Sp k$ equipped with the trivial logarithmic structure and $f:X\ra S$ a smooth morphism of fs logarithmic schemes. Then $X$ is reduced, hence $\underline{X}=X.$
\end{proposition}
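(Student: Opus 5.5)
Since the question is étale local on $X$, we may assume $X$ affine and, by Kato's structure theorem for log smooth morphisms (\cite{Kat89} 3.5), that $X\ra S$ factors through a chart. Concretely, we may assume there is an fs monoid $P$, with $P^{gp}$ having torsion of order prime to $p$ (indeed $P^{gp}$ may be taken torsion free after shrinking, since $k$ has trivial log structure), and a strict étale morphism $X\ra A_k[P]=\Sp k[P]$. Here $S$ carries the trivial log structure, so the chart of $S$ is the trivial monoid. The plan is then to reduce reducedness of $X$ to reducedness of $k[P]$.

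The first step is to note that reducedness is local in the étale topology. If $X\ra \Sp k[P]$ is étale and $k[P]$ is reduced, then $X$ is reduced, since étale morphisms preserve reducedness (\cite{SP}, reducedness is an étale local property, and $\Ox_X$ is étale over a reduced ring). Moreover, $X$ is covered by such étale neighbourhoods, so $X$ is reduced globally.

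The second step is to prove that $k[P]$ is reduced for $P$ fs. Since $P$ is integral and saturated, with $P^{gp}\simeq \Z^r$ after reducing to the torsion-free case, the ring $k[P]$ embeds into $k[P^{gp}]$, a Laurent polynomial ring, which is a domain. If torsion of order prime to $p$ remains, then $k[P^{gp}]=k[\Z^r]\otimes_k k[H]$ with $H$ finite of order prime to $p$. In that case $k[H]$ is étale over $k$, hence reduced, and the tensor product with a Laurent ring over the perfect field $k$ is again reduced. Injectivity $k[P]\hookrightarrow k[P^{gp}]$ follows from integrality of $P$. This argument only needs $P$ integral, together with the absence of $p$-torsion in $P^{gp}$; the latter is the expected obstacle. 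Kato's chart theorem only guarantees that the $p$-part of the torsion of the kernel or cokernel of $0\ra P^{gp}$ vanishes. Since the base monoid is trivial, this says exactly that $P^{gp}$ has no $p$-torsion, which is what is needed. I expect this to be the one step requiring care: one must invoke the version of Kato's theorem giving an étale, rather than merely smooth, map to $\Sp k[P]$ with $P^{gp}$ free of $p$-torsion.

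Finally, $\underline{X}=X$ follows directly from \ref{era3logimagedef}. The ideal of $\underline{X}\ra X$ consists of local sections whose $p$-th power vanishes, and these are nilpotent, so the ideal is zero when $X$ is reduced. Hence $\underline{X}=X$ as schemes. On log structures, $\calM_{\underline{X}}=\calM_X$ by \ref{era4prop15}(2), since the immersion is then strict and an isomorphism of underlying schemes, so $\underline{X}=X$ as logarithmic schemes.
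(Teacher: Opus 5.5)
Your proof is correct, but it takes a different route from the paper. The paper first observes that $f$ is saturated because the log structure of $S$ is trivial (as in the proof of \ref{propfT/S}). It then cites \cite{Tsuji} II.4.2 on reducedness for log smooth saturated morphisms.

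You instead compute locally. Kato's chart theorem (\cite{Kat89} 3.5) with the trivial chart $Q=0$ on $S$ gives, étale locally, a strict étale map $X\ra\Sp k[P]$ with $P$ integral and the torsion of $P^{gp}$ of order prime to $p$. Then $k[P]\subset k[P^{gp}]\simeq k[H][t_1^{\pm1},\dots,t_r^{\pm1}]$ is reduced, and reducedness passes up along étale maps and descends along étale covers.

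What each approach buys:
\begin{itemize}
\item The paper's route is a one-line citation. It places the statement inside a result valid over arbitrary reduced log bases, where saturatedness is genuinely needed. For example, when $p\neq 2$, the log étale map $A_k[\N]\ra A_k[\N]$ induced by $1\mapsto 2$ has underlying scheme $\Sp k[s]/(s^2)$ after base change to the standard log point.
\item Your route is elementary and self-contained. It actually proves reducedness for fine, not necessarily saturated, $X$, since only integrality of $P$ and the absence of $p$-torsion in $P^{gp}$ are used.
\item You also justify the log-structure part of $\underline{X}=X$ via the strictness in \ref{era4prop15}(2). The paper leaves this implicit.
\end{itemize}

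Two minor remarks. Your parenthetical claim that $P^{gp}$ can be taken torsion free is asserted without justification, but you never use it, since you treat the prime-to-$p$ torsion case anyway. Your concern about needing an étale rather than a merely smooth map to $\Sp k[P]$ is unnecessary, since smooth morphisms also preserve reducedness.
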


\begin{proof}
Since $S$ is equipped with the trivial logarithmic structure, $f$ is saturated (see the proof of \ref{propfT/S} for more details). The result then follows from (\cite{Tsuji} II.4.2).
\end{proof}

\begin{parag}\label{Shihoparag1}
Let $S$ be an fs logarithmic scheme of characteristic $p,$ $U$ and $T$ two fs logarithmic schemes over $S$ and $u:\underline{T} \ra U$ an $S$-morphism. Recall the diagram defining the exact relative Frobenius
\begin{equation}\label{diag51U}
\begin{tikzcd}
U\ar{r}\ar{dr}\ar[bend right=30]{rdd} & U'\ar{d}\ar{dr} & \\
 & U''\ar{d}\ar{r} & U\ar{d} \\
 & S \ar{r}{F_S} & S,
\end{tikzcd}
\end{equation}
where the square is cartesian in the category of fine logarithmic schemes and $U' \ra U''$ is log étale. By \ref{era4prop15}, the absolute Frobenius $F_T:T \ra T$ is equal to the composition $T \xrightarrow{G_T} \underline{T} \xrightarrow{i} T,$
where $i:\underline{T} \ra T$ is the canonical immersion, which is strict by \ref{era4prop15}. We then have a unique morphism $T \ra U''$ fitting into the commutative diagram
\begin{equation}\label{morphismShiho2}
\begin{tikzcd}
T\ar{r}{G_T} \ar{dr}\ar[bend right=30]{rdd} & \underline{T} \ar{dr}{u} \ar{r}{i} & T\ar[bend right=-30]{dd} \\
 & U''\ar{d}\ar{r} & U\ar{d} \\
 & S \ar{r}{F_S} & S.
\end{tikzcd}
\end{equation}
\end{parag}

\begin{proposition}\label{propShiho185}
Keep the notation of \ref{Shihoparag1}. The diagram
\begin{equation}\label{morphismShiho1}
\begin{tikzcd}
U \ar{rr} & & U' \ar{d} \\
\underline{T} \ar[swap]{u}{u} \ar{r}{i} & T \ar[dashed]{ur}{\Delta_{u}} \ar{r} & U'',
\end{tikzcd}
\end{equation}
where $T \ra U''$ is defined in \eqref{morphismShiho2}, is commutative and there exists a unique morphism $\Delta_u:T\ra U'$ fitting into it.
In addition, we have the following properties:
\begin{enumerate}
\item If $i_X:\underline{X} \ra X$ is the canonical immersion then $\Delta_{i_X}:X\ra X'$ is the exact relative Frobenius.
\item If $u$ is affine, then so is $\Delta_u.$
\item If $S=\op{Spec}k$ equipped with the trivial logarithmic structure and $f_{T/S}$ is the morphism defined in \ref{propfT/S}, then $U'=U''$ and $\Delta_u=u' \circ f_{T/S}.$
\end{enumerate}
\end{proposition}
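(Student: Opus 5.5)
The plan is to get $\Delta_u$ from the infinitesimal lifting property of the log étale morphism $G_U:U'\ra U''$ along the strict closed nilimmersion $i:\underline{T}\ra T$. Uniqueness will follow from the same property, and the three listed properties will be deduced from uniqueness.

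\emph{Commutativity of the outer square.} Write $F_U:U\ra U'$ for the exact relative Frobenius. I need
\[
G_U\circ F_U\circ u=h\circ i,
\]
where $h:T\ra U''$ is the morphism of \eqref{morphismShiho2}. Since $U''$ is a fibre product in fine logarithmic schemes, it suffices to compare the two compositions with the projections $U''\ra S$ and $U''\ra U$.
\begin{itemize}
\item For $U''\ra U$: we have $\op{pr}\circ G_U\circ F_U=F_U^{\mathrm{abs}}$, the absolute Frobenius of $U$. Hence the left side gives $F_U^{\mathrm{abs}}\circ u=u\circ F_{\underline{T}}$, by functoriality of the absolute Frobenius. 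By definition of $h$, the right side gives $u\circ G_T\circ i$, which also equals $u\circ F_{\underline{T}}$ by \ref{era4prop15}(1).
\item For $U''\ra S$: both compositions are the structure morphism of $\underline{T}$, by construction.
\end{itemize}

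\emph{Existence and uniqueness of $\Delta_u$.} The immersion $i$ is strict by \ref{era4prop15}(2). Its ideal $\calI=\{x\in\Ox_T : x^p=0\}$ is a nilideal, but it need not be nilpotent. I would argue étale locally on $T$ and $U'$, with everything affine. There, $\calI$ is the filtered union of its finitely generated subideals $\calI_\alpha$, and each $\calI_\alpha$ is nilpotent.
\begin{itemize}
\item Uniqueness: $G_U$ is log étale, so the infinitesimal lifting property gives unique lifts along each strict nilpotent thickening $V(\calI_\alpha)\ra T$. Since $\calI=\bigcup_\alpha\calI_\alpha$, the lift $T\ra U'$ is unique.
\item Existence: $G_U$ is log étale, hence locally of finite presentation. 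I would compare with $T_\alpha=V(\calI_\alpha)$ via a limit argument (the morphism $\underline{T}\ra U'$ spreads out to some $T_\alpha$, and then lifts uniquely from $T_\alpha$ to $T$). Local lifts glue by uniqueness.
\end{itemize}
This limit step, i.e. passing from nilpotent to merely nil thickenings, is the one I expect to be the main technical obstacle. A cleaner alternative, if available, is a Frobenius-type trick exploiting that $i$ already contains the Frobenius factorization.

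\emph{Property (1).} Take $T=X$ and $u=i_X$. The composition $h\circ i_X$ is then the relative Frobenius $F_{X/S}=G\circ F$. The exact relative Frobenius $F$ makes \eqref{morphismShiho1} commute, since $F\circ i_X=F_X\circ i_X$ on $\underline{X}$. By uniqueness, $\Delta_{i_X}=F$.

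\emph{Property (2).} We have $\Delta_u\circ i=F_U\circ u$. Here $F_U$ is affine, being integral by the argument of \ref{piunivhomeo}, and $u$ is affine, so $\Delta_u\circ i$ is affine. Since $i$ is a closed immersion defined by a nilideal, a closed subscheme defined by a nilideal is affine exactly when the ambient scheme is. The underlying maps of $i$ and $\pi$ are homeomorphisms, so preimages of affine opens under $\Delta_u$ are thickenings by nilideals of preimages under $F_U\circ u$. Hence they are affine.

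\emph{Property (3).} By the proof of \ref{propfT/S}, $U\ra S$ is saturated, so $U'=U''$ and $G_U=\op{id}$. Then $\Delta_u$ is forced to equal $h$. It remains to check $h=u'\circ f_{T/S}$ through the two projections.
\begin{itemize}
\item Projection to $U$: $u\circ G_T$ on one side. On the other, the projection of $u'\circ f_{T/S}$ equals $u\circ(\underline{T}'\ra\underline{T})\circ f_{T/S}=u\circ G_T$, by the diagram in the proof of \ref{propfT/S}.
\item Projection to $S$: both sides give the structure morphism, as required.
\end{itemize}
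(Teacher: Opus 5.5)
Your overall route is the paper's. You prove commutativity by checking the two projections of the fine fibre product $U''$. You get existence and uniqueness of $\Delta_u$ from log étaleness of $U'\ra U''$ against the strict immersion $i$. You derive (1)--(3) from uniqueness plus a projection check; for (2), your use of the fact that a nil-thickening of an affine scheme is affine spells out what the paper calls clear. You are also right that $\calI=\{x:x^p=0\}$ is only a nil ideal. The paper simply calls $i$ a nilpotent immersion, which already fails for $T=\op{Spec}k[x_1,x_2,\hdots]/(x_1^p,x_2^p,\hdots)$. So an extra argument is genuinely needed.

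The gap is in your uniqueness bullet. Two lifts $a,b:T\ra U'$ are only known to agree on $\underline{T}=V(\calI)$, and $V(\calI)$ is contained in $V(\calI_\alpha)$, not the other way round. Unique lifting along the nilpotent thickening $V(\calI_\alpha)\ra T$ applies only once $a$ and $b$ agree on $V(\calI_\alpha)$, and ``$\calI=\bigcup_\alpha\calI_\alpha$'' does not give that. What you need is the injectivity half of the limit property. Since $U'\ra U''$ is locally of finite presentation and $\underline{T}=\lim_\alpha V(\calI_\alpha)$ with strict affine transition maps, $\op{Hom}_{U''}(\underline{T},U')=\op{colim}_\alpha\op{Hom}_{U''}(V(\calI_\alpha),U')$. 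Hence $a$ and $b$ already agree on some $V(\calI_\alpha)$, and nilpotent lifting then gives $a=b$. So both existence and uniqueness rest on this log version of the finite-presentation limit statement, which you only sketch and should state and justify (using $\calM_{V(\calI_\alpha)}=\calM_T/(1+\calI_\alpha)$ and $\calM_{\underline{T}}=\calM_T/(1+\calI)$).

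The Frobenius-type shortcut you hoped for is available within the paper. By \ref{era4prop15}(2), $i$ is strict and inseparable. The paper invokes the lifting property of log étale morphisms along inseparable morphisms as (\cite{Ogus2018} IV 3.3.7) in \ref{era3paralpha}. Applied to $i$, it gives existence and uniqueness of $\Delta_u$ at once, with no limit argument.

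Finally, a slip in (1): it is $h$ itself, not $h\circ i_X$, that equals $F_{X/S}$, since its projections are $i_X\circ G_X=F_X$ and the structure map. The condition to check is therefore $G\circ F=F_{X/S}$. The upper triangle then reads $F\circ i_X=F\circ i_X$ and holds tautologically.
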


\begin{proof}
The composition
$
\underline{T} \xrightarrow{i} T \xrightarrow{G_T} \underline{T}
$
is equal to $F_{\underline{T}}.$
On one hand, by \eqref{diag51U}, the composition
$
\underline{T} \xrightarrow{u} U \ra U' \ra U'' \ra U,
$
where $U'' \ra U$ is the canonical projection, is equal to
$
\underline{T} \xrightarrow{u} U\xrightarrow{F_U} U.
$
This is equal to
$
\underline{T} \xrightarrow{F_{\underline{T}}} \underline{T} \xrightarrow{u} U.
$
This is then equal to
$
\underline{T} \xrightarrow{i} T \xrightarrow{G_T} \underline{T} \xrightarrow{u} U.
$
By \eqref{morphismShiho2}, this is equal to
$
\underline{T} \xrightarrow{i} T \ra U'' \ra U.
$
On the other hand, by \eqref{diag51U}, the composition
$
\underline{T} \xrightarrow{u} U \ra U' \ra U'' \ra S,
$
where $U'' \ra S$ is the canonical projection, is equal to
$
\underline{T} \xrightarrow{u} U\ra S.
$
Since $u$ is an $S$-morphism, this is equal to
$
\underline{T} \ra S.
$
This is, by definition, equal to
$
\underline{T} \xrightarrow{i} T \ra S.
$
By definition of $T \ra U'',$ this is equal to
$
\underline{T} \xrightarrow{i} T \ra U'' \ra S.
$
Since $U''$ is, by definition, the fiber product of $U\ra S$ and $F_S$ in the category of fine logarithmic schemes, we get the commutativity of the outer diagram of
\begin{equation}
\begin{tikzcd}
U \ar{rr} & & U' \ar{d} \\
\underline{T} \ar[swap]{u}{u} \ar{r}{i} & T \ar[dashed]{ur}{\Delta_{u}} \ar{r} & U''.
\end{tikzcd}
\end{equation}
Then, the existence and uniqueness of $\Delta_{u}$ follows from the fact that $U' \ra U''$ is log étale and $\underline{T} \ra T$ is a strict nilpotent closed immersion (i.e. its ideal is nilpotent). The fact that $\Delta_{u}$ is affine if so is $u,$ is clear. It is also clear that $\Delta_{i_X}$ is the exact relative Frobenius of $X$ with respect to $S.$

Suppose now that $S=\op{Spec}k$ equipped with the trivial logarithmic structure. It follows, since $U$ is fs, that the morphism $U\ra S$ is saturated and so $U'=U''$ (\cite{Ogus2018} III 2.5.4). We have the diagram
$$
\begin{tikzcd}
T \ar{dr}{G_T} \ar[swap,bend right=70]{dd}{F_{T/S}} \ar[bend right=-70]{ddr}{F_T} \ar[swap]{d}{f_{T/S}} & \\
\underline{T}' \ar{d}{i'} \ar{r} & \underline{T} \ar{d}{i} \\
T' \ar{r} & T,
\end{tikzcd}
$$
where $T' \ra T$ and $\underline{T}' \ra \underline{T}$ are the canonical projections.
The square and the outer diagram are commutative. Since $i$ is a strict immersion hence a monomorphism, it follows that the diagram
$$
\begin{tikzcd}
T \ar{dr}{G_T} \ar[swap]{d}{f_{T/S}} & \\
\underline{T}' \ar{r} & \underline{T}
\end{tikzcd}
$$
is commutative. It follows that the diagram
$$
\begin{tikzcd}
T \ar{dr}{f_{T/S}} \ar[bend right=-30]{drr}{G_T} \ar[bend right=30]{dddrr} & & & \\
 & \underline{T}' \ar{r} \ar{dr}{u'} & \underline{T} \ar{dr}{u} & \\
 & & U' \ar{r} \ar{d} & U \ar{d} \\
 & & S \ar{r}{F_S} & S
\end{tikzcd}
$$
is commutative.
The morphism $\Delta_{u}$ is then equal to the composition
$
T \xrightarrow{f_{T/S}} \underline{T}' \xrightarrow{u'} U'.
$
\end{proof}

\begin{proposition}
Let $S$ be an fs logarithmic scheme of characteristic $p$ and $T_1\ra T_2$ a morphism of fs logarithmic schemes over $S$ and $u_1:\underline{T_1} \ra U_1$ and $u_2:\underline{T_2} \ra U_2$ two $S$-morphisms fitting into a commutative diagram
$$
\begin{tikzcd}
\underline{T_1} \ar{r} \ar[swap]{d}{u_1} & \underline{T_2} \ar{d}{u_2} \\
U_1 \ar{r} & U_2.
\end{tikzcd}
$$
The diagram
$$
\begin{tikzcd}
T_1 \ar{r} \ar[swap]{d}{\Delta_{u_1}} & T_2\ar{d}{\Delta_{u_2}} \\
U_1' \ar{r} & U_2'
\end{tikzcd}
$$
is then commutative.
\end{proposition}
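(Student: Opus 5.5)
The plan is to use the uniqueness in Proposition \ref{propShiho185}: $U_2'\ra U_2''$ is log étale and $i_1:\underline{T_1}\ra T_1$ is a strict nilpotent closed immersion. Write $h:T_1\ra T_2$ for the given morphism, $\underline{h}:\underline{T_1}\ra\underline{T_2}$ for the induced one (\ref{era3functoriality}), $g:U_1\ra U_2$, $g':U_1'\ra U_2'$ and $g'':U_1''\ra U_2''$ for the induced morphisms, and $F_{U_j}':U_j\ra U_j'$ for the exact relative Frobenius. Both compositions $g'\circ\Delta_{u_1}$ and $\Delta_{u_2}\circ h$ are morphisms $T_1\ra U_2'$. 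It therefore suffices to show that each makes the square
$$
\begin{tikzcd}
\underline{T_1} \ar{r}{F_{U_2}'\circ u_2\circ \underline{h}} \ar[swap]{d}{i_1} & U_2' \ar{d} \\
T_1 \ar{r} & U_2''
\end{tikzcd}
$$
commute, with the same bottom arrow $T_1\ra U_2''$. The unique lifting property for log étale morphisms against strict nilpotent thickenings then forces the two morphisms to coincide.

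\emph{Upper triangle.} For $\Delta_{u_2}\circ h$ we have $\Delta_{u_2}\circ h\circ i_1=\Delta_{u_2}\circ i_2\circ\underline{h}=F_{U_2}'\circ u_2\circ\underline{h}$. The first equality is $h\circ i_1=i_2\circ \underline{h}$ (functoriality of $\underline{(\cdot)}$), and the second is the upper triangle of \eqref{morphismShiho1} for $u_2$. For $g'\circ\Delta_{u_1}$ we have $g'\circ\Delta_{u_1}\circ i_1=g'\circ F_{U_1}'\circ u_1=F_{U_2}'\circ g\circ u_1=F_{U_2}'\circ u_2\circ\underline{h}$. This uses functoriality of the exact relative Frobenius, $g'\circ F'_{U_1}=F'_{U_2}\circ g$, which I take from its construction in (\cite{SF1} 3), together with the hypothesis.

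\emph{Lower triangle.} I must show that the morphisms $T_1\ra U_2''$ obtained from the two compositions agree, namely $g''\circ\phi_1=\phi_2\circ h$, where $\phi_j:T_j\ra U_j''$ is defined in \eqref{morphismShiho2}. Since $U_2''$ is the fibre product in fine log schemes, I check the two projections separately. Projecting to $S$, both give $T_1\ra S$, because $h$ is an $S$-morphism. Projecting to $U_2$, the first gives $g\circ u_1\circ G_{T_1}$ and the second gives $u_2\circ G_{T_2}\circ h$. These agree because $G_{T_2}\circ h=\underline{h}\circ G_{T_1}$ and $g\circ u_1=u_2\circ\underline{h}$. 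The relation $G_{T_2}\circ h=\underline{h}\circ G_{T_1}$ follows from $i_2\circ G_{T_2}\circ h=F_{T_2}\circ h=h\circ F_{T_1}=h\circ i_1\circ G_{T_1}=i_2\circ\underline{h}\circ G_{T_1}$, using that $i_2$ is a monomorphism (a strict closed immersion). Then $g'\circ\Delta_{u_1}$ lies over $g''\circ\phi_1$ because $U_1'\ra U_1''\ra U_2''$ equals $U_1'\ra U_2'\ra U_2''$, again by functoriality of diagram \eqref{diag51U}.

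The main obstacle is this functoriality bookkeeping for $U\mapsto U'$, $U\mapsto U''$ and $G_T$, which is straightforward but must be stated carefully. Once it is in place, uniqueness of the lift gives the commutativity of the asserted square.
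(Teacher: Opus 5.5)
Your proof is correct and follows the paper's approach. Both compositions $T_1\to U_2'$ are lifts in the same square against the log étale morphism $U_2'\to U_2''$, taken along the strict nilpotent immersion $\underline{T_1}\to T_1$, so uniqueness forces them to be equal. You spell out explicitly the functoriality checks (the identity $G_{T_2}\circ h=\underline{h}\circ G_{T_1}$ and the compatibility of $U\mapsto U',U''$ with $g$) that the paper takes as implicit in its commutative diagrams.
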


\begin{proof}
Consider the notation of \ref{Shihoparag1}. We have the commutative diagram
\begin{equation}\label{diagShiho8}
\begin{tikzcd}
T_1 \ar{rr}{G_{T_1}} \ar{dd} \ar{dr} & & \underline{T_1} \ar[dashed]{dd} \ar{dr}{u_1} & \\
 & U_1'' \ar{rr} \ar{dd} & & U_1 \ar{dd} \\
 T_2 \ar[bend right=30]{ddr} \ar[dashed,pos=0.20]{rr}{G_{T_2}} \ar{dr} & & \underline{T_2} \ar[dashed]{dr}{u_2} & \\
& U_2'' \ar{rr} \ar{d} & & U_2 \ar{d} \\
 & S \ar{rr}{F_S} & & S.
\end{tikzcd}
\end{equation}
We also have the commutative diagrams
\begin{equation}\label{diagShiho6}
\begin{tikzcd}
 & U_2 \ar[bend right =-30]{rrd} &  & \\
U_1 \ar{rr} \ar{ur} & & U_1' \ar{d} \ar{r} & U_2' \ar{d} \\
\underline{T_1} \ar{r} \ar{u}{u_1} & T_1 \ar{ur}{\Delta_{u_1}} \ar{r} & U_1'' \ar{r} & U_2''
\end{tikzcd}
\end{equation}
and
\begin{equation}\label{diagShiho7}
\begin{tikzcd}
U_1 \ar{r}  & U_2 \ar{rr} &  & U_2' \ar{d} \\
\underline{T_1} \ar{r} \ar{u}{u_1} & \underline{T_2} \ar{u}{u_2} \ar{r} & T_2 \ar{ur}{\Delta_{u_2}} \ar{r}  & U_2''.
\end{tikzcd}
\end{equation}
The lower composition
$$
\underline{T_1} \ra \underline{T_2} \ra T_2 \ra U_2''
$$
of \eqref{diagShiho7}, is equal to
$$
\underline{T_1} \ra T_1 \ra T_2 \ra U_2''.
$$
This is equal, by \eqref{diagShiho8}, to
$$
\underline{T_1} \ra T_1 \ra U_1'' \ra U_2'',
$$
which is the lower composition of \eqref{diagShiho6}.
We conclude by the log étaleness of $U_2' \ra U_2''.$
\end{proof}

\section{The formal schemes $Q_{\frakX}$ and $R_{\frakX,n}$}

\begin{proposition}\label{erafdil2}
Let $\frakS=\op{Spf}W$ \eqref{Wflat} equipped with the trivial logarithmic structure and $\frakY$ a logarithmic $p$-adic formal scheme flat and locally of finite type over $\frakS$ and $i:T \ra \frakY_1$ a strict immersion. There exists a strict morphism of logarithmic $p$-adic formal schemes flat over $\frakS,$ $g:\frakY_{(T/p)}^{\#} \ra \frakY,$ unique up to a canonical isomorphism, satisfying the following conditions:
\begin{enumerate}
\item The morphism $\underline{\left (\frakY_{(T/p)}^{\#}\right )_1}\ra \left (\frakY_{(T/p)}^{\#}\right )_1 \ra \frakY_1$ factors uniquely through $T \ra \frakY_1$ and the resulting morphism $\underline{\left (\frakY_{(T/p)}^{\#}\right )_1} \ra T$ is affine.
\item If $\frakZ$ is a logarithmic $p$-adic formal scheme flat over $\frakS$ and $f:\frakZ \ra \frakY$ is an $\frakS$-morphism such that $\underline{\frakZ_1} \ra \frakY_1$ factors through $T\ra \frakY_1$ then there exists a unique $\frakS$-morphism $f':\frakZ \ra \frakY_{(T/p)}^{\#}$ such that $f=g\circ f'.$ In addition, if $T \ra \frakY$ and $f$ are closed immersions, then so is $f'.$
\end{enumerate}
\end{proposition}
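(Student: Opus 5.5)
We construct $\frakY_{(T/p)}^{\#}$ as a dilatation in the style of Oyama, carried out locally and then glued; since it will be strict over $\frakY$, the log structure is simply pulled back. The strategy is to translate conditions 1 and 2 into a statement about ideals and $p$-torsion-free rings, and then verify the universal property by hand.

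\textbf{Local construction.} We work first on an affine open $\frakY=\op{Spf}A$ with $A$ $p$-adically complete and flat over $W$. After shrinking, the strict immersion $T\ra \frakY_1$ becomes a closed immersion. Let $J\subset A$ be the inverse image of the ideal of $T$ in $A/p$, so $p\in J$. Let $J^{(p)}\subset A$ be the ideal generated by $p$ and the elements $a^p$ with $a\in J$; since the Frobenius is additive modulo $p$, it is enough to take $a$ ranging over a set of generators of $J$.

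Define $B$ as the $p$-adic completion of the image of $A[x/p : x\in J^{(p)}]$ in $A[1/p]$. Equivalently, $B$ is the quotient of $A[t_x]$ by the relations $pt_x=x$, taken modulo $p$-torsion and then completed. Because $A$ is noetherian, $J$ is finitely generated, so only finitely many $t_x$ are needed. The ring $B$ is flat over $W$, and its $p$-adic completion stays $p$-torsion free since we are in the noetherian setting. Set $\frakY^{\#}_{(T/p)}=\op{Spf}B$ with the log structure pulled back from $\frakY$.

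\textbf{Condition 1.} For $a\in J$ we have $a^p=p\,t_{a^p}$ in $B$, so $a^p$ vanishes in $B/p$. By the definition in \ref{era3logimagedef}, this means $a$ maps to zero in $\Ox_{\underline{(\frakY^\#)_1}}$. Hence $\underline{(\frakY^\#)_1}\ra \frakY_1$ factors through $T$ as schemes. On log structures the factorization is automatic, since $T\ra\frakY_1$ and $g$ are strict and $\underline{(\frakY^\#)_1}\ra (\frakY^\#)_1$ is strict by \ref{era4prop15}. The factorization is unique because $T\ra\frakY_1$ is a monomorphism, and the resulting map is affine because everything here is affine.

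\textbf{Condition 2.} Suppose $\frakZ=\op{Spf}C$ is flat over $W$ and the map $\underline{\frakZ_1}\ra \frakY_1$ factors through $T$. Then for every $a\in J$ the image of $a$ in $C$ satisfies $a^p\in pC$. Since $C$ is $p$-torsion free, $a^p/p$ is a uniquely determined element of $C$. This gives a unique extension $A[x/p]\ra C$, and by completeness a unique extension $B\ra C$.

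For the closed immersion claim, suppose $A\ra C$ is surjective and $T\ra\frakY$ is a closed immersion. Then $B\ra C$ is surjective modulo $p$: each $c\in C$ lifts to $A$, and hence to $B$ via $A\ra B$. Since $B$ and $C$ are $p$-adically complete and $B$ is noetherian, surjectivity modulo $p$ gives surjectivity.

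The log part of the universal property follows from strictness of $g$, since a morphism into a strict $\frakY$-object is determined by the underlying formal scheme morphism. The morphism $f'$ then becomes a morphism of log formal schemes with log structure pulled back from $\frakY$.

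\textbf{Gluing.} Uniqueness in the universal property shows that the local constructions are compatible on overlaps: localizing $A$ commutes with forming $B$, because dilatation commutes with flat localization. The local pieces therefore glue, and uniqueness up to canonical isomorphism is the usual Yoneda argument.

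\textbf{Main obstacle.} The step I expect to be delicate is checking that the local construction commutes with étale or open localization when $T\ra\frakY_1$ is merely an immersion and not a closed immersion. This requires showing that $B$ formed for $A$ and then restricted agrees with $B$ formed for a localization $A_f$ of $A$. I would reduce this to the flatness of $A\ra \widehat{A_f}$ and the compatibility of $p$-torsion quotients with flat base change.
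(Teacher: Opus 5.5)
Your proposal is correct and follows essentially the paper's route: the paper also defers to the construction of \cite{SF1} 10.2, taking $\frakY^{\#}_{(T/p)}$ to be the dilatation with respect to $p$ of the ideal generated by $p$ and the $p$th powers of local sections of the ideal of $T$ (when $T\ra\frakY_1$ is closed), with the log structure pulled back from $\frakY$, exactly as you do. Two small points: for the log part of condition 1 you only need strictness of $T\ra\frakY_1$, so there is no need to invoke \ref{era4prop15} (which assumes fs, not part of the statement); and compatibility with restriction to opens follows directly from the universal property, so your ``main obstacle'' is not really an obstacle.
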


\begin{proof}
The proof is similar to (\cite{SF1} 10.2). The only difference is that, if $T \ra \frakY_1$ is a closed immersion, we take $\frakY_{(T/p)}^{\#}$ to be the dilatation of the ideal $\calI^{\#}$ locally generated by $p$ and the $p$th powers of the sections of the ideal $\calI$ of $T \ra \frakY.$
\end{proof}

\begin{proposition}\label{propdiletale}
Let $\frakS=\op{Spf}W$ \eqref{Wflat} equipped with the trivial logarithmic structure, $\frakX$ and $\frakY$ two logarithmic $p$-adic formal schemes flat and locally of finite type over $\frakS$ and $f:\frakX \ra \frakY$ a log étale $\frakS$-morphism. Suppose that there exists two strict $\frakS_1$-immersions $i:T \ra \frakX_1$ and $j:T \ra \frakY_1$ such that $f_1\circ i=j.$ Then $f$ induces a canonical isomorphism of logarithmic formal schemes
$$\frakX_{(T/p)}^{\#} \xrightarrow{\sim} \frakY_{(T/p)}^{\#}.$$
\end{proposition}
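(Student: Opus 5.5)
The plan is to build maps in both directions using the universal property of Proposition \ref{erafdil2}, and to show they are mutually inverse by the uniqueness part of that property. Write $g_{\frakX}:\frakX_{(T/p)}^{\#}\ra \frakX$ and $g_{\frakY}:\frakY_{(T/p)}^{\#}\ra \frakY$ for the structural strict morphisms.

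\emph{First map.} Consider $f\circ g_{\frakX}:\frakX_{(T/p)}^{\#}\ra \frakY$. By \ref{erafdil2}(1), the composition $\underline{(\frakX_{(T/p)}^{\#})_1}\ra \frakX_1$ factors through $i$. Since $f_1\circ i=j$, the composition $\underline{(\frakX_{(T/p)}^{\#})_1}\ra \frakY_1$ factors through $j$. The source is flat over $\frakS$, so \ref{erafdil2}(2) gives a unique $\frakS$-morphism $\phi:\frakX_{(T/p)}^{\#}\ra \frakY_{(T/p)}^{\#}$ with $g_{\frakY}\circ\phi=f\circ g_{\frakX}$.

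\emph{Inverse map.} Here I use that $f$ is log étale. Put $\frakZ=\frakY_{(T/p)}^{\#}$, which is flat over $\frakS$, and write $Z=\frakZ_1$. I will construct $\frakZ\ra\frakX$ lifting $g_{\frakY}$ by induction on $n$, using the thickenings $\frakZ_n\hookrightarrow\frakZ_{n+1}$. These are strict nilpotent closed immersions, since $p$ is nilpotent modulo $p^{n+1}$.
\begin{enumerate}
\item Base step. I need an $\frakS$-morphism $Z\ra\frakX_1$ over $\frakY_1$. The morphism $Z\ra\frakY_1$ restricts on the strict nilimmersion $\underline{Z}\hookrightarrow Z$ (Proposition \ref{era4prop15}(2)) to $\underline{Z}\ra T\xrightarrow{j}\frakY_1$. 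So $\underline{Z}\ra T\xrightarrow{i}\frakX_1$ is a lift over $\frakY_1$ on $\underline{Z}$. Since $f_1$ is log étale and $\underline{Z}\ra Z$ is a strict nilpotent immersion (its ideal is killed by $p$-th powers, and is nilpotent locally in the finite type situation), this extends uniquely to $Z\ra\frakX_1$ over $\frakY_1$.
\item Inductive step. Given a lift on $\frakZ_n$, log étaleness of $f_{n+1}$ extends it uniquely to $\frakZ_{n+1}$ over $\frakY_{n+1}$.
\end{enumerate}
Passing to the limit gives $h:\frakZ\ra\frakX$ with $f\circ h=g_{\frakY}$. By construction $\underline{Z}\ra\frakX_1$ factors through $i$, so \ref{erafdil2}(2) yields $\chi:\frakY_{(T/p)}^{\#}\ra\frakX_{(T/p)}^{\#}$ with $g_{\frakX}\circ\chi=h$.

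\emph{Mutual inverses.} We have $g_{\frakY}\circ\phi\circ\chi=f\circ h=g_{\frakY}$, so uniqueness in \ref{erafdil2}(2) gives $\phi\circ\chi=\mathrm{id}$. For the other composite, the morphisms $g_{\frakX}$ and $g_{\frakX}\circ\chi\circ\phi$ from $\frakX_{(T/p)}^{\#}$ to $\frakX$ have the same composite with $f$, namely $f\circ g_{\frakX}$. Both also agree with $\underline{(\cdot)_1}\ra T\xrightarrow{i}\frakX_1$ on $\underline{(\frakX_{(T/p)}^{\#})_1}$, since $i$ is a monomorphism and both factor through $T$ compatibly with $j$. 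The uniqueness in the log étale lifting argument above then forces $g_{\frakX}\circ\chi\circ\phi=g_{\frakX}$. Uniqueness in \ref{erafdil2}(2) then gives $\chi\circ\phi=\mathrm{id}$. Canonicity follows from the uniqueness of all the maps involved.

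\emph{Main obstacle.} The delicate point is the base step: checking that $\underline{Z}\ra Z$ is a nilpotent, not merely nil, strict closed immersion, so that the infinitesimal lifting property of log étale morphisms applies. The locally finite type hypothesis gives local noetherianity, so the nilideal is locally nilpotent. That suffices, because the lifts are unique and therefore glue; one works étale locally on an affine cover. A second point to check is that $i$ and $j$ are strict. This makes $T$ carry the same log structure from $\frakX_1$ and from $\frakY_1$, so the factorizations through $T$ are compatible on log structures.
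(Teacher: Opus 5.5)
Your proposal is correct and follows the route the paper indicates by ``similar to (\cite{SF1} 10.4)'': the map comes from the universal property \ref{erafdil2}, and the inverse comes from log étale lifting of $g_{\frakY}$ to $\frakX$ followed by the universal property again, with the extra lifting step along the strict nilpotent immersion $\underline{Z}\hookrightarrow Z$ that you supply being exactly what the $\#$-dilatation requires beyond the $p^n$-dilatation case. Two small precisions: the strictness of $\underline{Z}\hookrightarrow Z$ via \ref{era4prop15} uses that $\frakY$, hence $Z$ (as $g_{\frakY}$ is strict), is fs, which is the paper's standing setting; and in your last step it is the monomorphism $j$, not $i$, that shows the two factorizations $\underline{(\frakX_{(T/p)}^{\#})_1}\to T$ coincide.
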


\begin{proof}
The proof is similar to (\cite{SF1} 10.4)
\end{proof}

\begin{proposition}\label{propdilflat}
Let $\frakS=\op{Spf}W$ \eqref{Wflat} equipped with the trivial logarithmic structure, $\frakX$ and $\frakY$ two logarithmic $p$-adic formal schemes flat and locally of finite type over $\frakS,$ $f:\frakX \ra \frakY$ an $\frakS$-morphism which is flat on the underlying formal schemes, $T \ra \frakY_1$ a strict immersion and $S=\frakX\times_{\frakY}T.$ Then $f$ induces a canonical isomorphism
$$\frakY_{(T/p)}^{\#}\times_{\frakY}\frakX \xrightarrow{\sim} \frakX_{(S/p)}^{\#}.$$
\end{proposition}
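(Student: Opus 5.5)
The plan is to construct the comparison map from the universal property in \ref{erafdil2}, then check that it is an isomorphism locally, where both sides are explicit dilatations.

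\textbf{Constructing the map.} Set $\frakZ=\frakY_{(T/p)}^{\#}\times_{\frakY}\frakX$. Because $f$ is flat on the underlying formal schemes and $\frakY_{(T/p)}^{\#}$ is flat over $\frakS$, the fibre product $\frakZ$ is flat over $\frakS$. Multiplication by $p$ stays injective after a flat base change, so this can be checked on affines modulo $p^n$. The projection $\frakZ_1\ra \frakY_{(T/p)}^{\#}$ induces a map on Frobenius images $\underline{\frakZ_1}\ra \underline{(\frakY_{(T/p)}^{\#})_1}$ by \ref{era3functoriality}. Composing with the factorization of \ref{erafdil2}(1), the map $\underline{\frakZ_1}\ra\frakY_1$ factors through $T$. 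Together with the map $\underline{\frakZ_1}\ra \frakZ_1\ra\frakX_1$, this makes $\underline{\frakZ_1}\ra \frakX_1$ factor through $S=\frakX_1\times_{\frakY_1}T$. Since the projection $\frakZ\ra\frakX$ is strict, \ref{erafdil2}(2) then gives a canonical $\frakS$-morphism $\frakZ\ra \frakX_{(S/p)}^{\#}$ over $\frakX$.

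\textbf{Inverse map.} Let $h:\frakX_{(S/p)}^{\#}\ra\frakX$ be the structure map. The composite $f\circ h$ has the property that $\underline{(\frakX_{(S/p)}^{\#})_1}\ra\frakY_1$ factors through $S\ra T$. By \ref{erafdil2}(2) there is a unique morphism $\frakX_{(S/p)}^{\#}\ra\frakY_{(T/p)}^{\#}$ compatible with $f\circ h$, and hence a map $\frakX_{(S/p)}^{\#}\ra\frakZ$. Both composites are morphisms over $\frakX$, respectively over $\frakY_{(T/p)}^{\#}\times_\frakY\frakX$, between objects of the relevant universal problems. The uniqueness parts of \ref{erafdil2} show the composite on $\frakX_{(S/p)}^{\#}$ is the identity.

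\textbf{Main obstacle.} The composite on $\frakZ$ is harder, because $\frakZ$ is not a priori known to satisfy a universal property. I would avoid this by working locally. Reduce to the affine case with $T\ra\frakY_1$ a closed immersion, as in the proof of \ref{erafdil2}, whose construction is Zariski local. Then $\frakY_{(T/p)}^{\#}$ is the formal dilatation of $\calI^{\#}=(p,a^p : a\in\calI)$ with respect to $p$. It is the $p$-adic completion of $\Ox_\frakY[\calI^{\#}/p]$ modulo $p$-torsion.

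The ideal of $S$ in $\frakX$ is $\calI\Ox_\frakX+(p)$. Its ``$\#$-ideal'' is generated by $p$ and the $p$-th powers of its elements. Because $(b+c)^p\equiv b^p+c^p$ modulo $p$, and $(xa)^p=x^pa^p$, this ideal equals $\calI^{\#}\Ox_\frakX$. So it suffices to show that dilatation commutes with flat base change. The two ingredients are:
\begin{enumerate}
\item the blow-up affine chart $B[J/p]$ commutes with flat base change $B\ra C$;
\item removing $p$-torsion commutes with flat base change, and so does $p$-adic completion for the Noetherian rings at hand.
\end{enumerate}
This is exactly the argument of (\cite{SF1} 10.x) for flat base change of dilatations, and I would cite it. The log structures agree because both sides are strict over $\frakX$. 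This identification is compatible with the map constructed above, so that map is an isomorphism.
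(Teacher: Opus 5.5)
Your proposal is correct and follows essentially the paper's route: the paper only invokes the flat-base-change argument for dilatations of (\cite{SF1} 10.5), and you correctly isolate the one new input, $(\calI\Ox_{\frakX})^{\#}=\calI^{\#}\Ox_{\frakX}$. The local step is in fact unnecessary, because the composite $\frakZ\ra\frakX^{\#}_{(S/p)}\ra\frakZ$ is already the identity: its $\frakX$-component is the projection, and its $\frakY^{\#}_{(T/p)}$-component and the projection $\frakZ\ra\frakY^{\#}_{(T/p)}$ both lift $f\circ\mathrm{pr}_{\frakX}$, so they coincide by the uniqueness in \ref{erafdil2}(2) — which applies since you have shown that $\frakZ$ is flat over $\frakS$ and that $\underline{\frakZ_1}\ra\frakY_1$ factors through $T$.
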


\begin{proof}
The proof is similar to (\cite{SF1} 10.5).
\end{proof}

\begin{parag}\label{parag86}
In the rest of this article, we let $f:(\frakX,Q)\ra (\frakS,P)$ be a log smooth morphism of framed fs logarithmic $p$-adic formal schemes (\cite{SF1} 8.3), log flat and locally of finite type over $\op{Spf}W.$ Note that this implies, by \ref{Wflat}, that $\frakX$ and $\frakS$ are flat over $\op{Spf}W$ \eqref{Wflat}. 
We now recall the formal groupoid $R_{\frakX,n}$ defined in (\cite{SF1} 11.2) and introduce a new groupoid that we denote by $Q_{\frakX}.$
Let $r$ be a nonnegative integer. By (\cite{SF1} 8.12), the diagonal immersion $\frakX \ra \frakX^{r+1}_{\frakS}$ factors as
$$\begin{tikzcd}
 & \frakX_{\frakS,\left [Q\right ]}^{r+1}\ar{d} \\
\frakX \ar{r} \ar{ur}{\Delta(r)}&  \frakX^{r+1}_{\frakS},
\end{tikzcd}$$
where $\frakX^{r+1}_{\frakS}$ denotes the fiber product, in the category of fs logarithmic $p$-adic formal schemes, of $\frakX$ over $\frakS$ with itself $r+1$ times, $\Delta(r):\frakX\ra \frakX_{\frakS,[Q]}^{r+1}$ is a strict immersion and $\frakX_{\frakS,[Q]}^{r+1}\ra \frakX_{\frakS}^{r+1}$ is log étale and affine. Set $\frakY(r)=\frakX_{\frakS,[Q]}^{r+1},$ let $\frakX \ra \frakU(r) \ra \frakY(r)$ be a factorization of $\Delta(r)$ into a closed strict immersion followed by an open one and denote by $\calI_{\frakX/\frakS}(r)$ the ideal of $\frakX \ra \frakU(r).$ The canonical morphism $\frakY(r) \ra \frakS$ is log smooth hence locally of finite type, so $\frakY(r)$ is locally of finite type over $\op{Spf}W.$ The projections $\frakY(r) \ra \frakX$ are strict (\cite{SF1} 8.9) and log smooth hence smooth. It follows that $\frakY(r)$ is flat over $\op{Spf}W.$ 

For any integer $n\ge 1,$ denote by $R_{\frakX,n}(r)$ (resp. $Q_{\frakX}(r)$) the dilatation defined in (\cite{SF1} 8.2) (resp. $\frakY(r)_{(X/p)}^{\#}$ defined in \ref{erafdil2}). We also denote, for a positive integer $k,$ by $P_{\frakX/\frakS}(r)_k$ the PD-envelope of $\frakX_k \ra \frakY(r)_k.$ Just as in (\cite{SF1} 11.2), we set $P_{\frakX/\frakS}(r)=\lim\limits_{\longrightarrow}P_{\frakX/\frakS}(r)_k.$ By (\cite{SF1} 11.2) (resp. the universal property of $Q_{\frakX}$), there exists a unique strict closed immersion of logarithmic formal schemes $\frakX\ra R_{\frakX,n}(r)$ (resp. $\frakX\ra Q_{\frakX}(r)$) making the following diagrams commutative
$$\begin{tikzcd}
 & R_{\frakX,n}(r)\ar{d} \\
 & \frakY(r)\ar{d} \\
\frakX\ar{uur}\ar{ur}\ar{r} & \frakX^{r+1}_{\frakS}
\end{tikzcd} 
\begin{tikzcd}
 & Q_{\frakX}(r)\ar{d} \\
 & \frakY(r)\ar{d} \\
\frakX\ar{uur}\ar{ur}\ar{r} & \frakX^{r+1}_{\frakS}
\end{tikzcd}
$$
Following (\cite{SF1} 11.2), for $r=1,$ we will drop $(r)$ from the notation we just introduced.
\end{parag}

\begin{proposition}\label{erafprop13}
Let $Q_{1}=\left (Q_{\frakX}\right )_1.$ Denote by $\underline{Q_1}$ the logarithmic scheme-theoretic image of the Frobenius of $Q_1$ \eqref{era3logimagedef}. The composition $\underline{Q_1} \ra Q_1 \ra Y,$ of the canonical morphism $Q_1 \ra Y$ and the canonical closed immersion $\underline{Q_1} \ra Q_1$ factors uniquely through the strict diagonal immersion $X\ra Y$. In addition, $\underline{Q_1} \ra X$ is affine.
\end{proposition}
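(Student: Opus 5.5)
This is an immediate consequence of property (1) of the dilatation in Proposition \ref{erafdil2}, applied with $\frakY=\frakY(1)$ and $T=X$. By \ref{parag86}, $\frakY$ is flat and locally of finite type over $\op{Spf}W$. The diagonal $X\ra Y=\frakY_1$ is the reduction modulo $p$ of the strict immersion $\Delta:\frakX\ra \frakY$, hence a strict immersion. By definition $Q_{\frakX}=\frakY_{(X/p)}^{\#}$, so property (1) of \ref{erafdil2} says exactly that $\underline{Q_1}\ra Q_1\ra Y$ factors uniquely through $X\ra Y$, with affine resulting morphism.

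Since the proof of \ref{erafdil2} was only sketched ("similar to \cite{SF1} 10.2"), I would also check the factorization directly in the local situation. The argument is local on $\frakY$, so we may work on an open $\frakU\subset\frakY$ in which $X$ is closed with ideal $\calI$. Then $Q_{\frakX}$ is the dilatation of $\calI^{\#}=(p,a^p;\ a\in\calI)$ with respect to $p$. On its coordinate ring every $a^p$ becomes divisible by $p$, that is, $a^p=p\,b$. Reducing modulo $p$, the image of $a$ in $\Ox_{Q_1}$ has vanishing $p$-th power, so it lies in the ideal defining $\underline{Q_1}$ (\ref{era3logimagedef}).

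Consequently, at the level of underlying schemes, $\underline{Q_1}\ra \frakU_1$ kills the ideal of $X$ and factors uniquely through the closed immersion $X\ra\frakU_1$. Uniqueness holds because closed immersions are monomorphisms. These local factorizations glue by this uniqueness.

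For the logarithmic structures I use strictness. The dilatation morphism $Q_{\frakX}\ra\frakY$ is strict by \ref{erafdil2}, hence so is its reduction $Q_1\ra Y$. By \ref{era4prop15}(2), $\underline{Q_1}\ra Q_1$ is strict when $Q_1$ is fs; in general I would argue directly with the pullback log structure. The immersion $X\ra Y$ is strict too. So the log structure of $\underline{Q_1}$ is the pullback of $\calM_Y$, which is also the pullback of $\calM_X$ along the factored map. This gives a unique morphism of log schemes $\underline{Q_1}\ra X$.

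\emph{Main obstacle.} Affineness is where I expect the real work. The morphism $Q_{\frakX}\ra\frakY$ is affine, since dilatations are affine, so $\underline{Q_1}\ra Y$ is affine. Now $X\ra Y$ is only an immersion, closed in $\frakU$, but $\underline{Q_1}$ lands in $\frakU_1$ set-theoretically. Hence $\underline{Q_1}\ra X$ is the base-changed closed factor of an affine morphism to $\frakU_1$, which is affine because $X\ra\frakU_1$ is separated (\cite{EGA1}). The delicate step is verifying that the whole of $\underline{Q_1}$ maps into $\frakU_1$ rather than only into $Y$. This holds because the dilatation is computed over $\frakU$: its underlying space lies over the support of $\calI^{\#}$ modulo $p$, which is $X\subset\frakU$.
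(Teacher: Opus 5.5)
Your first paragraph is exactly the paper's proof: since $Q_{\frakX}=\frakY^{\#}_{(X/p)}$ by definition, the statement is property (1) of \ref{erafdil2} applied to the strict immersion $X\ra Y$.

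Your supplementary local check is fine as a sanity check, with two small corrections. First, the dilatation is affine over $\frakU$ rather than over $\frakY$, and affineness over $\frakU$ is all your final step uses. Second, you need no strictness of $\underline{Q_1}\ra Q_1$, and your non-fs caveat does not arise, because $Q_1$ is strict over the fs scheme $Y$; in any case, a log morphism to $Y$ whose underlying map factors through the strict immersion $X\ra Y$ automatically factors uniquely through $X$.
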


\begin{proof}
This follows from the definition of $Q_{\frakX}$ and \ref{erafdil2}.
\end{proof}

\begin{proposition}[\cite{SF1} 11.6]
For all positive integers $n,$ $R_{\frakX,n}$ has a natural formal groupoid structure.
\end{proposition}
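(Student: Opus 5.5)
The plan is to construct the groupoid structure maps on $R_{\frakX,n}=R_{\frakX,n}(1)$ from the corresponding structure maps of the groupoid $\frakY(\bullet)=\frakX^{\bullet+1}_{\frakS,[Q]}$, using the universal property of the dilatation (\cite{SF1} 8.2). First, I would record that $\frakY(\bullet)$ itself is a formal groupoid over $\frakX$. The source and target maps are the two strict projections $p_1,p_2:\frakY\ra\frakX$, the identity is the strict diagonal $\frakX\ra\frakY$, and the inverse is the swap of factors. For the composition, I would use the canonical isomorphism $\frakY\times_{p_2,\frakX,p_1}\frakY\simeq\frakY(2)$ followed by the projection $p_{13}:\frakY(2)\ra\frakY$. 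These maps exist because $\frakX^{r+1}_{\frakS,[Q]}$ is functorial in the framed data (\cite{SF1} 8.9--8.12). The groupoid axioms then reduce to the corresponding identities for fibre products, which hold after composing with the log \'etale affine map to $\frakX^{r+1}_{\frakS}$.

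Next, I would transport each structure map to the dilatations. Recall that $R_{\frakX,n}(r)$ is the dilatation of $\calI(r)+(p)$ with respect to $p^n$, and that it is characterized by the following property: an $\frakS$-morphism $\frakZ\ra\frakY(r)$ with $\frakZ$ flat over $\op{Spf}W$ factors uniquely through $R_{\frakX,n}(r)$ if and only if the pullback of $\calI(r)+(p)$ lies in $p^n\Ox_{\frakZ}$. Each structure map of $\frakY(\bullet)$ is compatible with the diagonal embeddings of $\frakX$. Hence it carries the ideal $\calI(s)+(p)$ into $\calI(r)+(p)$, and the factorization follows. Concretely:
\begin{enumerate}
\item the projections and the swap induce $R_{\frakX,n}\ra\frakX$ and the inverse $R_{\frakX,n}\ra R_{\frakX,n}$;
\item the strict closed immersion $\frakX\ra R_{\frakX,n}$ from \ref{parag86} gives the identity;
\item for composition, I would show that the natural map $R_{\frakX,n}\times_{\frakX}R_{\frakX,n}\ra\frakY(2)$ factors through $R_{\frakX,n}(2)$, and then compose with the map $R_{\frakX,n}(2)\ra R_{\frakX,n}$ induced by $p_{13}$.
\end{enumerate}

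The main obstacle is step (3). The fibre product $R_{\frakX,n}\times_{\frakX}R_{\frakX,n}$ must be flat over $\op{Spf}W$ for the universal property to apply, and the ideal condition has to be checked there. To handle flatness, I would use the strictness and flatness of the projections $R_{\frakX,n}\ra\frakX$. Locally, the local coordinates $\widetilde\eta_i$ from \ref{parag77} make $R_{\frakX,n}$ the $p$-adic completion of a polynomial algebra over $\Ox_{\frakX}$ in the variables $\widetilde\eta_i/p^n$. In the same way I would identify $R_{\frakX,n}(2)$ with the completed polynomial algebra in two sets of such variables, which shows that $R_{\frakX,n}\times_{\frakX}R_{\frakX,n}\simeq R_{\frakX,n}(2)$; this is the analogue of \ref{propdilflat}.

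For the ideal condition, the key computation concerns $p_{13}^*(\widetilde\eta_i)$. The log relation $p_3^\flat\widetilde m_i=p_1^\flat\widetilde m_i+\lambda(1+\eta_{13})$ gives
$$1+\eta_{13}=(1+\eta_{12})(1+\eta_{23}),$$
so $\eta_{13}=\eta_{12}+\eta_{23}+\eta_{12}\eta_{23}$, which lies in $p^n$ when the two factors do. With these identifications, the groupoid axioms follow from uniqueness in the universal property, since both sides of each axiom are maps into $R_{\frakX,n}$ lying over the same map into $\frakY(r)$.
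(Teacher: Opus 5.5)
Your proposal is correct and is essentially the construction the paper relies on. The paper cites it from \cite{SF1} 11.6 and spells out the same pattern for the analogous groupoid $Q_{\frakX}$: composition $R_{\frakX,n}\times_{\frakX}R_{\frakX,n}\simeq R_{\frakX,n}(2)\ra R_{\frakX,n}$ induced by $p_{13}$, unit $\frakX\ra R_{\frakX,n}$, and inverse induced by the swap of factors, all obtained from the universal property of the dilatation.

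One slip to fix: the factorization criterion must be $\calI(r)\Ox_{\frakZ}\subseteq p^n\Ox_{\frakZ}$ (i.e.\ one dilates $\calI(r)+(p^n)$), not $(\calI(r)+(p))\Ox_{\frakZ}\subseteq p^n\Ox_{\frakZ}$. For $n\ge 2$ the latter forces $p\in p^n\Ox_{\frakZ}$, hence $\Ox_{\frakZ}=0$ for $p$-torsion-free $p$-adically complete $\frakZ$. Your computation with $\eta_{13}$ already uses the correct condition.
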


\begin{proposition}
Consider the morphism
$\alpha :Q_{\frakX}\times_{\frakX}Q_{\frakX} \xrightarrow{\sim} Q_{\frakX}(2) \ra Q_{\frakX}$
induced by the projection $\frakY(2)=\frakX \times_{\frakS,[Q]}^{\op{log}}\frakX \times_{\frakS,[Q]}^{\op{log}} \frakX\ra \frakX \times_{\frakS,[Q]}^{\op{log}} \frakX=\frakY$ on the first and third factors,
the morphism $\iota:\frakX\ra Q_{\frakX}$
and $\eta :Q_{\frakX} \ra Q_{\frakX}$
induced by the morphism $\frakY\ra \frakY$ that exchanges factors.
Then the morphisms $\alpha,\ \iota$ and $\eta$ define a $p$-adic $\frakX$-groupoid structure on $Q_{\frakX}$ (\cite{DXU19} 4.7).
\end{proposition}

\begin{proof}
The fact that the canonical morphism $Q_{\frakX} \ra \frakY$ factors, as a map of topological spaces, through $\Delta:\frakX \ra \frakY,$ follows from \ref{erafprop13}. We omit the proof of the commutativity of the usual diagrams defining the groupoid structure.
\end{proof}

\begin{parag}\label{paragomega}
Let $\frakG$ be a $p$-adic $\frakX$-groupoid (\cite{DXU19} 4.7) and $\omega:\frakG_{\text{zar}} \ra \frakX_{\text{zar}}$ the morphism of topoi induced by the factorization of the morphism of underlying topological spaces of $\frakG \ra \frakX^2_{\frakS}$ through the diagonal. Let $q_1,q_2:\frakG \ra \frakX$ be the projections. Then
$$\omega_*\Ox_{\frakG}=q_{1*}\Ox_{\frakG}=q_{2*}\Ox_{\frakG}.$$
In this way, we see $\Ox_{\frakG}$ as a bialgebra of $\frakX_{\text{zar}}.$ The groupoid structure on $\frakG$ induces a formal Hopf algebra structure on $\Ox_{\frakG}.$ For any positive integers $r$ and $n,$ we denote by $\calR_{\frakX,n}(r)$ and $\calQ_{\frakX}(r)$ the formal Hopf algebras $\Ox_{R_{\frakX,n}(r)}$ and $\Ox_{Q_{\frakX}(r)}$ respectively. 
\end{parag}

\begin{proposition}\label{locdescRQ}
Consider $Q_{\frakX}$ as a logarithmic formal scheme over $\frakX$ via the first projection $q_1: Q_{\frakX} \ra \frakX.$ Suppose that there exists a chart $\alpha:P \ra M$ of $f:\frakX \ra \frakS,$ fitting into a commutative diagram
$$
\begin{tikzcd}
\frakX \ar{r} \ar{d} & \frakS \ar{d} \\
B \langle M \rangle \ar{r} \ar{d} & B\langle P \rangle \ar{d} \\
\left [Q \right ] \ar{r} & \left [P \right ],
\end{tikzcd}
$$ 
satisfying the conditions :
\begin{enumerate}
\item $\alpha^{gp}$ is injective and the torsion subgroup of $\op{coker}\alpha^{gp}$ is finite and of order coprime with $p.$
\item The morphism $g:\frakX \ra \frakS \times_{B\langle P\rangle}B\langle M\rangle,$ induced by $f$ and the chart $\frakX \ra B\langle M\rangle,$ is étale and strict.
\end{enumerate}
Suppose that there exist $m_1,\hdots,m_d \in \Gamma\left (\frakX,\calM_{\frakX}\right )$ lifting local coordinates of $X\ra S.$ Let $\eta_1,\hdots,\eta_d \in \Gamma \left (\frakX,\Delta^{-1}\calI_{\frakX/\frakS} \right )$ be as defined in \eqref{parag77} and consider them as sections of $\Ox_{Q_{\frakX,n}}$. Then, there exists an isomorphism of $\Ox_{\frakX}$-algebras:
\begin{equation}\label{isoQ1}
\Ox_{\frakX}\{x_1,\hdots,x_d,y_1,\hdots,y_d\}/(y_1^p-px_1,\hdots,y_d^p-px_d) \xrightarrow{\sim} q_{1*}\Ox_{Q_{\frakX}},
\end{equation}
sending $x_i$ to $\frac{\eta_i^p}{p}$ and $y_i$ to $\eta_i$.
\end{proposition}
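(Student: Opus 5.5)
The argument mirrors the analogous computation for $R_{\frakX,n}$ in (\cite{SF1} 11), with the dilatation ideal changed. Everything is local on $\frakX$, and the chart hypotheses are exactly those used in (\cite{SF1} 8.12). The plan has three steps: describe $\frakU=\frakU(1)$ near the diagonal as a formal power series type thickening in the variables $\eta_i$, compute the dilatation $Q_{\frakX}$ of the ideal $\calI^{\#}=(p,\eta_1^p,\dots,\eta_d^p)$ with respect to $p$ explicitly, and compare with the source of \eqref{isoQ1}.

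\textbf{Step 1: local structure of $\frakU$.} Under conditions 1 and 2 on the chart, (\cite{SF1} 8.12) shows that the projection $q_1:\frakY\ra\frakX$ is strict and smooth. It also shows that the sections $\eta_1,\dots,\eta_d$ of \eqref{parag77} generate $\Delta^{-1}\calI$ and induce the basis $\op{dlog} m_i$ of $\calI/\calI^2\simeq\omega^1_{\frakX/\frakS}$ \eqref{isoomega1}. Shrinking $\frakU$, I would deduce that $q_1$ restricted to $\frakU$ is étale over the affine space $\widehat{\mathbb{A}}^d_{\frakX}$ via $(\eta_1,\dots,\eta_d)$, with the diagonal mapping to the zero section. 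Since $\frakX\ra\frakU$ is a closed immersion with ideal $(\eta_i)$, any construction depending only on the ideal $\calI+(p)$ can be computed as if $\frakU=\widehat{\mathbb{A}}^d_{\frakX}$. Concretely I would invoke \ref{propdiletale}: the étale map $\frakU\ra\widehat{\mathbb{A}}^d_{\frakX}$ identifies the two dilatations, because the immersions of $X$ correspond. This reduces the problem to $\frakY=\op{Spf}\Ox_{\frakX}\{t_1,\dots,t_d\}$ with $\eta_i=t_i$. Strictness of the log structures lets me ignore them here, so only the underlying formal schemes matter.

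\textbf{Step 2: the dilatation.} For $A=\Gamma(\frakX,\Ox_{\frakX})$ flat over $W$, the dilatation of $(p,t_1^p,\dots,t_d^p)$ with respect to $p$ is the $p$-adic completion of the subalgebra $A[t][t_i^p/p]\subset A[t][1/p]$. This subalgebra is $p$-torsion free by construction, which matches the flatness required in \ref{erafdil2}. Define the map from $B=A\{x,y\}/(y_i^p-px_i)$ by $x_i\mapsto t_i^p/p$ and $y_i\mapsto t_i$. It is visibly well defined, and it is surjective before completion, hence after completion.

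\textbf{Step 3: injectivity (the main obstacle).} The key claim is that $B$ is $p$-torsion free. As an $A$-module, $A[x,y]/(y_i^p-px_i)$ is free with basis $x^I y^J$, $0\le J_i<p$, since each relation is monic in $y_i$. Its $p$-adic completion is therefore also $p$-torsion free, because $A$ is flat over $W$ and $p$-adically complete. The map $B[1/p]\ra A\{t\}[1/p]$ is injective because it sends that basis to the $A$-linearly independent elements $t^{pI+J}p^{-|I|}$. Combining $p$-torsion freeness of $B$ with injectivity after inverting $p$ gives injectivity of the map itself. One must check that completion commutes with these statements, i.e.\ that the images of the finite-level algebras are $p$-adically separated in the right way. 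I would handle this by working with $B$ directly: verify that $B$ satisfies the universal property of \ref{erafdil2}. In $B/p$ one has $y_i^p=0$, so $\underline{B/p}$ kills the $y_i$, and the reduced image factors through $X$. Uniqueness of the lifts then follows from flatness. This yields $B\simeq q_{1*}\Ox_{Q_{\frakX}}$ with $x_i\mapsto\eta_i^p/p$ and $y_i\mapsto\eta_i$.
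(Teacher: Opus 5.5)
Your argument is correct, but it follows a different route from the paper's.

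\textbf{The paper's route.} The paper uses \ref{propdiletale} and \ref{propdilflat} to reduce to the chart model $\frakX=\frakS\times_{B\langle P\rangle}B\langle M\rangle$, with $\frakY=\op{Spf}D$ and $D=A\widehat{\otimes}_{\Z_p\langle P\rangle}\Z_p\langle N\rangle$. It writes $Q_{\frakX}$ as $\op{Spf}\bigl((D\{x\}/(px_i-a_i^p))_{/p\text{-tor}}\bigr)$ and then proves Lemma \ref{lemQ}.
\begin{enumerate}
\item Injectivity of $y_i\mapsto a_i$ comes from a degree-by-degree induction modulo $p^k$. It uses that the monomials $a^J$ are free in $I_k^{m}/I_k^{m+1}$ (\cite{SF1} 11.9, (11.3.1)).
\item Surjectivity comes from repeatedly splitting $y=\pi\Delta(y)+(y-\pi\Delta(y))$, with $y-\pi\Delta(y)\in I$.
\end{enumerate}

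\textbf{Your route.} You use that $q_1:\frakU\ra\frakX$ is strict and smooth, with a section whose conormal sheaf is free on the $\eta_i$. So $(\eta_i)$ gives a strict étale $\frakX$-morphism from a neighbourhood of the diagonal to $\widehat{\mathbb{A}}^d_{\frakX}$, sending the diagonal to the zero section. Then \ref{propdiletale} transports the dilatation to the affine-space model.

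\textbf{What each buys.} Your approach does not need the chart conditions 1 and 2. Strictness and smoothness of $q_1$, and generation of $\calI$ by the $\eta_i$, are available without them. In the model, the paper's graded induction also becomes unnecessary. The standard presentation $(\Ox_{\frakX}\{t\}\{x\}/(px_i-t_i^p))_{/p\text{-tor}}$ of the dilatation is literally your $B$ with $y_i=t_i$. So the whole content reduces to the $p$-torsion freeness you get from monicity in $y_i$. The paper's approach, in exchange, stays inside the monoid-algebra description of $\frakY$ used for $R_{\frakX,n}$ in \cite{SF1} 11.8, and avoids the étale-coordinate argument.

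\textbf{A sub-step to discard.} In Step 3, there is no map $B[1/p]\ra A\{t\}[1/p]$ once $B$ is $p$-adically completed. For example, $\sum_n p^{\lceil\sqrt n\rceil}x_1^n\in B$ would have to go to a series with unbounded coefficients $p^{\lceil\sqrt n\rceil-n}$. Use one of the following instead.
\begin{enumerate}
\item Run your basis argument before completion. It shows $A[x,y]/(y_i^p-px_i)\ra A[t][t^p/p]$ is an isomorphism; then complete both sides.
\item Use the universal-property check you describe, which is sound. A morphism $\frakZ\ra\widehat{\mathbb{A}}^d_{\frakX}$ with $\frakZ$ flat sends $\underline{Z_1}$ into the zero section iff $t_i^p\in p\Ox_{\frakZ}$. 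Flatness of $\frakZ$ then makes $t_i^p/p$ unique, which gives the unique lift to $\op{Spf}B$.
\end{enumerate}
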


\begin{proof}
The proof is similar to (\cite{SF1} 11.8) so we just outline it. Set $\frakT=\frakS \times_{B\langle P\rangle}B\langle M\rangle.$
Just as in the proof of (\cite{SF1} 11.8), we have
$$\frakY = \left (\frakX \times_{\frakS}^{\op{log}}\frakX \right )\times _{B\langle \left (M\oplus_PM\right )^{sat}\rangle }B\langle N \rangle,$$
$$\frakT \times_{\frakS,[M]}^{\op{log}}\frakT = \left (\frakT \times_{\frakS}^{\op{log}}\frakT \right )\times _{B\langle \left (M\oplus_PM\right )^{sat}\rangle }B\langle N \rangle=\frakS \times_{B\langle P\rangle} B\langle N\rangle,$$
where $N$ is the inverse image of $M$ by
$\left (M\oplus_PM\right )^{gp} \ra M^{gp},\ (x,y)\mapsto x+y.$
Using \ref{propdiletale} and \ref{propdilflat}, we prove that we have isomorphisms
\begin{alignat*}{2}
Q_{\frakX}=\frakY^{\#}_{(X/p)} & \xrightarrow{\sim} \left (\frakX \times_{\frakS,[M]}^{\op{log}}\frakT\right )^{\#}_{(X/p)} \\
  & \xrightarrow{\sim} \left (\frakT \times_{\frakS,[M]}^{\op{log}}\frakT\right )_{(T/p)}^{\#} \times_{\frakT \times_{\frakS,[M]}^{\op{log}}\frakT}\left (\frakX \times_{\frakS,[M]}^{\op{log}}\frakT\right ) \\
& \xrightarrow{\sim} \frakX \times_{\frakT} \left (\frakT \times_{\frakS,[M]}^{\op{log}}\frakT\right )_{(T/p)}^{\#},
\end{alignat*}
where $\left (\frakT \times_{\frakS,[M]}^{\op{log}}\frakT\right )_{(T/p)}^{\#}$ is considered as a logarithmic scheme over $\frakT$ via the first projection.
We can hence reduce to the case $\frakX=\frakT=\frakS \times_{B\langle P\rangle}B\langle M\rangle$ and we can also suppose that $\frakS=\op{Spf}A$ for a $p$-adic algebra $A.$ In this case,
\begin{alignat*}{2}
\frakX &= \frakS \times_{B\langle P\rangle}B\langle M\rangle = \op{Spf} \left ( A \widehat{\otimes}_{\Z_p\langle P \rangle} \Z_p \langle M\rangle \right ) = \op{Spf} C, \\
\frakY &= \frakS \times_{B\langle P \rangle} B\langle N \rangle = \op{Spf} \left ( A\widehat{\otimes}_{\Z_p\langle P\rangle } \Z_p\langle N\rangle \right ) = \op{Spf} D,
\end{alignat*}
where $\widehat{\otimes}$ is the $p$-adically completed tensor product. Let $I$ be the kernel of the exact diagonal $D\ra C$ and $a_1,\hdots,a_d\in I$ the generators corresponding to $\eta_1,\hdots,\eta_d.$ Then, by definition of $Q_{\frakX}$ and (\cite{SF1} 10.1.1), we have
\begin{alignat*}{2}
Q_{\frakX} &= \op{Spf}\left ( \left ( \frac{D\{ x_1,\hdots,x_d\}}{\left (px_1-a_1^p,\hdots,px_d-a_d^p \right )} \right )_{/p\text{-tor}} \right ),
\end{alignat*}
where $p$-tor denotes the $p$-torsion ideal.
The result then follows from the lemma \ref{lemQ} below.
\end{proof}

\begin{lemma}\label{lemQ}
Suppose that $\frakS=\op{Spf}A$ for a $p$-adic algebra $A$ and $\frakX=\frakS \times_{B\langle P\rangle}B\langle M\rangle=\op{Spf} C.$ The morphism $\frakX \ra \frakS$ corresponds then to a continuous morphism $\xi:A \ra C.$ In this case, if $\frakX \ra \frakY$ denotes the exact diagonal immersion, then $\frakY=\frakS \times_{B\langle P \rangle} B\langle N \rangle=\op{Spf} D,$ where $N$ is the inverse image of $M$ by
$$\left (M\oplus_PM\right )^{gp} \ra M^{gp},\ (x,y)\mapsto x+y,$$
and
$$
D=A\widehat{\otimes}_{\Z_p\langle P\rangle } \Z_p\langle N\rangle.
$$
Let $I$ be the ideal of the exact diagonal $D \ra C,$ $I_k$ its reduction modulo $p^k$ for $k\ge 1$ and $a_1,\hdots,a_d\in I$ whose images in $I_1/I_1^2$ are a basis of the $\left ( C/pC \right )$-module $I_1/I_1^2.$ The morphism of $C$-modules
$$\varphi: \frac{C\{x_1,\hdots,x_d,y_1,\hdots,y_d\}}{(px_1-y_1^p,\hdots,px_r-y_r^p)} \ra \left ( \frac{D\{ x_1,\hdots,x_d\}}{\left (px_1-a_1^p,\hdots,px_d-a_d^p \right )} \right )_{/p\text{-tor}},$$
sending $x_i$ to $x_i,$ $y_i$ to $a_i$ and $C$ to $D$ via the first projection, is an isomorphism.
\end{lemma}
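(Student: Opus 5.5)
The plan is to reduce to an explicit description of $D$ as a $C$-algebra and then compare the two dilatations directly. The key input is that $D$ (via the first projection) is, $p$-adically, a Laurent-type extension of $C$ in which the diagonal ideal is generated by elements $a_i$ forming a "regular sequence of coordinates". Concretely, I first note that $N\cong M\oplus K$ where $K=\ker\left((M\oplus_PM)^{gp}\to M^{gp}\right)$. This holds because $N$ contains the group $K$, and the map $N\to M$, $(x,y)\mapsto x+y$, has the section $x\mapsto (x,0)$. Under condition 1 on $\alpha$, the group $K\cong M^{gp}/P^{gp}$ is, up to prime-to-$p$ torsion, free of rank $d$, with basis given by the classes $(-m_i,m_i)$ whose images are $u_i=1+a_i$. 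Hence $D\cong C\widehat{\otimes}_{\Z_p}\Z_p\langle K\rangle$. The torsion part gives an étale factor which is split by the diagonal, and Proposition \ref{propdiletale} allows me to discard it. So, after an étale localization compatible with the diagonal, I reduce to
$$D\cong C\langle u_1^{\pm1},\hdots,u_d^{\pm1}\rangle,\qquad a_i=u_i-1,$$
with $I=(a_1,\hdots,a_d)$. The choice of the $a_i$ is irrelevant: any other family whose images form a basis of $I_1/I_1^2$ differs by an invertible matrix modulo $I^2+pD$. I expect the dilatation to be insensitive to this change, and will check it via the universal property of Proposition \ref{erafdil2}.

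Next I construct $\varphi$ and an inverse. Since $u_i=1+a_i$ and $a_i$ becomes topologically nilpotent in the target ($a_i^p=px_i$), the element $u_i^{-1}=\sum_k(-a_i)^k$ converges $p$-adically in the left-hand side, where $a_i$ is replaced by $y_i$. This gives a continuous $C$-algebra map
$$D=C\langle u^{\pm1}\rangle\to C\{x,y\}/(px_i-y_i^p),\qquad u_i\mapsto 1+y_i,$$
which sends $a_i^p-px_i$ to $0$. It therefore extends to $D\{x\}/(px_i-a_i^p)\to C\{x,y\}/(y_i^p-px_i)$. The left-hand side is $p$-torsion free, since it is a free $C$-module with basis $x^Iy^J$ for $0\le J_i<p$, and $C$ is flat over $W$. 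Hence this map kills $p$-torsion and factors through the quotient by the $p$-torsion ideal.

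It remains to show that the two maps are mutually inverse. Both are determined by the images of $x_i$, $y_i$, resp. $u_i$, $x_i$ together with continuity, so the compositions are identities on generators. The main obstacle is to justify that the map out of $(D\{x\}/(px-a^p))_{/p\text{-tor}}$ is well defined and continuous, and that density arguments apply after quotienting by torsion. I would handle this by first working modulo $p^n$ with $D\{x\}$ replaced by the algebraic polynomial ring and passing to the limit. Alternatively, I would verify that the target of $\varphi$ satisfies the universal property of Proposition \ref{erafdil2}: it is flat, and the image of the ideal $(p,a_i^p)$ is generated by $p$. The ring $C\{x,y\}/(y^p-px)$ satisfies the same property, and uniqueness in Proposition \ref{erafdil2} then identifies them, with $\varphi$ as the comparison map.
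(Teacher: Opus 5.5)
Your core computation is correct. In the model case $D=C\langle u_1^{\pm1},\hdots,u_d^{\pm1}\rangle$, $a_i=u_i-1$, the map $u_i\mapsto 1+y_i$ gives a two-sided inverse of $\varphi$. This is a genuinely different route from the paper's. The paper proves injectivity for an arbitrary admissible family $(a_i)$ by induction on the $x$-degree modulo $p^k$, using only that the $a^J$ with $|J|=m$ are free in $I_k^m/I_k^{m+1}$, and gets surjectivity from $D=\pi(C)+I$. The splitting $N\cong M\oplus K$ and the explicit inverse are shorter and more transparent.

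The gap is in the reduction to the model case. First, Propositions \ref{propdiletale} and \ref{erafdil2} are statements about the dilatation $\frakY^{\#}_{(X/p)}$. The lemma is about the explicit ring $R_a=(D\{x\}/(px_i-a_i^p))_{/p\text{-tor}}$ and the explicit map $\varphi_a$. $R_a$ is that dilatation when $(a_1,\hdots,a_d)+pD=I+pD$, but a basis of $I_1/I_1^2$ does not guarantee this, and then the statement actually fails.
\begin{itemize}
\item If $\op{coker}\alpha^{gp}$ has torsion, then $D=D_e\times D'$ with $1-e\in I^2$. For instance, for the Kummer chart $P=\N\xrightarrow{\times 2}\N=M$ with $p$ odd, one gets $d=0$ and the target is all of $D=C[w]/(w^2-1)$, so $\varphi$ is not surjective.
\item For $d=1$, $p$ odd and $a=(u-1)(2-u)$, the specialisations $u\mapsto 1$ and $u\mapsto 2$ give two maps $R_a\to\Z_p$ that agree on the image of $\varphi$.
\end{itemize}
So the statement must be read with the $a_i$ generating $I$, i.e. after shrinking $\op{Spf}D$ to a neighbourhood $\frakU$ of the diagonal, as in the definition of $Q_{\frakX}$. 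The paper's one-line surjectivity argument tacitly needs this too. Your torsion-discarding step is exactly this shrinking, and you should say so instead of asserting that the choice of the $a_i$ is irrelevant.

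Second, even when the $a_i$ generate $I$, uniqueness of dilatations only gives a $D$-isomorphism $R_a\cong R_{u-1}$. It does not show that $\varphi_a$ is an isomorphism, since $\varphi_a$ sends $y_i$ to $a_i$, not to $u_i-1$. Transported through your inverse, $\varphi_a$ becomes the $C$-endomorphism $\theta$ of $L=C\{x,y\}/(px_i-y_i^p)$ given by $y_i\mapsto a_i(1+y)=\sum_j g_{ij}(y)y_j$, with $g(0)\in\mathrm{GL}_d(C)$, and $x_i\mapsto a_i(1+y)^p/p$. Bijectivity of $\theta$ still has to be proved, either by an inverse-function argument or by the paper's graded injectivity argument, which works for any admissible family. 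Your alternative via the universal property has the same hole. To make $L$ a $D$-algebra for which $\varphi_a$ is the comparison map, you need a $C$-algebra map $D\to L$ with $a_i\mapsto y_i$, which is the same inverse-function problem.

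This case cannot be avoided. In Proposition \ref{locdescRQ} the $a_i$ are the $\widetilde{\eta}_i$ attached to arbitrary lifts $\widetilde{m}_i$. So $1+a_i$ is a chart monomial times $p_2^{\#}(v_i)/p_1^{\#}(v_i)$ with $v_i\in\Ox_{\frakX}^{\times}$, not a chart monomial.
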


\begin{proof}
Denote by $\pi:C \ra D$ the morphism corresponding to the first projection $\frakY \ra \frakX$ and $\Delta:D \ra C$ the morphism corresponding to the exact diagonal $\frakX \ra \frakY.$ Note that $\Delta \circ \pi=\op{Id}_{C}.$
For $g\in D \{x_1,\hdots,x_d\}$ and $J=(J_1,\hdots,J_d)\in \N^d,$ set
$x^J=\prod_{j=1}^dx_j^{J_j}$ and $a^J=\prod_{j=1}^da_j^{J_j}$
and denote by $g_J$ the coefficient of $x^J$ in $g.$
Let us prove that $\varphi$ is injective. Let
$$f=\sum_{\substack{K\in \N^d \\ J\in \llbracket 0,p-1\rrbracket^d}}\alpha_{K,J}x^Ky^J\in C\{x_1,\hdots,x_d,y_1,\hdots,y_d\}$$
such that $\varphi(f)=0.$
There exists a positive integer $l$ and $g_1,\hdots,g_d\in D \{x_1,\hdots,x_d\}$ such that
\begin{equation}\label{eq1291}
p^l\sum_{\substack{K\in \N^d \\ J\in \llbracket 0,p-1\rrbracket^d}}\pi(\alpha_{K,J})a^Jx^K=\sum_{i=1}^dg_i(px_i-a_i^p).
\end{equation}
From here on, we work modulo $p^k$ for a fixed positive integer $k.$
We prove by induction on $n$ that $p^l\alpha_{K,J}=0$ for all $(K,J)\in \N^d \times \llbracket 0,p-1\rrbracket^d$ such that $|K|=n$ and that 
$
\sum_{\substack{1\le i\le d \\ |K|=n}}g_{i,K}a^{pK+p\epsilon_i}=0.
$
Taking the constant coefficients of both sides in \eqref{eq1291}, we get
\begin{equation}\label{eq1294}
p^l\sum_{J\in \llbracket 0,p-1\rrbracket^d}\pi(\alpha_{0,J})a^J=-\sum_{i=1}^dg_{i,0}a_i^p.
\end{equation}
We rewrite this as
\begin{equation}\label{eq1297}
\sum_{J\in \N^d}\beta_Ja^J=0,
\end{equation}
where
$$\beta_J=\begin{cases} p^l\pi(\alpha_{0,J})\ \text{if}\ J\in \llbracket 0,p-1\rrbracket^d,\\ g_{i,0}\ \text{if}\ J=p\epsilon_i,\\  0\ \text{otherwise.}\end{cases}$$ 
Proceeding by induction on $|J|,$ reducing modulo $I_k,$ $I_k^2,\hdots$ and using (\cite{SF1} (11.3.1)), we prove that $\Delta(\beta_J)=0$ for all $J.$ Indeed, applying $\Delta$ to \eqref{eq1297}, we get
$\Delta(\beta_0)=0.$
Let $m\ge 0$ be an integer and suppose that $\Delta(\beta_J)=0$ for $|J|\le m.$ In the $\left (C/p^kC\right )$-module $I_k^{m+1}/I_k^{m+2},$ we have
$\sum_{|J|=m+1}\Delta (\beta_J)a^J=0.$
By (\cite{SF1} 11.9), the family $(a_1,\hdots,a_d)$ is a basis of the $\left (C/p^kC\right )$-module $I_k/I_k^2.$ Then, by (\cite{SF1} (11.3.1)), $(a^J)_{|J|=m+1}$ is free in the $\left (C/p^kC\right )$-module $I_k^{m+1}/I_k^{m+2}$ so $\Delta(\beta_J)=0$ for $|J|=m+1.$
In particular, if $J\in \llbracket 0,p-1\rrbracket^d,$ then
$$\Delta(\beta_J)=p^l\Delta\circ \pi(\alpha_{0,J})=p^l\alpha_{0,J}=0.$$
By \eqref{eq1294}, we deduce that
$\sum_{i=1}^dg_{i,0}a_i^p=0.$
This concludes the first step of the induction.

Now taking the monomials of degree $1$ in \eqref{eq1291}, we get
$$p^l\sum_{\substack{1\le i\le d \\ J\in \llbracket 0,p-1 \rrbracket^d}} \pi(\alpha_{\epsilon_i,J})a^Jx_i=\sum_{i=1}^dpg_{i,0}x_i-\sum_{1\le i,j\le d}g_{i,\epsilon_j}a_i^px_j.$$
Taking $x_i=a_i^p,$ we get
$$p^l\sum_{\substack{1\le i\le d \\ J\in \llbracket 0,p-1 \rrbracket^d}} \pi(\alpha_{\epsilon_i,J})a^{J+p\epsilon_i}=-\sum_{1\le i,j\le d}g_{i,\epsilon_j}a^{p\epsilon_i+p\epsilon_j}.$$
Just as in the previous case, we prove that $p^l\alpha_{\epsilon_i,J}=0$ for all $1\le i\le d$ and $J.$
Let $n$ be a positive integer and suppose that $p^l\alpha_{K,J}=0$ for all $K\in \N^d$ and $J\in \llbracket 1,p-1\rrbracket^d$ such that $|K|\le n$ and that
\begin{equation}\label{eq1293}
\sum_{\substack{|K|= n \\ 1\le i\le d}}g_{i,I}a^{pK+p\epsilon_i}=0.
\end{equation}
Taking the monomials of degree $n+1$ in \eqref{eq1291}, we get
$$p^l\sum_{\substack{|K|=n+1 \\ J\in \llbracket 0,p-1 \rrbracket^d}} \pi(\alpha_{K,J})a^Jx^K=\sum_{\substack{|K|=n \\ 1\le i\le d}}pg_{i,K}x^{K+\epsilon_i}-\sum_{\substack{|K|=n+1 \\ 1\le i\le d}}g_{i,K}a_i^px^K.$$
Taking $x_i=a_i^p$ for all $1\le i\le d,$ we get
$$p^l\sum_{\substack{|K|=n+1 \\ J\in \llbracket 0,p-1 \rrbracket^d}} \pi(\alpha_{K,J})a^{J+pK}=\sum_{\substack{|K|=n \\ 1\le i\le d}}pg_{i,K}a^{pI+p\epsilon_i}-\sum_{\substack{|K|=n+1 \\ 1\le i\le d}}g_{i,K}a^{pK+p\epsilon_i}.$$
By \eqref{eq1293}, we get
$$ p^l\sum_{\substack{|K|=n+1 \\ J\in \llbracket 0,p-1 \rrbracket^d}} \pi(\alpha_{K,J})a^{J+pK}=-\sum_{\substack{|K|=n+1 \\ 1\le i\le d}}g_{i,K}a^{pK+p\epsilon_i}.$$
Again, using the same argument as in the case $n=0,$ we prove that $p^l\alpha_{K,J}=0$ for $|K|=n+1$ and $J\in \llbracket 0,p-1 \rrbracket^d$ and then
$$\sum_{\substack{|K|= n+1 \\ 1\le i\le d }}g_{i,K}a^{pK+p\epsilon_i}=0.
$$
This concludes the induction, and since this is true modulo $p^k$ for all $k,$ this concludes the injectivity of $\varphi.$ The surjectivity follows from the fact that, for $y\in D,$
$y=\pi \circ \Delta(y)+y-\pi \circ \Delta(y)$
and $y-\pi \circ \Delta(y)\in I.$
\end{proof}

\begin{remark}\label{par1212}
By (\cite{SF1} 11.11), the isomorphism \eqref{isoQ1} exists étale locally on $\frakX$ and $\frakS.$
\end{remark}

\begin{parag}\label{loccoord}
We will often make the following hypothesis:
suppose that we have local coordinates $m_1,\hdots,m_d\in \Gamma(X,\calM_X)$ for $X\ra S$ that lift to local sections $\widetilde{m}_1,\hdots,\widetilde{m}_d\in \Gamma(\frakX,\calM_{\frakX}).$ For every $1\le i\le d,$ let $m_i'=\pi^{\flat}m_i$ \eqref{diag51} and $\widetilde{\eta}_i\in \Gamma(\frakX,\Delta^{-1}\calI_{\frakX/\frakS})$ be as defined in \ref{parag77} and $\eta_i$ the reduction of $\widetilde{\eta}_i$ modulo $p.$ Suppose also that we have a chart of $\frakX \ra \frakS$ satisfying the conditions of \ref{locdescRQ}. Let $n$ be a positive integer. By \ref{locdescRQ} and  (\cite{SF1} 11.15), we have isomorphisms:
\begin{alignat}{2}
\calR_{\frakX,n} & \xrightarrow{\sim} \Ox_{\frakX}\left \{ \frac{\widetilde{\eta}_1}{p^n},\hdots,\frac{\widetilde{\eta}_d}{p^n} \right \}, \label{isoR} \\
\calQ_{\frakX} & \xrightarrow{\sim} \Ox_{\frakX}\{ \widetilde{\eta}_1,\hdots,\widetilde{\eta}_d,y_1,\hdots,y_d\}/(py_1-\widetilde{\eta}_1^p,\hdots,py_d-\widetilde{\eta}_d^p), \label{isoQ} \\
\calP_{\frakX/\frakS} & \xrightarrow{\sim} \Ox_{\frakX}\left \langle \left \langle \widetilde{\eta}_1,\hdots,\widetilde{\eta}_d \right \rangle \right \rangle. \label{eqPkraz6}
\end{alignat}
Let $\eta_{i(r),n}$ be the image of $\frac{\widetilde{\eta}_i}{p^n}$ in $\calR_n:=\calR_{\frakX,n}/p\calR_{\frakX,n}$ and $\eta_{i(q)}$ the image of $\frac{\widetilde{\eta}_i^p}{p}$ in $\calQ_1:=\calQ_{\frakX}/p\calQ_{\frakX}.$ For $n=1,$ we denote $ \eta_{i(r),1}$ simply by $\eta_{i(r)}.$ For any $I\in \N^d,$ we set
$$
\eta^I=\prod_{i=1}^d\eta_i^{I_i},\ \eta_{(q)}^I=\prod_{i=1}^d\eta_{i(q)}^{I_i},\ \eta_{(r)}^I=\prod_{i=1}^d\eta_{i(r)}^{I_i}.
$$
Then $\left (\eta^I\eta_{(q)}^J \right )_{\substack{I\in \llbracket 0,p-1 \rrbracket^d \\ J\in \N^d}}$ is a basis of the $\Ox_X$-module $\calQ_1.$ We denote by $\left ( \varphi_{I,J} \right )$ its dual basis. Also, $\left (\eta^{[I]} \right )_{I\in \N^d}$ is a basis for the $\Ox_X$-module $\calP_0:=\calP_{\frakX/\frakS}/p\calP_{\frakX/\frakS}.$
If $X'$ lifts to a log smooth framed fs logarithmic $p$-adic formal $(\frakS,P)$-scheme $(\frakX',Q'),$ we define $\eta_i',$ $\widetilde{\eta}_i',$ $\eta_{i(r)}'$ and $\eta_{i(q)}'$ in a similar way from $m_i'.$ We also denote by $(\partial_1',\hdots,\partial_d')$ the dual basis of $(\op{dlog}m_1',\hdots,\op{dlog}m_d').$
\end{parag}

\begin{corollaire}\label{Qlogflat}
For all positive integers $n,$ the logarithmic formal schemes $Q_{\frakX},$ $P_{\frakX/\frakS}$ and $R_{\frakX,n}$ are both log flat and flat over $\frakX$ and $\op{Spf}W.$
\end{corollaire}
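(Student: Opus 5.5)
The plan is to reduce to the local situation of \ref{loccoord} and read off flatness from the explicit descriptions \eqref{isoR}, \eqref{isoQ} and \eqref{eqPkraz6}.

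\textbf{Reduction to the local case.} Flatness and log flatness are étale local on the source and target. By \ref{par1212} (and the analogous statements of (\cite{SF1} 11.11, 11.15) for $R_{\frakX,n}$ and $P_{\frakX/\frakS}$), after replacing $\frakX$ and $\frakS$ by étale covers we may assume we are in the situation of \ref{loccoord}. In particular we have lifted coordinates $\widetilde{m}_i$ and a chart satisfying \ref{locdescRQ}.

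\textbf{Flatness over $\frakX$.} Via the first projection, \eqref{isoR} identifies $\calR_{\frakX,n}$ with the restricted power series ring $\Ox_{\frakX}\{\widetilde{\eta}_i/p^n\}$. Modulo $p^k$ this is a polynomial ring, hence free over $\Ox_{\frakX_k}$. For \eqref{isoQ}, the monomials $\widetilde{\eta}^I y^J$ with $I\in \llbracket 0,p-1\rrbracket^d$ and $J\in\N^d$ form a topological basis: the relation $\widetilde{\eta}_i^p=py_i$ lets us reduce exponents of $\widetilde{\eta}_i$ below $p$, and freeness follows from \ref{lemQ}. So modulo $p^k$ this is again a free $\Ox_{\frakX_k}$-module. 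Similarly, $\Ox_{\frakX}\langle\langle\widetilde{\eta}\rangle\rangle$ is the $p$-adic completion of a free module with basis $\widetilde{\eta}^{[I]}$, so it is free modulo each $p^k$. Hence all three are flat over $\frakX$ modulo $p^k$ for every $k$.

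\textbf{Flatness over $\op{Spf}W$.} By \ref{parag86}, $\frakX$ is flat over $\op{Spf}W$. Composing with the flatness over $\frakX$ just obtained, each $\frakX_k$-scheme is flat over $W/p^k$, which is the notion of flatness from \ref{Wflat}.

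\textbf{Log flatness.} $R_{\frakX,n}$ and $Q_{\frakX}$ are strict over $\frakY$ by \ref{erafdil2} and its analogue in \cite{SF1}. The projections $\frakY\ra\frakX$ are strict by (\cite{SF1} 8.9). Hence the projections from $R_{\frakX,n}$, $Q_{\frakX}$ and $P_{\frakX/\frakS}$ to $\frakX$ are strict and flat on underlying schemes modulo every $p^k$. A strict morphism that is flat on underlying schemes is log flat, which gives log flatness over $\frakX$. Composing with the log flatness of $\frakX$ over $\op{Spf}W$ (from the standing assumption of \ref{parag86}) gives log flatness over $\op{Spf}W$.

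\textbf{Main obstacle.} The delicate point is the topological basis claim for $\calQ_{\frakX}$, namely that modulo $p^k$ the monomials $\widetilde{\eta}^I y^J$ are independent and not merely generating. I expect to handle it by invoking \ref{lemQ}: the injectivity argument there works modulo every $p^k$, and it shows the quotient $C\{x,y\}/(px-y^p)$ is $p$-torsion free with the stated basis.
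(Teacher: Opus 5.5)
Your proposal is correct and follows the paper's argument: flatness over $\frakX$ is read off the local descriptions \eqref{isoR}, \eqref{isoQ}, \eqref{eqPkraz6}, log flatness comes from strictness of the projections to $\frakX$, and one then composes with $\frakX \ra \op{Spf}W$. The differences are cosmetic. You obtain flatness over $\op{Spf}W$ by composing flat maps on underlying schemes, whereas the paper goes through log flatness and \ref{Wflat}. Also, your ``main obstacle'' does not need \ref{lemQ}: modulo $p^k$ the basis $\widetilde{\eta}^I y^J$ ($I\in \llbracket 0,p-1\rrbracket^d$) follows from division by the monic relations $\widetilde{\eta}_i^p-py_i$, since \eqref{isoQ} is already given.
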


\begin{proof}
By \ref{loccoord} and \ref{par1212}, $Q_{\frakX}$ and $R_{\frakX,n}$ are flat over $\frakX.$ Since the projections $Q_{\frakX}\ra \frakX$ and $R_{\frakX,n}\ra \frakX$ are also strict, they are log flat. We conclude by the log flatness of $\frakX$ over $\op{Spf}W$ and \ref{Wflat}.
\end{proof}

\begin{proposition}[\cite{SF1} 11.17]\label{HopfR}
Let $n$ be a positive integer, $\calR_n=\calR_{\frakX,n}/p\calR_{\frakX,n}$ and
$
\delta:\calR_n \ra \calR_n\otimes_{\Ox_X}\calR_n,
\pi:\calR_n \ra \Ox_X,
\sigma:\calR_n \ra \calR_n
$
the morphisms defining the Hopf algebra structure on $\calR_n.$ Under the hypothesis \ref{loccoord},
we have
\begin{alignat*}{2}
\delta\left (\eta_{i(r),n} \right ) &= 1\otimes \eta_{i(r),n}+\eta_{i(r),n}\otimes 1, \\
\pi\left (\eta_{i(r),n} \right ) &= 0,\\
\sigma\left (\eta_{i(r),n} \right ) &= -\eta_{i(r),n}.
\end{alignat*}
\end{proposition}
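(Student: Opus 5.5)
The plan is to compute the three structure maps on the generators $\widetilde{\eta}_i/p^n$ of $\calR_{\frakX,n}$ at the level of the formal groupoid, before reducing modulo $p$. Work under hypothesis \ref{loccoord}, so that \eqref{isoR} identifies $\calR_{\frakX,n}$ with $\Ox_{\frakX}\{\widetilde{\eta}_1/p^n,\hdots,\widetilde{\eta}_d/p^n\}$. Since $\calR_{\frakX,n}$ is flat over $\op{Spf}W$ (\ref{Qlogflat}), it is $p$-torsion free. Hence it suffices to determine $\delta(\widetilde{\eta}_i)$, $\pi(\widetilde{\eta}_i)$ and $\sigma(\widetilde{\eta}_i)$ in the rings attached to $R_{\frakX,n}(2)$, $\frakX$ and $R_{\frakX,n}$ respectively, then divide by $p^n$ and reduce modulo $p$.

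The basic input is the characterization of $\widetilde{\eta}_i$ from \ref{parag77}: with $p_1,p_2$ the two projections,
$$p_2^{\flat}(\widetilde{m}_i)=p_1^{\flat}(\widetilde{m}_i)+\lambda(1+\widetilde{\eta}_i).$$
On $\frakY(2)$ we have three projections $q_1,q_2,q_3$. Pulling back along $q_{12},q_{23},q_{13}$ gives elements $\widetilde{\eta}_i^{(12)},\widetilde{\eta}_i^{(23)},\widetilde{\eta}_i^{(13)}$, determined by the analogous relations. Adding the relations for $(12)$ and $(23)$ and using that $\lambda$ is a monoid morphism, we get
$$1+\widetilde{\eta}_i^{(13)}=(1+\widetilde{\eta}_i^{(12)})(1+\widetilde{\eta}_i^{(23)}).$$
Uniqueness comes from injectivity of $\lambda$ in the exact sequence \eqref{era2exactseq}. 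Under $R_{\frakX,n}\times_{\frakX}R_{\frakX,n}\simeq R_{\frakX,n}(2)$, this reads $\delta(\widetilde{\eta}_i)=\widetilde{\eta}_i\otimes 1+1\otimes\widetilde{\eta}_i+\widetilde{\eta}_i\otimes\widetilde{\eta}_i$. Dividing by $p^n$ gives
$$\delta(\widetilde{\eta}_i/p^n)=1\otimes\widetilde{\eta}_i/p^n+\widetilde{\eta}_i/p^n\otimes 1+p^n(\widetilde{\eta}_i/p^n\otimes\widetilde{\eta}_i/p^n),$$
and the last term vanishes modulo $p$. The counit is easy: $\widetilde{\eta}_i$ lies in the ideal of the diagonal, so $\pi(\widetilde{\eta}_i)=0$, and by torsion-freeness of $\Ox_{\frakX}$ also $\pi(\widetilde{\eta}_i/p^n)=0$.

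For the antipode, swapping the factors gives $(1+\widetilde{\eta}_i)(1+\sigma(\widetilde{\eta}_i))=1$, so
$$\sigma(\widetilde{\eta}_i)=-\widetilde{\eta}_i+\widetilde{\eta}_i^2-\cdots,$$
a series that converges in $\calR_{\frakX,n}$ because $\widetilde{\eta}_i=p^n(\widetilde{\eta}_i/p^n)$ is topologically nilpotent. Dividing by $p^n$ gives $\sigma(\widetilde{\eta}_i/p^n)=-\widetilde{\eta}_i/p^n+p^n(\cdots)$, which reduces to $-\eta_{i(r),n}$.

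The step needing the most care is justifying these formal manipulations. One must check that the identification $R_{\frakX,n}(2)\simeq R_{\frakX,n}\times_{\frakX}R_{\frakX,n}$ is compatible with the three pullbacks of the $\widetilde{m}_i$. One must also check that $\lambda$ and the relations make sense in the dilatations, which are strict over $\frakY$, so the log structures are pulled back. This is all available from \cite{SF1}~11.6, on which the proof relies.
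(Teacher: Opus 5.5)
Your proposal is correct and follows the same route the paper relies on. The statement is quoted from \cite{SF1} 11.17, and the paper treats the parallel case (\ref{HopffrakQ} and \ref{HopfQ}) the same way: it takes the formal-level formulas $\widetilde{\delta}(\widetilde{\eta}_i)=1\otimes\widetilde{\eta}_i+\widetilde{\eta}_i\otimes 1+\widetilde{\eta}_i\otimes\widetilde{\eta}_i$, $\widetilde{\pi}(\widetilde{\eta}_i)=0$ and $\widetilde{\sigma}(\widetilde{\eta}_i)=(1+\widetilde{\eta}_i)^{-1}-1$ from the log-structure relations (\cite{SF1} 8.13, 8.14), divides by a power of $p$ using flatness over $\op{Spf}W$, and reduces modulo $p$. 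One detail to state explicitly: for $\delta$ the division by $p^n$ happens in $\Ox_{R_{\frakX,n}(2)}$, not in $\calR_{\frakX,n}$. Its $p$-torsion-freeness holds because $R_{\frakX,n}(2)\simeq R_{\frakX,n}\times_{\frakX}R_{\frakX,n}$ is flat over $\frakX$, or directly because dilatations are flat over $\op{Spf}W$ by construction.
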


\begin{proposition}\label{HopffrakQ}
Let
$
\widetilde{\delta}:\calQ_{\frakX} \ra \calQ_{\frakX}\otimes_{\Ox_{\frakX}}\calQ_{\frakX},
\widetilde{\pi}:\calQ_{\frakX} \ra \Ox_{\frakX},
\widetilde{\sigma}:\calQ_{\frakX} \ra \calQ_{\frakX}
$
the morphisms defining the Hopf algebra structure on $\calQ_{\frakX}.$ Under the hypothesis \ref{loccoord},
we have
\begin{alignat*}{2}
\widetilde{\delta}(\widetilde{\eta}_i) &= 1\otimes \widetilde{\eta}_i+\widetilde{\eta}_i\otimes 1+\widetilde{\eta}_i\otimes \widetilde{\eta}_i, \\
\widetilde{\pi}(\widetilde{\eta}_i) &= 0,\\
\widetilde{\sigma}(\widetilde{\eta}_i) &= (1+\widetilde{\eta}_i)^{-1}-1.
\end{alignat*}
\end{proposition}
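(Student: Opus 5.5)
We need to compute the Hopf structure maps of $\calQ_{\frakX}$ on the sections $\widetilde{\eta}_i$. The plan is to compute them on the bigger object $\Ox_{\frakY}$, where everything is determined by the log structure, and then transport to $\calQ_{\frakX}$.

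\textbf{Step 1: reduction to $\Ox_{\frakY}$.} The maps $\widetilde{\delta},\widetilde{\pi},\widetilde{\sigma}$ are induced by geometric morphisms:
\begin{itemize}
\item the projection $\frakY(2)\ra\frakY$ on the first and third factors, composed with $Q_{\frakX}\times_{\frakX}Q_{\frakX}\simeq Q_{\frakX}(2)$;
\item the strict diagonal $\frakX\ra\frakY$;
\item the swap $\frakY\ra\frakY$.
\end{itemize}
All of these are compatible with the dilatation maps $Q_{\frakX}(r)\ra\frakY(r)$ by the universal property \ref{erafdil2}. So it suffices to compute the images of the section $\widetilde{\eta}_i\in\Gamma(\frakY,\Ox_{\frakY})$, or rather of $\Delta^{-1}\Ox_{\frakY}$ near $\frakX$, under the corresponding morphisms of $\frakY(r)$. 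We then pull these formulas back to $Q_{\frakX}(r)$. Since $Q_{\frakX}$ is flat over $W$ (\ref{Qlogflat}), no torsion issues arise in transporting identities.

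\textbf{Step 2: translate $\widetilde{\eta}_i$ into the log structure.} By \ref{parag77}, the section $u_i:=1+\widetilde{\eta}_i$ is the unique unit with
$$p_2^{\flat}(\widetilde{m}_i)=p_1^{\flat}(\widetilde{m}_i)+\lambda(u_i).$$
Uniqueness holds because $\lambda$ is injective by the exact sequence \eqref{era2exactseq}. On $\frakY(2)$, write $q_1,q_2,q_3$ for the three projections, and write $u_i^{(ab)}$ for the pullback of $u_i$ along the projection to factors $(a,b)$.

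\textbf{Step 3: the three formulas.}
\begin{itemize}
\item \emph{Comultiplication.} Adding the relations for $(1,2)$ and $(2,3)$ gives
$$q_3^{\flat}\widetilde{m}_i=q_1^{\flat}\widetilde{m}_i+\lambda\bigl(u_i^{(12)}u_i^{(23)}\bigr),$$
since $\lambda$ is a monoid homomorphism. The relation for $(1,3)$ and uniqueness then give $u_i^{(13)}=u_i^{(12)}u_i^{(23)}$. Under $Q_{\frakX}(2)\simeq Q_{\frakX}\times_{\frakX}Q_{\frakX}$, with $(12)\leftrightarrow$ the first factor and $(23)\leftrightarrow$ the second, this reads $\widetilde{\delta}(1+\widetilde{\eta}_i)=(1\otimes 1+\widetilde{\eta}_i\otimes1)(1\otimes1+1\otimes\widetilde{\eta}_i)$. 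Expanding yields the stated formula.
\item \emph{Counit.} Pulling back by the diagonal, $p_1^{\flat}$ and $p_2^{\flat}$ agree, so $\lambda(\Delta^*u_i)=0$ and hence $\Delta^*u_i=1$. Therefore $\widetilde{\pi}(\widetilde{\eta}_i)=0$.
\item \emph{Antipode.} The swap exchanges $p_1^{\flat}$ and $p_2^{\flat}$, so $\lambda(\sigma^*u_i)=-\lambda(u_i)=\lambda(u_i^{-1})$. Hence $\sigma^*u_i=u_i^{-1}$, which is the stated formula. Note that $u_i$ is a unit because $\widetilde{\eta}_i$ is topologically nilpotent in the relevant adic sense, and the formula makes sense in $\calQ_{\frakX}$.
\end{itemize}

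\textbf{Main obstacle.} The delicate point is the bookkeeping in Step 1. One must justify that the identification $Q_{\frakX}\times_{\frakX}Q_{\frakX}\simeq Q_{\frakX}(2)$ sends the pullbacks $u_i^{(12)},u_i^{(23)}$ to $\widetilde{\eta}_i\otimes1$ and $1\otimes\widetilde{\eta}_i$ respectively. This follows from the compatibility of the dilatations with the projections $\frakY(2)\ra\frakY$ onto factors $(1,2)$ and $(2,3)$, which is the same argument as for $R_{\frakX,n}$ in (\cite{SF1} 11.17). A second point is making sure uniqueness in the exact sequence holds on $\frakY(2)$ near the diagonal. For this I would invoke the analogue of \eqref{era2exactseq} for the strict immersion $\frakX\ra\frakY(2)$, where again $\lambda$ is injective.
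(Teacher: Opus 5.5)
Your proposal is correct and follows essentially the paper's route. The paper's proof just cites (\cite{SF1} 8.13 and 8.14), which supply the corresponding groupoid formulas on $\frakY$ coming from the relation $p_2^{\flat}\widetilde{m}_i=p_1^{\flat}\widetilde{m}_i+\lambda(1+\widetilde{\eta}_i)$, and implicitly transports them to $\calQ_{\frakX}$ along $Q_{\frakX}\ra\frakY$; your Steps 1--3 rederive exactly this. Two small points: your uniqueness claim also needs cancellation in the fine (hence integral) log structure, not only injectivity of $\lambda$; and the counit formula is immediate because $\widetilde{\eta}_i$ lies in the ideal of the diagonal.
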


\begin{proof}
This is a result of (\cite{SF1} 8.13) and (\cite{SF1} 8.14).
\end{proof}

\begin{proposition}\label{HopfQ}
Let $\calQ_1=\calQ_{\frakX}/p\calQ_{\frakX}$ and
$
\delta:\calQ_1 \ra \calQ_1\otimes_{\Ox_X}\calQ_1,
\pi:\calQ_1 \ra \Ox_X,
\sigma:\calQ_1 \ra \calQ_1
$
the morphisms defining the Hopf algebra structure on $\calQ_1.$ Under the hypothesis \ref{loccoord},
we have
\begin{alignat*}{2}
\delta\left (\eta_{i(q)} \right ) &= 1\otimes \eta_{i(q)}+\sum_{0<b+c<p} \frac{(-1)^{b+c}}{b+c}\begin{pmatrix}b+c \\ b \end{pmatrix} \eta_i^{b+c}\otimes \eta_i^{p-b} +\eta_{i(q)}\otimes 1, \\
\delta(\eta_i) &= 1\otimes \eta_i+\eta_i\otimes 1+\eta_i\otimes \eta_i, \\
\pi\left (\eta_{i(q)} \right ) &= \pi(\eta_i)= 0,\\
\sigma\left (\eta_{i(q)} \right ) &= \frac{-\eta_{i(q)}}{(1+\eta_i)^p}.
\end{alignat*}
\end{proposition}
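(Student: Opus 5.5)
The plan is to deduce all four formulas from Proposition \ref{HopffrakQ} by reducing modulo $p$, using the local description \eqref{isoQ} of $\calQ_{\frakX}$. The Hopf structure $(\delta,\pi,\sigma)$ on $\calQ_1$ is the reduction modulo $p$ of $(\widetilde{\delta},\widetilde{\pi},\widetilde{\sigma})$ on $\calQ_{\frakX}$. Recall that $\eta_i$ is the image of $\widetilde{\eta}_i$ and $\eta_{i(q)}$ is the image of $y_i=\widetilde{\eta}_i^p/p$.

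\textbf{Step 1: $\eta_i$.} The formulas for $\delta(\eta_i)$ and $\pi(\eta_i)$ are just the reductions of the formulas in \ref{HopffrakQ}.

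\textbf{Step 2: $\pi$ and $\sigma$ on $\eta_{i(q)}$.} Since $\calQ_{\frakX}$, $\Ox_{\frakX}$ and $\calQ_{\frakX}\otimes_{\Ox_{\frakX}}\calQ_{\frakX}$ are flat over $W$ by \ref{Qlogflat}, they are $p$-torsion free. Hence the value of a ring homomorphism on $y_i$ is determined by its value on $py_i=\widetilde{\eta}_i^p$, divided by $p$.
\begin{itemize}
\item For the counit, $p\,\widetilde{\pi}(y_i)=\widetilde{\pi}(\widetilde{\eta}_i)^p=0$, so $\widetilde{\pi}(y_i)=0$.
\item For the antipode, $\widetilde{\sigma}(\widetilde{\eta}_i)=-\widetilde{\eta}_i/(1+\widetilde{\eta}_i)$, so $p\,\widetilde{\sigma}(y_i)=-\widetilde{\eta}_i^p/(1+\widetilde{\eta}_i)^p$. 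This gives $\widetilde{\sigma}(y_i)=-y_i/(1+\widetilde{\eta}_i)^p$, and reducing yields the stated formula for $\sigma(\eta_{i(q)})$.
\end{itemize}

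\textbf{Step 3: the comultiplication of $\eta_{i(q)}$.} This is the main computation. Put $A=\widetilde{\eta}_i\otimes 1$ and $B=1\otimes\widetilde{\eta}_i$, so that $\widetilde{\delta}(\widetilde{\eta}_i)=A+B(1+A)$. Then
$$p\,\widetilde{\delta}(y_i)=A^p+B^p(1+A)^p+\sum_{0<k<p}\binom{p}{k}A^{p-k}B^k(1+A)^k .$$
Since the target is $p$-torsion free, I divide each term by $p$:
\begin{itemize}
\item the first term gives $A^p/p=y_i\otimes 1$;
\item the second gives $(1\otimes y_i)(1+A)^p$, and $(1+A)^p\equiv 1+A^p=1+p(y_i\otimes1)\equiv 1 \pmod p$;
\item in the middle sum, $\frac1p\binom pk\equiv \frac{(-1)^{k-1}}{k}\pmod p$.
\end{itemize}
Expanding $(1+A)^k=\sum_c\binom kc A^c$ and setting $b=p-k$, the middle sum becomes a combination of monomials $\eta_i^{b+c}\otimes\eta_i^{p-b}$.

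Terms with $b+c=p$ (or with $b=0$) contain a factor $\widetilde{\eta}_i^p=py_i$, so they vanish in $\calQ_1\otimes\calQ_1$. This leaves exactly the range $0<b+c<p$.

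\textbf{Step 4: matching coefficients.} It remains to check the congruence
$$\frac{(-1)^{p-b-1}}{p-b}\binom{p-b}{c}\equiv\frac{(-1)^{b+c}}{b+c}\binom{b+c}{b}\pmod p .$$
This follows from
$$\binom{p-b}{c}\equiv(-1)^c\frac{b(b+1)\cdots(b+c-1)}{c!}\quad\text{and}\quad\frac{1}{p-b}\equiv-\frac1b \pmod p,$$
together with $(-1)^{p-b}=(-1)^b$ for odd $p$; for $p=2$ signs are irrelevant.

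I expect Step 4 to be the only delicate point: tracking which tensor factor carries which power, and the signs.
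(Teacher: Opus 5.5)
Your Steps 1--3 follow the paper's own route: divide the formulas of Proposition \ref{HopffrakQ} by $p$ using $p$-torsion-freeness, then reduce modulo $p$. The coefficient you obtain in Step 3 for $\eta_i^{b+c}\otimes\eta_i^{p-b}$ is correct; it is $\frac1p\binom{p}{k}\binom{k}{c}\equiv\frac{(-1)^{p-b-1}}{p-b}\binom{p-b}{c}$ with $k=p-b$.

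The gap is in Step 4: the congruence you assert there is false for every odd $p$. Your verification uses $(-1)^{p-b}=(-1)^b$ for odd $p$. In fact, for odd $p$ one has $(-1)^{p-b}=-(-1)^b$, i.e.\ $(-1)^{p-b-1}=(-1)^b$. With the correct sign, the two identities you quote give
\[
\frac{(-1)^{p-b-1}}{p-b}\binom{p-b}{c}\equiv(-1)^{b}\Bigl(-\frac{1}{b}\Bigr)(-1)^{c}\,\frac{b(b+1)\cdots(b+c-1)}{c!}=\frac{(-1)^{b+c-1}}{b+c}\binom{b+c}{b}\pmod p ,
\]
which is the \emph{negative} of the coefficient in the statement. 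The trinomial expansion shows this directly: the coefficient is $\frac1p\binom{p}{b+c}\binom{b+c}{b}$, and $\frac1p\binom{p}{m}\equiv\frac{(-1)^{m-1}}{m}$, the very congruence you use in Step 3. For $p=3$, expanding $\frac13(A+B+AB)^3$ with your $A,B$ gives
\[
\delta(\eta_{i(q)})=1\otimes\eta_{i(q)}+\eta_{i(q)}\otimes 1+\eta_i\otimes\eta_i^2+\eta_i^2\otimes\eta_i-\eta_i^2\otimes\eta_i^2,
\]
while the displayed formula puts the opposite signs on the last three terms.

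So the first formula of the proposition is misstated for odd $p$ (it is fine for $p=2$), and no correct argument can end at it. What your Steps 1--3 actually prove is
\[
\delta(\eta_{i(q)})=1\otimes\eta_{i(q)}+\sum_{0<b+c<p}\frac{(-1)^{b+c-1}}{b+c}\binom{b+c}{b}\eta_i^{b+c}\otimes\eta_i^{p-b}+\eta_{i(q)}\otimes1 .
\]
You should conclude with this rather than force agreement in Step 4. The paper's proof has the matching slip: it asserts $\frac{(p-1)!}{k!(p-k)!}=\frac{(-1)^k}{k}$ in $\mathbb{F}_p$, which already fails for $k=1$.

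The error is harmless later in the paper. Every term of the middle sum carries a positive power of $\eta_i$ in both tensor factors, so it is killed by $v$. That covers both uses: the check that $v$ is a Hopf morphism and the commutation computation.

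A similar, harmless sign issue occurs in your Step 2. Integrally, $\widetilde{\sigma}(y_i)=(-1)^p\,y_i/(1+\widetilde{\eta}_i)^p$, which for $p=2$ agrees with your formula only after reduction modulo $2$.
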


\begin{proof}
Let
$
\widetilde{\delta}:\calQ_1 \ra \calQ_1\otimes_{\Ox_X}\calQ_1,
\widetilde{\pi}:\calQ_1 \ra \Ox_X,
\widetilde{\sigma}:\calQ_1 \ra \calQ_1
$
be the morphisms defining the Hopf algebra structure on $\calQ_{\frakX}.$ By \ref{HopffrakQ}, we have
\begin{alignat*}{2}
p\widetilde{\delta}\left (\frac{\widetilde{\eta}_i^p}{p}\right ) =& \widetilde{\delta}(\widetilde{\eta}_i)^p 
= \left (1\otimes \widetilde{\eta}_i + \widetilde{\eta}_i\otimes 1 + \widetilde{\eta}_i\otimes \widetilde{\eta}_i\right )^p
= \sum_{a+b+c=p} \begin{pmatrix}p \\ b+c \end{pmatrix} \begin{pmatrix} b+c \\ b\end{pmatrix} \widetilde{\eta}_i^{b+c} \otimes \widetilde{\eta}_i^{a+c} \\
=& 1\otimes \widetilde{\eta}_i^p + \sum_{0<b+c<p} \begin{pmatrix}p \\ b+c \end{pmatrix} \begin{pmatrix} b+c \\ b\end{pmatrix} \widetilde{\eta}_i^{b+c} \otimes \widetilde{\eta}_i^{p-b} + \sum_{b+c=p}\begin{pmatrix}p \\ b \end{pmatrix}\widetilde{\eta}_i^p\otimes \widetilde{\eta}_i^c \\
=& 1\otimes \widetilde{\eta}_i^p + \sum_{0<b+c<p} \begin{pmatrix}p \\ b+c \end{pmatrix} \begin{pmatrix} b+c \\ b\end{pmatrix} \widetilde{\eta}_i^{b+c} \otimes \widetilde{\eta}_i^{p-b} + \sum_{b=1}^{p-1}\begin{pmatrix}p \\ b \end{pmatrix}\widetilde{\eta}_i^p\otimes \widetilde{\eta}_i^{p-b} + \widetilde{\eta}_i^p\otimes 1 + \widetilde{\eta}_i^p\otimes \widetilde{\eta}_i^p \\
=& p \left (1\otimes \frac{\widetilde{\eta}_i^p}{p}\right )  + \sum_{0<b+c<p} \begin{pmatrix}p \\ b+c \end{pmatrix} \begin{pmatrix} b+c \\ b\end{pmatrix} \widetilde{\eta}_i^{b+c} \otimes \widetilde{\eta}_i^{p-b} + \sum_{b=1}^{p-1}\begin{pmatrix}p \\ b \end{pmatrix}\widetilde{\eta}_i^p\otimes \widetilde{\eta}_i^{p-b} + p\left (\frac{\widetilde{\eta}_i^p}{p}\otimes 1 \right )\\
& + p^2\left (\frac{\widetilde{\eta}_i^p}{p}\otimes \frac{\widetilde{\eta}_i^p}{p} \right ).
\end{alignat*}
By the flatness of $Q_{\frakX}$ over $\op{Spf}W$ (\ref{Qlogflat} and \ref{Wflat}), we get
\begin{alignat*}{2}
\widetilde{\delta}\left (\frac{\widetilde{\eta}_i^p}{p} \right )=&  1\otimes \frac{\widetilde{\eta}_i^p}{p}  + \sum_{0<b+c<p} \frac{(p-1)!}{(b+c)!(p-b-c)!} \begin{pmatrix} b+c \\ b\end{pmatrix} \widetilde{\eta}_i^{b+c} \otimes \widetilde{\eta}_i^{p-b} \\
& + \sum_{b=1}^{p-1}\frac{(p-1)!}{b!(p-b)!}\widetilde{\eta}_i^p\otimes \widetilde{\eta}_i^{p-b} + \frac{\widetilde{\eta}_i^p}{p}\otimes 1  + p\left (\frac{\widetilde{\eta}_i^p}{p}\otimes \frac{\widetilde{\eta}_i^p}{p} \right ).
\end{alignat*}
Since
$$
\sum_{b=1}^{p-1}\frac{(p-1)!}{b!(p-b)!}\widetilde{\eta}_i^p\otimes \widetilde{\eta}_i^{p-b}=\sum_{b=1}^{p-1}\begin{pmatrix}p\\ b \end{pmatrix} \frac{\widetilde{\eta}_i^p}{p}\otimes \widetilde{\eta}_i^{p-b},
$$
this sum vanishes modulo $p.$
By \ref{HopffrakQ},
$
(1+\widetilde{\eta}_i)\widetilde{\sigma}(\widetilde{\eta}_i)=-\widetilde{\eta}_i
$
so
$$
p(1+\widetilde{\eta}_i)^p\widetilde{\sigma}\left (\frac{\widetilde{\eta}_i^p}{p}\right )=(1+\widetilde{\eta}_i)^p\widetilde{\sigma}(\widetilde{\eta}_i^p)=-\widetilde{\eta}_i^p=-p\frac{\widetilde{\eta}_i^p}{p}.
$$
It follows, by the flatness of $Q_{\frakX}$ over $\op{Spf}W$ (\ref{Qlogflat} and \ref{Wflat}), that
$
(1+\widetilde{\eta}_i)^p\widetilde{\sigma} \left (\frac{\widetilde{\eta}_i^p}{p} \right )=-\frac{\widetilde{\eta}_i^p}{p}.
$
The result then follows from \ref{HopffrakQ} and the fact that, for any $1\le k\le p-1,$ we have the equality $\frac{(p-1)!}{k!(p-k)!}=\frac{(-1)^k}{k}$ in $\mathbb{F}_p.$
\end{proof}

\begin{proposition}\label{lem92}
Suppose that $\frakX$ and $\frakS$ are equipped with frames $\frakX \ra [Q]$ and $\frakS\ra [P],$ where $P$ and $Q$ are fs monoids and that $f$ underlies a morphism of framed logarithmic formal schemes. Let $\Delta:\frakX \ra \frakY=\frakX\times_{\frakS,[Q]}^{\op{log}}\frakX$ and $\Delta':\frakX' \ra \frakY'= \frakX' \times_{\frakS,[Q']}^{\op{log}}\frakX'$ be the strict diagonal immersions. Suppose that there exists a morphism $F:(\frakX,Q) \ra (\frakX',Q')$ of framed logarithmic formal schemes, over $(\frakS,P),$ lifting the exact relative Frobenius $F_1:X \ra X'.$ Denote by $G:\frakY\ra \frakY'$ the morphism induced by $F:(\frakX,Q)\ra (\frakX',Q').$ Then, there exists a morphism of formal groupoids $\nu:Q_{\frakX} \ra R_{\frakX',1}$ fitting into the commutative diagram
$$
\begin{tikzcd}
Q_{\frakX} \ar{r}{\nu} \ar{d} & R_{\frakX',1} \ar{d} \\
\frakY \ar{r}{G} & \frakY'.
\end{tikzcd}
$$
\end{proposition}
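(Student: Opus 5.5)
We construct $\nu$ through the universal property of the dilatation $R_{\frakX',1}$ (\cite{SF1} 10.2, 11.2). Recall that $R_{\frakX',1}=\frakY'^{\#}_{(X'/p)}$ is the dilatation of the ideal $\calI'+(p)$ of $X'\ra \frakY'$ with respect to $p$. Given a flat logarithmic $p$-adic formal scheme $\frakZ$ over $\op{Spf}W$ and a morphism $h:\frakZ\ra \frakY'$, $h$ factors uniquely through $R_{\frakX',1}$ as soon as $h_1:Z\ra Y'$ factors through the diagonal immersion $X'\ra Y'$. By \ref{Qlogflat}, $Q_{\frakX}$ is flat over $\op{Spf}W$. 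It therefore suffices to show that the reduction modulo $p$ of the composition $Q_{\frakX}\ra \frakY\xrightarrow{G}\frakY'$ factors through $X'\ra Y'$.

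For this factorization, \ref{erafprop13} gives that $\underline{Q_1}\ra Q_1\ra Y$ factors through $\Delta_1:X\ra Y$. We then use the Frobenius factorization $F_{Q_1}:Q_1\xrightarrow{G_{Q_1}}\underline{Q_1}\hookrightarrow Q_1$ of \ref{era4prop15}. Since $G_1:Y\ra Y'$ lifts a Frobenius-type map, the composition $Q_1\ra Y\xrightarrow{G_1}Y'$ should factor as
\[ Q_1\xrightarrow{G_{Q_1}}\underline{Q_1}\ra X\xrightarrow{F_1}X'\xrightarrow{\Delta'_1}Y', \]
in the spirit of the construction of $\Delta_u$ in \ref{propShiho185}.

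The main obstacle is checking this factorization, including on log structures. We would argue locally, under the hypothesis \ref{loccoord}. Write $F^{*}\widetilde{\eta}'_i$ for the image of $\widetilde{\eta}'_i$ in $\Ox_{\frakY}$. Since $F$ lifts the exact relative Frobenius and is a morphism of framed schemes, $F^{\flat}\widetilde{m}'_i=p\widetilde{m}_i+\lambda(1+\widetilde{a}_i)$ with $\widetilde{a}_i\in p\Ox_{\frakX}$. Comparing the exact sequences \eqref{era2exactseq} for $\frakY$ and $\frakY'$ should then give
\[ 1+F^{*}\widetilde{\eta}'_i=(1+\widetilde{\eta}_i)^p\cdot u_i,\qquad u_i\equiv 1 \bmod p\,\calI . \]
Hence $F^{*}\widetilde{\eta}'_i\in(\widetilde{\eta}_i^p)+p\calI+p\Ox$. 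In $\calQ_{\frakX}$ we have $\widetilde{\eta}_i^p=p\cdot\frac{\widetilde{\eta}_i^p}{p}$ by \eqref{isoQ}, so $F^{*}\widetilde{\eta}'_i\in p\calQ_{\frakX}$. This says that the ideal of $X'\ra \frakY'$ (generated by $p$ and the $\widetilde{\eta}'_i$) becomes divisible by $p$ in $\calQ_{\frakX}$, which is exactly the divisibility condition in the dilatation's universal property. On log structures nothing further is needed: $Q_{\frakX}\ra\frakY$ and $R_{\frakX',1}\ra\frakY'$ are strict, so $\nu$ is defined by pulling back the log structure. Étale locally the existence of $\nu$ then follows from the universal property, and uniqueness in that property lets the local constructions glue, which gives $\nu$ globally.

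Finally, $\nu$ is compatible with the groupoid structures. The composition, unit and inverse of $Q_{\frakX}$ and $R_{\frakX',1}$ are all induced by the maps on $\frakY(r)$, $\frakY'(r)$ via the universal properties. $G$ commutes with the projections, the diagonal and the exchange of factors, and $Q_{\frakX}(2)\ra R_{\frakX',1}(2)$ is obtained in the same way from $\frakY(2)\ra\frakY'(2)$. By the uniqueness in the universal property of $R_{\frakX',1}(r)$ applied to the flat source $Q_{\frakX}(r)$, the structure maps of the two groupoids are intertwined by $\nu$, and the square with $G$ commutes by construction.
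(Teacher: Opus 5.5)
Your proposal is correct and is essentially the paper's proof: the flatness of $Q_{\frakX}$ and the universal property of the dilatation $R_{\frakX',1}$ reduce everything to showing that $G_1^{\#}$ kills the ideal of $X'\ra Y'$ in $\Ox_{Q_1}$, and this follows from the formula $1+G^{\#}(\widetilde{\eta}'_i)=(1+\widetilde{\eta}_i)^p(1+pc)$ with $c\in\calI$ (\cite{SF1} (9.4.1)) together with $\widetilde{\eta}_i^p\in p\Ox_{Q_{\frakX}}$. There are two harmless slips, neither of which affects the ideal-theoretic argument you actually carry out: in the paper $\frakY'^{\#}_{(X'/p)}$ denotes $Q_{\frakX'}$, not $R_{\frakX',1}$; and your heuristic factorization through $G_{Q_1}$ is off by a Frobenius, since $\lambda_Q\circ G_{Q_1}=F_X\circ q_1$, whereas the map $Q_1\ra X'$ through which $G_1$ factors must be $F_1\circ q_1$ (cf.\ \ref{lemKhaminei1}).
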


\begin{proof}
Denote by $Q_1$ the special fiber of $Q_{\frakX}.$ Let $\frakX' \ra \frakU' \ra \frakY'$ be a factorization of $\Delta'$ into a closed immersion followed by an open one and denote by $\calI'$ the ideal of $\frakX' \ra \frakU'.$ By definition of $R_{\frakX',1}$ (\cite{SF1} 11.2) and (\cite{SF1} 10.2), it is sufficient to prove that the morphism $Q_1 \ra Y \xrightarrow{G_1} Y'$ factors through the strict diagonal immersion $X' \ra Y'.$ By (\cite{SF1} (9.4.1) and (9.4.2)) and the definition of $Q_{\frakX},$ if $a$ is a local section of $\calI',$ then the image of $G_1^{\#}(a)$ in $Q_1$ vanishes. This proves the result.
\end{proof}

\begin{remark}
Keep the assumption of \ref{lem92} and let $Q_1 \ra R'_1$ be the special fiber of $\nu:Q_{\frakX} \ra R_{\frakX',1}$ and let $\calV:\calR_1' \ra \calQ_1$ be the morphism of Hopf algebras induced by it. Let $q_1,q_2:Q_1 \ra X$ be the canonical projections. Suppose that the hypothesis \ref{loccoord} is satisfied. By (\cite{SF1} 9.3), there exist local sections $b_i$ of $\Ox_{\frakX}$ such that $F^{\flat}(\widetilde{m}_i')=p\widetilde{m}_i+\alpha_{\frakX}^{-1}(1+pb_i).$ If we denote by $c_i$ the image of $b_i$ in $\Ox_X,$ then, by (\cite{SF1} 9.4), we have
\begin{equation}\label{eqKoko1323}
\calV \left (\eta_{i(r)} \right ) = \eta_{i(q)} + \sum_{k=1}^{p-1}\frac{(-1)^{k+1}}{k}\eta_i^k + q_2^{\#}c_i-q_1^{\#}c_i.
\end{equation}
\end{remark}

\section{\texorpdfstring{$Q_{\frakX}$}{Q} %
     and \texorpdfstring{$R_{\frakX,n}$}%
     {R}-stratifications}

\begin{parag}
In this section, we keep the set-up of the previous one i.e. we keep the notation and assumption of \ref{parag86}. In other words, we consider a log smooth morphism of framed fs logarithmic $p$-adic formal schemes $f:(\frakX,Q)\ra (\frakS,P),$ such that $\frakS$ is log flat and locally of finite type over $\op{Spf}W.$ Note that this implies, by \ref{Wflat}, that the formal schemes $\frakX$ and $\frakS$ are flat over $\op{Spf}W$ \eqref{Wflat}. We also consider the formal groupoids $Q_{\frakX}$ and $R_{\frakX,n}$ defined in \ref{parag86}. We denote by $P_{\frakX/\frakS}$ the PD-envelope of the diagonal immersion $\frakX \ra \frakX\times_{\frakS}^{\op{log}}\frakX.$
\end{parag}

\begin{proposition}\label{wiw1}
There exists a unique morphism of logarithmic formal schemes
\begin{equation}\label{wiwkraz}
P_{\frakX/\frakS} \ra Q_{\frakX},
\end{equation}
such that the diagram
$$
\begin{tikzcd}
 & Q_{\frakX} \ar{d} \\
P_{\frakX/\frakS} \ar{r} \ar{ur} & \frakY
\end{tikzcd}
$$
is commutative. In addition, this morphism is compatible with the groupoid structures of $P_{\frakX/\frakS}$ and $Q_{\frakX}.$
\end{proposition}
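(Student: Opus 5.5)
The plan is to use the universal property of the dilatation $Q_{\frakX}=\frakY^{\#}_{(X/p)}$ given in \ref{erafdil2} (2). Here $P_{\frakX/\frakS}\ra \frakY$ denotes the canonical morphism. The PD-envelope of $\frakX\ra\frakX\times^{\op{log}}_{\frakS}\frakX$ coincides with that of the strict immersion $\frakX\ra\frakY$, because $\frakY\ra\frakX\times^{\op{log}}_{\frakS}\frakX$ is log étale (\cite{SF1} 8.10 and 11.2). First I check the hypotheses. $P_{\frakX/\frakS}$ is flat over $\op{Spf}W$ by \ref{Qlogflat}, so it only remains to show that the composition $\underline{(P_{\frakX/\frakS})_1}\ra (P_{\frakX/\frakS})_1\ra \frakY_1$ factors through $X\ra Y$.

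For this factorization I work étale locally under hypothesis \ref{loccoord}. Let $\calI$ be the ideal of $X$ in an open $U\subset Y$. The question is whether, for every local section $a$ of $\calI$, the image of $a$ in $\calP_0=\Ox_{(P_{\frakX/\frakS})_1}$ has vanishing $p$-th power. This holds because $\calI$ maps into the PD-ideal $\calI_0$, and in a PD-ideal of characteristic $p$ we have $a^p=p!\,a^{[p]}=0$. The underlying morphism of schemes therefore factors through $X$. For the logarithmic structures, the immersion $X\ra Y$ is strict, and $\underline{(P_{\frakX/\frakS})_1}\ra (P_{\frakX/\frakS})_1\ra Y$ is a morphism to $Y$. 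A factorization of schemes through a strict closed immersion lifts uniquely to log schemes, since the log structure of $X$ is the pullback of that of $Y$. This gives the factorization, and \ref{erafdil2} (2) then yields a unique $\frakS$-morphism $P_{\frakX/\frakS}\ra Q_{\frakX}$ over $\frakY$. Uniqueness in the statement is exactly this uniqueness, since the factorization condition depends only on the morphism to $\frakY$.

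For compatibility with the groupoid structures, I apply the same argument to $r$-fold products. The morphisms $P_{\frakX/\frakS}(r)\ra Q_{\frakX}(r)$ exist and are unique over $\frakY(r)$. Each structure map (composition $\alpha$, unit $\iota$, inverse $\eta$) on $P$ and on $Q$ is induced by the corresponding map between $\frakY(r)$'s, namely the projection $p_{13}$, the diagonal, or the swap. Therefore the two composites $P(2)\ra Q$, obtained by going around each square, are both morphisms over $\frakY$ compatible with $\frakY(2)\ra \frakY$. By the uniqueness part of \ref{erafdil2} (2), they coincide. The same argument applies to $\iota$ and $\eta$. It also requires compatibility with the identifications $P\times_{\frakX}P\simeq P(2)$ and $Q\times_{\frakX}Q\simeq Q(2)$, which is checked in the same way by uniqueness.

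The step I expect to be the main obstacle is the identification of $P_{\frakX/\frakS}$ as a logarithmic formal scheme over $\frakY$ with the right flatness, so that \ref{erafdil2} applies. Once that is in place, the vanishing of $a^p$ in a PD-ideal is straightforward, and so is the bookkeeping for the log structure through the strict immersion.
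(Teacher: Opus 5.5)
Your proposal is correct and follows essentially the same route as the paper. Both arguments rest on the vanishing $a^p=p!\,a^{[p]}=0$ in $P_0$ for local sections $a$ of the ideal of $X$ in $U\subset Y$, the flatness of $P_{\frakX/\frakS}$ over $\op{Spf}W$ from \ref{Qlogflat}, and the universal property \ref{erafdil2} of $Q_{\frakX}$. Your uniqueness argument for the groupoid compatibility spells out what the paper leaves implicit. Note, though, that applying the uniqueness in \ref{erafdil2} to the composition law requires the source $P_{\frakX/\frakS}(2)\simeq P_{\frakX/\frakS}\times_{\frakX}P_{\frakX/\frakS}$ to be flat over $\op{Spf}W$, which \ref{Qlogflat} does not state. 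This follows from the same local PD-polynomial description.
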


\begin{proof}
If $X=\frakX_1 \ra U \ra \frakY_1=Y$ is a factorization of the exact diagonal immersion into a closed immersion followed by an open one and if $x$ is a local section of the ideal of $X\ra U$ then its image in $P_0= \left (P_{\frakX/\frakS}\right )_1$ satisfies
$x^p=p! x^{[p]}=0.$
It follows that the composition
$\underline{P_0} \ra P_0 \ra Y$
factors through $X.$
Since $P_{\frakX/\frakS}$ is flat over $\op{Spf}W$ \eqref{Qlogflat}, we conclude by the universal property of $Q_{\frakX}$ (\ref{erafdil2}).
\end{proof} 

\begin{parag}
Let $P_0$ and $Q_1$ be the special fibers of $P_{\frakX/\frakS}$ and $Q_{\frakX},$ and let $\calP_0$ and $\calQ_1$ be the Hopf algebras defined by $P_0$ and $Q_1$ respectively.
The morphism $P_{\frakX/\frakS} \ra Q_{\frakX}$ \eqref{wiw1} induces a morphism of groupoids $P_0 \ra Q_1$ and then a morphism of Hopf algebras
\begin{equation}\label{Muzanu1}
u:\calQ_1 \ra \calP_0.
\end{equation}
By taking the dual, we obtain a morphism of $\Ox_X$-algebras
\begin{equation}\label{checku}
\check{u}:\widehat{\calD}_{X/S} \ra \calQ_1^{\vee},
\end{equation}
where $\widehat{\calD}_{X/S}=\mathscr{Hom}_{\Ox_X}(\calP_0,\Ox_X)$ is the sheaf of hyper PD-differential operators.
Under the hypothesis \ref{loccoord}, since $(p-1)!\equiv -1\ (\text{mod}\ p),$ we clearly have, for all $1\le i\le d$ and with the notation \ref{loccoord},
\begin{equation}\label{ueta}
u(\eta_i)=\eta_i,\ u(\eta_{i(q)})=-\eta_i^{[p]}.
\end{equation}
For an upcoming proposition, we recall the canonical Hopf algebra structure on $S^{\bullet}\omega^1_{X/S}:$
\begin{equation}\label{HopfalgstrS}
\begin{alignedat}{2}
\delta : &\begin{array}[t]{clc}
S^{\bullet}\omega^1_{X/S} & \ra & S^{\bullet}\omega^1_{X/S} \otimes_{\Ox_X} S^{\bullet}\omega^1_{X/S} \\
x & \mapsto & 1\otimes x+x\otimes 1,
\end{array},\quad
\sigma : &\begin{array}[t]{clc}
S^{\bullet}\omega^1_{X/S} & \ra & S^{\bullet}\omega^1_{X/S} \\
x & \mapsto & -x,
\end{array} \\
\pi : &\begin{array}[t]{clc}
S^{\bullet}\omega^1_{X/S} & \ra & \Ox_X \\
x=(x_0,x_1,\hdots ) & \mapsto & x_0,
\end{array}
\end{alignedat}
\end{equation}
where $x_i \in S^{i}\omega^1_{X/S}.$
\end{parag}

\begin{proposition}\label{dualAkaza}
For a Hopf $\Ox_X$-algebra $\calA,$ let $\mathscr{Hom}_{\Ox_X}(\calA,\Ox_X)$ be the sheaf of $\Ox_X$-linear morphisms of $\Ox_X$-modules, equipped with the ring structure induced by the Hopf algebra structure on $\calA.$
Denote by $\widehat{S}^{\bullet}\calT_{X/S}$ the completion of $S^{\bullet}\calT_{X/S}$ with respect to the ideal
$$S^{\ge 1}\calT_{X/S}:=\bigoplus_{n\ge 1}S^n\calT_{X/S}.$$
Also denote by and $\widehat{\Gamma}^{\bullet}\calT_{X/S}$ the completed PD-algebra of $\calT_{X/S}.$
There exist canonical isomorphisms of $\Ox_X$-algebras
\begin{equation}\label{isodualAkaza}
s:\widehat{S}^{\bullet}\calT_{X/S}\xrightarrow{\sim} \mathscr{Hom}_{\Ox_X}(\Gamma^{\bullet}\omega^1_{X/S},\Ox_X),
\end{equation}
\begin{equation}\label{isodualAkaza2}
s':\widehat{\Gamma}^{\bullet}\calT_{X/S}\xrightarrow{\sim} \mathscr{Hom}_{\Ox_X}(S^{\bullet}\omega^1_{X/S},\Ox_X),
\end{equation}
such that the following diagram is commutative
$$
\begin{tikzcd}
\calT_{X/S} \ar[equal]{r} \ar{d} & \mathscr{Hom}_{\Ox_X}(\omega^1_{X/S},\Ox_X) \ar{d} \\
\widehat{S}^{\bullet}\calT_{X/S} \ar{r}{s} & \mathscr{Hom}_{\Ox_X}(\Gamma^{\bullet}\omega^1_{X/S},\Ox_X)
\end{tikzcd}
$$
where the left vertical arrow is the canonical one and the right vertical arrow sends $f:\omega^1_{X/S} \ra \Ox_X$ to the morphism induced by $f$ and the zero maps $0:\Gamma^{\ge 2}\omega^1_{X/S} \ra \Ox_X$ and $0:\Ox_X \ra \Ox_X.$
\end{proposition}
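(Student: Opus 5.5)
The plan is to construct $s$ and $s'$ from the classical perfect pairings between symmetric and divided power algebras of a locally free module and its dual. The key steps are: reduce to a graded statement, check multiplicativity on a local basis, then verify the diagram in degree one.

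\emph{Reduction to degrees.} Since $X\ra S$ is log smooth, $\omega^1_{X/S}$ is a locally free $\Ox_X$-module of finite rank $d$ with dual $\calT_{X/S}$. Both $\Gamma^{\bullet}\omega^1_{X/S}$ and $S^{\bullet}\omega^1_{X/S}$ are graded with locally free components of finite rank. Hence
$$
\mathscr{Hom}_{\Ox_X}(\Gamma^{\bullet}\omega^1_{X/S},\Ox_X)=\prod_{n\ge 0}\mathscr{Hom}_{\Ox_X}(\Gamma^n\omega^1_{X/S},\Ox_X),
$$
and similarly for $S^{\bullet}$. Exactly as in the proof of \ref{era4tak2}, we have $\widehat{S}^{\bullet}\calT_{X/S}=\prod_n S^n\calT_{X/S}$ and $\widehat{\Gamma}^{\bullet}\calT_{X/S}=\prod_n\Gamma^n\calT_{X/S}$. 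So it suffices to produce canonical isomorphisms of modules
$$
S^n\calT_{X/S}\xrightarrow{\sim}(\Gamma^n\omega^1_{X/S})^{\vee},\qquad \Gamma^n\calT_{X/S}\xrightarrow{\sim}(S^n\omega^1_{X/S})^{\vee}
$$
for each $n$. I would define these intrinsically through the standard pairings. For the first, $\partial_1\cdots\partial_n$ pairs with $x_1^{[a_1]}\cdots$ by the formula obtained from viewing $\Gamma^n$ as symmetric tensors and $S^n$ as coinvariants. For the second, the roles are swapped. These pairings are defined without coordinates, so the resulting maps glue globally. That they are isomorphisms is local, and follows from the well-known fact $\Gamma^n(M)\simeq (S^nM^{\vee})^{\vee}$ for $M$ finite free.

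\emph{Multiplicativity.} This is checked locally with coordinates $m_1,\hdots,m_d$. Let $e_i=\op{dlog}m_i$ and let $\partial_i$ be the dual basis. For $I\in\N^d$, write $e^{[I]}=\prod_i e_i^{[I_i]}$ and $e^I=\prod_i e_i^{I_i}$. The pairings make $(\partial^J)$ dual to $(e^{[I]})$ and $(\partial^{[J]})$ dual to $(e^I)$. The ring structure on the dual of a Hopf algebra $\calA$ is the convolution $(f\cdot g)=(f\otimes g)\circ\delta$.
\begin{itemize}
\item On $\Gamma^{\bullet}$, one has $\delta(e^{[I]})=\sum_{A+B=I}e^{[A]}\otimes e^{[B]}$. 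Hence the dual basis elements multiply as $\partial^J\cdot\partial^K=\partial^{J+K}$, which is the product in $S^{\bullet}\calT_{X/S}$.
\item On $S^{\bullet}$, \eqref{HopfalgstrS} gives $\delta(e^I)=\sum_{A+B=I}\binom{I}{A}e^A\otimes e^B$. Hence the dual elements multiply as $\partial^{[J]}\cdot\partial^{[K]}=\binom{J+K}{J}\partial^{[J+K]}$, which is exactly the product in $\Gamma^{\bullet}\calT_{X/S}$.
\end{itemize}
Since an element of each completed algebra is an infinite sum $\sum_J c_J\partial^J$ (resp. $\sum_J c_J\partial^{[J]}$), I must also check that convolution is compatible with these infinite sums. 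This holds because evaluating a product on a fixed $e^{[I]}$ (resp. $e^I$) only involves the finitely many terms with $J\le I$. The unit corresponds to the counit $\pi$, which is projection to degree $0$. Thus $s$ and $s'$ are ring isomorphisms.

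\emph{Compatibility with the diagram.} The diagram in the statement only involves degree $1$. There, $s$ restricts to the identification $\calT_{X/S}=(\omega^1_{X/S})^{\vee}=(\Gamma^1\omega^1_{X/S})^{\vee}$, extended by zero on the other degrees, which is immediate from the definition of the pairing.

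The main obstacle I expect is making the pairings genuinely coordinate-free, so that $s$ and $s'$ are canonical rather than merely locally defined. I would handle this by defining the degree-$n$ pairings via the universal properties of $S^n$ (symmetric multilinear maps) and $\Gamma^n$ (the divided power structure), and then checking only the values on bases locally.
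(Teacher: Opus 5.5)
Your proposal is correct and follows essentially the same route as the paper. Both build the degree-wise canonical dualities $S^n\calT_{X/S}\simeq(\Gamma^n\omega^1_{X/S})^{\vee}$ and $\Gamma^n\calT_{X/S}\simeq(S^n\omega^1_{X/S})^{\vee}$, which the paper takes from Ogus's appendix, and assemble them using $\mathscr{Hom}$ of a direct sum being a product together with the identification of the completions with $\prod_n$. The one difference is that the paper cites Ogus--Vologodsky for multiplicativity, whereas you check it directly with the local coproduct formulas; your computations there are correct.
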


\begin{proof}
The $\Ox_X$-module $\omega^1_{X/S}$ is locally free of finite rank so, by (\cite{Ogus78} A10), there exists, for any integer $n\ge 0,$ canonical isomorphisms
$$s_n:S^{n}\calT_{X/S} \xrightarrow{\sim}\mathscr{Hom}_{\Ox_X}(\Gamma^n\omega^1_{X/S},\Ox_X),$$
$$S^{n}\omega^1_{X/S} \xrightarrow{\sim}\mathscr{Hom}_{\Ox_X}(\Gamma^{n}\calT_{X/S},\Ox_X).$$
By taking the dual of the second arrow, we obtain an isomorphism
$$s_n':\Gamma^{n}\calT_{X/S} \xrightarrow{\sim}\mathscr{Hom}_{\Ox_X}(S^{n}\omega^1_{X/S},\Ox_X).$$
Then
\begin{alignat*}{2}
\mathscr{Hom}_{\Ox_X}(\Gamma^{\bullet}\omega^1_{X/S},\Ox_X) & \xrightarrow{\sim} \prod_{n\ge 0}\mathscr{Hom}_{\Ox_X}(\Gamma^n\omega^1_{X/S},\Ox_X)
\xrightarrow{\sim} \prod_{n\ge 0} S^n\calT_{X/S} \\
&\xrightarrow{\sim} \lim_{\substack{\longleftarrow \\ n\ge 1}} \bigoplus_{k<n}S^k\calT_{X/S}
\xrightarrow{\sim} \lim_{\substack{\longleftarrow \\ n\ge 0}} \left (S^{\bullet}\calT_{X/S} / \bigoplus_{k\ge n}S^k\calT_{X/S} \right ) \\
&\xrightarrow{\sim} \w{S}^{\bullet}\calT_{X/S},
\end{alignat*}
where the first arrow is induced by the canonical embeddings 
$\Gamma^n\omega^1_{X/S} \hookrightarrow \Gamma^{\bullet}\omega^1_{X/S},$
the second arrow is induced by $(s_n^{-1})_{n\ge 0},$ the third arrow sends $(t_n)\in \prod_{n\ge 0}S^n\calT_{X/S}$ to the section $(y_n)$ defined, for any $n\ge 1,$ by
$y_n=(t_0,t_1,\hdots,t_{n-1}),$
and the two last arrows are the canonical ones. The fact that this isomorphism is an isomorphism of algebras follows from (\cite{OgusVol} 5.19). The isomorphism $s'$ is constructed in a similar way and the remaining assertion follows from the construction of $s.$
\end{proof}

\begin{proposition}\label{erafprop115}
Let $\calR=\calR_{\frakX,1}/p\calR_{\frakX,1}.$ There exists a canonical isomorphism of $\Ox_X$-algebras
\begin{equation}\label{eq1181}
S^{\bullet}\omega^1_{X/S} \xrightarrow{\sim} \calR,
\end{equation}
compatible with the Hopf algebra structures. In particular, there exists a canonical isomorphism of $\Ox_X$-algebras
\begin{equation}\label{eq1182}
\calR^{\vee} \xrightarrow{\sim} \widehat{\Gamma}^{\bullet}\calT_{X/S}.
\end{equation}
In addition, under the hypothesis \ref{loccoord}, \eqref{eq1181} sends $\op{dlog}m_i$ to $\eta_{i(r)}.$
\end{proposition}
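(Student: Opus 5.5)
We construct \eqref{eq1181} globally from the universal property of $R_{\frakX,1}$ and then verify that it is an isomorphism étale locally, where the explicit description \eqref{isoR} is available. Let $\calI$ be the ideal of the strict closed immersion $\frakX\ra\frakU$ and $\calJ$ the ideal of $X\ra R_1$, where $R_1$ is the special fiber of $R_{\frakX,1}$. Since $R_{\frakX,1}$ is the dilatation of $\calI+(p)$ with respect to $p$, every section $a\in\calI$ becomes divisible by $p$ in $\calR_{\frakX,1}$. We therefore have a well-defined map
$$
\calI\ra \calR=\calR_{\frakX,1}/p\calR_{\frakX,1},\qquad a\mapsto \ov{a/p}.
$$
We will check that this map is $\Ox_{\frakX}$-linear and kills $p\calI$. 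For the second point, $p\calI/p=\calI\subset\calJ$, and we need its image in $\calR$ to vanish modulo $p$; we will verify this carefully using flatness of $R_{\frakX,1}$ over $\op{Spf}W$ (\ref{Qlogflat}). We will also check that it kills $\calI^2$, since $ab/p=p\cdot(a/p)(b/p)$.

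These checks give a morphism $\calI/(\calI^2+p\calI)\ra\calR$. Through \eqref{isoomega1} reduced modulo $p$, this becomes an $\Ox_X$-linear map $\omega^1_{X/S}\ra\calR$. We must confirm that the $\Ox_X$-module structure used here agrees with the one coming from $q_1$; the two agree after reduction because $q_2^\#-q_1^\#$ lands in $\calI$. By the universal property of the symmetric algebra, we obtain an $\Ox_X$-algebra map $S^\bullet\omega^1_{X/S}\ra\calR$.

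Under hypothesis \ref{loccoord}, $\op{dlog}m_i$ corresponds to the class of $\widetilde\eta_i$, so it is sent to $\eta_{i(r)}$, which is the last assertion. Bijectivity is local, so by \ref{par1212} we may assume \eqref{isoR}. Reducing \eqref{isoR} modulo $p$ gives $\calR\simeq\Ox_X[\eta_{1(r)},\dots,\eta_{d(r)}]$, a polynomial algebra. Since $\omega^1_{X/S}$ is free on the $\op{dlog}m_i$, the map is an isomorphism.

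For compatibility with the Hopf structures, both sides are generated as $\Ox_X$-algebras by degree-one elements, so it suffices to compare $\delta,\pi,\sigma$ on the generators. By \ref{HopfR}, $\eta_{i(r)}$ is primitive with counit $0$ and antipode $-\eta_{i(r)}$, which matches \eqref{HopfalgstrS}. Both structures are canonical and the local comparison is independent of the chosen coordinates, so the check glues.

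Finally, \eqref{eq1182} is obtained by dualizing. The isomorphism \eqref{eq1181} identifies $\calR^\vee$ with $\mathscr{Hom}_{\Ox_X}(S^\bullet\omega^1_{X/S},\Ox_X)$, with the ring structures induced by the compatible Hopf structures. Composing with the inverse of $s'$ from \ref{dualAkaza} gives the isomorphism.

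The step I expect to be the main obstacle is making the global map well defined and independent of choices. This includes showing that $a\mapsto a/p$ kills $p\calI$ modulo $p$, and that the two $\Ox_X$-structures agree. If the direct argument is awkward, I would instead define the map locally in coordinates via $\op{dlog}m_i\mapsto\eta_{i(r)}$. I would then check invariance under change of coordinates using \eqref{era2exactseq}: a coordinate change modifies the $\widetilde\eta_i$ by elements of $\calI^2$ plus $\Ox$-linear combinations, which vanish after dividing by $p$ and reducing modulo $p$. The local maps then glue.
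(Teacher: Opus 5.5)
Your proposal is correct and follows essentially the paper's own proof: define $a\mapsto \ov{a/p}$ on $\Delta^{-1}\calI$, note that it vanishes on $\calI^2$, factor it through \eqref{isoomega1} to obtain $S^{\bullet}\omega^1_{X/S}\ra\calR$, get bijectivity from \eqref{isoR} and Hopf compatibility from \ref{HopfR}, and obtain \eqref{eq1182} by dualizing with \ref{dualAkaza}. The step you flagged as the main obstacle is immediate and needs no separate flatness argument: for $a\in\calI$ one has $pa\mapsto a=p\cdot(a/p)\in p\calR_{\frakX,1}$, so $p\calI$ dies in $\calR$, and your coordinate-gluing fallback is therefore unnecessary.
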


\begin{proof}
Let $\frakX \xrightarrow{\Delta} \frakU \ra \frakY$ be a factorization of the exact diagonal immersion into a closed immersion followed by an open one, $\calI$ the ideal of $\Delta$ and $g:R_{\frakX,1} \ra \frakU$ the canonical morphism. The morphism of topoi $g:\left (R_{\frakX,1} \right )_{\text{zar}} \ra \frakU_{\text{zar}}$ factors as
$$
\begin{tikzcd}
\left (R_{\frakX,1} \right )_{\text{zar}} \ar{r}{\omega} \ar{d}{g} & \frakX_{\text{zar}} \ar{dl}{\Delta} \\
\frakU_{\text{zar}} &
\end{tikzcd}
$$
The morphism
$
g^{-1}\calI \ra \Ox_{R_{\frakX,1}},\ x\mapsto \frac{x}{p}
$
induces
$\Delta^{-1}\calI \ra \omega_*\Ox_{R_{\frakX,1}} = \calR_{\frakX,1}.$
Since $\calR$ is of characteristic $p,$ the composition $\Delta^{-1}\calI \ra \calR_{\frakX,1} \ra \calR,$ where $\calR_{\frakX,1}\ra \calR$ is the canonical projection,
vanishes on $\Delta^{-1}\calI^2.$ By \eqref{isoomega1}, we get an $\Ox_X$-linear morphism
$\omega^1_{X/S} \ra \calR$
and then a morphism of $\Ox_X$-algebras
$S^{\bullet}\omega^1_{X/S} \ra \calR.$
It is an isomorphism by \eqref{isoR}. The compatibility with the Hopf algebra structures comes from \ref{HopfR}. By taking the dual and \ref{dualAkaza}, we get an isomorphism
$$\calR^{\vee} \xrightarrow{\sim} \widehat{\Gamma}^{\bullet}\calT_{X/S}.$$
\end{proof}

\begin{parag}\label{Q'recall}
Recall the monoid $Q'$ and the morphisms $F_{Q/P}:Q'\ra Q,\ (x,t)\mapsto px+\theta^{gp}(t)$ and $\pi_{Q/P}:Q\ra Q',\ x\mapsto (x,0)$ defined in \ref{PFrob}.
\end{parag}

\begin{lemma}\label{Glogflat}
Keep the notation \ref{Q'recall}. Suppose that there exists a morphism $F:(\frakX,Q) \ra (\frakX',Q')$ of framed logarithmic formal schemes, over $(\frakS,P),$ lifting the exact relative Frobenius $F_1:X \ra X'$ and such that $\frakX'\ra \frakS$ is log smooth. Then $F$ and the morphism
$$G:\frakY=\frakX\times_{\frakS,[Q]}^{\op{log}}\frakX \ra \frakX'\times_{\frakS,[Q']}^{\op{log}}\frakX'=\frakY',$$
induced by $F,$ are log flat.
\end{lemma}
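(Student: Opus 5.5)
The plan is to show log flatness of $F$ first, and then deduce it for $G$ through the strict projections.

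\emph{Step 1: $F$ is log flat.} Log flatness of a morphism of $p$-adic formal schemes means log flatness modulo $p^l$ for every $l\ge 1$. The key ingredients are:
\begin{itemize}
\item $\frakX$ and $\frakX'$ are log smooth over $\frakS$, and $\frakS$ is log flat over $\op{Spf}W$, so both are flat over $\op{Spf}W$ by \ref{Wflat}.
\item The special fiber $F_1$ is the exact relative Frobenius of the log smooth morphism $X\ra S$, hence log flat by (\cite{SF1} 3.5).
\end{itemize}
I would then invoke a log version of the local flatness criterion for nilpotent thickenings, presumably available in \cite{SF1} (section 7, around 7.18--7.19). For a morphism of log schemes flat over $W/p^l$, log flatness can be tested on the reduction modulo $p$. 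Applying this to $F_l:\frakX_l\ra \frakX'_l$ for each $l$ gives log flatness of $F$.

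If no such criterion is directly available, I would argue with charts. Étale locally, take the framed charts $Q\ra \Ox_{\frakX}$ and $Q'\ra \Ox_{\frakX'}$; then $F$ is compatible with $F_{Q/P}:Q'\ra Q$, which is injective with cokernel killed by a power of $p$. Log flatness then reduces to flatness of the induced morphism $\frakX\ra \frakX'\times_{B\langle Q'\rangle}B\langle Q\rangle$. That morphism is flat modulo $p$, since $F_1$ is log flat and exact. The ordinary local flatness criterion over the flat base $W/p^l$ then lifts flatness from the special fiber to each $\frakX_l$.

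\emph{Step 2: $G$ is log flat.} I would use the strict projections $\frakY\ra \frakX$ and $\frakY'\ra \frakX'$ (\cite{SF1} 8.9). The projections are strict, and $G$ is compatible with the first projections and $F$. By step 1, the composite $\frakY\ra \frakX\xrightarrow{F}\frakX'$ is log flat.

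It would suffice to show that the induced morphism $\frakY\ra \frakX'\times^{\op{log}}_{\frakX'}\frakY'$ is flat, or more simply that $G$ is log flat modulo $p$ together with flatness over $W$. Both $\frakY$ and $\frakY'$ are flat over $\op{Spf}W$, by the argument in \ref{parag86}. So again it is enough to check $G_1$.

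For $G_1:Y\ra Y'$, I would use the explicit description $\frakY=\frakS\times_{B\langle P\rangle}B\langle N\rangle$ étale locally, as in the proof of \ref{locdescRQ}, and similarly $\frakY'$ with $N'$. The map $G$ then corresponds to the monoid map $N'\ra N$ induced by $F_{Q/P}$, which is again injective with $p$-power torsion-free-part cokernel, together with an underlying morphism built from $F$. Flatness of $G_1$ modulo the chart should follow from flatness of $F_1$ and base change along the projection.

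\emph{Main obstacle.} The hardest part will be Step 2. $G$ is not simply a base change of $F$, and the framed fiber product $\frakX\times_{\frakS,[Q]}^{\op{log}}\frakX$ mixes the two factors. My first attempt would be to factor $G$ as
$$\frakX\times_{[Q]}\frakX\ra \frakX'\times_{[Q']}\frakX\ra \frakX'\times_{[Q']}\frakX',$$
and show each arrow is a base change of $F$ along a strict (hence harmless) projection. This would use strictness of the projections to identify these framed products with ordinary fiber products over $\frakX$ or $\frakX'$.
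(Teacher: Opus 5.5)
Your Step 1 and your reduction in Step 2 to the special fiber match the paper. The paper deduces log flatness of $F$, and at the end of $G$, from that of the special fibers via the local criterion (\cite{Ogus2018} IV 4.2.2). For this it uses that $\frakX$ and $\frakY$ are log smooth over $\frakS$, hence log flat (not merely flat) over $\op{Spf}W$, which is the hypothesis that criterion needs.

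The gap is in proving that $G_1$ is log flat. The chart route does not work as written. The identification $\frakY\cong\frakS\times_{B\langle P\rangle}B\langle N\rangle$ holds only in the model case of \ref{locdescRQ}. Even there, $G$ is not determined by the monoid map $N'\ra N$, because $F$ is an arbitrary lift.

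Your factorization route breaks at its first arrow. Base-changing $F$ along the first projection $\frakX'\times_{\frakS,[Q']}^{\op{log}}\frakX\ra\frakX'$ gives $\frakX\times_{\frakS,[Q']}^{\op{log}}\frakX$. In that product the first factor is framed over $[Q']$ through $[F_{Q/P}]$; this is where you use that $F$ is a framed morphism. It is not $\frakY=\frakX\times_{\frakS,[Q]}^{\op{log}}\frakX$. So $\frakY\ra\frakX'\times_{\frakS,[Q']}^{\op{log}}\frakX$ is not, as it stands, a base change of $F$. Strictness of the projections does not bridge this difference. Strictness is irrelevant anyway, since log flatness is stable under arbitrary base change of fs log schemes (\cite{Ogus2018} IV 4.1.2).

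The missing ingredient is the change-of-frame morphism
$$Y=X\times_{S,[Q]}^{\op{log}}X\ra X\times_{S,[Q']}^{\op{log}}X.$$
The paper observes that this map is log \'etale, because source and target are both log \'etale over $X\times_S^{\op{log}}X$ (\ref{parag77}). Then $G_1$ is this log \'etale map followed by $H=F_1\times_{S,[Q']}F_1$. In turn, $H$ factors as $\op{Id}\times F_1$ followed by $F_1\times\op{Id}$, each a base change of the log flat morphism $F_1$. Hence $G_1$ is log flat, and $G$ follows by the same lifting criterion. If you insert this comparison map before your first arrow, your factorization becomes essentially the paper's argument.
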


\begin{proof}
Since $F_1$ is log flat \eqref{PFrob} and by (\cite{Ogus2018} IV 4.2.2), the lifting $F$ is log flat. Note that $\frakY$ and $\frakY'$ are log étale over $\frakX\times_{\frakS}^{\op{log}}\frakX$ and $\frakX'\times_{\frakS}^{\op{log}}\frakX'$ respectively \eqref{parag77}. The morphisms $\frakX \ra \frakS$ and $\frakX' \ra \frakS$ are log smooth hence so are the projections $\frakX\times_{\frakS}^{\op{log}}\frakX \ra \frakX$ and $\frakX'\times_{\frakS}^{\op{log}}\frakX' \ra \frakX'.$ Since the projections $\frakY \ra \frakX$ and $\frakY' \ra \frakX'$ are strict \eqref{parag77}, we get that the canonical morphisms $\frakY \ra \frakS$ and $\frakY' \ra \frakS$ are log smooth, hence $\frakY$ and $\frakY'$ are locally of finite presentation and log flat over $\op{Spf}W.$ Consider $X$ as a logarithmic scheme over $[Q']$ by the composition
$$X \ra [Q] \xrightarrow{[F_{Q/P}]} [Q'].$$
We have the commutative diagram
$$
\begin{tikzcd}
X \times_{S,[Q]}^{\op{log}}X \ar{r} \ar{dr} & X \times_{S,[Q']}^{\op{log}}X\ar{d} \\
 & X \times_{S}^{\op{log}}X.
\end{tikzcd}
$$
By \ref{parag77}, the morphisms $X \times_{S,[Q]}^{\op{log}}X \ra X \times_{S}^{\op{log}}X$ and $X \times_{S,[Q']}^{\op{log}}X \ra X \times_{S}^{\op{log}}X$ are log étale so $X \times_{S,[Q]}^{\op{log}}X \ra X \times_{S,[Q']}^{\op{log}}X$ is log étale. We prove that
$$H=F_1\times_{S,[Q']} F_1:X \times_{S,[Q']}^{\op{log}}X \ra X' \times_{S,[Q']}^{\op{log}}X'$$
is log flat. This morphism $F_1\times_{S,[Q']} F_1$ decomposes as
$$X\times_{S,[Q']}^{\op{log}}X \xrightarrow{\op{Id} \times F_1} X\times_{S,[Q']}^{\op{log}}X' \xrightarrow{F_1\times \op{Id}} X'\times_{S,[Q']}^{\op{log}}X'.$$
By the log flatness of $F_1$ \eqref{PFrob} and the cartesian squares
$$
\begin{tikzcd}
X\times_{S,[Q']}^{\op{log}}X \ar{r} \ar{d} & X\ar{d}{F_1} & X\times_{S,[Q']}^{\op{log}}X' \ar{r} \ar{d} & X\ar{d}{F_1} \\
X\times_{S,[Q']}^{\op{log}}X' \ar{r}{q_2} & X' & X'\times_{S,[Q']}^{\op{log}}X' \ar{r}{p_1} & X',
\end{tikzcd}
$$
where $p_1$ is the first projection and $q_2$ is the second projection, we deduce that $H:X \times_{S,[Q']}^{\op{log}}X \ra X' \times_{S,[Q']}^{\op{log}}X'$ is log flat. It follows that the special fiber $G_1$ is also log flat. Since $\frakY$ is log flat over $\op{Spf}W,$ we conclude by (\cite{Ogus2018} IV 4.2.2) that $G$ is log flat.
\end{proof}

\begin{proposition}\label{groupprop17}
Keep the notation \ref{Q'recall}. Suppose that there exists a morphism $F:(\frakX,Q) \ra (\frakX',Q')$ of framed logarithmic formal schemes, over $(\frakS,P),$ lifting the exact relative Frobenius $F_1:X \ra X'$ and such that $\frakX'\ra \frakS$ is log smooth. Let $G:\frakY \ra \frakY':=\frakX'\times_{\frakS,[Q']}^{\op{log}}\frakX'$ be the morphism induced by $F.$ Then the projection
$$\frakY\times_{\frakY'}R_{\frakX',1} \ra \frakY$$
factors through $Q_{\frakX}.$
\end{proposition}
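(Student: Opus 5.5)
We will apply the universal property of the dilatation $Q_{\frakX}=\frakY^{\#}_{(X/p)}$ from \ref{erafdil2}. Set $\frakZ=\frakY\times_{\frakY'}R_{\frakX',1}$ and let $h:\frakZ\ra\frakY$ be the projection. By \ref{erafdil2}(2), to factor $h$ through $Q_{\frakX}$ it suffices to check two things: that $\frakZ$ is flat over $\op{Spf}W$, and that the composition $\underline{\frakZ_1}\ra \frakZ_1\ra Y$ factors through the strict diagonal immersion $X\ra Y$.

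\emph{Flatness.} By \ref{Qlogflat}, $R_{\frakX',1}$ is flat over $\frakX'$. Its structure morphism to $\frakY'$ is strict and identifies étale locally with the dilatation, so locally it is an affine morphism $R_{\frakX',1}\ra\frakY'$ with $\calR_{\frakX',1}$ flat over $W$. Next, $G$ is log flat by \ref{Glogflat}, and its source and target are log smooth over $\frakS$. I would reduce to the local description \eqref{isoR}: étale locally, $\calR_{\frakX',1}=\Ox_{\frakY'}\{\widetilde{\eta}'_i/p\}$ modulo $p$-torsion. Hence $\frakZ$ is the $p$-adic formal spectrum of $\Ox_{\frakY}\{x_i\}/(px_i-G^{\#}\widetilde{\eta}'_i)$, up to $p$-torsion, and flatness over $W$ reduces to $p$-torsion freeness of this algebra. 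Instead of killing $p$-torsion, I would try to show directly that the pullback of the dilatation along a log flat morphism is still the dilatation of the pulled-back ideal, arguing as in \ref{propdilflat}. The difficulty is that $G$ is only log flat, not flat on underlying schemes, so \ref{propdilflat} does not apply directly. The fallback is to replace $\frakZ$ by its quotient by the $p$-torsion (its flat part) and note that \ref{erafdil2} only needs a flat source mapping to $\frakY$. This yields a morphism from the flat part; to get the full statement, I would then show that the $p$-torsion vanishes using the local coordinate description, which makes the pullback explicit. This flatness step is the main obstacle.

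\emph{Factorization modulo $p$.} Work in $Z=\frakZ_1$. Let $a$ be a local section of the ideal $\calI$ of $X\ra U\subset Y$. We must show that the image of $a^p$ in $\Ox_Z$ vanishes. Under hypothesis \ref{loccoord}, $\calI$ is generated by the $\widetilde{\eta}_i$, so it suffices to treat $a=\eta_i$. By (\cite{SF1} 9.3, 9.4), as in the remark following \ref{lem92}, we have
\[
G^{\#}(\widetilde{\eta}'_i)=(1+\widetilde{\eta}_i)^p\,\frac{q_2^{\#}(1+pb_i)}{q_1^{\#}(1+pb_i)}-1,
\]
which is $\equiv \eta_i^p \pmod p$. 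In $\Ox_Z$, $G^{\#}\widetilde{\eta}'_i=p\cdot(\widetilde{\eta}'_i/p)$, which is $0$ mod $p$. Hence $\eta_i^p=0$ in $\Ox_Z$. Since the $p$-th power map is additive in characteristic $p$ and $(ca)^p=c^pa^p$, every $a\in\calI$ has $a^p=0$ in $\Ox_Z$. So $\underline{Z}\ra Y$ factors through $X$, and the factorization is strict because both immersions are strict.

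Finally, I would glue the étale-local factorizations. They are unique by the uniqueness part of \ref{erafdil2}(2), and this uniqueness gives compatibility on overlaps.
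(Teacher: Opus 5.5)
Your factorization step is essentially sound. The flatness step, however, which you yourself single out as the main obstacle, is left unresolved, and that is a genuine gap: \ref{erafdil2}(2) cannot be invoked without knowing that $\frakZ=\frakY\times_{\frakY'}R_{\frakX',1}$ is flat over $\op{Spf}W$.

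Your fallback does not close it. Applying the universal property to the quotient of $\frakZ$ by its $p$-torsion only factors a closed formal subscheme of $\frakZ$ through $Q_{\frakX}$. The promised vanishing of that torsion ``via the local coordinate description'' is not carried out, and it is not a routine computation. Because $G$ is not flat on underlying schemes, $\Ox_{\frakY}\widehat{\otimes}_{\Ox_{\frakY'}}\calR_{\frakX',1}$ is not $\bigl(\Ox_{\frakY}\{x_i\}/(px_i-G^{\#}\widetilde{\eta}'_i)\bigr)_{/p\text{-tor}}$.

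The missing idea is that you never need to describe $\frakZ$ as a dilatation; you already have both ingredients.
\begin{itemize}
\item The morphism $R_{\frakX',1}\ra\frakY'$ is strict, so $\frakZ$ is also the fs fiber product.
\item Log flatness is stable under base change, so $\frakZ\ra R_{\frakX',1}$ is log flat because $G$ is (\ref{Glogflat}).
\item Composing with the log flat morphisms $R_{\frakX',1}\ra\frakX'\ra\frakS\ra\op{Spf}W$ (\ref{Qlogflat}), $\frakZ$ is an fs logarithmic formal scheme log flat over $\op{Spf}W$.
\item Hence $\frakZ$ is flat over $\op{Spf}W$ by \ref{Wflat}.
\end{itemize}
This is how the paper disposes of the point, in two lines.

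For the factorization, your computation is correct: $G^{\#}\widetilde{\eta}'_i\equiv\widetilde{\eta}_i^p$ modulo $p$, while the image of $G^{\#}\widetilde{\eta}'_i$ in $\Ox_{\frakZ}$ lies in $p\Ox_{\frakZ}$. It is the same kind of argument the paper uses in \ref{lem92} and \ref{erafparag121}.

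As written, though, it only controls the part of $\frakZ$ lying over a neighbourhood of the diagonal on which the $\widetilde{\eta}_i$ are defined and generate $\calI$. Factoring through the immersion $X\ra Y$ also requires $Z\ra Y$ to land in the open $U$, and you do not check this. It is not automatic, since points of $Y$ with equal projections need not lie on the diagonal.

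The paper handles both issues at once, without coordinates. The morphism $\pi$ induces $\rho:Y'\ra Y$ with $F_Y=\rho\circ G$ and $\rho\circ\Delta'=\Delta\circ\pi$. So the composite $Z\ra Y\xrightarrow{F_Y}Y$ equals $Z\ra R'_1\ra X'\xrightarrow{\pi}X\xrightarrow{\Delta}Y$, where $R'_1$ is the special fiber of $R_{\frakX',1}$. Since $F_Y$ is the identity on points and $a\mapsto a^p$ on functions, $Z$ lands in $U$ and the $p$-th power of the image of any $a\in\calI$ vanishes in $\Ox_Z$.

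Alternatively, you can supply the missing topological statement directly. The identity $\rho\circ G=F_Y$ shows $G$ is injective on points, and $G$ maps the diagonal onto the diagonal. Hence $G^{-1}(\Delta'(X'))=\Delta(X)$.
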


\begin{proof}
Let $R'_1$ and $Y'$ be the special fibers of $R_{\frakX',1}$ and $\frakY'$ respectively\footnote{Not to confuse $Y'$ with the logarithmic scheme appearing in the exact relative Frobenius of $Y/S.$}.
Since $G$ is log flat \eqref{Glogflat}, the projection $\frakY\times_{\frakY'}R_{\frakX',1} \ra R_{\frakX',1}$ is also log flat. By (\cite{SF1} 11.12), $R_{\frakX',1} \ra \frakX'$ is log flat. We deduce that $\frakY\times_{\frakY'}R_{\frakX',1}$ is log flat over $\frakX'$ and then over $\op{Spf}W.$ Then, by \ref{Wflat}, $\frakY\times_{\frakY'}R_{\frakX',1}$ is flat over $\op{Spf}W.$ To apply the universal property of $Q_{\frakX}$ (\ref{erafdil2}), it remains to prove that
$$\underline{Y\times_{Y'}R'_1} \ra Y\times_{Y'}R'_1 \ra Y$$
factors through the exact diagonal immersion $X \ra Y.$ For that it is sufficient to prove that
$$Y\times_{Y'}R'_1 \ra Y \xrightarrow{F_Y} Y$$
factors through $X \ra Y.$ We may work étale locally on $X$ and suppose that the frames $X\ra [Q]$ and $X' \ra [Q']$ lift to charts $X\ra A_1[Q]$ and $X' \ra A_1[Q'].$ Let $\widetilde{Q}$ and $\widetilde{Q}'$ be the inverse images of $Q$ and $Q'$ by the addition maps
$$Q^{gp} \oplus Q^{gp} \ra Q^{gp},\ Q'^{gp} \oplus Q'^{gp} \ra Q'^{gp}$$
respectively. By (\cite{Saito04} 4.2.3 and 4.2.1), we have
$$
Y=X\times_{S,[Q]}^{\op{log}}X=\left (X\times_S^{\op{log}}X\right )\times_{A_1[Q\oplus Q]}^{\op{log}}A_1[\widetilde{Q}],
$$
$$
Y'=X'\times_{S,[Q']}^{\op{log}}X'=\left (X'\times_S^{\op{log}}X'\right )\times_{A_1[Q'\oplus Q']}^{\op{log}}A_1[\widetilde{Q}'].
$$
The absolute Frobenius $F_Y$ is then induced by the absolute Frobenius morphisms $F_X$ and $F_{A_1[\widetilde{Q}]}.$
Recall that the morphism $\pi:X' \ra X$ \eqref{diag51} is underlying a morphism of framed logarithmic schemes \eqref{PFrob} and satisfies $\pi \circ F_1=F_X.$ The morphism $\pi$ then induces a morphism $\rho:Y' \ra Y$ such that $F_Y=\rho \circ G.$ Let $\Delta':X' \ra Y'$ and $\Delta:X\ra Y$ be the exact diagonal immersions and $p_1,p_2:Y \ra X$ the canonical projections. The compositions
\begin{alignat*}{1}
X' \xrightarrow{\Delta'} Y' \xrightarrow{\rho} Y \xrightarrow{p_i} X \\
X' \xrightarrow{\pi} X \xrightarrow{\Delta} Y \xrightarrow{p_i} X
\end{alignat*}
are both equal to $\pi$ for $i\in \{1,2\}.$ This implies that $\rho \circ \Delta'=\Delta \circ \pi.$
The result then follows from the commutative diagram
$$
\begin{tikzcd}
Y\times_{Y'}R'_1 \ar{r} \ar{d} & Y \ar{r}{F_Y} \ar{d}{G} & Y \\
R'_1 \ar{r} \ar{dr} & Y' \ar[swap]{ru}{\rho} & X \ar[swap]{u}{\Delta} \\
 & X' \ar{u}{\Delta'} \ar{ur}{\pi}, &
\end{tikzcd}
$$
where the upper left square is cartesian.
\end{proof}

\begin{parag}\label{erafparag121}
Keep the assumptions of \ref{groupprop17}. Let $Q_1$ be the special fiber of $Q_{\frakX}.$ By \ref{erafprop13}, we have a canonical morphism of logarithmic schemes $\omega':R'_1 \ra X'.$ Let $\omega:Q_1 \ra X$ be the map of topological spaces coming from the groupoid structure on $Q_1.$ We have a commutative diagram
$$
\begin{tikzcd}
Q_1\ar{rr} & & Q_{\frakX}\ar{d}{g} \\
X\times_{X'}R'_1 \ar{u}{v} \ar{r}{\delta} \ar{urr}{\widetilde{v}} & \frakY \times_{\frakY'}R_{\frakX',1} \ar{ur}{\chi} \ar{r}{\alpha} \ar{d}{\beta} & \frakY \ar{d}{G} \\
 & R_{\frakX',1} \ar{r}{h} & \frakY',
\end{tikzcd}
$$
where the lower square is cartesian, $g$ and $h$ are the canonical morphisms, $\chi:\frakY \times_{\frakY'}R_{\frakX',1} \ra Q_{\frakX}$ is given in \ref{groupprop17} and $\delta$ is induced by $X \ra \frakX \xrightarrow{\Delta}\frakY,$ $X' \ra \frakX' \xrightarrow{\Delta'} \frakY'$ and $R_1' \ra R_{\frakX',1}.$
Let us provide local descriptions for $\widetilde{v}$ and $v.$ Let $p_1,p_2:\frakY \ra \frakX$ be the canonical projections.
Consider the hypothesis \ref{loccoord}. Then, by (\cite{SF1} (9.4.1)), there exists $b\in \Ox_{\frakX}$ such that
$$G^{\#}\left ( \widetilde{\eta}_i'\right ) = \left (\widetilde{\eta}_i^p+\sum_{k=1}^{p-1}\begin{pmatrix}p\\k \end{pmatrix}\widetilde{\eta}_i^k+1 \right )\frac{1+pp_2^{\#}(b)}{1+pp_1^{\#}(b)}-1.$$
Set
$
c=\frac{p_2^{\#}(b)-p_1^{\#}(b)}{1+pp_1^{\#}(b)},
$
so that
$
1+pc=\frac{1+pp_2^{\#}(b)}{1+pp_1^{\#}(b)}
$
and
$$G^{\#}\left ( \widetilde{\eta}_i'\right ) = \left (\widetilde{\eta}_i^p+\sum_{k=1}^{p-1}\begin{pmatrix}p\\k \end{pmatrix}\widetilde{\eta}_i^k+1 \right )(1+pc)-1.$$
Then
\begin{alignat}{2}\label{eq12301}
\widetilde{\eta}_i^p &= G^{\#}(\widetilde{\eta}_i')-\sum_{k=1}^{p-1}\begin{pmatrix}p\\k \end{pmatrix}\widetilde{\eta}_i^k -pc\widetilde{\eta}_i^p-pc \left (\sum_{k=1}^{p-1}\begin{pmatrix}p\\k \end{pmatrix}\widetilde{\eta}_i^k\right )-pc.
\end{alignat}
Applying $g^{\#}$ to \eqref{eq12301}, since $g^{\#}(\widetilde{\eta}_i)^p\in p\Ox_{Q_{\frakX}},$ we get $g^{\#}G^{\#}(\widetilde{\eta}_i') \in p\Ox_{Q_{\frakX}}.$ Using the flatness of $Q_{\frakX}$ over $\Z_p,$ we get
\begin{equation}\label{eq12302}
\frac{g^{\#}(\widetilde{\eta}_i)^p}{p} = \frac{g^{\#}G^{\#}(\widetilde{\eta}_i')}{p}-\sum_{k=1}^{p-1}\frac{(p-1)!}{k!(p-k)!}g^{\#}(\widetilde{\eta}_i)^k -g^{\#}(c\widetilde{\eta}_i^p)- \left (\sum_{k=1}^{p-1}\begin{pmatrix}p\\k \end{pmatrix}g^{\#}(c\widetilde{\eta}_i^k)\right )-g^{\#}(c).
\end{equation}
Since $\widetilde{\eta}_i$ and $c=\frac{p_2^{\#}(b)-p_1^{\#}(b)}{1+pp_1^{\#}(b)}$ belong to the ideal of $\Delta:\frakX \ra \frakY,$ we get
\begin{alignat}{2}
\widetilde{v}^{\#}g^{\#}(\widetilde{\eta}_i) & =\delta ^{\#}\alpha ^{\#}(\widetilde{\eta}_i)=0, \label{eq12303}\\
\widetilde{v}^{\#}g^{\#}(c) & =\delta ^{\#}\alpha ^{\#}(c)=0. \label{eq12304}
\end{alignat}
We also have
$
\chi^{\#}g^{\#}G^{\#}(\widetilde{\eta}_i') = \beta^{\#}h^{\#}(\widetilde{\eta}_i')=p\beta^{\#}\left (\frac{\widetilde{\eta}_i'}{p} \right )=p\left (1\otimes \frac{\widetilde{\eta}_i'}{p} \right ).
$
The morphism $G:\frakY \ra \frakY'$ is log flat \eqref{Glogflat} and hence so is $\beta.$ Since $R_{\frakX',1}$ is log flat over $\op{Spf}W$ \eqref{Qlogflat}, we deduce that $\frakY \times_{\frakY'}R_{\frakX',1}$ is log flat hence flat over $\op{Spf}W.$ We then get
$$
\chi^{\#}\left ( \frac{g^{\#}G^{\#}(\widetilde{\eta}_i')}{p} \right ) = \beta^{\#}\left (\frac{h^{ \#}(\widetilde{\eta}_i')}{p} \right )=1\otimes \frac{\widetilde{\eta}_i'}{p}.
$$
It follows that
\begin{equation}\label{eq12305}
\widetilde{v}^{\#}\left (\frac{g^{\#}G^{\#}(\widetilde{\eta}_i')}{p}\right )=1\otimes \eta_{i(r)}'.
\end{equation}
By \eqref{eq12302}, \eqref{eq12303}, \eqref{eq12304} and \eqref{eq12305}, dropping the notation $g^{\#},$ we get
$$\widetilde{v}^{\#}\left (\frac{\widetilde{\eta}_i^p}{p} \right )=1\otimes \eta_{i(r)}',\ \widetilde{v}^{\#}(\widetilde{\eta}_i)=0,$$
and so
\begin{equation}\label{eq1211v}
v^{\#}\left (\eta_{i(q)} \right ) = 1\otimes \eta_{i(r)}',\ v^{\#}(\eta_i)=0.
\end{equation}
This proves that $v$ is independant of the choice of the lifting $F,$ so we have a global morphism
$$v:X\times_{X'}R'_1 \ra Q_1.$$
Let $\calR'_1=\omega'_*\Ox_{R'_1}$ and $\calQ_1=\omega_*\Ox_{Q_1}$ be the respective Hopf algebras defined by $R'_1$ and $Q_1$ (\ref{paragomega}). 
Consider the cartesian square
$$
\begin{tikzcd}
X\times_{X'}R'_1 \ar{r}{\alpha'} \ar{d}{\beta'} & X \ar{d}{F_1} \\
R'_1 \ar{r}{\omega'} & X'.
\end{tikzcd}
$$
Since $F_1$ and $\omega'$ are affine, there exists a canonical isomorphism
\begin{equation}\label{eq12307}
F_1^*\calR'_1=F_1^*\omega'_*\Ox_{R'_1} \xrightarrow{\sim} \alpha'_*\Ox_{X\times_{X'}R'_1}.
\end{equation}
For a scheme $T,$ denote by $|T|$ its underlying topological space. We have a diagram of topological spaces :
$$
\begin{tikzcd}
 & & \left|Q_1\right| \ar[swap]{dl}{\omega} \ar{d} \\
\left|X\times_{X'}R'_1\right| \ar{r}{\alpha'} \ar[bend right=-30]{urr}{v} & \left|X\right| \ar{r}{\Delta} & \left|Y\right|.
\end{tikzcd}
$$
Since the outer diagram is commutative and $\Delta$ is injective as a map of sets, we get $\omega \circ v=\alpha'$ as maps of topological spaces.
Applying $\omega_*$ to $v^{\#}:\Ox_{Q_1} \ra v_*\Ox_{X\times_{X'}R'_1},$ by \eqref{eq12307}, we get a morphism that we abusively denote by $v:$
\begin{equation}\label{vHopf}
v:\calQ_1 \ra F_1^*\calR'_1.
\end{equation}
\end{parag}

\begin{proposition}
Keep the notation of \ref{erafparag121}. The morphism \eqref{vHopf} is a morphism of Hopf algebras.
\end{proposition}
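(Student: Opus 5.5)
The plan is to reduce the claim to a local computation on generators, using the explicit descriptions already obtained. Being a morphism of Hopf algebras is a local property on $X$: it asks for the commutation of finitely many diagrams of morphisms of sheaves of $\Ox_X$-algebras, for comultiplication, counit and antipode. Both $\calQ_1$ and $F_1^*\calR'_1$ are quasi-coherent. By \ref{par1212} and (\cite{SF1} 11.11), I may therefore work étale locally on $X$ and assume the hypothesis \ref{loccoord}, together with the existence of a lifting $F$ as in \ref{groupprop17}. The Hopf structure on $F_1^*\calR'_1$ is the base change along $F_1$ of that of $\calR'_1$, whose structure maps are given by \ref{HopfR}.

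First, $v$ is a morphism of $\Ox_X$-algebras. It is obtained by applying $\omega_*$ to the ring map $v^{\#}$ and composing with the algebra isomorphism \eqref{eq12307}. It is $\Ox_X$-linear for the left structure, since $\omega\circ v=\alpha'$ and $v$ is compatible with the first projections to $X$. Since all structure maps involved are algebra maps, it suffices to check the three compatibilities
$$(v\otimes v)\circ\delta=\delta'\circ v,\quad \pi'\circ v=\pi,\quad \sigma'\circ v=v\circ\sigma$$
on local algebra generators of $\calQ_1$. Here $\delta',\pi',\sigma'$ are the base-changed structure maps. By \eqref{isoQ}, the $\Ox_X$-algebra $\calQ_1$ is generated by the $\eta_i$, by the $\eta_{i(q)}$, and by the right $\Ox_X$-structure $q_2^{\#}(\Ox_X)$.

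For the generators $\eta_i$ and $\eta_{i(q)}$ I will use \eqref{eq1211v}, namely $v(\eta_i)=0$ and $v(\eta_{i(q)})=1\otimes\eta'_{i(r)}$, together with the formulas of \ref{HopfQ} and \ref{HopfR}.
\begin{itemize}
\item For $\eta_i$: $\delta(\eta_i)=1\otimes\eta_i+\eta_i\otimes1+\eta_i\otimes\eta_i$ maps to $0$ under $v\otimes v$, which is $\delta'(0)$.
\item For $\eta_{i(q)}$: $v\otimes v$ kills every term of $\delta(\eta_{i(q)})$ containing a positive power of $\eta_i$. What remains is $1\otimes\eta'_{i(r)}+\eta'_{i(r)}\otimes1$, which equals $\delta'(1\otimes\eta'_{i(r)})$.
\item Counits: both sides vanish.
\item Antipode: $v(\sigma(\eta_{i(q)}))=v\bigl(-\eta_{i(q)}(1+\eta_i)^{-p}\bigr)=-\eta'_{i(r)}=\sigma'(v(\eta_{i(q)}))$, and $v(\sigma(\eta_i))=0$ since $\sigma(\eta_i)$ lies in the ideal generated by $\eta_i$.
\end{itemize}

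The remaining and most delicate step is compatibility on $\Ox_X$ embedded through the second projection, $b\mapsto q_2^{\#}b$. Here I have to identify $v(q_2^{\#}b)$ in $F_1^*\calR'_1$. The relevant fact is that $v$ comes from a morphism of schemes $X\times_{X'}R'_1\ra Q_1$ over $Y$, and the second projection of $Y$ composed with $\delta$ is $X\times_{X'}R'_1\ra X$. So I must verify that, as morphisms of $X$-schemes on both sides, the target's right $\Ox_X$-structure corresponds to the base-changed Hopf structure. Concretely I would check that $\delta'(v(q_2^{\#}b))=v(q_2^{\#}b)$ viewed in the second tensor factor. For this it is cleanest to argue geometrically rather than with formulas: $\delta$ on $\calQ_1$ is induced by the groupoid composition $Q_1\times_XQ_1\ra Q_1$, and $\delta'$ by composition in $X\times_{X'}R'_1$, viewed as a groupoid over $X$ obtained by pulling back $R'_1$ along $F_1$. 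I would then show that $v$ commutes with these compositions, using that $\chi$ is compatible with the projections to $\frakY$ and that the groupoid laws on $Q_{\frakX}$ and $R_{\frakX',1}$ are induced from the projections of $\frakY(2)$ and $\frakY'(2)$, compatibly via $G$. Flatness over $\op{Spf}W$ (\ref{Qlogflat} and the flatness of $\frakY\times_{\frakY'}R_{\frakX',1}$ proved in \ref{erafparag121}) reduces this to checking equalities after inverting $p$, where all the maps are restrictions of maps between the $\frakY(r)$. I expect this groupoid-compatibility bookkeeping to be the main obstacle; once it is done, the generator computations above finish the proof.
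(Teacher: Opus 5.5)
Your checks on $\eta_i$ and $\eta_{i(q)}$ for $\delta$, $\pi$ and $\sigma$, via \ref{HopfQ}, \eqref{eq1211v} and \ref{HopfR}, are exactly the paper's proof, which verifies these generators and stops. The extra point you raise is legitimate, and the paper leaves it implicit. The map $v\otimes v$ is only defined on $\calQ_1\otimes_{\Ox_X}\calQ_1$ if $v$ respects the right $\Ox_X$-structure. Moreover, the antipode exchanges $q_1^{\#}$ and $q_2^{\#}$, so you must know $v(q_2^{\#}b)$.

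Your write-up, however, leaves this step open, and the plan you sketch is far heavier than needed and not workable as stated. The schemes $X\times_{X'}R'_1$ and $Q_1$ are killed by $p$, so ``checking after inverting $p$'' would first require flat formal lifts of the composition laws. For instance, you would need a map $\frakY(2)\times_{\frakY'(2)}R_{\frakX',1}(2)\ra Q_{\frakX}(2)$ built as in \ref{groupprop17}, together with the uniqueness in \ref{erafdil2}, and you do not construct these. Such an argument would also prove the whole proposition on its own, making your generator computations redundant.

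The missing observation is elementary. In the notation of the diagram of \ref{erafparag121}, $g\circ\widetilde v=g\circ\chi\circ\delta=\alpha\circ\delta$. By definition of $\delta$, the map $\alpha\circ\delta$ factors through $\Delta:\frakX\ra\frakY$. So $\widetilde v^{\#}g^{\#}$ kills the ideal of $\Delta$, by the same reasoning as \eqref{eq12303} and \eqref{eq12304}; in particular it kills $p_2^{\#}(b)-p_1^{\#}(b)$ for every local section $b$ of $\Ox_{\frakX}$. Hence $q_1\circ v=q_2\circ v=\alpha'$, i.e.\ $v(q_2^{\#}b)=v(q_1^{\#}b)=b\otimes 1$.

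On the target side, both projections $R'_1\ra X'$ coincide with $\lambda_{R'}$. Indeed, any local section $a$ of the ideal of $\Delta'$ equals $p\cdot(a/p)$ in $\Ox_{R_{\frakX',1}}$, so it vanishes modulo $p$. Thus $F_1^*\calR'_1$ has equal left and right $\Ox_X$-structures, consistent with \ref{erafprop115}. Consequently $v$ is $\Ox_X$-bilinear and $v\otimes v$ is well defined. The checks on $q_2^{\#}b$ are then immediate, e.g.\ $v(\sigma(q_1^{\#}b))=v(q_2^{\#}b)=b\otimes 1=\sigma'(b\otimes 1)$. With this one line in place of the groupoid bookkeeping, your generator argument is complete.
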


\begin{proof}
We prove that $v$ is a morphism of Hopf algebras. Let $(\delta_q,\pi_q,\sigma_q)$ and $(\delta_r,\pi_r,\sigma)$ be the Hopf algebra structures of $\calQ_1$ and $\calR_1'$ respectively. We check that the diagram
\begin{equation}\label{diag12309}
\begin{tikzcd}
\calQ_1 \ar{r}{\delta_q} \ar[swap]{d}{v} & \calQ_1 \otimes_{\Ox_X} \calQ_1 \ar{d}{v\otimes v} \\
F_1^*\calR_1' \ar{r}{F_1^*\delta_r} & F_1^*\calR_1' \otimes_{\Ox_X} F_1^*\calR_1'.
\end{tikzcd}
\end{equation}
By \ref{HopfQ}, we have
\begin{alignat*}{2}
\delta_q\left (\eta_{i(q)} \right ) &= 1\otimes \eta_{i(q)}+\sum_{0<b+c<p} \frac{(-1)^{b+c}}{b+c}\begin{pmatrix}b+c \\ b \end{pmatrix} \eta_i^{b+c}\otimes \eta_i^{p-b} + \eta_{i(q)}\otimes 1, \\
\delta_q(\eta_i) &= 1\otimes \eta_i+\eta_i\otimes 1+\eta_i\otimes \eta_i.
\end{alignat*}
By \eqref{eq1211v}, we get
\begin{alignat*}{2}
(v\otimes v)\circ \delta_q(\eta_{i(q)}) &= 1\otimes F_1^*\eta_{i(r)}' + F_1^*\eta_{i(r)}' \otimes 1, \\
(v\otimes v) \circ \delta_q(\eta_i) &= 0.
\end{alignat*}
By \eqref{eq1211v} and \ref{HopfR}, we have
\begin{alignat*}{2}
(F_1^*\delta_r)\circ v(\eta_{i(q)}) &= 1\otimes F_1^*\eta_{i(r)}' + F_1^*\eta_{i(r)}'\otimes 1, \\
(F_1^*\delta_r)\circ v(\eta_{i}) &= 0.
\end{alignat*}
This proves the commutativity of \eqref{diag12309}. Similarly, we prove the commutativity of the diagrams
$$
\begin{tikzcd}
\calQ_1 \ar{r}{\sigma_q} \ar[swap]{d}{v} & \calQ_1 \ar{d}{v} & \calQ_1 \ar[swap]{d}{v} \ar{r}{\pi_q} & \Ox_X \\
F_1^*\calR_1' \ar{r}{F_1^*\sigma} & F_1^*\calR_1' & F_1^*\calR_1' \ar[swap]{ur}{F_1^*\pi_r} & 
\end{tikzcd}
$$
This proves that $v$ is a morphism of Hopf algebras.
By taking the dual of $v,$ we obtain a morphism of $\Ox_X$-algebras
\begin{equation}\label{checkv}
\check{v}:F_1^*\calR_1'^{\vee} \ra \calQ_1^{\vee}.
\end{equation}
\end{proof}

\begin{lemma}
Consider $\calD_{X/S}$ as a module over $F_1^*S^{\bullet}\calT_{X'/S}$ via the $p$-curvature map \eqref{pcurv}. Suppose that $X\ra S$ has local coordinates $m_1,\hdots,m_d \in \Gamma(X,\calM_X)$ and consider the associated differential operators $\partial_I$ (\cite{SF1} 4.4). Then the $F_1^*S^{\bullet}\calT_{X'/S}$-module $\calD_{X/S}$ has a basis $(\partial_I)_{I\in \{0,\hdots,p-1\}^d}.$
\end{lemma}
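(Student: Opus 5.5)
The plan is to reduce everything to the explicit local description of $\calD_{X/S}$ in terms of the basis $(\partial_I)_{I\in\N^d}$ dual to $(\eta^{[I]})$ from \ref{Mojtaba}, together with the formula $\psi(\partial'^I)=\partial_{pI}$ of \eqref{eqDoma1}. As an $\Ox_X$-module, $\calD_{X/S}$ is free with basis $(\partial_I)_{I\in\N^d}$. The module structure over $F_1^*S^{\bullet}\calT_{X'/S}$ is via $\Ox_X$-linear combinations of products with the central elements $\psi(\partial'^J)=\partial_{pJ}$. Moreover $F_1^*S^{\bullet}\calT_{X'/S}$ is the polynomial $\Ox_X$-algebra on the $1\otimes\partial'_i$, with $\Ox_X$-basis $(\partial'^J)_{J\in\N^d}$. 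So the claim amounts to showing that the $\Ox_X$-linear map
$$\bigoplus_{J\in\N^d,\ I\in\{0,\hdots,p-1\}^d}\Ox_X\cdot e_{I,J}\ra \calD_{X/S},\quad e_{I,J}\mapsto \partial_{pJ}\,\partial_I$$
is an isomorphism. Centrality ensures the order of the product does not matter.

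The key step is the multiplication formula for the $\partial_I$. In the logarithmic setting (\cite{SF1} 4.4, 4.5), with $\eta_i$ defined by $p_2^\flat m_i=p_1^\flat m_i+\lambda(1+\eta_i)$, the coproduct on $\calP_0$ satisfies $\delta(1+\eta_i)=(1+\eta_i)\otimes(1+\eta_i)$. Hence $\partial_{\epsilon_i}$ acts like $x_i\partial/\partial x_i$, and the $\partial_I$ multiply as in the divided-power style formula
$$\partial_I\partial_J=\sum_K c_{I,J,K}\,\partial_K,\qquad c_{I,J,I+J}=\binom{I+J}{I},$$
with other terms of strictly lower total order $|K|<|I|+|J|$. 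I would extract this from the dual of the coproduct, computing $\delta(\eta^{[K]})$ via the expansion of $(1+\eta\otimes1+1\otimes\eta+\eta\otimes\eta)^{[K]}$. The leading term is the $\eta\otimes\eta$-free part, which gives the binomial.

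For $J\in\N^d$ and $I\in\{0,\hdots,p-1\}^d$, the coefficient $\binom{pJ+I}{pJ}=\prod_i\binom{pJ_i+I_i}{pJ_i}$ is $\equiv 1 \pmod p$ by Lucas's theorem. Therefore $\partial_{pJ}\partial_I=\partial_{pJ+I}+(\text{terms of order}<|pJ+I|)$. Since every $K\in\N^d$ is uniquely written $K=pJ+I$ with $I\in\{0,\hdots,p-1\}^d$, the transition matrix from $(\partial_K)$ to $(\partial_{pJ}\partial_I)$ is unitriangular with respect to the filtration by order. Restricted to each order-filtered piece $\calD_{X/S,\le n}$, which is free of finite rank, this gives an isomorphism by induction on $n$. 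Passing to the union over $n$ yields the statement.

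The main obstacle is justifying the leading-term formula, in particular making sure that the lower-order terms really have smaller total order, in the log setting where $\eta\otimes\eta$ terms appear. I would handle this by observing that $\delta$ respects the PD-filtration, $\delta(\calI_0^{[n]})\subset\sum_{a+b=n}\calI_0^{[a]}\otimes\calI_0^{[b]}$, and that modulo the degree-shifting $\eta\otimes\eta$ terms the coproduct is the usual one $\delta(\eta_i)\equiv\eta_i\otimes1+1\otimes\eta_i$ on the graded pieces. Only this graded computation is needed, since the graded ring of $\calD_{X/S}$ for the order filtration is then the divided-power-dual symbol algebra. An alternative check is to compare with the explicit formulas for $\partial_{\epsilon_i}^{k}$ in (\cite{SF1} 4.5).
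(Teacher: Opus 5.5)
Your reduction is right: since $\psi(\partial'^J)=\partial_{pJ}$ is central and $F_1^*S^{\bullet}\calT_{X'/S}$ is $\Ox_X$-free on the $\partial'^J$, the lemma is equivalent to $(\partial_{pJ}\partial_I)_{J\in\N^d,\,I\in\{0,\hdots,p-1\}^d}$ being an $\Ox_X$-basis of $\calD_{X/S}$. A unitriangularity argument on the pieces $\calD_{X/S,\le n}$ proves this. However, the leading-coefficient formula you propose is wrong.

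The operator $\partial_I$ is dual to the \emph{divided} powers $\eta^{[I]}$, and $\delta$ is a PD-morphism with $\delta(\eta_i)=\eta_i\otimes 1+1\otimes\eta_i+\eta_i\otimes\eta_i$. Hence, in each variable,
\[
\delta\bigl(\eta^{[n]}\bigr)=\sum_{i+j+k=n}k!\binom{i+k}{i}\binom{j+k}{j}\,\eta^{[i+k]}\otimes\eta^{[j+k]},
\qquad\text{so}\qquad
\partial_{a}\partial_{b}=\sum_{k\ge 0}k!\binom{a}{k}\binom{b}{k}\,\partial_{a+b-k}.
\]
The $\eta\otimes\eta$-free part therefore contributes coefficient $1$, not $\binom{I+J}{I}$. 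The graded ring of $\calD_{X/S}$ for the order filtration is the polynomial algebra $S^{\bullet}\calT_{X/S}$, the graded dual of $\Gamma^{\bullet}\omega^1_{X/S}$ (the duality of \ref{dualAkaza}), not a divided-power algebra; you have swapped the two dualities. Your formula would, for instance, make $\partial_{\epsilon_i}^p$ of order $<p$ (leading coefficient $p!=0$), whereas in fact $\partial_{\epsilon_i}^p=\partial_{p\epsilon_i}+\partial_{\epsilon_i}$. Fortunately your argument only needs the leading coefficient of $\partial_{pJ}\partial_I$ to be a unit. It is $1$, and your Lucas computation happens to give the same residue, so after this correction the induction on $\calD_{X/S,\le n}$ goes through.

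The paper avoids any filtration argument. It uses the exact identities of \cite{SF1} (4.5.7)--(4.5.8), namely $\partial_{(pn+r)\epsilon_i}=\prod_{k=0}^{pn+r-1}(\partial_{\epsilon_i}-k)=(\partial_{p\epsilon_i})^n\circ\partial_{r\epsilon_i}$ and $\partial_{pJ}\circ\partial_I=\partial_{pJ+I}$ for $I\in\{0,\hdots,p-1\}^d$. So the family $(\psi(\partial'^J)\partial_I)$ is literally the $\Ox_X$-basis $(\partial_K)_{K\in\N^d}$ reindexed by $K=pJ+I$, and generation and freeness are immediate. The corrected product formula recovers this: for $0<k\le I_i<p$ one has $p\mid\binom{pJ_i}{k}$, so all the ``lower-order terms'' in your expansion of $\partial_{pJ}\partial_I$ actually vanish. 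Your graded argument is more robust, since it only uses the symbol algebra and would survive lower-order perturbations of the operators. The paper's exact identity is what it reuses later, in \ref{Kokoprop1234}, to identify $\calD_{X/S}\otimes_{S^{\bullet}\calT_{X'/S}}\widehat{S}^{\bullet}\calT_{X'/S}$ with $\widehat{\calD}_{X/S}$ termwise via $\partial_{\alpha}\otimes\partial'^{\beta}\mapsto\partial_{\alpha+p\beta}$.
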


\begin{proof}
First note that, by (\cite{SF1} (4.5.7)), for any integers $n,r\ge 0,$
\begin{alignat*}{2}
\partial_{(pn+r)\epsilon_i} &= \prod_{k=0}^{pn+r-1}(\partial_{\epsilon_i}-k) &= (\partial_{p\epsilon_i})^n \circ \partial_{r\epsilon_i}.
\end{alignat*}
We have a basis $(\partial_I)_{I\in \N^d}$ for the $\Ox_X$-module $\calD_{X/S}.$ Let $I=(I_1,\hdots,I_d)\in \N^d$ and $I_i=pq_i+r_i,\ 0\le r_i<p.$ Then
$$\partial_I=\prod_{i=1}^d\partial_{I_i\epsilon_i}=\prod_{i=1}^d (\partial_{p\epsilon_i})^{q_i}\circ \partial_{r_i\epsilon_i}.$$
We deduce that the family $(\partial_I)_{I\in \{0,\hdots,p-1\}^d}$ is generating.

Now let $(b_I)_{I\in \{0,\hdots,p-1 \}^d}$ be local sections of $F_1^*S^{\bullet}\calT_{X'/S}$ such that
$$\sum_{i\in \{0,\hdots,p-1 \}^d }b_I\partial_I=0.$$
By \eqref{eqDoma1}, for any $I\in \{0,\hdots,p-1\}^d,$ there exists a finite subset $\Gamma_I\subset \N^d$ and local sections $a_{I,J}$ of $\Ox_X$ for $J\in \Gamma_I$ such that $b_I=\sum_{J\in \Gamma_I}a_{I,J}\partial_{pJ}.$ Hence
$$
\sum_{I\in \{0,\hdots,p-1 \}^d,J\in \Gamma_I}a_{I,J}\partial_{pJ}\circ \partial_I=0.
$$
By (\cite{SF1} (4.5.8)),
$\partial_{pJ}\circ \partial_I=\partial_{pJ+I}.$
We deduce that
$$\sum_{I\in \{0,\hdots,p-1 \}^d,J\in \Gamma_I}a_{I,J}\partial_{pJ+I}=0.$$
Since $pK+I \neq pL+J$ for $K,L\in \N^d$ and $I,J\in \{0,\hdots,p-1 \}^d$ such that $I\neq J,$ it follows that for any $I\in \{0,\hdots,p-1 \}^d$ and $J\in \Gamma_I,$ $a_{I,J}=0.$
\end{proof}

\begin{proposition}\label{Kokoprop1234}
The morphism
$$
\Xi:\calD_{X/S} \otimes_{S^{\bullet}\calT_{X'/S}}\widehat{S}^{\bullet}\calT_{X'/S}\ra \widehat{\calD}_{X/S},
$$
induced by the canonical morphism $\calD_{X/S} \ra \w{\calD}_{X/S}$ and the morphism $\w{\psi}:\widehat{S}^{\bullet}\calT_{X'/S} \ra \widehat{\calD}_{X/S}$ \eqref{eq13191}, is an isomorphism of $\Ox_X$-algebras.
\end{proposition}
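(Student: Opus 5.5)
The statement is local on $X$ for the étale topology. Both sides are sheaves, and $\Xi$ is a globally defined morphism of $\Ox_X$-algebras, so it suffices to check bijectivity étale locally. I will therefore assume that $X\ra S$ has local coordinates $m_1,\hdots,m_d\in\Gamma(X,\calM_X)$, with the associated basis $(\eta^{[I]})_{I\in\N^d}$ of $\calP_0$ and its dual family $(\partial_I)_{I\in\N^d}$ as in \ref{Mojtaba}.

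First, $\Xi$ is a morphism of algebras. The map $\calD_{X/S}\ra\w{\calD}_{X/S}$ is a ring homomorphism. The map $\w{\psi}$ is a ring homomorphism extending $\psi$ by continuity, since its restriction to $S^{\bullet}\calT_{X'/S}$ is $v$ and $\w{\psi}$ is continuous for the adic topology on the source and the topology on $\w{\calD}_{X/S}$ given by the kernels of $\w{\calD}_{X/S}\ra\mathscr{Hom}_{\Ox_X}(\calP_0/\calI_0^{[n]},\Ox_X)$. The image of $\psi$ is central in $\calD_{X/S}$ (\cite{Ohkawa} 4.15). Taking limits of elements of the central image, together with the density of $\calD_{X/S}$ in $\w{\calD}_{X/S}$, shows that the image of $\w{\psi}$ is central in $\w{\calD}_{X/S}$. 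Hence the two maps combine into an algebra morphism on the tensor product.

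For bijectivity, I use the previous lemma: $\calD_{X/S}$ is free over $F_1^*S^{\bullet}\calT_{X'/S}$ with basis $(\partial_I)_{I\in\{0,\hdots,p-1\}^d}$. Consequently the source of $\Xi$ is a free $F_1^*\w{S}^{\bullet}\calT_{X'/S}$-module on the same basis. Write $F_1^*\w{S}^{\bullet}\calT_{X'/S}$ locally as the formal power series ring $\Ox_X[[\partial_1',\hdots,\partial_d']]$ over $\Ox_X$. Then every section of the source has a unique expression
$$\sum_{I\in\{0,\hdots,p-1\}^d}\ \sum_{J\in\N^d} a_{I,J}\,\partial'^J\otimes\partial_I,\qquad a_{I,J}\in\Ox_X.$$
On the target, $\w{\calD}_{X/S}=\mathscr{Hom}_{\Ox_X}(\calP_0,\Ox_X)$ with $\calP_0$ free on $(\eta^{[K]})_{K\in\N^d}$. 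Hence $\w{\calD}_{X/S}\simeq\prod_{K\in\N^d}\Ox_X\,\partial_K$, meaning every element is a unique formal sum $\sum_K c_K\partial_K$ determined by its values on the $\eta^{[K]}$.

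Now compute $\Xi$ on the basis. By \eqref{eqDoma1} and (\cite{SF1} (4.5.8)),
$$\Xi(\partial'^J\otimes\partial_I)=\partial_{pJ}\circ\partial_I=\partial_{pJ+I}.$$
The infinite sum over $J$ is mapped by $\w{\psi}$ to $\sum_J a_{I,J}\partial_{pJ+I}$. This is well defined because, on each $\eta^{[K]}$, only finitely many terms are nonzero, exactly as in the proof of \ref{era4tak2}. Therefore $\Xi$ sends the element above to $\sum_{I,J}a_{I,J}\partial_{pJ+I}$. Every $K\in\N^d$ is written uniquely as $pJ+I$ with $I\in\{0,\hdots,p-1\}^d$ and $J\in\N^d$, so $\Xi$ is the identification $\prod_{I,J}\Ox_X\to\prod_K\Ox_X$ induced by this bijection of index sets. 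In particular it is an isomorphism.

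The main obstacle is justifying the description of the source as a module of formal sums. The tensor product $\calD_{X/S}\otimes_{S^{\bullet}}\w{S}^{\bullet}$ must be identified with $\bigoplus_I\w{S}^{\bullet}\calT_{X'/S}\cdot\partial_I$, where the right-module structure of $\calD_{X/S}$ over $S^{\bullet}$ agrees with the left one by centrality. This follows from freeness of $\calD_{X/S}$ over $F_1^*S^{\bullet}\calT_{X'/S}$ with a finite basis; finiteness is what lets the tensor product commute with the completion. One must also check that $\w{\psi}$ applied to an infinite series really equals $\sum a_J\partial_{pJ}$ termwise in $\w{\calD}_{X/S}$. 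This comes from the construction of $\w{\psi}$ as a pointwise convergent sum on the basis $\eta^{[K]}$.
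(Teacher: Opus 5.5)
Your proof is correct and follows essentially the same argument as the paper. Étale-locally with coordinates, you write a section of the source uniquely as $\sum c_{\alpha\beta}\,\partial_\alpha\otimes\partial'^{\beta}$ with $\alpha\in\llbracket 0,p-1\rrbracket^d$, $\beta\in\mathbb{N}^d$, and observe that $\Xi$ sends it to $\sum c_{\alpha\beta}\,\partial_{\alpha+p\beta}$, so $\Xi$ is the bijection of coefficient families induced by $(\alpha,\beta)\mapsto\alpha+p\beta$. Your extra justification that $\Xi$ is multiplicative, via centrality of the image of $\widehat{\psi}$ by a limit argument, goes beyond what the paper writes; the only fix needed is that the relevant topology on $\w{\calD}_{X/S}$ is pointwise (eventually constant) convergence on local sections of $\calP_0$, not kernels of maps $\w{\calD}_{X/S}\ra\mathscr{Hom}_{\Ox_X}(\calP_0/\calI_0^{[n]},\Ox_X)$, since no such natural maps exist.
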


\begin{proof}
We may work étale locally on $X$ and hence suppose that we have local coordinates $m_1,\hdots,m_d$ $\in \calM_X.$ Then $\pi^{\flat}m_1,\hdots,\pi^{\flat}m_d\in \calM_{X'}$ (where $\pi:X'\ra X$ is defined in \ref{diag51}) are local coordinates for $X'/S.$ For every multi-index $I\in\N^d,$ denote by $\partial_I$ the differential operator corresponding to $(m_i)_{1\le i\le d},$ as defined in (\cite{SF1} 4.4). Let $(\partial_i')_{1\le i\le d}$ be the dual basis of $(\op{dlog}\pi^{\flat}m_i)_{1\le i\le d}.$ A local section $x$ of $\calD_{X/S} \otimes_{S^{\bullet}\calT_{X'/S}}\widehat{S}^{\bullet}\calT_{X'/S} = \calD_{X/S} \otimes_{F_1^*S^{\bullet}\calT_{X'/S}}F_1^*\widehat{S}^{\bullet}\calT_{X'/S}$ is uniquely written as an infinite sum
$$
x=\sum_{\substack{\alpha \in \llbracket0,p-1\rrbracket^d \\ \beta\in \N^d}} c_{\alpha\beta} \partial_{\alpha}\otimes \partial'^{\beta},\ c_{\alpha \beta}\in \Ox_X,
$$
where
$
\partial'^{\beta}=\prod_{i=1}^d\partial_i'^{\beta_i} \in S^{\bullet}\calT_{X'/S}.
$
By (\cite{SF1} (4.5.8)), the morphism $\Xi$ sends $x$ to
$$
\sum_{\substack{\alpha \in \llbracket0,p-1\rrbracket^d \\ \beta\in \N^d}} c_{\alpha\beta} \partial_{\alpha}\circ \partial_{p\beta}=
\sum_{\substack{\alpha \in \llbracket0,p-1\rrbracket^d \\ \beta\in \N^d}} c_{\alpha\beta} \partial_{\alpha+p\beta}.
$$
This finishes the proof.
\end{proof}
\begin{proposition}\label{prop1232}
Consider the morphisms $\check{u}:\widehat{\calD}_{X/S} \ra \calQ_1^{\vee}$ \eqref{checku}, $\check{v}:F_1^*\calR_1'^{\vee} \ra \calQ_1^{\vee}$ \eqref{checkv}, the antipode morphism $\sigma:\calR'_1 \ra \calR'_1$ \eqref{HopfR} and its dual $\check{\sigma}:\calR_1'^{\vee} \ra \calR_1'^{\vee}.$ Recall that we identify the small étale sites of $X$ and $X'$ via $F_1:X\ra X'.$
Consider the canonical morphisms $\calR_1'^{\vee}\ra F_1^*\calR_1'^{\vee},\ x\mapsto 1\otimes x$ and $\widehat{S}^{\bullet}\calT_{X'/S} \ra \widehat{\Gamma}^{\bullet}\calT_{X'/S}$ and the morphism $\widehat{\psi}:\widehat{S}^{\bullet}\calT_{X'/S} \ra \widehat{\calD}_{X/S}$ \eqref{eq13191}.
We identify $\widehat{\Gamma}^{\bullet}\calT_{X'/S}$ with $\calR_1'^{\vee}$ via the isomorphism $\xi:\widehat{\Gamma}^{\bullet}\calT_{X'/S} \xrightarrow{\sim} \calR_1'^{\vee}$ \eqref{eq1182}.
Then, the diagram
$$
\begin{tikzcd}
\widehat{S}^{\bullet}\calT_{X'/S} \ar{d} \ar{rrrr}{\widehat{\psi}} & & & & \widehat{\calD}_{X/S} \ar{d}{\check{u}} \\
\widehat{\Gamma}^{\bullet}\calT_{X'/S} \ar{r}{\xi} & \calR_1'^{\vee} \ar{r}{\check{\sigma}} & \calR_1'^{\vee} \ar{r} & F_1^*\calR_1'^{\vee} \ar{r}{\check{v}} & \calQ_1^{\vee}
\end{tikzcd}
$$
is commutative.
\end{proposition}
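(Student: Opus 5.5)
The plan is to reduce the commutativity to a check on the generators $\partial'_1,\hdots,\partial'_d$, and then evaluate both composites on an explicit basis of $\calQ_1$. The statement is étale local on $X$, since all the morphisms involved are canonical: $\check{v}$ was shown in \ref{erafparag121} to be independent of the lifting $F$. So I will assume the hypothesis \ref{loccoord} holds, together with a lifting $F$ as in \ref{groupprop17}, and use the bases $(\eta^I\eta_{(q)}^J)_{I\in\llbracket 0,p-1\rrbracket^d,\,J\in\N^d}$ of $\calQ_1$, $(\eta^{[K]})_{K\in\N^d}$ of $\calP_0$ and $(\eta'^K_{(r)})_{K\in\N^d}$ of $\calR'_1$.

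\textbf{Step 1: both composites are continuous algebra maps.} Every arrow in the diagram is a morphism of algebras, and I will use this to compare the composites through generators.
\begin{enumerate}
\item $\widehat{\psi}$ is a morphism of algebras because $\psi$ is (\ref{era4tak2}).
\item $\check{u}$ and $\check{v}$ are morphisms of algebras as duals of Hopf algebra morphisms, and $\xi$ is one by \ref{erafprop115}.
\item $\check{\sigma}$ is an algebra map because $\calR'_1$ is cocommutative: by \ref{HopfR}, $\eta'_{i(r)}$ is primitive. Hence $\sigma$ is a coalgebra morphism and its dual is multiplicative.
\end{enumerate}
Next, I equip $\calQ_1^{\vee}$ with the topology of pointwise convergence on the basis $\eta^I\eta_{(q)}^J$. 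Both composites are then continuous for the $S^{\ge 1}$-adic topology on $\widehat{S}^{\bullet}\calT_{X'/S}$. This is the degree argument used in the proof of \ref{era4tak2}: a fixed basis element is killed by the image of $S^{n}\calT_{X'/S}$ for $n$ large. On the top path this follows from \eqref{eqDoma1}. On the bottom path it holds because $v$ sends $\eta^I\eta_{(q)}^J$ into a finite-degree piece, as \eqref{eq1211v} shows.

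The two composites are $\Ox_{X'}$-linear, with $\calT_{X'/S}$-sections viewed in $\Ox_X$-algebras through $F_1$. Consequently it suffices to check that they agree on each $\partial'_i$. I expect this step to be the main technical obstacle, namely making the continuity and $\Ox_{X'}$-linearity precise, including how $\Ox_{X'}$ acts through $\calR_1'^{\vee}\ra F_1^*\calR_1'^{\vee}$.

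\textbf{Step 2: the top path on $\partial'_i$.} By \eqref{eqDoma1}, $\widehat{\psi}(\partial'_i)=\partial_{p\epsilon_i}$, and $\check{u}(\partial_{p\epsilon_i})=\partial_{p\epsilon_i}\circ u$. By \eqref{ueta},
$$u(\eta^I\eta_{(q)}^J)=(-1)^{|J|}\,\eta^I\prod_i(\eta_i^{[p]})^{J_i}.$$
Now $\eta_i^{a}=a!\,\eta_i^{[a]}$ for $a<p$, and $(\eta_i^{[p]})^{j}$ is a unit multiple of $\eta_i^{[pj]}$. Hence $u(\eta^I\eta_{(q)}^J)$ is a scalar multiple of $\eta^{[I+pJ]}$. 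Pairing with $\partial_{p\epsilon_i}$ is nonzero only when $I+pJ=p\epsilon_i$, that is $I=0$ and $J=\epsilon_i$, and the value there is $-1$. So the top path gives $-\varphi_{0,\epsilon_i}$.

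\textbf{Step 3: the bottom path on $\partial'_i$.} The element $\xi(\partial'_i)$ is the linear form on $\calR'_1$ dual to $\eta'_{i(r)}$, because \eqref{eq1181} sends $\op{dlog}m'_i$ to $\eta'_{i(r)}$ and the diagram in \ref{dualAkaza} is compatible. The antipode acts by $\sigma(\eta'^K_{(r)})=(-1)^{|K|}\eta'^K_{(r)}$, so $\check{\sigma}\xi(\partial'_i)$ is minus that dual form. By \eqref{eq1211v}, $v(\eta^I\eta_{(q)}^J)$ equals $0$ unless $I=0$, and $v(\eta_{(q)}^J)=1\otimes\eta'^J_{(r)}$. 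Therefore $\check{v}$ of this form is again $-\varphi_{0,\epsilon_i}$.

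The two composites therefore agree on the generators, which proves the commutativity. Throughout I will keep track of the identification of the étale sites of $X$ and $X'$ via $F_1$ and of the base change $\calR_1'^{\vee}\ra F_1^*\calR_1'^{\vee}$.
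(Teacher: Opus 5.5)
Your proposal is correct and follows essentially the paper's route. You work under \ref{loccoord} and compare the two composites on the generators $\partial'_i$ via \eqref{eqDoma1}, \eqref{ueta}, \eqref{eq1211v} and $\sigma(\eta'_{i(r)})=-\eta'_{i(r)}$, and you are more careful than the paper, which records values only on $\eta_j,\eta_{j(q)}$ and passes from the $\partial'_i$ to all of $\widehat{S}^{\bullet}\calT_{X'/S}$ without comment, whereas you test on the full basis $\eta^I\eta_{(q)}^J$ and justify the reduction by multiplicativity and continuity. One harmless slip: $(\eta_i^{[p]})^{j}$ is a unit multiple of $\eta_i^{[pj]}$ only for $j<p$ (it vanishes for $j\ge p$), but all you need is $u(\eta^I\eta_{(q)}^J)\in\Ox_X\cdot\eta^{[I+pJ]}$ with coefficient $-1$ at $(I,J)=(0,\epsilon_i)$, which holds.
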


\begin{proof}
We may suppose that the hypothesis \ref{loccoord} is satisfied. We keep the same notations of \ref{erafparag121}. Let $(\partial_1',\hdots,\partial_d')\in \left (\calT_{X'/S} \right )^d$ be the dual basis of $(\op{dlog}m_1',\hdots,\op{dlog}m_d').$ For $1\le i\le d,$ denote by $\epsilon_i$ the multi-index of $\N^d$ whose all coefficients are zero except for the $i$th which is equal to $1.$
By \ref{loccoord}, $\left (\eta^{[I]}\right )_{I\in \N^d}$ is a basis for the $\Ox_X$-module $\calP_0.$ Denote by $\left (\partial_I \right )\in \widehat{\calD}_{X/S}$ its dual basis. By definition and \eqref{eqDoma1}, $\check{u}(\partial_i')=\partial_{p\epsilon_i}\circ u.$ Then, by \eqref{ueta}, we have
\begin{alignat*}{2}
\left (\check{u}(\partial_i')\right )(\eta_j) &= \partial_{p\epsilon_i}(\eta_j)=0.\\
\left (\check{u}(\partial_i')\right )(\eta_{j(q)}) &= \partial_{p\epsilon_i}\left (-\eta_j^{[p]}\right )=-\delta_{ij}.
\end{alignat*}
By \ref{locdescRQ}, $F_1^*\calR_1' =\Ox_X\left [\eta_{1(r)}',\hdots,\eta_{d(r)}'\right ].$ Let $(\varphi_I)_{I\in \N^d}$ be the dual basis of $(\eta_{(r)}'^I)_{I\in \N^d}.$
By \ref{erafprop115}, the local section $\partial_i'\in F_1^*\widehat{\Gamma}^{\bullet}\calT_{X'/S}$ corresponds, by $F_1^*\xi,$ to $\varphi_{\epsilon_i} \in F_1^*\calR_1'^{\vee}.$ It follows that $(\check{v}\circ\check{\sigma})(\partial_i')=\varphi_{\epsilon_i}\circ (F_1^*\sigma) \circ v.$ Then, by \eqref{eq1211v},
\begin{alignat*}{2}
\left (\check{v}\circ \check{\sigma}(\partial_i')\right )(\eta_j) &= \varphi_{\epsilon_i}\circ (F_1^*\sigma)\circ v(\eta_j)=0.\\
\left (\check{v}\circ \check{\sigma}(\partial_i')\right )(\eta_{j(q)}) &= \varphi_{\epsilon_i}\circ (F_1^*\sigma)\circ v\left (\eta_{j(q)}\right )=\varphi_{\epsilon_i}(-\eta_{j(r)}')=-\delta_{ij}.
\end{alignat*}
This proves that
$$\check{u}(\partial_i')=\check{v}\circ \check{\sigma}(\partial_i'),$$
and hence the proposition.
\end{proof}

\begin{proposition}
Keep the notation of \ref{prop1232}.
\begin{enumerate}
\item For any $\partial \in \widehat{\calD}_{X/S}$ and $D'\in \widehat{\Gamma}^{\bullet}\calT_{X'/S},$ we have the following equality in $\calQ_1^{\vee}:$
$$\check{u}(\partial)\cdot (\check{v}\circ \check{\sigma})(D')=(\check{v}\circ \check{\sigma})(D')\cdot \check{u}(\partial),$$
where the algebra structure on $\calQ_1^{\vee}$ comes from the Hopf algebra structure on $\calQ_1.$
\item The map
\begin{equation}\label{isoDQ}
\xi:\begin{array}[t]{clc}
\widehat{\calD}_{X/S} \otimes_{\widehat{S}^{\bullet}\calT_{X'/S}} \widehat{\Gamma}^{\bullet}\calT_{X'/S} & \ra & \calQ_1^{\vee} \\
x\otimes y & \mapsto & \check{u}(x) \cdot \check{v}(y),
\end{array}
\end{equation}
is a well-defined isomorphism of $\Ox_X$-algebras.
\end{enumerate}
\end{proposition}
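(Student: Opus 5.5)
The plan is to work étale locally, since both assertions are local and all maps involved are canonical. We may therefore assume the hypothesis \ref{loccoord}, with the lifting $F$ of \ref{groupprop17} available. We use the $\Ox_X$-basis $\left(\eta^I\eta_{(q)}^J\right)_{I\in\llbracket 0,p-1\rrbracket^d,\,J\in\N^d}$ of $\calQ_1$ and its dual family $(\varphi_{I,J})$. The product on $\calQ_1^{\vee}$ is the convolution $(a\cdot b)(x)=(a\otimes b)(\delta(x))$, computed with the comultiplication of \ref{HopfQ}.

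\textbf{Step 1: commutation in (1).}
\begin{enumerate}
\item By \ref{Kokoprop1234}, $\widehat{\calD}_{X/S}$ is topologically generated over $\Ox_X$ by the operators $\partial_{\epsilon_i}$ together with the image of $\widehat{S}^{\bullet}\calT_{X'/S}$. By \ref{prop1232}, the latter image under $\check{u}$ lands inside the image of $\check{v}\circ\check{\sigma}$.
\item Since $\check u$ and $\check v\circ\check\sigma$ are ring morphisms and $\widehat{\Gamma}^{\bullet}\calT_{X'/S}$ is commutative, it suffices to prove two things. First, $\check{u}(f)$ for $f\in\Ox_X$ commutes with $\check v(\check\sigma(D'))$; this holds because $\check v$ is $\Ox_X$-linear for the left structure and elements of $F_1^*\calR_1'^{\vee}$ are $p$-th power linear, i.e.\ $v$ sends $\delta$-twisted elements $q_2^{\#}f-q_1^{\#}f$ into $p$-th powers. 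Second, $\check u(\partial_{\epsilon_i})$ commutes with $\check v(\varphi_{\epsilon_j})$ and with its divided powers.
\item I check both on the basis $\eta^I\eta_{(q)}^J$, using \eqref{eq1211v}: $v$ kills $\eta_i$ and sends $\eta_{i(q)}$ to $1\otimes\eta'_{i(r)}$. Thus $\check v(y)$ is supported on the monomials $\eta^0\eta_{(q)}^J$, and $\check u(\partial_{\epsilon_i})$ is supported on $\eta_i$ and $\eta_{i(q)}$. The formula for $\delta(\eta_{i(q)})$ in \ref{HopfQ} is symmetric up to the cross terms $\eta_i^{b+c}\otimes\eta_i^{p-b}$. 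Pairing these cross terms against $\check u(\partial)\otimes\check v(y)$ and against $\check v(y)\otimes\check u(\partial)$ gives zero, because $\check v(y)$ vanishes on positive powers of $\eta_i$.
\end{enumerate}

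\textbf{Step 2: well-definedness in (2).} By (1), $\check u$ and $\check v$ have commuting images. By \ref{prop1232}, the $\widehat{S}^{\bullet}\calT_{X'/S}$-actions on the two sides agree, the right action being through $\widehat S\to\widehat\Gamma$ followed by $\check v\circ\check\sigma$. One point needs care: the statement writes $\check v(y)$, while \ref{prop1232} involves $\check\sigma$. I will take the tensor product with $\widehat{\Gamma}^{\bullet}\calT_{X'/S}$ mapping to $\calQ_1^{\vee}$ via $\check v\circ\check\sigma$ (composed with $\xi$), which is compatible with the diagram of \ref{prop1232}. Since $\check\sigma$ is an automorphism, this does not affect bijectivity, and with this convention $\xi$ is a well-defined $\Ox_X$-algebra map.

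\textbf{Step 3: bijectivity (the main obstacle).} By \ref{Kokoprop1234}, the source is
\[
\calD_{X/S}\otimes_{S^{\bullet}\calT_{X'/S}}\widehat{\Gamma}^{\bullet}\calT_{X'/S},
\]
which is topologically free on the elements $\partial_\alpha\otimes\partial'^{[\beta]}$ with $\alpha\in\llbracket0,p-1\rrbracket^d$ and $\beta\in\N^d$. Its image is $\check u(\partial_\alpha)\cdot\check v(\varphi_\beta)$ up to sign. I will evaluate this on $\eta^I\eta_{(q)}^J$, expanding $\delta(\eta^I\eta_{(q)}^J)$ via \ref{HopfQ} and using \eqref{ueta}: $u(\eta^I\eta_{(q)}^J)=(-1)^{|J|}\eta^I\prod_j(\eta_j^{[p]})^{J_j}$, which is a unit multiple of $\eta^{[I+pJ]}$. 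The claim to establish is that the resulting matrix is triangular with unit diagonal, namely
\[
\bigl(\check u(\partial_\alpha)\cdot\check v(\varphi_\beta)\bigr)(\eta^I\eta_{(q)}^J)=\pm\delta_{\alpha I}\delta_{\beta J}+(\text{terms with }(I,J)\text{ of smaller total degree }|I|+p|J|).
\]
This suffices because both sides are products of $\Ox_X$ indexed by the same set and the filtrations are complete, so a unipotent triangular map is an isomorphism. Controlling the cross terms of $\delta(\eta_{i(q)})$ is the delicate part of the computation.
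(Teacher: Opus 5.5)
Your Steps 1 and 2 are essentially the paper's argument: work in local coordinates, evaluate both products on the basis $\eta^K\eta_{(q)}^J$, and use that $v$ kills the $\eta_i$. You reduce to $\partial_{\epsilon_i}$ via \ref{prop1232} instead of computing with every $\partial_\alpha$, which is fine. You are also right that the map must be $x\otimes y\mapsto \check u(x)\cdot(\check v\circ\check\sigma)(y)$. This is what \ref{prop1232} forces, and it is what the paper actually computes in \eqref{1222v} and \eqref{12315}.

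Two corrections in Step 1:
\begin{itemize}
\item $\check u(f)$ commutes with $\check v(y)$ because $v\circ q_1^{\#}=v\circ q_2^{\#}$. The difference $q_2^{\#}f-q_1^{\#}f$ lies in the ideal generated by the $\eta_i$, and $v$ sends it to $0$, not merely to $p$-th powers.
\item Your generators on the $\widehat{\Gamma}^{\bullet}\calT_{X'/S}$ side omit the coefficients in $\Ox_{X'}$. These act through $p$-th powers, and they commute with $\check u(\partial_{\epsilon_i})$ because $p$-th powers are central in $\widehat{\calD}_{X/S}$.
\end{itemize}

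The genuine gap is Step 3. It carries all of the bijectivity, and you only state it as a claim to be established. As formulated, your criterion would not suffice. A map $\prod\ra\prod$ with unit diagonal and off-diagonal contributions at $(I,J)$ of \emph{smaller} degree than $(\alpha,\beta)$ need not be injective. For example, on $\prod_{n\ge 0}\Ox_X$ the map $(c_n)\mapsto(c_n-c_{n+1})$ has exactly this shape and kills the constant sequences. Completeness of the filtration only helps when the off-diagonal terms go to higher degree, or when the map is graded.

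Moreover, $u(\eta^I\eta_{(q)}^J)$ is not a unit multiple of $\eta^{[I+pJ]}$. The coefficient $\pm (I+pJ)!/(p!)^J$ has $p$-adic valuation $\sum_j v_p(J_j!)$, so it vanishes as soon as some $J_j\ge p$; indeed $(\eta_j^{[p]})^p=0$.

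What closes the gap is the observation you already made in Step 1. The cross terms of $\delta(\eta_{j(q)})$ lie in $(\eta)\otimes(\eta)$ and are killed by $v$. Hence
$(\op{Id}\otimes v)\delta(\eta^K\eta_{(q)}^J)=\sum_{L\le J}\binom{J}{L}\eta^K\eta_{(q)}^L\otimes\eta_{(r)}'^{J-L}.$
Pairing with $\varphi_\beta\circ F_1^*\sigma$ forces $J-L=\beta$ and contributes the sign $(-1)^{|\beta|}$. For $\alpha\in\llbracket 0,p-1\rrbracket^d$, the functional $\partial_\alpha\circ u$ vanishes on $\eta^K\eta_{(q)}^{J-\beta}$ unless $J=\beta$ and $K=\alpha$, where it gives $\alpha!$.

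Therefore $\xi(\partial_\alpha\otimes\partial'^{[\beta]})=(-1)^{|\beta|}\alpha!\,\varphi_{\alpha,\beta}$ exactly, which is \eqref{12315}. The matrix is diagonal with unit entries, so $\xi$ is an isomorphism. No delicate cross-term analysis or triangularity argument is needed.
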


\begin{proof}
For the first assertion, it is enough to prove the equality for $\partial=\partial_{\alpha}$ and $D'=\partial'^{[\beta]}:=\prod_{i=1}^d\partial_i'^{[\beta_i]},$ for $\alpha,\beta\in \N^d.$
Just as in the proof of \ref{prop1232}, we have
$$
(\check{v} \circ \check{\sigma})(\partial'^{[\beta]})=\varphi_{\beta} \circ (F_1^*\sigma) \circ v
$$
and
$$
\check{u}(\partial_{\alpha})=\partial_{\alpha}\circ u.
$$
By definition of the ring structure on $\calQ_1^{\vee},$ the product $\check{u}(\partial_{\alpha}) \cdot (\check{v} \circ \check{\sigma})(\partial'^{[\beta]})$ is then equal to the composition
\begin{equation*}
\calQ_1 \xrightarrow{\delta} \calQ_1\otimes_{\Ox_X}\calQ_1 \xrightarrow{\op{Id}\otimes v} \calQ_1\otimes_{\Ox_X}F_1^*\calR_1' \xrightarrow{\op{Id}\otimes F_1^*\sigma} \calQ_1\otimes_{\Ox_X}F_1^*\calR_1' \xrightarrow{\op{Id}\otimes \varphi_{\beta}} \calQ_1 \xrightarrow{u} \calP_0 \xrightarrow{\partial_{\alpha}} \Ox_X,
\end{equation*}
where $\delta$ is given in \ref{HopfQ}.
This is equal to the composition
\begin{equation*}
\calQ_1 \xrightarrow{\delta} \calQ_1\otimes_{\Ox_X}\calQ_1 \xrightarrow{u\otimes v} \calP_0\otimes_{\Ox_X} F_1^*\calR_1' \xrightarrow{\partial_{\alpha}\otimes (\varphi_{\beta}\circ F_1^*\sigma)}  \Ox_X.
\end{equation*}
For $J,K\in \N^d,$ set
$$
\eta_{(q)}^J=\prod_{j=1}^d\eta_{j(q)}^{J_j} \in \calQ_1,\ 
\eta^K=\prod_{k=1}^d\eta_k^{K_k} \in \calQ_1.
$$
By \eqref{isoQ}, the $\Ox_X$-module $\calQ_1$ has a basis
$$\left (\eta^K\eta_{(q)}^J\right )_{K\in \llbracket 0,p-1 \rrbracket^d,\ J\in \N^d}.$$
Let $K\in \llbracket 0,p-1 \rrbracket^d$ and $J\in \N^d.$ By \ref{HopfQ},
\begin{equation}\label{12332}
\begin{alignedat}{2}
\delta(\eta^K\eta_{(q)}^J) =& \prod_{1\le k\le d} \delta(\eta_k)^{K_k} \prod_{1\le j\le d}\delta(\eta_{j(q)})^{J_j} \\
=& \prod_{1\le k\le d} (1\otimes\eta_k+\eta_k\otimes 1+\eta_k\otimes\eta_k)^{K_k} \\
&\times \prod_{1\le j\le d} \left (1\otimes \eta_{j(q)}+\sum_{0<b+c<p} \frac{(-1)^{b+c}}{b+c}\begin{pmatrix}b+c \\ b \end{pmatrix} \eta_j^{b+c}\otimes \eta_j^{p-b} + \eta_{j(q)}\otimes 1 \right )^{J_j}.
\end{alignedat}
\end{equation}
Recall the notation (\cite{SF1} 2.2). Developing the right-hand side of \eqref{12332} with the binomial formula and by \eqref{eq1211v} and \eqref{ueta}, we get
\begin{alignat*}{2}
(u\otimes v)\circ \delta(\eta^K\eta_{(q)}^J) =& \left (u \left (\eta^K \right )\otimes 1\right ) \left (\sum_{L\le J} \begin{pmatrix}J\\L \end{pmatrix} u \left (\eta_{(q)}^L\right )\otimes v\left (\eta_{(q)}^{J-L} \right ) \right ) \\
=& \left (\eta^K\otimes 1\right )\left (\sum_{L\le J} \begin{pmatrix} J\\ L \end{pmatrix} \left (-\eta^{[p]} \right )^L\otimes \eta_{(r)}'^{J-L}\right ) \\
=& \sum_{L\le J} \begin{pmatrix} J\\ L \end{pmatrix} \left (\eta^K\left (-\eta^{[p]} \right )^L\right )\otimes \eta_{(r)}'^{J-L}.
\end{alignat*}
We then get
$$
(\op{Id}_{\calP_0}\otimes F_1^*\sigma) \circ (u\otimes v)\circ \delta(\eta^K\eta_{(q)}^J) = (-1)^{|J|}\sum_{L\le J} \begin{pmatrix} J\\ L \end{pmatrix} \left (\eta^K\left (\eta^{[p]} \right )^L\right )\otimes \eta_{(r)}'^{J-L},
$$
and so
\begin{equation}
\left ((\check{v} \circ \check{\sigma})(\partial'^{[\beta]}) \cdot \check{u}(\partial_{\alpha}) \right ) (\eta^K\eta_{(q)}^J) = (-1)^{|J|} \begin{pmatrix} J \\ \beta\end{pmatrix} \partial_{\alpha}\left (\eta^K\left (\eta^{[p]} \right )^{J-\beta} \right ).
\end{equation}
By basic PD structure properties,
$$\left (\eta^{[p]} \right )^{J-\beta}=\frac{(p(J-\beta))!}{p!^{J-\beta}}\eta^{[p(J-\beta)]}.$$
Note that, for any integer $n\ge 0,$
$$v_p\left ( (pn)! \right )=\sum_{k\ge 1} \left \lfloor \frac{pn}{p^k} \right \rfloor \ge n,$$
so $\frac{(p(J-\beta))!}{p!^{J-\beta}}\in \N.$
It follows that
\begin{equation}\label{1222v}
 (\check{v} \circ \check{\sigma})(\partial'^{[\beta]}) \cdot \left ( \check{u}(\partial_{\alpha}) \right ) (\eta^K\eta_{(q)}^J) = (-1)^{|J|} \begin{pmatrix} J \\ \beta\end{pmatrix} \frac{(K+p(J-\beta))!}{p!^{J-\beta}} \partial_{\alpha}\left (\eta^{[K+p(J-\beta)]} \right ).
\end{equation}
The product $\left ((\check{v} \circ \check{\sigma})(\partial'^{[\beta]})\right ) \cdot  \check{u}(\partial_{\alpha})$ is equal to the composition
$$
\calQ_1 \xrightarrow{\delta} \calQ_1\otimes_{\Ox_X}\calQ_1 \xrightarrow{\op{Id} \otimes u} \calQ_1\otimes_{\Ox_X}\calP_0 \xrightarrow{\op{Id} \otimes \partial_{\alpha}} \calQ_1 \xrightarrow{v} F_1^*\calR_1' \xrightarrow{F_1^*\sigma} F_1^*\calR_1' \xrightarrow{\varphi_{\beta}} \Ox_X.
$$
This is equal to the composition
\begin{equation*}
\calQ_1 \xrightarrow{\delta} \calQ_1\otimes_{\Ox_X}\calQ_1 \xrightarrow{v\otimes u} \calP_0\otimes_{\Ox_X} F_1^*\calR_1' \xrightarrow{(\varphi_{\beta}\circ F_1^*\sigma)\otimes \partial_{\alpha}}  \Ox_X.
\end{equation*}
A similar computation for $\left ((\check{v} \circ \check{\sigma})(\partial'^{[\beta]}) \right ) \circ \check{u}(\partial_{\alpha})(\eta^K\eta_{(q)}^J)$ proves that
$$(\check{v} \circ \check{\sigma})(\partial'^{[\beta]}) \cdot \check{u}(\partial_{\alpha})=\check{u}(\partial_{\alpha}) \cdot (\check{v} \circ \check{\sigma})(\partial'^{[\beta]}).$$
This concludes the proof of the first assertion, which in turn proves that $\xi$ \eqref{isoDQ} is well-defined. It remains to prove that it is an isomorphism. A local section of $\widehat{\calD}_{X/S} \otimes_{\widehat{S}^{\bullet}\calT_{X'/S}} \widehat{\Gamma}^{\bullet}\calT_{X'/S}$ can be uniquely written as an infinite sum
$$
\sum_{\substack{\alpha \in \llbracket0,p-1\rrbracket^d \\ \beta\in \N^d}} c_{\alpha\beta} \partial_{\alpha}\otimes \partial'^{[\beta]},\ c_{\alpha \beta}\in \Ox_X.
$$
For $\alpha\in \llbracket 0,p-1 \rrbracket^d$ and $\beta,J,K\in \N^d,$ we have, by \eqref{1222v},
\begin{equation}\label{12315}
\xi(\partial_{\alpha}\otimes \partial'^{[\beta]})(\eta^K\eta_{(q)}^J)=(-1)^{|\beta|}\alpha! \delta_{\alpha K}\delta_{\beta J}.
\end{equation}
This finishes the proof.
\end{proof}

\begin{corollaire}
Let
\begin{equation}\label{eq12331bis}
\calD^{\gamma}_{X/S}=\calD_{X/S}\otimes_{S^{\bullet} \calT_{X'/S}} \widehat{\Gamma}^{\bullet}\calT_{X'/S}.
\end{equation}
There exists an isomorphism of $\Ox_X$-algebras
\begin{equation}\label{eq12331}
\calD^{\gamma}_{X/S} \xrightarrow{\sim} \calQ_1^{\vee}.
\end{equation}
In addition, under the assumption \ref{loccoord}, the local section $(-1)^{|J|} I! \partial_I \otimes \partial'^{[J]}$ of $\calD_{X/S}^{\gamma}$ corresponds to $\varphi_{I,J},$ for $I\in \llbracket 0,p-1 \rrbracket^d$ and $J\in \N^d.$
\end{corollaire}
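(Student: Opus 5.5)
The plan is to obtain \eqref{eq12331} by composing two isomorphisms already established. First, I rewrite $\calD^{\gamma}_{X/S}$ using Proposition \ref{Kokoprop1234}. Since $\widehat{\Gamma}^{\bullet}\calT_{X'/S}$ is an $S^{\bullet}\calT_{X'/S}$-algebra through $S^{\bullet}\calT_{X'/S}\ra \widehat{S}^{\bullet}\calT_{X'/S}\ra \widehat{\Gamma}^{\bullet}\calT_{X'/S}$, associativity of tensor products gives
$$
\calD_{X/S}\otimes_{S^{\bullet}\calT_{X'/S}}\widehat{\Gamma}^{\bullet}\calT_{X'/S}\xrightarrow{\sim}\left(\calD_{X/S}\otimes_{S^{\bullet}\calT_{X'/S}}\widehat{S}^{\bullet}\calT_{X'/S}\right)\otimes_{\widehat{S}^{\bullet}\calT_{X'/S}}\widehat{\Gamma}^{\bullet}\calT_{X'/S}.
$$
Applying $\Xi$ from \ref{Kokoprop1234} then identifies the right-hand side with $\widehat{\calD}_{X/S}\otimes_{\widehat{S}^{\bullet}\calT_{X'/S}}\widehat{\Gamma}^{\bullet}\calT_{X'/S}$, where $\widehat{\calD}_{X/S}$ is an $\widehat{S}^{\bullet}\calT_{X'/S}$-module via $\widehat{\psi}$. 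Because $\Xi$ is an algebra isomorphism and $\psi$ takes values in the center, these identifications are isomorphisms of $\Ox_X$-algebras. For the algebra structure on the tensor product I use the commutation established in the first assertion of the preceding proposition. Composing with the isomorphism $\xi$ of \eqref{isoDQ} gives \eqref{eq12331}.

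Next I check that the module structures agree. The map $\xi$ in \eqref{isoDQ} is only meaningful if the $\widehat{S}^{\bullet}\calT_{X'/S}$-actions on both tensor factors are compatible with $\check{u}$ and $\check{v}$. This compatibility is exactly the content of Proposition \ref{prop1232}: it gives $\check{u}\circ\widehat{\psi}=\check{v}\circ\check{\sigma}\circ\xi\circ(\text{can})$. One point needs care, namely whether $\check{v}$ or $\check{v}\circ\check{\sigma}$ is applied to the second factor. I will follow the convention fixed in \eqref{isoDQ}; the sign this produces appears in the local formula below.

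For the local description I work under \ref{loccoord}. The element $\partial_I\otimes\partial'^{[J]}$ of $\calD^{\gamma}_{X/S}$ goes, under the identifications above, to $\partial_I\otimes\partial'^{[J]}$ in $\widehat{\calD}_{X/S}\otimes\widehat{\Gamma}^{\bullet}\calT_{X'/S}$, because $\Xi$ is the identity on $\partial_I$ for $I\in\llbracket 0,p-1\rrbracket^d$. By \eqref{12315}, its image under $\xi$ evaluates on the basis element $\eta^K\eta^{L}_{(q)}$ to $(-1)^{|J|}I!\,\delta_{IK}\delta_{JL}$. Since $I!$ is invertible in $\Ox_X$ for $I\in\llbracket0,p-1\rrbracket^d$, it follows that $\xi(\partial_I\otimes\partial'^{[J]})=(-1)^{|J|}I!\,\varphi_{I,J}$. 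Multiplying by $(-1)^{|J|}(I!)^{-1}$ gives the correspondence with $\varphi_{I,J}$. The statement's normalization $(-1)^{|J|}I!\,\partial_I\otimes\partial'^{[J]}$ should be read against this computation: either the statement intends the inverse scalar, or $I!$ enters through a different normalization of the dual basis. I will check this against \eqref{12315} and the convention $\partial_I(\eta^{[K]})=\delta_{IK}$, noting that $\eta^K=K!\,\eta^{[K]}$.

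The main obstacle I expect is bookkeeping rather than mathematics. It consists of making sure the associativity identification respects the algebra structures, which requires centrality of $\psi$ and of the image of $\widehat{\Gamma}^{\bullet}$, and tracking the signs and factorials coming from $\check{\sigma}$ and from the relation $\eta^K=K!\,\eta^{[K]}$.
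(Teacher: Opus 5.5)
Your argument is the paper's own proof: rewrite $\calD^{\gamma}_{X/S}$ via $\Xi$ of \ref{Kokoprop1234} as $\widehat{\calD}_{X/S}\otimes_{\widehat{S}^{\bullet}\calT_{X'/S}}\widehat{\Gamma}^{\bullet}\calT_{X'/S}$, apply $\xi$ of \eqref{isoDQ}, and read off the local formula from \eqref{12315}; note that $\xi$ must send the second factor through $\check{v}\circ\check{\sigma}$, since by \ref{prop1232} this is what makes it balanced over $\widehat{S}^{\bullet}\calT_{X'/S}$, and this is where $(-1)^{|J|}$ comes from. The discrepancy you flagged is real and is a misprint in the statement, not a normalization issue: with $\varphi_{I,J}$ dual to $\eta^I\eta^J_{(q)}$, $\partial_I$ dual to $\eta^{[I]}$ and $u(\eta^K)=K!\,\eta^{[K]}$, one gets $\xi(\partial_I\otimes\partial'^{[J]})=(-1)^{|J|}I!\,\varphi_{I,J}$, so $\varphi_{I,J}$ corresponds to $(-1)^{|J|}(I!)^{-1}\partial_I\otimes\partial'^{[J]}$ (the stated element differs by $(I!)^2$, which is $\not\equiv 1$ mod $p$ for e.g.\ $I=2\epsilon_1$ when $p\ge 5$), and this corrected form is exactly how the paper uses the corollary in the proof of Theorem \ref{thm1237}.
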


\begin{proof}
This results from \eqref{isoDQ}, \ref{Kokoprop1234} and \eqref{12315}.
\end{proof}

\begin{definition}\label{locPDnil}
Let $\mathfrak{I}$ be the PD-ideal of $\Gamma^{\bullet}\calT_{X/S}.$
\begin{enumerate}
\item A $\widehat{\Gamma}^{\bullet}\calT_{X/S}$-module $\calE$ is said to be \emph{locally PD-nilpotent} if any local section $x$ of $\calE$ is locally annihilated by $\mathfrak{I}^{[m]}$ for some positive integer $m.$
\item A $\calD_{X/S}^{\gamma}$-module $\calE$ is said to be \emph{locally PD-nilpotent} if it is so when considered as a $\w{\Gamma}^{\bullet}\calT_{X/S}$-module via the canonical morphism
$$\w{\Gamma}^{\bullet}\calT_{X/S} \ra \calD_{X/S}^{\gamma}.$$
\end{enumerate}
\end{definition}

\begin{theorem}\label{thm1237}
Let $\calR_1=\calR_{\frakX,1}/p\calR_{\frakX,1},$ $\calQ_1=\calQ_{\frakX}/p\calQ_{\frakX}$ and
$$\calD^{\gamma}_{X/S}=\calD_{X/S}\otimes_{S^{\bullet} \calT_{X'/S}} \widehat{\Gamma}^{\bullet}\calT_{X'/S}.$$
The following tensor categories are canonically equivalent:
\begin{enumerate}
\item The category of $\Ox_X$-modules equipped with a stratification relative to $\calR_1$ (resp. $\calQ_1$).
\item The category of locally PD-nilpotent $\widehat{\Gamma}^{\bullet}\calT_{X/S}$-modules (resp. locally PD-nilpotent $\calD^{\gamma}_{X/S}$-modules).
\end{enumerate}
\end{theorem}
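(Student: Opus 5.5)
We will deduce both equivalences from the general formalism relating stratifications over a Hopf algebra to modules over its dual, in the form of (\cite{DXU19} 5.4 and the surrounding discussion). Given a formal Hopf algebra $\calA$ on $X$ which is, locally, a free $\Ox_X$-module with a basis $(e_K)$ indexed by monomials in finitely many generators, an $\calA$-stratification on an $\Ox_X$-module $\calE$ is an $\calA$-linear isomorphism $\epsilon:\calA\otimes\calE\xrightarrow{\sim}\calE\otimes\calA$ satisfying the cocycle condition. Equivalently, it is a map $\theta:\calE\ra\calE\otimes_{\Ox_X}\calA$, $x\mapsto \sum_K \theta_K(x)\otimes e_K$, where the sum is finite for each local section. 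Writing $(e_K^\vee)$ for the dual basis, we let $\varphi\in\calA^\vee$ act by $\varphi\cdot x=(\op{Id}\otimes\varphi)(\theta(x))$. The cocycle condition translates into associativity for the product on $\calA^\vee$ dual to $\delta$, and the counit condition translates into unitality. The finiteness of the sum is exactly a local nilpotence condition: each $x$ is killed by all but finitely many $e_K^\vee$. Conversely, a module over $\calA^\vee$ satisfying this nilpotence yields $\theta(x)=\sum_K e_K^\vee\cdot x\otimes e_K$. Invertibility of $\epsilon$ comes for free from the antipode. Tensor products correspond because the coproduct is dual to multiplication on $\calA$.

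\textbf{Step 1 (the case $\calR_1$).} By \ref{erafprop115}, $\calR_1\simeq S^{\bullet}\omega^1_{X/S}$ as Hopf algebras, and $\calR_1^\vee\simeq\widehat{\Gamma}^{\bullet}\calT_{X/S}$. Under \ref{loccoord}, the dual basis of $(\eta_{(r)}^I)$ is $(\partial^{[I]})$. An element $x$ is killed by all but finitely many $\partial^{[I]}$ if and only if it is killed by $\mathfrak{I}^{[m]}$ for some $m$, because $\mathfrak{I}^{[m]}$ is topologically generated by the $\partial^{[I]}$ with $|I|\ge m$. So finiteness of $\theta$ is exactly local PD-nilpotence in the sense of \ref{locPDnil}.

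\textbf{Step 2 (the case $\calQ_1$).} By \eqref{isoQ}, $\calQ_1$ is locally free with basis $\eta^I\eta_{(q)}^J$, where $I\in\llbracket0,p-1\rrbracket^d$ and $J\in\N^d$. By \eqref{eq12331}, $\calQ_1^\vee\simeq\calD^\gamma_{X/S}$, and $\varphi_{I,J}$ corresponds to $(-1)^{|J|}I!\,\partial_I\otimes\partial'^{[J]}$. The general formalism therefore gives an equivalence between $\calQ_1$-stratifications and $\calD^\gamma_{X/S}$-modules on which each local section is killed by almost all $\varphi_{I,J}$. Since $I$ ranges over a finite set, this condition is equivalent to being killed by $\partial'^{[J]}$ for $|J|$ large. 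That is PD-nilpotence for the factor $\widehat{\Gamma}^{\bullet}\calT_{X'/S}$.

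\textbf{Main obstacle.} The statement of \ref{locPDnil} refers to $\widehat{\Gamma}^{\bullet}\calT_{X/S}\ra\calD^\gamma_{X/S}$, so the hard part is reconciling the nilpotence condition with the correct structure map, presumably via $F_1^*$ and the identification of étale sites. We would check carefully, using the explicit formula \eqref{12315}, that the nilpotence condition extracted from the finiteness of $\theta$ matches \ref{locPDnil}. A second point needing care is that the formalism is applied locally and then glued. This requires the equivalences to be canonical, i.e. independent of the coordinates and of the lifting $F$, and this is ensured by the global nature of \eqref{eq12331} and \eqref{eq1182}. Compatibility with tensor structures follows from the compatibility of these isomorphisms with the coproducts, as checked in \ref{HopfQ}.
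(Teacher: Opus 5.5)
Your proposal is correct and is essentially the paper's proof. Following Oyama, the paper lets $\varphi\in\calR_1^{\vee}$ (resp. $\calQ_1^{\vee}\simeq\calD^{\gamma}_{X/S}$) act by $x\mapsto(\op{Id}_{\calE}\otimes\varphi)(\epsilon(1\otimes x))$, reads off local PD-nilpotence from the finiteness of $\epsilon(1\otimes x)$ in the bases $(\eta_{(r)}^I)$ and $(\eta^I\eta_{(q)}^J)$, and conversely defines $\theta(x)=\sum_I(\alpha_I\cdot x)\otimes\eta_{(r)}^I$ locally, gluing via the injectivity of $\Lambda:\mathscr{Hom}_{\Ox_X}(\calE,\calE\otimes_{\Ox_X}\calR_1)\ra\mathscr{Hom}_{\Ox_X}(\calR_1^{\vee}\otimes_{\Ox_X}\calE,\calE)$ (a cleaner form of your canonicity argument). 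Your ``main obstacle'' is just a typo in Definition \ref{locPDnil}(2): the paper's proof verifies exactly what you extracted, namely $\varphi_{I,\beta}\cdot x=0$ for $|\beta|$ large, i.e.\ PD-nilpotence with respect to the factor $\widehat{\Gamma}^{\bullet}\calT_{X'/S}$.
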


\begin{proof}
The proof being similar to (\cite{Oyama} 1.2.10), we just outline the idea of the equivalences. 

Let $\calE$ be an $\Ox_X$-module equipped with an $\calR_1$-stratification
$$\epsilon:\calR_1\otimes_{\Ox_X}\calE \ra \calE \otimes_{\Ox_X}\calR_1.$$
Since we have an isomorphism of $\Ox_X$-algebras \eqref{eq1182}
$$\calR^{\vee}_1 \xrightarrow{\sim} \widehat{\Gamma}^{\bullet}\calT_{X/S},$$
it is sufficient to define a $\calR^{\vee}_1$-module structure on $\calE.$
This structure is obtained as follows: for local sections $x$ and $\varphi$ of $\calE$ and $\calR^{\vee}_1$ respectively, the action $\varphi \cdot x$ of $\varphi$ on $x$ is the image of $x$ by the composition
$$\begin{array}[t]{clclc}
\calE & \ra & \calE\otimes_{\Ox_X}\calR_1 & \xrightarrow{\op{Id}_{\calE}\otimes \varphi }  & \calE \\
x & \mapsto & \epsilon(1\otimes x) & &
\end{array}
$$
We check that the $\widehat{\Gamma}^{\bullet}\calT_{X/S}$-module structure obtained is PD-nilpotent by reducing to local coordinates: suppoe that the hypothesis \ref{loccoord} is satisfied. We have a basis $(\op{dlog}m_i)_{1\le i\le d}$ of $\omega^1_{X/S}.$ Let $(D_i)_{1\le i\le d}$ be its dual basis and consider $D_i$ as a section of $\widehat{\Gamma}^{\bullet}\calT_{X/S}$ via the canonical morphism $\calT_{X/S} \ra \widehat{\Gamma}^{\bullet}\calT_{X/S}.$ For $I=(I_1,\hdots,I_d)\in \N^d,$ set
$$\eta_{(r)}^I=\prod_{i=1}^d\eta_{i(r)}^{I_i} \in \calR_1.$$
By \ref{HopfR}, the family $\left (\eta_{(r)}^I\right )_{I\in \N^d}$ is a basis for the $\Ox_X$-module $\calR_1.$ Denote by $\left (\varphi_I \right )_{I\in \N^d}$ its dual base. By \ref{erafprop115}, the section $D_i\in \widehat{\Gamma}^{\bullet}\calT_{X/S}$ corresponds to $\varphi_{\epsilon_i} \in \calR_1^{\vee},$ where $\epsilon_i\in \N^d$ is such that all its coefficients are zero except for the $i$th which is equal to $1.$ Let $x$ be a local section of $\calE.$ There exists a positive integer $n$ and local sections $x_I\in \calE$ such that
$$
\epsilon(1\otimes x)=\sum_{\substack{I\in \N^d \\ |I|\le n}} x_I\otimes \eta_{(r)}^I \in \calE \otimes_{\Ox_X}\calR_1.
$$
It follows that, for $I\in \N^d$ such that $|I|>n,$
$$
\varphi_I\cdot x=0.
$$
Conversly, let $\calE$ be a locally PD-nilpotent $\widehat{\Gamma}^{\bullet}\calT_{X/S}$-module and
$$\alpha:\calR^{\vee}_1\otimes_{\Ox_X}\calE \xrightarrow{\sim} \widehat{\Gamma}^{\bullet}\calT_{X/S}\otimes_{\Ox_X}\calE \ra \calE$$
the structure morphism.
Consider the morphism
$$
\Lambda:\mathscr{Hom}_{\Ox_X}\left (\calE , \calE\otimes_{\Ox_X}\calR_1 \right ) \ra \mathscr{Hom}_{\Ox_X}\left (\calR^{\vee}_1\otimes_{\Ox_X}\calE , \calE \right )
$$
sending a morphism $f:\calE \ra \calE \otimes_{\Ox_X}\calR_1$ to
$$
\begin{array}[t]{clc}
\calR^{\vee}_1 \otimes_{\Ox_X} \calE & \ra & \calE \\
\varphi \otimes x & \mapsto & (\op{Id}_{\calE} \otimes \varphi)\circ f(x).
\end{array}
$$
The $\Ox_X$-module $\calR_1$ is locally free so $\Lambda$ is injective. Suppose that there exists an étale covering $(U_i \ra X)_{i\in I}$ and morphisms $\theta_i:\calE_{|U_i} \ra \calE_{|U_i}\otimes_{\Ox_{U_i}}\calR_{1|U_i}$ such that $\Lambda(\theta_i)=\alpha_{|U_i}$ for all $i\in I.$ The injectivity of $\Lambda$ implies that the morphisms $\theta_i$ glue together into a morphism $\theta:\calE \ra \calE\otimes_{\Ox_X}\calR_1$ such that $\Lambda(\theta)=\alpha.$ It is hence sufficient to construct, étale locally on $X,$ a morphism
$$\theta:\calE \ra \calE \otimes_{\Ox_X}\calR_1$$
sent, by $\Lambda,$ to $\alpha.$ We can thus suppose that the hypothesis \ref{loccoord} is satisfied. We have a basis $\left (\eta_{(r)}^I\right )_{I\in \N^d}$ of the $\Ox_X$-module $\calR_1.$ Let $(\alpha_I)_{i\in \N^d}$ be the dual basis. We set
$$
\theta:\begin{array}[t]{clc}
\calE & \ra & \calE\otimes_{\Ox_X}\calR_1 \\
x & \mapsto & \sum_{I\in \N^d}(\alpha_I\cdot x)\otimes \eta_{(r)}^I.
\end{array}
$$
Note that the sum is finite since $\calE$ is locally PD-nilpotent.

Let us now construct the second equivalence. Let $\calE$ be an $\Ox_X$-module equipped with a $\calQ_1$-stratification $\epsilon.$ By \eqref{eq12331}, it is sufficient to define a $\calQ_1^{\vee}$-module structure on $\calE.$ We define the action $\varphi\cdot x$ of $\varphi \in \calQ_1^{\vee}$ on $x\in \calE$ as the image of $x$ by the composition
$$
\begin{array}[t]{clclc}
\calE & \ra & \calE\otimes_{\Ox_X}\calQ_1 & \xrightarrow{\op{Id}_{\calE}\otimes \varphi }  & \calE \\
x & \mapsto & \epsilon(1\otimes x) & &
\end{array}
$$
To check that this $\calD_{X/S}^{\gamma}$-module structure is PD-nilpotent, we suppose that the hypothesis \ref{loccoord} is satisfied. For $I=(I_1,\hdots,I_d)\in \N^d,$ set
$$\eta_{(q)}^I=\prod_{i=1}^d\eta_{i(q)}^{I_i} \in \calQ_1,\ \eta^I=\prod_{i=1}^d\eta_{i}^{I_i} \in \calQ_1.$$
Then $\left (\eta^I\eta_{(q)}^J\right )_{\substack{I\in \llbracket 0,p-1 \rrbracket^d \\ J\in \N^d}}$ is a basis for the $\Ox_X$-module $\calQ_1.$ Denote by $\left (\varphi_{I,J}\right )$ its dual basis. Let $(\partial'_1,\hdots,\partial'_d) \in \left (\calT_{X'/S}\right )^d$ be the dual basis of $(\op{dlog}m_1',\hdots,\op{dlog}m_d').$ For $\beta\in  \N^d,$ set
$$
\partial'^{[\beta]}:=\prod_{i=1}^d\partial_i'^{[\beta_i]} \in \Gamma^{\bullet}\calT_{X'/S}.
$$
By \eqref{12315}, the section $\partial_I\otimes \partial'^{[\beta]}\in \calD_{X/S}^{\gamma}$ corresponds, by \eqref{eq12331}, to $(-1)^{|\beta|}I!\varphi_{I,\beta}\in \calQ_1^{\vee}.$
For $x\in \calE,$ there exist $x_{I,J}\in \calE$ such that
$$
\epsilon(1 \otimes x)=\sum_{\substack{I\in \llbracket 0,p-1 \rrbracket^d \\ J\in \N^d}} x_{I,J}\otimes \eta^I\eta_{(q)}^J
$$
and $x_{I,J}=0$ except for finitely many multi-indices $I$ and $J.$
It follows that $\varphi_{I,\beta}\cdot x=0$ for sufficiently large $|\beta|.$ The rest of the equivalence is similar to the previous case.
\end{proof}

\section{Logarithmic Oyama topoi and the Cartier transform}

Let $\frakS$ be a logarithmic formal scheme, log flat and locally of finite type over $\op{Spf}W,$ and $X\ra S$ a log smooth morphism of logarithmic schemes.

\begin{definition}\label{defforintro}
We define the categories $\calE(X/\frakS)$ and $\underline{\calE}(X/\frakS)$ as follows:
\begin{enumerate}
\item An object of $\calE(X/\frakS)$ (resp. $\underline{\calE}(X/\frakS)$) is a triple $(U,\frakT,u)$ consisting of an étale strict morphism of logarithmic schemes $U \ra X,$ a log flat fs $p$-adic formal logarithmic $\frakS$-scheme $\frakT$ (\cite{SF1} 7.18) and an $S$-morphism of logarithmic schemes $u:T \ra U$ (resp. $u:\underline{T}\ra U$) which is affine as a morphism of schemes, where $\underline{T}$ is defined in \ref{era3logimagedef}.
\item A morphism $(U_1,\frakT_1,u_1) \ra (U_2,\frakT_2,u_2)$ in $\calE(X/\frakS)$ (resp. $\underline{\calE}(X/\frakS)$) is a pair $(f,g)$ consisting of an $\frakS$-morphism $f:\frakT_1 \ra \frakT_2$ and an $X$-morphism $g:U_1 \ra U_2$ such that $u_2\circ f_1=g\circ u_1$ (resp. $u_2 \circ \underline{f_1}=g\circ u_1$), where $f_1:T_1 \ra T_2$ is the morphism induced by $f$ by reduction modulo $p$ and $\underline{f_1}:\underline{T_1} \ra \underline{T_2}$ is obtained from $f_1$ by functoriality (\ref{era3functoriality}).
\end{enumerate}
A morphism $(U_1,\frakT_1,u_1) \ra (U_2,\frakT_2,u_2)$ in $\calE(X/\frakS)$ or $\underline{\calE}(X/\frakS)$ is said to be \emph{log flat} if $\frakT_1 \ra \frakT_2$ is log flat.
\end{definition}

\begin{definition}
We denote by $\calE_{\mathrm{strict}}(X/\frakS)$ (resp. $\underline{\calE}_{\mathrm{strict}}(X/\frakS)$) the subcategory of $\calE(X/\frakS)$ (resp. $\underline{\calE}(X/\frakS)$) consisting of objects $(U,\frakT,u)$ such that $u:T\ra U$ (resp. $u:\underline{T} \ra U$) is strict.
\end{definition}

\begin{parag}\label{era3fiberparag}
Fiber products by log flat morphisms in the categories $\calE(X/\frakS),$ $\underline{\calE}(X/\frakS),$ $\calE_{\mathrm{strict}}(X/\frakS)$ and $\underline{\calE}_{\mathrm{strict}}(X/\frakS)$ are representable. More precisely, given morphisms $(U_1,\frakT_1,u_1) \ra (U,\frakT,u)$ and $(U_2,\frakT_2,u_2) \ra (U,\frakT,u)$ in $\calE(X/\frakS)$ (resp. $\underline{\calE}(X/\frakS)$) with one of them being log flat, the fiber product $(U_1,\frakT_1,u_1)\times_{(U,\frakT,u)}(U_2,\frakT_2,u_2)$ is representable by $(U_1\times_UU_2,\frakT_1\times_{\frakT}^{\op{log}}\frakT_2,v)$ where $v$ is the morphism $T_1\times_T^{\op{log}}T_2 \ra U_1\times_UU_2$ induced by $u_1$ and $u_2$ (resp. is the composition of the mophism $\underline{T_1}\times_{\underline{T}}^{\op{log}}\underline{T_2} \ra U_1\times_UU_2$ induced by $u_1$ and $u_2$ with the canonical morphism $\underline{T_1\times_T^{\op{log}}T_2} \ra \underline{T_1}\times_{\underline{T}}^{\op{log}}\underline{T_2}$ (\ref{era3functoriality})), which is affine. Indeed, since one of the morphisms $\frakT_1 \ra \frakT$ and $\frakT_2 \ra \frakT$ is log flat, $\frakT_1\times_{\frakT}^{\op{log}}\frakT_2 \ra \frakS$ is also log flat (\cite{Ogus2018} IV 4.1.2 (3)). Note that $U_1\times_UU_2=U_1\times_U^{\op{log}}U_2$ since $U\ra X,$ $U_1\ra X$ and $U_2 \ra X$ are all strict. In addition, if $u_1$ and $u_2$ are strict, then so is $v.$
\end{parag}

\begin{definition}\label{era3defcart}~ 
\begin{enumerate}
\item A morphism $(U_1,\frakT_1,u_1) \ra (U_2,\frakT_2,u_2)$ in $\calE(X/\frakS)$ is said to be \emph{cartesian} if the canonical morphism
$$T_1 \ra T_2\times_{U_2}U_1$$
is an isomorphism.
\item A morphism $(U_1,\frakT_1,u_1) \ra (U_2,\frakT_2,u_2)$ in $\underline{\calE}(X/\frakS)$ is said to be \emph{cartesian} if the morphism $\frakT_1 \ra \frakT_2$ is strict and étale and the square
$$
\begin{tikzcd}
T_1 \ar{r} \ar[swap]{d}{\Delta_{u_1}} & T_2 \ar{d}{\Delta_{u_2}} \\
U_1' \ar{r} & U_2',
\end{tikzcd}
$$
where the horizontal arrows are induced by $\frakT_1 \ra \frakT_2$ and $U_1 \ra U_2$ and the vertical arrows are given in \eqref{morphismShiho1}, is cartesian.
\end{enumerate}
\end{definition}

\begin{proposition}\label{era3stablebc}
The class of cartesian morphisms in $\calE(X/\frakS)$ (resp. $\underline{\calE}(X/\frakS),$ resp. $\calE_{\mathrm{strict}}(X/\frakS),$ resp. $\underline{\calE}_{\mathrm{strict}}(X/\frakS)$) is stable under composition and base change by any morphism.
\end{proposition}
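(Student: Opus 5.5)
The plan is to treat the four categories in turn, handling composition and base change separately. Base change is only asserted where the fiber product exists, by \ref{era3fiberparag}; in the non-flat case I would construct the fiber product along a cartesian morphism directly, as below.

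\emph{Composition in $\calE(X/\frakS)$.} Let $(U_1,\frakT_1,u_1)\ra(U_2,\frakT_2,u_2)\ra(U_3,\frakT_3,u_3)$ be cartesian. Then
$$T_1\simeq T_2\times_{U_2}U_1\simeq (T_3\times_{U_3}U_2)\times_{U_2}U_1\simeq T_3\times_{U_3}U_1$$
by transitivity of fiber products. The $U_i$ are strict étale over $X$, so these fiber products agree in the categories of logarithmic schemes and of fs logarithmic schemes.

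\emph{Base change in $\calE(X/\frakS)$.} Let $(U_1,\frakT_1,u_1)\ra(U_2,\frakT_2,u_2)$ be cartesian and $(V,\frakT',v)\ra(U_2,\frakT_2,u_2)$ arbitrary. Since $T_1\simeq T_2\times_{U_2}U_1$ and $U_1\ra U_2$ is strict étale, $T_1\ra T_2$ is strict étale. The $p$-adic formal scheme $\frakT_1$ is étale over $\frakT_2$, by topological invariance of the étale site, and in particular log flat. So \ref{era3fiberparag} applies and the fiber product is $(U_1\times_{U_2}V,\frakT_1\times_{\frakT_2}\frakT',w)$. Its special fiber is
$$T_1\times_{T_2}T'\simeq (T_2\times_{U_2}U_1)\times_{T_2}T'\simeq T'\times_{U_2}U_1\simeq T'\times_V(V\times_{U_2}U_1),$$
which is exactly the cartesian condition for the projection to $(V,\frakT',v)$. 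Strictness of the structure morphisms is preserved by base change, which gives the claim for $\calE_{\mathrm{strict}}(X/\frakS)$. Composition there is immediate.

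\emph{The category $\underline{\calE}(X/\frakS)$.} Cartesian morphisms require $\frakT_1\ra\frakT_2$ strict étale, and this class is stable under composition and base change. The square condition uses the compatibility proposition after \ref{propShiho185}: for a morphism of $\underline{\calE}$, we have $\Delta_{u_2}\circ f_1=g'\circ\Delta_{u_1}$. Composition is then pasting of cartesian squares, using functoriality $U\mapsto U'$, i.e.\ $(g\circ h)'=g'\circ h'$, which follows from the uniqueness in the construction of the exact relative Frobenius.

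For base change of $(U_1,\frakT_1,u_1)\ra(U_2,\frakT_2,u_2)$ along $(V,\frakT',v)\ra(U_2,\frakT_2,u_2)$, I take the fiber product $(U_1\times_{U_2}V,\ \frakT_1\times_{\frakT_2}\frakT',\ w)$ from \ref{era3fiberparag}. The morphism $\frakT_1\times_{\frakT_2}\frakT'\ra\frakT'$ is strict étale. It remains to check that
$$\begin{tikzcd} T_1\times_{T_2}T' \ar{r}\ar{d}{\Delta_w} & T'\ar{d}{\Delta_v}\\ (U_1\times_{U_2}V)' \ar{r} & V'\end{tikzcd}$$
is cartesian. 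There are three steps.
\begin{enumerate}
\item Show that $(U_1\times_{U_2}V)'\simeq U_1'\times_{U_2'}V'$. Since $U_1\ra U_2$ is strict étale, the relative Frobenius square for $U_1/U_2$ is cartesian. So $U_1'\simeq U_1\times_{U_2}U_2'$, compatibly with strict étale base change.
\item Use that $\Delta_w$ is the unique morphism attached to $w$, so by uniqueness in \ref{propShiho185} it equals $(\Delta_{u_1}\circ \mathrm{pr}_1,\ \Delta_v\circ \mathrm{pr}_2)$.
\item Paste cartesian squares: $T_1\times_{T_2}T'\simeq (U_1'\times_{U_2'}T_2)\times_{T_2}T'\simeq U_1'\times_{U_2'}T'$, and this equals $(U_1'\times_{U_2'}V')\times_{V'}T'$.
\end{enumerate}
Strictness again passes to base change, which handles $\underline{\calE}_{\mathrm{strict}}(X/\frakS)$.

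The main obstacle is step 1, compatibility of $U\mapsto U'$ with strict étale fiber products. I would prove it étale-locally: the fiber product $U''=U\times_{S,F_S}S$ in fine logarithmic schemes commutes with strict étale base change. Then $U'\ra U''$ is determined by the integral structure on stalks, which are unchanged under strict étale maps, so $U_1'=U_1''\times_{U_2''}U_2'$.
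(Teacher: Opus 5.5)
Your proposal is correct and follows the paper's route: composition by pasting cartesian squares, and base change in $\underline{\calE}(X/\frakS)$ via the fiber product of \ref{era3fiberparag} and the cube of $\Delta$'s. Along the way you make explicit two points the paper uses tacitly: a cartesian morphism of $\calE(X/\frakS)$ has $\frakT_1\ra\frakT_2$ strict \'etale, so the base change exists at all, and $(U_1\times_{U_2}V)'\simeq U_1'\times_{U_2'}V'$.

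One justification needs sharpening, namely the \'etaleness of $\frakT_1\ra\frakT_2$. Topological invariance only gives some \'etale lift of $T_1\ra T_2$. To identify it with the given $\frakT_1$ you must use that $\frakT_1$ and $\frakT_2$ are flat over $\op{Spf}W$ (\ref{Wflat}). Then the comparison morphism is an isomorphism modulo $p$ between $p$-torsion-free $p$-adically complete objects, hence an isomorphism. Strictness then passes up from $T_1\ra T_2$ because the characteristic monoids are unchanged.
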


\begin{proof}
Stability under composition in all categories is clear and so is stability by base change for $\calE(X/\frakS)$ and $\calE_{\mathrm{strict}}(X/\frakS).$
Let $(U_1,\frakT_1,u_1) \ra (U,\frakT,u)$ and $(U_2,\frakT_2,u_2) \ra (U,\frakT,u)$ be morphisms in $\underline{\calE}(X/\frakS)$ such that $(U_1,\frakT_1,u_1) \ra (U,\frakT,u)$ is cartesian. By \ref{era3fiberparag}, the fiber product is given by
$$(U_1,\frakT_1,u_1)\times_{(U,\frakT,u)}(U_2,\frakT_2,u_2)=(V,\frakZ,v),$$
where $V=U_1\times_UU_2,$ $\frakZ=\frakT_1\times_{\frakT}^{\op{log}}\frakT_2$ and $v:\underline{T_1\times_T^{\op{log}}T_2}\ra U_1\times_UU_2$ is the composition of the canonical morphism $\underline{T_1\times^{\op{log}}_TT_2} \ra \underline{T_1}\times_{\underline{T}}^{\op{log}}\underline{T_2}$ with the morphism $\underline{T_1}\times_{\underline{T}}^{\op{log}}\underline{T_2} \ra U_1\times_UU_2$ induced by $u_1$ and $u_2.$ Considering the following commutative diagram
$$
\begin{tikzcd}
 & T_1\times_T^{\op{log}}T_2 \ar{rr} \ar[dashed]{ddd}{\Delta_{v}} \ar{dl} & & T_2 \ar{dl} \ar{ddd}{\Delta_{u_2}} \\
T_1 \ar{rr} \ar[swap]{ddd}{\Delta_{u_1}} & & T \ar[swap]{ddd}{\Delta_{u}} & \\
 &  & &  \\
 & U_1'\times_{U'} U_2' \ar[dashed]{rr} \ar[dashed]{dl} & & U_2' \ar{dl} \\
 U_1' \ar{rr} & & U', &
\end{tikzcd}
$$
we get that the diagram
$$
\begin{tikzcd}
T_1\times_T^{\op{log}}T_2 \ar{r} \ar[swap]{d}{\Delta_{v}} & T_2 \ar{d}{\Delta_{u_2}} \\
U_1'\times_{U'}U_2' \ar{r} & U_2'
\end{tikzcd}
$$
is cartesian, hence the stability for $\underline{\calE}(X/\frakS)$. The stability for $\underline{\calE}_{\mathrm{strict}}(X/\frakS)$ follows immediately.
\end{proof}

\begin{parag}\label{era3paralpha}
Let $(U,\frakT,u)$ be an object of $\calE(X/\frakS)$ (resp. $\underline{\calE}(X/\frakS)$). We define a functor
\begin{equation}\label{eqKoko1861}
\alpha_{(U,\frakT,u)}:\begin{array}[t]{cll}
\text{ét}_{/U} & \ra & \calE(X/\frakS)\ (\op{resp.} \underline{\calE}(X/\frakS))\\ 
V & \mapsto & (V,\frakT_V,u_V)
\end{array}
\end{equation}
as follows: let $V$ be an étale $U$-scheme. We equip $V$ with the logarithmic structure pullback of that of $U,$ so that $V\ra U$ becomes a strict morphism. First, we construct $\alpha_{(U,\frakT,u)}$ for $\calE(X/\frakS).$ Let $T_V=T\times_UV$ and $u_v:T_V \ra V$ the canonical projection, which is affine as a base change of the affine morphism $u:T \ra U.$ Since $V\ra U$ is an étale strict morphism, so is $T_V \ra T.$ Then there exists a unique $p$-adic formal scheme $\frakT_V$ that fits into a cartesian square of formal schemes
$$
\begin{tikzcd}
T_V \ar{r} \ar{d} & \frakT_V \ar{d} \\
T \ar{r} & \frakT
\end{tikzcd}
$$
We equip $\frakT_V$ with the logarithmic structure pullback of that of $\frakT.$ This makes the previous square cartesian in the category of logarithmic formal schemes. In addition, $\frakT_V \ra \frakT$ is étale and strict so $\frakT_V$ is log flat over $\frakS.$ We get an object $(V,\frakT_V,u_V)$ of $\calE(X/\frakS)$ and we set
\begin{equation}\label{eqKoko1862}
\alpha_{(U,\frakT,u)}(V)=(V,\frakT_V,u_V).
\end{equation}
We now prove the result for $\underline{\calE}(X/\frakS).$ Let $T_V=T\times_{U'}V',$ where $T \ra U'$ is the morphism $\Delta_{u}$ \eqref{morphismShiho1}. Since $V' \ra U'$ is strict, $T_V\ra T$ is also strict and so $T_V$ is fs. The canonical projection $T_V\ra T$ being étale, we construct $\frakT_V$ the same way we did in the previous case. 
Let $T \xrightarrow{G_T} \underline{T} \ra T$ and $T_V \xrightarrow{G_{T_V}} \underline{T_V} \ra T_V$ be the factorizations of $F_T$ and $F_{T_V}$ through the canonical immersions $\underline{T} \ra T$ and $\underline{T_V} \ra T_V.$ The morphism $T_V \ra \underline{T_V}$ is inseparable. Consider the morphism $T \ra U''$ defined in \ref{Shihoparag1}. By \ref{propShiho185} and \eqref{morphismShiho2}, the diagram
$$
\begin{tikzcd}
T \ar{r}{\Delta_{u}} \ar[swap]{dd}{G_T}  \ar{dr} & U' \ar{d} \\
 & U'' \ar{d} \\
\underline{T} \ar{r}{u} & U
\end{tikzcd}
$$ 
is commutative. In addition, so are the diagrams
$$
\begin{tikzcd}
V' \ar{r} \ar{d} & V \ar{d} & T_V \ar{r} \ar{d} & \underline{T_V} \ar{d} \\
U' \ar{r} & U & T \ar{r} & \underline{T}.
\end{tikzcd}
$$
It follows that the decomposition $T_V \xrightarrow{G_{T_V}} \underline{T_V} \ra \underline{T} \xrightarrow{u} U$ is equal to $T_V \ra T \xrightarrow{G_T} \underline{T} \xrightarrow{u} U.$ This is then equal to $T_V \ra T \xrightarrow{\Delta_u} U' \ra U$ and then to $T_V \ra V' \ra U' \ra U$ and finally to $T_V \ra V' \ra V \ra U.$
It follows that the diagram
$$
\begin{tikzcd}
V' \ar{rrr} & & & V \ar{d} \\
T_V \ar{u} \ar{r}{G_{T_V}}  & \underline{T_V} \ar[dashed]{urr}{u_V} \ar{r} & \underline{T} \ar{r}{u} & U
\end{tikzcd}
$$
is commutative. The existence of the dashed arrow $u_V$ follows then from the fact that $V \ra U$ is étale and strict, the fact that $T_V \ra \underline{T_V}$ is inseparable and (\cite{Ogus2018} IV 3.3.7).
If $U$ and $V$ are affine then so is $u_V.$ Note that, if $u$ is strict, then so is $u_V.$
We set
\begin{equation}\label{eqKoko1863}
\alpha_{(U,\frakT,u)}(V)=(V,\frakT_V,u_V).
\end{equation}
If $V_1 \ra V_2$ is a morphism of étale and strict $U$-schemes then we have cartesian squares
$$
\begin{tikzcd}
T_{V_1} \ar{r} \ar{d} & T_{V_2} \ar{r} \ar{d} & T \ar{d}{\Delta_u} \\
V_1' \ar{r} & V_2' \ar{r} & U'.
\end{tikzcd}
$$
By the étaleness and strictness of $\frakT_{V_2} \ra \frakT,$ we have a unique morphism $\frakT_{V_1} \ra \frakT_{V_2}$ fitting into the commutative diagram
$$
\begin{tikzcd}
T_{V_2} \ar{rr} & & \frakT_{V_2} \ar{d} \\
T_{V_1} \ar{u} \ar{r} & \frakT_{V_1} \ar{r} \ar{ur} & \frakT.
\end{tikzcd}
$$
By construction of $u_{V_1},$ we have a commutative diagram
$$
\begin{tikzcd}
V_1' \ar{rrr} & &  & V_1 \ar{r} \ar{d} & V_2 \ar{dl} \\
T_{V_1} \ar{u} \ar{r}{G_{T_{V_1}}} & \underline{T_{V_1}} \ar{r} \ar{urr}{u_{V_1}} & \underline{T} \ar{r}{u} & U. &
\end{tikzcd}
$$
We also have the commutative diagram
$$
\begin{tikzcd}
 & V_2' \ar{rrr} & & & V_2 \ar{dd} \\
V_1' \ar{ur} & T_{V_2} \ar{u} \ar{r} & \underline{T_{V_2}} \ar{urr}{u_{V_2}} \ar{dr} & &  \\
 & T_{V_1} \ar{ul} \ar{u} \ar{r} & \underline{T_{V_1}} \ar{u} \ar{r} & \underline{T} \ar{r}{u} & U.
\end{tikzcd}
$$
The morphism $V_2 \ra U$ is étale and strict so the diagram
$$
\begin{tikzcd}
\underline{T_{V_1}} \ar[swap]{d}{u_{V_1}} \ar{r} & \underline{T_{V_2}} \ar{d}{u_{V_2}} \\
V_1 \ar{r} & V_2
\end{tikzcd}
$$
is commutative.
We hence get a morphism
$$
\alpha_{(U,\frakT,u)}(V_1)=\left ( V_1,\frakT_{V_1},u_{V_1} \right ) \ra \left ( V_2,\frakT_{V_2},u_{V_2} \right )=\alpha_{(U,\frakT,u)}(V_2).
$$
This defines the functor $\alpha_{(U,\frakT,u)}.$
\end{parag}

\begin{remark}\label{remShiho1}~ 
\begin{enumerate}
\item If $(U,\frakT,u)$ is an object of $\calE(X/\frakS)$ (resp. $\underline{\calE}(X/\frakS)$) and if $V$ is an étale and strict $U$-scheme and $(V,\frakT_V,u_V)=\alpha_{(U,\frakT,u)}(V),$ then the construction of $u_V$ shows that $u_V$ is strict if $u$ is strict. It follows that, if $(U,\frakT,u)$ is an object of $\calE_{\mathrm{strict}}(X/\frakS)$ (resp. $\underline{\calE}_{\mathrm{strict}}(X/\frakS)$), then $\alpha_{(U,\frakT,u)}$ factors through $\calE_{\mathrm{strict}}(X/\frakS)$ (resp. $\underline{\calE}_{\mathrm{strict}}(X/\frakS)$).
\item Suppose that $\frakS=\op{Spf}W$ equipped with the trivial log structure. Let $(U,\frakT,u)$ be an object of $\underline{\calE}(X/\frakS).$ Here, we provide a simpler definition of the functor
$$
\alpha_{(U,\frakT,u)}:\text{ét}_{/U} \ra \underline{\calE}(X/\frakS).
$$
Recall that, in this case, $\Delta_{u}=u' \circ f_{T/S}.$ Let $V$ be an étale $U$-scheme. Set $T_V=T\times_{U'}V',$ where $T \ra U'$ is the composition $u' \circ f_{T/S}.$ Since $V' \ra U'$ is strict, $T_V$ is fs. The canonical projection $T_V\ra T$ being étale, we construct $\frakT_V$ the same way we did in \ref{era3paralpha}. The morphism $f_{T_V/S}$ is inseparable \eqref{propfT/S} and $V' \ra U'$ is étale and strict. It follows, by (\cite{Ogus2018} IV 3.3.7), that there exists a unique morphism $v:\underline{T_V}' \ra V'$ fitting into the following commutative diagram
$$
\begin{tikzcd}
T_V \ar[swap, bend right=70]{dd} \ar{r} \ar[swap]{d}{f_{T_V/S}} & T  \ar{d}{f_{T/S}} \\
 \underline{T_V}' \ar{r} \ar[dashed,swap]{d}{v} & \underline{T}' \ar{d}{u'} \\
V' \ar{r} & U'.
\end{tikzcd}
$$
If $V$ is affine then $T_V=V'\times_{U',u'\circ f_{T/S}}T$ is affine and so so is $\underline{T_V}'.$ It follows that $v$ is affine.
There exists a unique morphism of logarithmic schemes $u_V:\underline{T_V} \ra V$ such that $v=u_V',$ which is affine since $v$ is affine.
We set
\begin{equation}
\alpha_{(U,\frakT,u)}(V)=(V,\frakT_V,u_V).
\end{equation}
\end{enumerate}
\end{remark}

\begin{proposition}\label{era3paralphaKoko}
Let $(U,\frakT,u)$ be an object of $\calE(X/\frakS)$ (resp. $\underline{\calE}(X/\frakS)$). The functor $\alpha_{(U,\frakT,u)}$ \eqref{eqKoko1861} satisfies the following properties
\begin{enumerate}
\item For every étale $U$-scheme $V,$ the canonical morphism $\alpha_{(U,\frakT,u)}(V) \ra (U,\frakT,u)$ is cartesian.
\item For every étale $U$-scheme $V$ and any morphism $a:(W,\frakZ,w) \ra \alpha_{(U,\frakT,u)}(V)$ in $\calE(X/\frakS)$ (resp. $\underline{\calE}(X/\frakS)$), there exists a unique morphism $b:(W,\frakZ,w) \ra \alpha_{(U,\frakT,u)}(W)$ such that the diagram
$$
\begin{tikzcd}
(W,\frakZ,w) \ar{r}{a} \ar{d}{b} & \alpha_{(U,\frakT,u)}(V) \\
\alpha_{(U,\frakT,u)}(W) \ar{ur} & 
\end{tikzcd}
$$
is commutative. In addition, $a$ is cartesian if and only if $b$ is cartesian.
\item For every $U$-morphism of étale $U$-schemes $V_1 \ra V_2,$ the morphism $\alpha_{(U,\frakT,u)}(V_1) \ra \alpha_{(U,\frakT,u)}(V_2)$ is cartesian.
\end{enumerate}
\end{proposition}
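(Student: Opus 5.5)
The plan is to treat $\calE(X/\frakS)$ and $\underline{\calE}(X/\frakS)$ in parallel, proving (1) and (3) first and then using them for (2).

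For (1), in $\calE(X/\frakS)$ we have $T_V=T\times_UV$ by construction, so the morphism $\alpha_{(U,\frakT,u)}(V)\ra(U,\frakT,u)$ is cartesian by Definition \ref{era3defcart}. In $\underline{\calE}(X/\frakS)$, $\frakT_V\ra\frakT$ is strict and étale by construction and $T_V=T\times_{U'}V'$, so it suffices to show that $\Delta_{u_V}:T_V\ra V'$ is the canonical projection $\mathrm{pr}:T_V\ra V'$. The projection makes the square \eqref{morphismShiho1} for $u_V$ commute, which is the diagram that served to construct $u_V$ in \ref{era3paralpha}. Since $V'\ra V''$ is log étale and $\underline{T_V}\ra T_V$ is a strict nilpotent closed immersion, the uniqueness in \ref{propShiho185} gives $\Delta_{u_V}=\mathrm{pr}$.

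For (3), take a $U$-morphism $V_1\ra V_2$. The construction in \ref{era3paralpha} produces cartesian squares $T_{V_1}\ra T_{V_2}\ra T$ over $V_1'\ra V_2'\ra U'$ (resp. over $V_1\ra V_2\ra U$ in the $\calE$ case). By the pasting lemma the left square is cartesian. The morphism $\frakT_{V_1}\ra\frakT_{V_2}$ is strict and étale, by cancellation for strict étale morphisms over $\frakT$. Combined with the identification $\Delta_{u_{V_i}}=\mathrm{pr}$ from (1), this gives (3).

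For (2), let $a=(f,g)$ with $g:W\ra V$ and $f:\frakZ\ra\frakT_V$. The $W$-component of $b$ is $\op{Id}_W$, so the task is to build $\frakZ\ra\frakT_W$. In the $\calE$ case, the maps $Z\ra T_V\ra T$ and $w:Z\ra W$ are compatible over $U$, so they induce $Z\ra T\times_UW=T_W$. In the $\underline{\calE}$ case, the composite $(W,\frakZ,w)\ra(U,\frakT,u)$ together with the proposition following \ref{propShiho185} (functoriality of $\Delta$) shows that $Z\ra T$ and $\Delta_w:Z\ra W'$ agree over $U'$. They therefore induce $Z\ra T\times_{U'}W'=T_W$. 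Since $\frakT_W\ra\frakT$ is étale and strict and $\frakZ$ is a $p$-adic thickening of $Z$, this lifts uniquely to a morphism $\frakZ\ra\frakT_W$ over $\frakT$. One then checks that the resulting $b$ is a morphism of $\underline{\calE}(X/\frakS)$, i.e. $u_W\circ\underline{(\cdot)}=w$. The strategy is to compose both sides with the strict étale map $W\ra U$ and invoke the uniqueness argument (\cite{Ogus2018} IV 3.3.7) used to define $u_W$, since $Z\ra\underline{Z}$ is inseparable. The identity $c\circ b=a$, where $c:\alpha(W)\ra\alpha(V)$ is induced by $g$, and the uniqueness of $b$ both follow from the same étale-lifting uniqueness.

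For the last assertion of (2), $c$ is cartesian by (3). If $b$ is cartesian, then $a=c\circ b$ is cartesian by \ref{era3stablebc}. Conversely, if $a$ is cartesian, then $b$ is cartesian by cancellation: the strict étale property cancels, and the pasting lemma shows the square for $b$ is cartesian because the squares for $c$ and for $c\circ b$ are.

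I expect the main obstacle to be the $\underline{\calE}$ case of (2). There one must carefully identify $\Delta_w$ with the composite $Z\ra T_W\ra W'$ and verify that $b$ respects the structure maps $w$ and $u_W$ on $\underline{Z}$. My first attempt would be to reduce every such identity to uniqueness of lifts along the log étale $W'\ra W''$, resp. the strict étale $W\ra U$.
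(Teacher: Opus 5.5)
Your proposal is correct and follows the same route as the paper. In the $\underline{\calE}$ case, $Z\to T_W=T\times_{U'}W'$ is induced by $Z\to T$ and $\Delta_w$ via functoriality of $\Delta$ and then lifted along the strict \'etale $\frakT_W\to\frakT$; the cartesian equivalence in (2) and assertion (3) both come from pasting cartesian squares, and you fill in checks the paper calls clear, such as $\Delta_{u_V}=\mathrm{pr}$ and the compatibility of $b$ with $w$ and $u_W$.

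One caveat: $c\circ b=a$ does not force the $W$-component of $b$ to be $\op{Id}_W$. Composing with $\alpha_{(U,\frakT,u)}(\sigma)$ for a nontrivial automorphism $\sigma$ of $W$ over $V$ gives a second solution, so the uniqueness in (2) holds only among morphisms lying over $\op{Id}_W$, which is the reading you tacitly adopt.
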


\begin{proof}
The first assertion is clear from the construction of $\alpha_{(U,\frakT,u)}.$ The existence of $b$ in the second assertion follows, in the case $\underline{\calE}(X/\frakS),$ from the following fact: let $(V,\frakT_V,u_V)=\alpha_{(U,\frakT,u)}(V)$ and $(W,\frakT_W,u_W)=\alpha_{(U,\frakT,u)}(W).$ By definition, we have cartesian squares
\begin{equation}\label{diagKoko1871}
\begin{tikzcd}
Z \ar[swap]{dr}{\Delta_{(W,\frakZ,w)}} \ar{r} \ar[bend right=-30]{rrr} & T_W \ar{r} \ar{d} & T_V \ar{d} \ar{r} & T \ar{d}{\Delta_{u}} \\
 & W' \ar{r} & V' \ar{r} & U',
\end{tikzcd}
\end{equation}
$$
\begin{tikzcd}
Z\ar{d} \ar[bend right=30]{ddd} \ar{r} & \frakZ \ar{d} \\
T_W \ar{r} \ar{d} & \frakT_W \ar{d} \\
T_V \ar{r} \ar{d} & \frakT_V \ar{d} \\
T \ar{r} & \frakT.
\end{tikzcd}
$$
The case $\calE(X/\frakS)$ is simpler.
The commutativity of \eqref{diagKoko1871} proves that $a$ is cartesian if and only if $b$ is cartesian. The third assertion follows from the first.
\end{proof}

\begin{parag}\label{era3parbeta}
Let $(f,g):(U_1,\frakT_1,u_1) \ra (U_2,\frakT_2,u_2)$ be a morphism in $\calE(X/\frakS)$ (resp. $\underline{\calE}(X/\frakS)$) and
\begin{equation}\label{jg}
j_g:\begin{array}[t]{clc}
\text{ét}_{/U_1} & \ra & \text{ét}_{/U_2} \\
(V\ra U_1) & \mapsto & (V \ra U_1 \xrightarrow[g]{} U_2).
\end{array}
\end{equation}
The morphism $(f,g)$ induces a morphism
\begin{equation}\label{defbeta115}
\beta_{(f,g)}:\alpha_{(U_1,\frakT_1,u_1)} \ra \alpha_{(U_2,\frakT_2,u_2)}\circ j_g
\end{equation}
as follows (we will only give the construction for $\underline{\calE}(X/\frakS)$): let $V$ be an étale $U_1$-scheme and $(V,\frakT_{i_V},v_i)=\alpha_{(U_i,\frakT_i,u_i)}(V).$
The morphism $T_1 \ra T_2$ induces
$$T_{1_V}=T_1\times_{U_1',\Delta_{u_1}}V' \ra T_2 \times_{U_2',\Delta_{u_2}}V'=T_{2_V}$$
fitting into the commutative diagram
$$
\begin{tikzcd}
T_{1_V} \ar{rr} \ar{dd} \ar{dr} & & T_{2_V} \ar{dr} \ar[dashed]{dd} & \\
 & T_1 \ar{rr}\ar{dd} & & T_2\ar{dd} \\
V' \ar{dr} \ar[dashed, equal]{rr} & & V' \ar{dr} & \\
 & U_1' \ar{rr} & & U_2'.
\end{tikzcd}
$$
This morphism extends, by étaleness and strictness of $\frakT_{2_V} \ra \frakT_2,$ to a morphism of $p$-adic logarithmic formal schemes $h:\frakT_{1_V} \ra \frakT_{2_V}$ over $\frakT_2.$
We hence obtain a morphism $(h,\op{Id}_V):(V,\frakT_{1_V},v_1) \ra (V,\frakT_{2_V},v_2)$ over $(f,g).$
We easily prove that the morphisms $\beta_{(f,g)}$ satisfy the following properties:
\begin{enumerate}
\item For every object $(U,\frakT,u)$ of $\calE(X/\frakS)$ (resp. $\underline{\calE}(X/\frakS)$),
$$\beta_{\op{Id}_{(U,\frakT,u)}}=\op{Id}_{\alpha_{(U,\frakT,u)}}.$$
\item For all composable morphisms $(f_1,g_1):(U_1,\frakT_1,u_1) \ra (U_2,\frakT_2,u_2)$ and $(f_2,g_2):(U_2,\frakT_2,u_2) \ra (U_3,\frakT_3,u_3)$ in $\calE(X/\frakS)$ (resp. $\underline{\calE}(X/\frakS)$), the following diagram
$$
\begin{tikzcd}
\alpha_{(U_1,\frakT_1,u_1)} \ar{r}{\beta_{(f_1,g_1)}} \ar[swap]{dr}{\beta_{(f_2,g_2) \circ (f_1,g_1)}} & \alpha_{(U_2,\frakT_2,u_2)} \circ j_{g_1} \ar{d}{j_{g_1}^*\beta_{(f_2,g_2)}} \\
& \alpha_{(U_3,\frakT_3,u_3)} \circ j_{g_2\circ g_1},
\end{tikzcd}
$$
where $j_{g_1}^*\beta_{(f_2,g_2)}$ is the morphism induced by $\beta_{(f_2,g_2)},$ is commutative up to a canonical isomorphism
\item If $(f,g)$ is a cartesian morphism, then $\beta_{(f,g)}$ is an isomorphism.
\end{enumerate}
We just check the third point: if $(f,g)$ is a cartesian morphism in $\underline{\calE}(X/\frakS)$ then
$$T_{1_V}=T_1\times_{U_1'}V'=T_2\times_{U_2'}V'=T_{2_V}.$$
It follows that $\beta_{(f,g)}$ is an isomorphism. This is also true in the category $\calE(X/\frakS).$
\end{parag}

\begin{proposition}\label{propalphacomfiber}
Let $(U,\frakT,u)$ be an object of $\calE(X/\frakS)$ (resp. $\underline{\calE}(X/\frakS)$). The functor $\alpha_{(U,\frakT,u)}$ \eqref{eqKoko1861} commutes with fiber products.
\end{proposition}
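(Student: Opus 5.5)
We must show that for $V_1\ra V_3\leftarrow V_2$ in $\text{ét}_{/U}$, the canonical morphism
$$\alpha_{(U,\frakT,u)}(V_1\times_{V_3}V_2)\ra \alpha_{(U,\frakT,u)}(V_1)\times_{\alpha_{(U,\frakT,u)}(V_3)}\alpha_{(U,\frakT,u)}(V_2)$$
is an isomorphism. A first point is that the right-hand side exists. By \ref{era3paralphaKoko} (3), the morphisms $\alpha(V_i)\ra\alpha(V_3)$ are cartesian. In particular their formal parts $\frakT_{V_i}\ra\frakT_{V_3}$ are étale and strict, hence log flat. So \ref{era3fiberparag} applies and describes the fiber product as
$$\bigl(V_1\times_{V_3}V_2,\ \frakT_{V_1}\times^{\op{log}}_{\frakT_{V_3}}\frakT_{V_2},\ v\bigr).$$

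I will treat $\underline{\calE}(X/\frakS)$; the case $\calE(X/\frakS)$ is the same argument with $\Delta_u$ replaced by $u$ and $V'$ by $V$. Set $V=V_1\times_{V_3}V_2$. The first step is to compare the special fibers. Since $V\ra U$ is strict, the exact relative Frobenius commutes with this strict étale base change, so $V'=V_1'\times_{V_3'}V_2'$. Then
$$T_V=T\times_{U'}V'=(T\times_{U'}V_1')\times_{T\times_{U'}V_3'}(T\times_{U'}V_2')=T_{V_1}\times_{T_{V_3}}T_{V_2}.$$
All maps here are strict, so this fiber product agrees with the fs log fiber product. Next I lift to the formal level. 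The maps $\frakT_{V_i}\ra\frakT_{V_3}$ are étale and strict, so $\frakT_{V_1}\times^{\op{log}}_{\frakT_{V_3}}\frakT_{V_2}$ is the ordinary fiber product with the pulled-back log structure, and it is étale and strict over $\frakT$. Its reduction modulo $p$ is $T_{V_1}\times_{T_{V_3}}T_{V_2}=T_V$. The formal schemes étale over $\frakT$ correspond uniquely to those étale over $T$ (the equivalence used in \ref{era3paralpha}). Hence $\frakT_V\ra\frakT_{V_1}\times_{\frakT_{V_3}}\frakT_{V_2}$, which is induced by the maps $\alpha(V)\ra\alpha(V_i)$, is an isomorphism.

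The remaining step, which I expect to be the main obstacle, is to check that the two morphisms $\underline{T_V}\ra V$ agree: $u_V$ on one side, and on the other $v$, the composite of $\underline{T_V}\ra\underline{T_{V_1}}\times^{\op{log}}_{\underline{T_{V_3}}}\underline{T_{V_2}}$ with $(u_{V_1},u_{V_2})$. I will avoid comparing these by hand and use uniqueness instead. In \ref{era3paralpha}, $u_V$ is characterized as the unique morphism whose composite with $V\ra U$ equals $\underline{T_V}\ra\underline{T}\xrightarrow{u}U$ and which makes the square with $T_V\ra V'$ commute. This uniqueness comes from \cite{Ogus2018} IV 3.3.7, because $V\ra U$ is étale strict and $T_V\ra\underline{T_V}$ is inseparable. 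The morphism $v$ satisfies the same conditions. Composing $v$ with the projection to $V_i$ gives $u_{V_i}$ precomposed with $\underline{T_V}\ra\underline{T_{V_i}}$, and that composite fits the defining diagram of $u_{V_i}$, as in the argument at the end of \ref{era3paralpha}. So by uniqueness $v=u_V$.

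Finally, the identity of $V$ together with the formal isomorphism above gives a morphism of $\underline{\calE}(X/\frakS)$ with an inverse. It is compatible with the projections by construction, and this proves the claim.
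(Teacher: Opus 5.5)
Your proof is correct and follows essentially the paper's route. You identify $T_{V_1\times_{V_3}V_2}$ with the fiber product of the $T_{V_i}$ via the cartesian squares over $V'$, lift this to $\frakT$ by uniqueness of étale strict lifts, and conclude $u_V=v$ from the uniqueness in \cite{Ogus2018} IV 3.3.7 that characterizes $u_V$. The differences are cosmetic: the paper first reduces to $V_3=U$ by replacing $(U,\frakT,u)$ with $\alpha_{(U,\frakT,u)}(V_3)$, whereas you work over a general $V_3$ and spell out two things the paper leaves implicit, namely the existence of the fiber product via \ref{era3fiberparag} and the identification $(V_1\times_{V_3}V_2)'=V_1'\times_{V_3'}V_2'$.
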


\begin{proof}
For any étale $U$-scheme $V,$ we set $(V,\frakT_V,u_V)=\alpha_{(U,\frakT,u)}(V).$ 
Let $V_1 \ra V$ and $V_2\ra V$ be étale morphisms of $U$-schemes. We want to prove that
$$\alpha_{(U,\frakT,u)}(V_1\times_VV_2)=\alpha_{(U,\frakT,u)}(V_1)\times_{\alpha_{(U,\frakT,u)}(V)}\alpha_{(U,\frakT,u)}(V_2).$$
Since
$$(V,\frakT_V,u_V)=\alpha_{(V,\frakT_V,u_V)}(V),$$
we can suppose that $U=V$ and hence $(V,\frakT_V,u_V)=(U,\frakT,u).$
We prove the result for $\underline{\calE}(X/\frakS)$ since the other case is simpler.
Consider the following commutative diagram with cartesian squares
$$
\begin{tikzcd}
T_{V_1\times_UV_2} \ar{rr} \ar{dd} \ar{dr} & & T_{V_2} \ar{dr} \ar[dashed]{dd}& \\
 & T_{V_1} \ar{rr} \ar{dd} & & T\ar{dd}{\Delta_u} \\
V_1'\times_{U'}V_2' \ar[dashed]{rr} \ar{dr} & & V_2' \ar[dashed]{dr} & \\
 & V_1' \ar{rr} & & U'.
\end{tikzcd}
$$
It follows that
\begin{equation}\label{17181}
T_{V_1\times_UV_2}=T_{V_1}\times_T^{\op{log}}T_{V_2}
\end{equation}
and so, by definition of $\alpha_{(U,\frakT,u)},$
$$\frakT_{V_1\times_UV_2}=\frakT_{V_1}\times_{\frakT}^{\op{log}}\frakT_{V_2}.$$
By definition, $u_{V_1\times_UV_2}$ is the unique morphism fitting in the following commutative diagram
\begin{equation}\label{diagS1}
\begin{tikzcd}
\left ( V_1\times_UV_2 \right )' \ar{rrr} & & & V_1\times_{U}V_2\ar{d} \\
T_{V_1\times_UV_2} \ar{u}{\Delta_{\left ( V_1\times_UV_2,T_{V_1\times_UV_2},u_{V_1\times_UV_2}\right )}}  \ar{r} & \underline{T_{V_1\times_UV_2}} \ar{urr}{u_{V_1\times_UV_2}} \ar{r} & \underline{T} \ar{r}{u} & U,
\end{tikzcd}
\end{equation}
where $T_{V_1\times_UV_2} \ra \underline{T_{V_1\times_UV_2}}$ is obtained from the factorization of $F_{T_{V_1\times_UV_2}}$ through $\underline{T_{V_1\times_UV_2}}.$ Similarly, $u_{V_i}$ fits into the commutative diagram
$$
\begin{tikzcd}
V'_i \ar{rrr} & & & V_i \ar{d} \\
T_{V_i} \ar{u} \ar{r}  & \underline{T_{V_i}} \ar[dashed]{urr}{u_{V_i}} \ar{r} & \underline{T} \ar{r}{u} & U.
\end{tikzcd}
$$
By \eqref{17181},
$$\underline{T_{V_1\times_UV_2}}=\underline{T_{V_1}\times_T^{\op{log}}T_{V_2}}.$$
We deduce that $u_{V_1} \times u_{V_2}$ fits also into the commutative diagram \eqref{diagS1}. Hence
$$
u_{V_1\times_UV_2}=u_{V_1} \times u_{V_2}.
$$
This finishes the proof.
\end{proof}

\begin{proposition}\label{era3propdescentdata}
A presheaf $\F$ on $\calE(X/\frakS)$ (resp. $\underline{\calE}(X/\frakS)$) is equivalent to the following data:
\begin{enumerate}
\item For every object $(U,\frakT,u)$ of $\calE(X/\frakS)$ (resp. $\underline{\calE}(X/\frakS)$), a presheaf $\F_{(U,\frakT,u)}$ on $\text{ét}_{/U}.$
\item For every morphism $(f,g):(U_1,\frakT_1,u_1) \ra (U_2,\frakT_2,u_2)$ in $\calE(X/\frakS)$ (resp. $\underline{\calE}(X/\frakS)$), a morphism of presheaves on $\text{ét}_{/U_1},$
$$\gamma_{\F,(f,g)}:g^{-1}\F_{(U_2,\frakT_2,u_2)} \ra \F_{(U_1,\frakT_1,u_1)},$$
\end{enumerate}
such that
\begin{enumerate}[(i)]
\item For every object $(U,\frakT,u)$ of $\calE(X/\frakS)$ (resp. $\underline{\calE}(X/\frakS)$),
$$\gamma_{\F,\op{Id}_{(U,\frakT,u)}}=\op{Id}_{\F_{(U,\frakT,u)}}.$$
\item For all composable morphisms $(f_1,g_1):(U_1,\frakT_1,u_1) \ra (U_2,\frakT_2,u_2)$ and $(f_2,g_2):(U_2,\frakT_2,u_2) \ra (U_3,\frakT_3,u_3)$ in $\calE(X/\frakS)$ (resp. $\underline{\calE}(X/\frakS)$),
$$\gamma_{\F,(f_1,g_1)} \circ g_1^{-1}\gamma_{\F,(f_2,g_2)}=\gamma_{\F,(f_2,g_2)\circ (f_1,g_1)}.$$
\item If $(f,g)$ is a cartesian morphism, then $\gamma_{\F,(f,g)}$ is an isomorphism.
\end{enumerate}
This equivalence is given as follows: for a presheaf $\F$ on $\calE(X/\frakS)$ (resp. $\underline{\calE}(X/\frakS)$),
$$\F_{(U,\frakT,u)}=\F \circ \alpha_{(U,\frakT,u)}$$
and $\gamma_{\F,(f,g)}$ is induced by $\beta_{(f,g)}$ \eqref{defbeta115}. Conversely, given a data $(\F_{(U,\frakT,u)},\gamma_{\F,(f,g)})$ as above, the presheaf is defined by
$$\F(U,\frakT,u)=\F_{(U,\frakT,u)}(U)$$
and, for a morphism $(f,g):(U_1,\frakT_1,u_1) \ra (U_2,\frakT_2,u_2),$ the morphism
$$\F(U_2,\frakT_2,u_2) \ra \F(U_1,\frakT_1,u_1)$$
is equal to $\gamma_{\F,(f,g)}(U_1).$
\end{proposition}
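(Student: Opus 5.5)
The plan is to construct the two functors between presheaves and descent data explicitly, check that each is well defined, and then check that they are mutually quasi-inverse. The main tools are the functors $\alpha_{(U,\frakT,u)}$ of \ref{era3paralpha}, the transition morphisms $\beta_{(f,g)}$ of \ref{era3parbeta}, and the universal property \ref{era3paralphaKoko}(2).

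\emph{From a presheaf to data.} Given a presheaf $\F$, set $\F_{(U,\frakT,u)}=\F\circ\alpha_{(U,\frakT,u)}$. This is a presheaf on $\text{ét}_{/U}$ because $\alpha_{(U,\frakT,u)}$ is a functor. For a morphism $(f,g)$, the morphism $\beta_{(f,g)}:\alpha_{(U_1,\frakT_1,u_1)}\ra\alpha_{(U_2,\frakT_2,u_2)}\circ j_g$ gives, after applying $\F$, a morphism
$$(\F_{(U_2,\frakT_2,u_2)})\circ j_g\ra\F_{(U_1,\frakT_1,u_1)}.$$
Its source is exactly the restriction $g^{-1}\F_{(U_2,\frakT_2,u_2)}$ as a presheaf on $\text{ét}_{/U_1}$: since $g$ is étale, $g^{-1}$ on presheaves is composition with $j_g$, with no sheafification needed. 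This defines $\gamma_{\F,(f,g)}$. Conditions (i), (ii) and (iii) then follow directly from properties 1, 2 and 3 of $\beta$ in \ref{era3parbeta}. For (ii) I will need to check that the canonical isomorphism mentioned there is compatible with the identification $g_1^{-1}g_2^{-1}=(g_2g_1)^{-1}$. This holds because both sides are built from the universal property of étale strict base change of $\frakT$.

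\emph{From data to a presheaf.} Given data $(\F_{(U,\frakT,u)},\gamma)$, define $\F(U,\frakT,u)=\F_{(U,\frakT,u)}(U)$ and $\F(f,g)=\gamma_{\F,(f,g)}(U_1)$. Here $g^{-1}\F_{(U_2,\frakT_2,u_2)}(U_1)=\F_{(U_2,\frakT_2,u_2)}(U_1\ra U_2)$ comes with the restriction map from $\F_{(U_2,\frakT_2,u_2)}(U_2)$, and $\F(f,g)$ is the composite of that restriction with $\gamma_{\F,(f,g)}(U_1)$. Functoriality follows from (i) and (ii).

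\emph{Quasi-inverse, first composite.} Starting from a presheaf, one has $\alpha_{(U,\frakT,u)}(U)=(U,\frakT,u)$ canonically, since $T\times_{U'}U'=T$. Hence $\F\circ\alpha(U)=\F(U,\frakT,u)$, and the transition maps agree because $\beta_{(f,g)}(U_1)$ composed with $\alpha_{(U_2,\frakT_2,u_2)}(U_1)\ra(U_2,\frakT_2,u_2)$ is $(f,g)$.

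\emph{Quasi-inverse, second composite.} This is the main point. Starting from data $\F_\bullet$, I must produce an isomorphism $\F_{\alpha_{(U,\frakT,u)}(V)}(V)\cong\F_{(U,\frakT,u)}(V)$ that is natural in $V$. Apply (iii) to the cartesian morphism $\alpha_{(U,\frakT,u)}(V)\ra(U,\frakT,u)$, which is cartesian by \ref{era3paralphaKoko}(1). This gives an isomorphism
$$\F_{(U,\frakT,u)}|_V\xrightarrow{\sim}\F_{\alpha(V)},$$
and evaluating at $V$ yields the desired identification. Naturality in $V$, and compatibility with the $\gamma$'s, follow from (ii) applied to the composite $\alpha(V_1)\ra\alpha(V_2)\ra(U,\frakT,u)$, together with \ref{era3paralphaKoko}(3). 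I expect the main obstacle to be the bookkeeping showing that the resulting $\gamma$ coincides with the original one. For this I would use the factorization $b$ of \ref{era3paralphaKoko}(2): any morphism into $\alpha(V)$ factors uniquely through $\alpha(W)$. This reduces the check to morphisms of the form $\beta$ and the cartesian projections, where it holds by (ii).
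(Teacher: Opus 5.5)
Your proposal is correct and follows the paper's proof. It uses the same two constructions, deduces (i)--(iii) from the properties of $\beta_{(f,g)}$ in \ref{era3parbeta}, checks the first composite via $\alpha_{(U,\frakT,u)}(U)=(U,\frakT,u)$, and checks the second via (iii) applied to the cartesian morphism $\alpha_{(U,\frakT,u)}(V)\ra(U,\frakT,u)$. You are more careful than the paper, which writes equalities where there are only canonical isomorphisms and omits the naturality and $\gamma$-compatibility checks; the detour through \ref{era3paralphaKoko}(2) is unnecessary, though, since compatibility with $\gamma_{\F,(f,g)}$ follows from (ii) applied to the two factorizations $\alpha_{(U_1,\frakT_1,u_1)}(V)\ra\alpha_{(U_2,\frakT_2,u_2)}(j_gV)\ra(U_2,\frakT_2,u_2)$ and $\alpha_{(U_1,\frakT_1,u_1)}(V)\ra(U_1,\frakT_1,u_1)\xrightarrow{(f,g)}(U_2,\frakT_2,u_2)$ of the same morphism, which coincide because $\beta_{(f,g)}(V)$ lies over $(f,g)$.
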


\begin{proof}
Suppose we are given the data $(\F_{(U,\frakT,u)},\gamma_{\F,(f,g)}).$ Conditions $(i)$ and $(ii)$ imply that the correspondance
$$(U,\frakT,u) \mapsto \F_{(U,\frakT,u)}(U)$$
is a presheaf.
Conversely, if $\F$ is a presheaf on $\calE(X/\frakS)$ (resp. $\underline{\calE}(X/\frakS)$) then, for any object $(U,\frakT,u)$ of $\calE(X/\frakS)$ (resp. $\underline{\calE}(X/\frakS)$), $\F_{(U,\frakT,u)}=\F \circ \alpha_{(U,\frakT,u)}$ is clearly a presheaf on $\text{ét}_{/U}.$ Conditons $(i),$ $(ii)$ and $(iii)$ are satisfied by the fact that $\F$ is a presheaf and \ref{era3parbeta}.

We check that the constructions are inverse to each other: let $\F$ be a presheaf on $\calE(X/\frakS)$ and $\calG$ the presheaf corresponding to $\left (\F_{(U,\frakT,u)}\right ).$ For any object $(U,\frakT,u)$ of $\calE(X/\frakS),$ we have
$$\calG(U,\frakT,u)=\F_{(U,\frakT,u)}(U)=\F(U,\frakT,u)$$
so $\calG=\F.$

Conversely, let $(\calG_{(U,\frakT,u)})$ be data satisfying conditions $(i),$ $(ii)$ and $(iii)$ and let $\F$ be the corresponding presheaf. Let $(U,\frakT,u)$ be an object of $\calE(X/\frakS),$ $V$ an étale $U$-scheme and $(V,\frakT_V,u_V)=\alpha_{(U,\frakT,u)}(V).$ Then
$$\F_{(U,\frakT,u)}(V)=\F(V,\frakT_V,u_V)=\calG_{(V,\frakT_V,u_V)}(V).$$
By condition $(iii),$ we have
$$\calG_{(V,\frakT_V,u_V)}(V)=\calG_{(U,\frakT,u)}(V)$$
so $\F_{(U,\frakT,u)}=\calG_{(U,\frakT,u)}.$
\end{proof}

\begin{remark}\label{rem1813}
Proposition \ref{era3propdescentdata} remains true if we replace $\calE(X/\frakS)$ and $\underline{\calE}(X/\frakS)$ with $\calE_{\mathrm{strict}}(X/\frakS)$ and $\underline{\calE}_{\mathrm{strict}}(X/\frakS)$ respectively.
\end{remark}

\begin{definition}\label{era3defdescentdata}
For a presheaf of sets $\F$ on $\calE(X/\frakS)$ (resp. $\underline{\calE}(X/\frakS),$ resp. $\calE_{\mathrm{strict}}(X/\frakS),$ resp. $\underline{\calE}_{\mathrm{strict}}(X/\frakS)$), we call \emph{descent data associated with $\F$} the data $(\F_{(U,\frakT,u)},\gamma_{\F,(f,g)})$ given in \ref{era3propdescentdata} and \ref{rem1813}.
\end{definition}

\begin{parag}\label{parettop}
For any object $(U,\frakT,u)$ of $\calE(X/\frakS)$ (resp. $\underline{\calE}(X/\frakS),$ resp. $\calE_{\mathrm{strict}}(X/\frakS),$ resp. $\underline{\calE}_{\mathrm{strict}}(X/\frakS)$), we denote by $\op{Cov}((U,\frakT,u))$ the collection of families of cartesian morphisms \eqref{era3defcart} $((U_i\frakT_i,u_i) \ra (U,\frakT,u))$ such that $(U_i\ra U)$ is an étale covering of $U.$ By \ref{era3stablebc}, these collections define a pretopology on $\calE(X/\frakS)$ (resp. $\underline{\calE}(X/\frakS),$ $\calE_{\mathrm{strict}}(X/\frakS)$ and $\underline{\calE}_{\mathrm{strict}}(X/\frakS)$). We call the associated topology the \emph{étale topology of $\calE(X/\frakS)$} (resp. $\underline{\calE}(X/\frakS),$ resp. $\calE_{\mathrm{strict}}(X/\frakS),$ resp. $\underline{\calE}_{\mathrm{strict}}(X/\frakS)$) and we denote by $\widetilde{\calE}(X/\frakS)$ (resp. $\widetilde{\underline{\calE}}(X/\frakS),$ resp. $\widetilde{\calE}_{\mathrm{strict}}(X/\frakS),$ resp. $\widetilde{\underline{\calE}}_{\mathrm{strict}}(X/\frakS)$) the category of sheaves of sets on $\calE(X/\frakS)$ (resp. $\underline{\calE}(X/\frakS),$ resp. $\calE_{\mathrm{strict}}(X/\frakS),$ resp. $\underline{\calE}_{\mathrm{strict}}(X/\frakS)$).
\end{parag}

\begin{proposition}\label{era4prop1723}
Let $\F$ be a presheaf of sets on $\calE(X/\frakS)$ (resp. $\underline{\calE}(X/\frakS)$) and
$$(\F_{(U,\frakT,u)},\gamma_{\F,(f,g)})$$
the associated descent data \eqref{era3defdescentdata}. Then $\F$ is a sheaf for the étale topology \eqref{parettop} if and only if $\F_{(U,\frakT,u)}$ is a sheaf of $U_{\text{ét}}$ for every object $(U,\frakT,u)$ of $\calE(X/\frakS)$ (resp. $\underline{\calE}(X/\frakS)$).
\end{proposition}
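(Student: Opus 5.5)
We prove both implications by comparing sieves on the two sides, using the functor $\alpha_{(U,\frakT,u)}$ of \ref{era3paralpha} and the descent-data description of \ref{era3propdescentdata}. We treat $\underline{\calE}(X/\frakS)$; the case of $\calE(X/\frakS)$ is the same, with simpler formulas for $\alpha$.

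\emph{Necessity.} Suppose $\F$ is a sheaf for the étale topology, fix $(U,\frakT,u)$, and let $(V_i\ra V)$ be an étale covering in $\text{ét}_{/U}$. By \ref{era3paralphaKoko} (3), every morphism $\alpha_{(U,\frakT,u)}(V_i)\ra \alpha_{(U,\frakT,u)}(V)$ is cartesian. Hence the family $(\alpha_{(U,\frakT,u)}(V_i)\ra \alpha_{(U,\frakT,u)}(V))$ lies in $\op{Cov}(\alpha_{(U,\frakT,u)}(V))$ \eqref{parettop}. By \ref{propalphacomfiber}, $\alpha_{(U,\frakT,u)}$ commutes with fiber products, so
$$\alpha(V_i)\times_{\alpha(V)}\alpha(V_j)=\alpha(V_i\times_VV_j).$$
The sheaf condition for $\F$ on this covering is therefore exactly the sheaf condition for $\F_{(U,\frakT,u)}=\F\circ\alpha_{(U,\frakT,u)}$ on $(V_i\ra V)$.

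\emph{Sufficiency.} Suppose each $\F_{(U,\frakT,u)}$ is a sheaf, and take a covering $((U_i,\frakT_i,u_i)\ra(U,\frakT,u))$, so each morphism is cartesian and $(U_i\ra U)$ is an étale covering. Apply \ref{era3paralphaKoko} (2) to the morphism $(U_i,\frakT_i,u_i)\ra \alpha_{(U,\frakT,u)}(U)=(U,\frakT,u)$. It factors through a morphism $b_i:(U_i,\frakT_i,u_i)\ra\alpha_{(U,\frakT,u)}(U_i)$, which is cartesian because the original morphism is. A cartesian morphism $b_i$ with identity on the $U$-component is an isomorphism.
\begin{itemize}
\item For $\calE(X/\frakS)$: $T_i\ra T_{U_i}$ is an isomorphism by definition of cartesian.
\item For $\underline{\calE}(X/\frakS)$: the square with $U_i'=U_i'$ forces $T_i\cong T_{U_i}$, and $\frakT_i\ra\frakT_{U_i}$ is strict étale with isomorphic special fiber, hence an isomorphism of $p$-adic formal schemes by the topological invariance of the étale site.
\end{itemize}
So the given covering is isomorphic to $(\alpha(U_i)\ra\alpha(U))$. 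By \ref{propalphacomfiber} its fiber products are $\alpha(U_i\times_UU_j)$, and the sheaf condition for $\F$ becomes the sheaf condition for $\F_{(U,\frakT,u)}$ on $(U_i\ra U)$, which holds by hypothesis.

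The only delicate point is the identification $b_i$ of a cartesian morphism over $\op{Id}_{U_i}$ with an isomorphism in $\underline{\calE}(X/\frakS)$. Beyond the argument above, one must check that the map on the $u$-components agrees. This follows from the uniqueness in the construction of $u_V$ in \ref{era3paralpha}, via (\cite{Ogus2018} IV 3.3.7).
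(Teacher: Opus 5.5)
Your necessity argument is the same as the paper's. For sufficiency you take a different route. You use \ref{era3paralphaKoko}~(2) to show that the given covering is isomorphic, over $(U,\frakT,u)$, to $(\alpha_{(U,\frakT,u)}(U_i)\ra \alpha_{(U,\frakT,u)}(U))$. This is essentially \ref{propalphacocont}, which the paper proves only later, for the cocontinuity of $\alpha$. After that identification the sheaf condition for $\F$ is literally that of $\F_{(U,\frakT,u)}$.

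The paper never identifies the covering objects with $\alpha$-images. It invokes condition (iii) of \ref{era3propdescentdata}: the $(f_i,g_i)$ and the induced morphisms $(U_{ij},\frakT_{ij},u_{ij})\ra(U,\frakT,u)$ are cartesian (\ref{era3stablebc}), so the corresponding maps $\gamma_{\F,(\cdot)}$ are isomorphisms. Evaluating them at $U_i$ and $U_i\times_UU_j$ turns the exact sequence for $\F_{(U,\frakT,u)}$ on $(U_i\ra U)$ into the one for $\F$. That route is purely formal, with the geometry hidden in \ref{era3parbeta}~(3). Yours needs the rigidity statement that a cartesian morphism over $\op{Id}_{U_i}$ is an isomorphism, but in exchange gives the cocontinuity of $\alpha$ as a byproduct.

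On that rigidity statement, your argument for $\underline{\calE}(X/\frakS)$ is fine. For $\calE(X/\frakS)$, however, your bullet only gives $T_i\simeq T_{U_i}$, and being cartesian in $\calE(X/\frakS)$ imposes nothing on $\frakT_i\ra\frakT$. You still need $\frakT_i\ra\frakT_{U_i}$ to be an isomorphism. This holds because both formal schemes are flat over $\op{Spf}W$ (\ref{Wflat}). A morphism between them that is an isomorphism mod $p$ is then an isomorphism mod $p^n$ for all $n$, by induction and the five lemma. The log structures also match: $T_i\ra T$ is strict, which forces $\frakT_i\ra\frakT$ to be strict.

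Conversely, your closing ``delicate point'' is not needed. $b_i$ is already a morphism, and once its two components are isomorphisms, functoriality of $T\mapsto\underline{T}$ shows that their inverses form a morphism of $\underline{\calE}(X/\frakS)$. No appeal to the uniqueness in the construction of \ref{era3paralpha} is required.
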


\begin{proof}
Suppose that $\F$ is a sheaf for the étale topology and let $(U,\frakT,u)$ be an object of $\calE(X/\frakS)$ (resp. $\underline{\calE}(X/\frakS)$) and $(V_i\ra V)_{i\in I}$ an étale covering of $U.$ We have seen that the functor $\alpha_{(U,\frakT,u)}$ \eqref{eqKoko1861} sends morphisms to cartesian morphisms. It follows that
$$\alpha_{(U,\frakT,u)}(V_i) \ra \alpha_{(U,\frakT,u)}(V)$$
is a covering for the étale topology and so the sequence
$$0 \ra \F \circ \alpha_{(U,\frakT,u)}(V) \ra \prod_{i\in I}\F \circ \alpha_{(U,\frakT,u)}(V_i) \rightrightarrows \prod_{i,j\in I} \F \circ \alpha_{(U,\frakT,u)}(V_i\times_VV_j)$$
is exact. By \ref{propalphacomfiber}, we get the exactness of the sequence
$$0 \ra \F_{(U,\frakT,u)}(V) \ra \prod_{i\in I}\F_{(U,\frakT,u)}(V_i) \rightrightarrows \prod_{i,j\in I} \F_{(U,\frakT,u)}(V_i\times_VV_j),$$
and so $F_{(U,\frakT,u)}$ is a sheaf.

Conversely, suppose every $\F_{(U,\frakT,u)}$ is a sheaf and let $((U_i,\frakT_i,u_i) \xrightarrow{(f_i,g_i)} (U,\frakT,u))_{i\in I}$ be a covering for the étale topology and $(U_{ij},\frakT_{ij},u_{ij})=(U_{i},\frakT_{i},u_{i})\times_{(U,\frakT,u)}(U_{j},\frakT_{j},u_{j}).$ Since $\F_{(U,\frakT,u)}$ is a sheaf, we get the exact sequence
$$0 \ra \F_{(U,\frakT,u)}(U) \ra \prod_{i\in I} \F_{(U,\frakT,u)}(U_i) \rightrightarrows \prod_{i,j\in I} \F_{(U,\frakT,u)}(U_i\times_UU_j).$$
By \ref{era3propdescentdata} (iii), we get the exact sequence
$$0 \ra \F_{(U,\frakT,u)}(U) \ra \prod_{i\in I} \F_{(U_i,\frakT_i,u_i)}(U_i) \rightrightarrows \prod_{i,j\in I} \F_{(U_{ij},\frakT_{ij},u_{ij})}(U_i\times_UU_j),$$
which is equal to
$$0 \ra \F(U,\frakT,u) \ra \prod_{i\in I} \F(U_i,\frakT_i,u_i) \rightrightarrows \prod_{i,j\in I} \F(U_{ij},\frakT_{ij},u_{ij}).$$
\end{proof}

\begin{remark}
Proposition \ref{era4prop1723} remains true if we replace $\calE(X/\frakS)$ and $\underline{\calE}(X/\frakS)$ with $\calE_{\mathrm{strict}}(X/\frakS)$ and $\underline{\calE}_{\mathrm{strict}}(X/\frakS)$ respectively.
\end{remark}

\begin{proposition}\label{propalphacocont}
Let $(U,\frakT,u)$ be an object of $\calE(X/\frakS)$ (resp. $\underline{\calE}(X/\frakS)$) and $V$ an étale $U$-scheme. Consider the functor $\alpha_{(U,\frakT,u)}$ \eqref{eqKoko1861} and set $\left (V,\frakT_V,u_V\right )=\alpha_{(U,\frakT,u)}(V).$ For any covering family $\left ((U_i,\frakT_i,u_i) \xrightarrow{(f_i,g_i)} \left (V,\frakT_V,u_V \right ) \right )_{i\in I}$ for the étale topology \eqref{parettop}, there exists an étale covering $\left (U_i \ra V\right )_{i\in I},$ sent by $\alpha_{(U,\frakT,u)}$ to $(f_i,g_i).$
\end{proposition}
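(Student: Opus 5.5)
The statement says: given a covering family $((U_i,\frakT_i,u_i)\ra (V,\frakT_V,u_V))_{i\in I}$ for the étale topology, the morphisms $(f_i,g_i)$ are, up to canonical isomorphism, the images under $\alpha_{(U,\frakT,u)}$ of the étale covering $(U_i\ra V)$. My plan is to reduce to the universal property of $\alpha$ recorded in \ref{era3paralphaKoko}, applied to each member of the family separately.

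Fix $i\in I$. By definition of $\op{Cov}$ in \ref{parettop}, $(f_i,g_i)$ is cartesian and $g_i:U_i\ra V$ is étale (and strict, since both sides are strict over $X$). So $U_i$ is an étale $U$-scheme via $U_i\ra V\ra U$, and $\alpha_{(U,\frakT,u)}(U_i)=(U_i,\frakT_{U_i},u_{U_i})$ makes sense. Applying \ref{era3paralphaKoko} (2) to $a=(f_i,g_i):(U_i,\frakT_i,u_i)\ra \alpha_{(U,\frakT,u)}(V)$ gives a unique morphism $b_i:(U_i,\frakT_i,u_i)\ra \alpha_{(U,\frakT,u)}(U_i)$ through which $a$ factors. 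Since $a$ is cartesian, so is $b_i$.

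Next I show $b_i$ is an isomorphism. Its component on étale schemes is $\op{Id}_{U_i}$; this follows from the construction of $b$ in \ref{era3paralphaKoko} (the second component of $b$ is the identity of $W$). In the $\calE(X/\frakS)$ case, cartesianness says $Z=T_i\ra T_{U_i}=T\times_UU_i$ is the base change of $U_i\ra U_i$, hence an isomorphism. In the $\underline{\calE}(X/\frakS)$ case, it says that the square with vertical arrows $\Delta_{u_i}$, $\Delta_{u_{U_i}}$ and bottom arrow $\op{Id}_{U_i'}$ is cartesian, which again forces $T_i\ra T_{U_i}$ to be an isomorphism. Both $\frakT_i\ra\frakT_{U_i}$ and its target are étale and strict over $\frakT$, and they induce an isomorphism on special fibers. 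By the topological invariance of the étale site for $p$-adic formal schemes (the equivalence used to build $\frakT_V$ in \ref{era3paralpha}), $\frakT_i\ra\frakT_{U_i}$ is then an isomorphism. Here I also need $\frakT_i\ra\frakT$ to be étale and strict, which holds because it is the composite of $\frakT_i\ra\frakT_V$ (étale and strict, by cartesianness in $\underline{\calE}$, and likewise in $\calE$ once we know $T_i\ra T_V$ is a base change of an étale strict map) with $\frakT_V\ra\frakT$. Finally, the components $u_i$ and $u_{U_i}$ agree under this identification by the uniqueness built into the construction of $u_V$ via (\cite{Ogus2018} IV 3.3.7).

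Consequently each $(f_i,g_i)$ is identified with $\alpha_{(U,\frakT,u)}(U_i)\ra \alpha_{(U,\frakT,u)}(V)$, the image under $\alpha$ of the étale covering $(U_i\ra V)_{i\in I}$. The main obstacle is the $\calE(X/\frakS)$ case, because Definition \ref{era3defcart} (1) does not require $\frakT_1\ra\frakT_2$ to be étale. There I would deduce étaleness of $\frakT_i\ra\frakT_V$ from the fact that $T_i\ra T_V$ is a base change of the étale strict map $U_i\ra V$, together with the formal lifting argument of \ref{era3paralpha}. If that fails, I would replace $\frakT_i$ by the unique étale lift of $T_i$, using that morphisms with fixed special fiber to étale formal schemes are rigid.
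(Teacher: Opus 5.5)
Your approach matches the paper's. Cartesianness identifies $T_i$ with $T_{U_i}$, and this is then upgraded to an isomorphism $(U_i,\frakT_i,u_i)\cong\alpha_{(U,\frakT,u)}(U_i)$. Getting the comparison map $b_i$ from \ref{era3paralphaKoko} (2) is a tidy way to obtain compatibility with $(f_i,g_i)$ for free. The agreement of $u_i$ with $u_{U_i}$ is automatic, because $b_i$ is a morphism of the category. In the $\underline{\calE}(X/\frakS)$ case your argument is complete, since cartesianness there includes étaleness and strictness of $\frakT_i\ra\frakT_{U_i}$.

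\textbf{The gap is in the $\calE(X/\frakS)$ case.} There you need $\frakT_i\ra\frakT_V$ to be étale, and your proposed justification does not give it. Étaleness of the special fibre $T_i\ra T_V$, together with the lifting construction of \ref{era3paralpha}, says nothing about the \emph{given} $\frakT_i$: that construction only produces \emph{some} étale lift of $T_i$. Without further input the conclusion is false; for instance $\op{Spec}(W/p^2)\ra\op{Spf}W$ is an isomorphism modulo $p$ but not étale. Your fallback of replacing $\frakT_i$ by the étale lift does not help either, since the statement concerns the given family.

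\textbf{The missing ingredient is flatness over $W$.} Every $\frakT$ occurring in an object is log flat over $\frakS$, hence over $\op{Spf}W$, hence flat over $\op{Spf}W$ by \ref{Wflat}. With this you no longer need étaleness of $\frakT_i\ra\frakT_V$. The first component $\frakT_i\ra\frakT_{U_i}$ of $b_i$ exists by formal étaleness of $\frakT_{U_i}\ra\frakT$. It is an isomorphism on special fibres with $W$-flat source, hence an isomorphism:
\begin{enumerate}
\item It is a homeomorphism.
\item Modulo $p^n$, the cokernel $C$ of the map of structure sheaves satisfies $C=pC$ with $p$ nilpotent, so $C=0$.
\item Flatness of $\Ox_{\frakT_{i,n}}$ over $W/p^n$ gives $K/pK=0$ for the kernel $K$, so $K=0$.
\item The log structures agree, because the map is an isomorphism on $\Ox^{*}$ and on characteristic monoids, and these are those of the special fibres.
\end{enumerate}
Alternatively, since $\frakT_i$ and $\frakT_V$ are both $W$-flat, the fibrewise flatness criterion shows $\frakT_{i,n}\ra\frakT_{V,n}$ is flat, and hence étale.

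The paper's own proof passes over the same point when it appeals to ``the uniqueness of $\frakT_{U_i}$''.
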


\begin{proof}
We first prove the proposition for $\calE(X/\frakS).$ For any $i\in I,$ the morphism $(U_i,\frakT_i,u_i) \ra \left (V,\frakT_V,u_V\right )$ is cartesian \eqref{era3defcart}. It follows that the left square in the diagram
$$
\begin{tikzcd}
T_i \ar{r} \ar[swap]{d}{u_i} & T_V \ar{r} \ar{d}{u_V} & T\ar{d}{u} \\
U_i \ar{r} & V \ar{r} & U
\end{tikzcd}
$$
is cartesian. The right square is cartesian by definition.
Set
$$
\left (U_i,\frakT_{U_i},u_{U_i}\right )=\alpha_{(U,\frakT,u)}(U_i).
$$
By definition, the square
$$
\begin{tikzcd}
T_{U_i} \ar{r} \ar[swap]{d}{u_{U_i}} & T \ar{d}{u} \\
U_i \ar{r} & U
\end{tikzcd}
$$
is cartesian. We deduce a canonical isomorphism
$
\varphi_i:T_i \xrightarrow{\sim} T_{U_i}
$
such that
$
u_{U_i}\circ \varphi_i=u_i.
$
By definition, we have cartesian squares
$$
\begin{tikzcd}
T_{U_i} \ar{r} \ar{d} & \frakT_{U_i} \ar{d} & T_i \ar{r} \ar{d} & \frakT_i \ar{d} \\
T \ar{r} & \frakT & T \ar{r} & \frakT
\end{tikzcd}
$$
where the left vertical arrow in each square is étale and strict.
By the isomorphism $\varphi$ and the étaleness of $T_{U_i} \ra T,$ hence the uniqueness of $\frakT_{U_i},$ we deduce an isomorphism
$
\widetilde{\varphi}_i:\frakT_i \ra \frakT_{U_i}
$
fitting into the commutative diagram
$$
\begin{tikzcd}
T_i \ar{rr} \ar{dr}{\varphi_i} \ar{dd} & & \frakT_i \ar[dashed]{dd} \ar{dr}{\widetilde{\varphi}_i} & \\
 & T_{U_i} \ar{rr}\ar{dd} & & \frakT_{U_i} \ar{dd} \\
T \ar[dashed]{rr} \ar[equal]{dr} & & \frakT \ar[dashed]{dr} & \\
 & T \ar{rr} &  & \frakT. 
\end{tikzcd}
$$
We deduce an isomorphism
$
(U_i,\frakT_i,u_i) \xrightarrow{\sim} \left (U_i,\frakT_{U_i},u_{U_i}\right )=\alpha_{(U,\frakT,u)}(U_i)
$
and the result follows. For the case $\underline{\calE}(X/\frakS),$ we have the following cartesian rectangles:
$$
\begin{tikzcd}
T_i \ar{r} \ar[swap]{d}{\Delta_{(U_i,\frakT_i,u_i)}} & T_V \ar{d}{\Delta_{(V,\frakT_V,u_V)}} \ar{rr} & & T \ar{d}{\Delta_{u}} \\
U_i' \ar{r} & V' \ar{rr} & & U'.
\end{tikzcd}
$$
We deduce an isomorphism
$
T_i \xrightarrow{\sim} T_{U_i}.
$
The rest of the proof is similar to the previous case.
\end{proof}

\begin{lemma}\label{Kokolemcocont}
Let $u:\calC \ra \calD$ be a functor between sites whose topologies are defined by pretopologies. Suppose that $u$ satisfies the following properties:
\begin{enumerate}
\item $u$ commutes with fiber products.
\item If $(X_i \ra X)_{i\in I}$ is a covering family for the pretopology of $\calC$ then $\left (u(X_i) \ra u(X) \right )_{i\in I}$ is a covering family in $\calD.$
\item If $\left (Y_i \xrightarrow{g_i} u(X) \right )_{i\in I}$ is a covering family in $\calD$ then there exists a covering family $(f_i:X_i \ra X)_{i\in I}$ in $\calC,$ such that $u(f_i)=g_i.$
\end{enumerate}
Then $u$ is continuous and cocontinuous and induces a morphism of topoi
$$
u:\widetilde{\calC} \ra \widetilde{\calD}
$$
such that $u^{-1}$ is the composition by $u.$
\end{lemma}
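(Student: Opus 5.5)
The plan is to verify the two defining properties of SGA 4 directly, continuity first and then cocontinuity, and to read off the description of $u^{-1}$ from the proof of continuity. Throughout, $\widetilde{\calC}$ and $\widetilde{\calD}$ are the categories of sheaves for the topologies generated by the pretopologies.

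\textbf{Continuity.} Recall (SGA 4 III 1.6) that a functor between sites defined by pretopologies, commuting with fiber products and sending covering families to covering families, is continuous. Concretely, I would show that for a sheaf $\calG$ on $\calD$ the presheaf $\calG\circ u$ is a sheaf on $\calC$.
\begin{enumerate}
\item Take a covering family $(X_i \ra X)$ in the pretopology of $\calC$.
\item By (2), $(u(X_i)\ra u(X))$ is covering in $\calD$, so the sheaf sequence for $\calG$ on it is exact.
\item By (1), $u(X_i\times_XX_j)=u(X_i)\times_{u(X)}u(X_j)$, so this exact sequence is exactly the sheaf sequence for $\calG\circ u$ on $(X_i\ra X)$.
\end{enumerate}
Since the topology is generated by the pretopology, checking the sheaf condition on these families suffices, and this proves continuity. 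Hence $u^s:\calG\mapsto \calG\circ u$ sends $\widetilde{\calD}$ to $\widetilde{\calC}$.

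\textbf{Cocontinuity.} Cocontinuity means that for every $X$ in $\calC$ and every covering sieve $R$ of $u(X)$, the sieve of arrows $X'\ra X$ with $u(X'\ra X)$ factoring through $R$ is a covering sieve of $X$.
\begin{enumerate}
\item Since the topology on $\calD$ comes from a pretopology, $R$ contains a sieve generated by a covering family $(Y_i\xrightarrow{g_i} u(X))$.
\item By (3), there is a covering family $(f_i:X_i\ra X)$ in $\calC$ with $u(f_i)=g_i$.
\item Each $f_i$ therefore lies in the pulled-back sieve, so that sieve contains the sieve generated by a covering family.
\item Hence the pulled-back sieve is covering, which is cocontinuity.
\end{enumerate}

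\textbf{The morphism of topoi.} By SGA 4 III 2.3, a cocontinuous functor $u$ induces a morphism of topoi $g:\widetilde{\calC}\ra\widetilde{\calD}$. Its components are $g_*=(\F\mapsto (u^p\F)^a)$, with $u^p$ the right adjoint of composition, and $g^{-1}=(\calG\mapsto(\calG\circ u)^a)$. Continuity shows $\calG\circ u$ is already a sheaf, so no sheafification is needed and $g^{-1}$ is composition with $u$. Continuity also implies that the restriction functor $u^s$ has a left adjoint $u_s$. This matches the statement, which names the morphism $u$ and identifies $u^{-1}$ with composition by $u$.

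The only delicate point is the cocontinuity step, namely checking that the pretopology version of condition (3) is enough for arbitrary covering sieves. This is handled in step 1 of the cocontinuity argument: every covering sieve contains a sieve generated by a covering family, and sieves containing covering sieves are covering.
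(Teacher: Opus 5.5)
Your proof is correct and takes the same route as the paper. Continuity comes from (1) and (2) via SGA 4 III 1.6. Cocontinuity comes from (3) via SGA 4 III 2.1 and II 1.4, using that every covering sieve contains one generated by a covering family. The morphism of topoi comes from cocontinuity, and continuity ensures that $u^{-1}$ is plain composition without sheafification. You spell out the verifications the paper leaves to the references, and you cite III 2.3 where the paper cites SGA 4 IV 4.7.
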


\begin{proof}
Since $u$ satisfies properties 1 and 2, its continuity results from (\cite{SGA43} III 1.6). The cocontinuity of $u$ results from property 3 and (\cite{SGA43} III 2.1 and II 1.4). The induced morphism of topoi results then from (\cite{SGA43} IV 4.7).
\end{proof}

\begin{proposition}
Let $(U,\frakT,u)$ be an object of $\calE(X/\frakS)$ (resp. $\underline{\calE}(X/S)$). The functor $\alpha_{(U,\frakT,u)}$ \eqref{eqKoko1861} induces a morphism of topoi
\begin{equation}\label{Kokoalphatopos}
\alpha_{(U,\frakT,u)}:U_{\text{ét}} \ra \widetilde{\calE}(X/\frakS)\ \left( \mathrm{resp. }\ \widetilde{\underline{\calE}}(X/\frakS)\right )
\end{equation}
such that the inverse image functor is the composition by $\alpha_{(U,\frakT,u)}.$
\end{proposition}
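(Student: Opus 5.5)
The plan is to apply Lemma \ref{Kokolemcocont} to the functor $\alpha_{(U,\frakT,u)}:\text{ét}_{/U} \ra \calE(X/\frakS)$ (resp. $\underline{\calE}(X/\frakS)$). On $\text{ét}_{/U}$ we take the usual étale pretopology, whose topos is $U_{\text{ét}}$. On the target we take the pretopology of \ref{parettop}. The lemma then gives continuity, cocontinuity and a morphism of topoi whose inverse image is composition with $\alpha_{(U,\frakT,u)}$, which is exactly the statement. So the proof reduces to checking the three hypotheses of Lemma \ref{Kokolemcocont}, in order.

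\textbf{Hypothesis 1 (fiber products).} This is Proposition \ref{propalphacomfiber}. One point needs care. By Proposition \ref{era3paralphaKoko} (3), the morphisms $\alpha_{(U,\frakT,u)}(V_1) \ra \alpha_{(U,\frakT,u)}(V)$ are cartesian, so they are strict étale on the formal schemes and in particular log flat. Hence the fiber products in question exist in $\calE(X/\frakS)$ (resp. $\underline{\calE}(X/\frakS)$) by \ref{era3fiberparag}, and Proposition \ref{propalphacomfiber} identifies them with the image of the fiber product in $\text{ét}_{/U}$.

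\textbf{Hypothesis 2 (coverings go to coverings).} Let $(V_i \ra V)_{i\in I}$ be an étale covering in $\text{ét}_{/U}$. By Proposition \ref{era3paralphaKoko} (3), each $\alpha_{(U,\frakT,u)}(V_i) \ra \alpha_{(U,\frakT,u)}(V)$ is cartesian. The underlying family $(V_i \ra V)$ is an étale covering. So the image family belongs to $\op{Cov}(\alpha_{(U,\frakT,u)}(V))$ by definition \ref{parettop}.

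\textbf{Hypothesis 3 (lifting coverings).} This is the content of Proposition \ref{propalphacocont}. A covering family of $\alpha_{(U,\frakT,u)}(V)$ consists of cartesian morphisms $(U_i,\frakT_i,u_i) \ra (V,\frakT_V,u_V)$ with $(U_i \ra V)$ an étale covering. That proposition gives isomorphisms $(U_i,\frakT_i,u_i) \simeq \alpha_{(U,\frakT,u)}(U_i)$ compatible with the maps to $\alpha_{(U,\frakT,u)}(V)$. Here one must also check that the $U_i \ra V$ are morphisms of $U$-schemes; they are, via $V \ra U$.

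\textbf{Expected obstacle.} The only delicate point is the formulation of hypothesis 3. It asks for equality $u(f_i)=g_i$, whereas Proposition \ref{propalphacocont} gives agreement only up to a canonical isomorphism of sources. I would handle this by noting that replacing the family $(g_i)$ by an isomorphic one does not change the sieve it generates. Cocontinuity (\cite{SGA43} III 2.1) only involves sieves, so the lemma still applies.
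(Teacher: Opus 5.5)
Your proposal is correct and follows the paper's own argument: apply Lemma \ref{Kokolemcocont}, with Proposition \ref{propalphacomfiber}, Proposition \ref{era3paralphaKoko}~(3) and Proposition \ref{propalphacocont} supplying the three hypotheses. Your extra remarks make explicit points the paper leaves implicit. These are the existence of the relevant fiber products and the fact that the lifting in hypothesis 3 holds only up to canonical isomorphism. One small precision: in $\calE(X/\frakS)$, the strict étaleness of $\frakT_{V_1}\ra\frakT_V$ comes from the construction of $\alpha_{(U,\frakT,u)}$, not from cartesianness.
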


\begin{proof}
It is sufficient to apply \ref{Kokolemcocont}. By \ref{era3paralphaKoko} (3), the functor $\alpha_{(U,\frakT,u)}$ sends covering families to covering families for the étale topology. Then, by \ref{propalphacomfiber} and \ref{propalphacocont}, the functor $\alpha_{(U,\frakT,u)}$ satisfies the conditions of \ref{Kokolemcocont}. The result follows.
\end{proof}

\begin{parag}\label{era4logflattop}
We say that a family of morphisms $((U_i,\frakT_i,u_i) \ra (U,\frakT,u))_{i\in I}$ of $\calE(X/\frakS)$ (resp. $\underline{\calE}(X/\frakS)$) is a \emph{covering for the log flat topology} if $(U_i \ra U)_{i\in I}$ is an étale covering and, for all positive integers $n,$ $(\frakT_{i,n} \ra \frakT_n)_{i\in I}$ is a log flat covering (\cite{Kat19} 2.3). By \ref{era3fiberparag}, this defines a pretopology on $\calE(X/\frakS)$ (resp. $\underline{\calE}(X/\frakS)$). We call the corresponding topology the \emph{log flat topology} and we denote by $\widetilde{\calE}_{lf}(X/\frakS)$ (resp. $\widetilde{\underline{\calE}}_{lf}(X/\frakS)$) the corresponding topos.
\end{parag}

\begin{remark}
The log flat topology on $\calE(X/\frakS)$ (resp. $\underline{\calE}(X/\frakS)$) is finer then the étale topology (\ref{parettop}). Indeed, if $\left ((U_i,\frakT_i,u_i) \ra (U,\frakT,u) \right )_{i\in I}$ is an étale covering then $(\frakT_i \ra \frakT)_{i\in I}$ is an étale and strict covering.
\end{remark}

\subsection*{Crystals}

\begin{definition}
For all positive integers $n,$ we define a presheaf of rings $\Ox_{\calE(X/\frakS),n}$ (resp. $\Ox_{\underline{\calE}(X/\frakS),n}$) on $\calE(X/\frakS)$ (resp. $\underline{\calE}(X/\frakS)$) by
$$
\Ox_{\calE(X/\frakS),n}:(U,\frakT,u) \mapsto \Gamma(\frakT_n,\Ox_{\frakT_n})\quad
(\op{resp.}\ \Ox_{\underline{\calE}(X/\frakS),n}: (U,\frakT,u)\mapsto \Gamma(\frakT_n,\Ox_{\frakT_n})).
$$
We denote by $\Ox_{\calE_{\mathrm{strict}}(X/\frakS),n}$ and $\Ox_{\underline{\calE}_{\mathrm{strict}}(X/\frakS),n}$ the restrictions of the rings $\Ox_{\calE(X/\frakS),n}$ and $\Ox_{\underline{\calE}(X/\frakS),n}$ to $\calE_{\mathrm{strict}}(X/\frakS)$ and $\underline{\calE}_{\mathrm{strict}}(X/\frakS)$) respectively.
For $n=1,$ we denote $\Ox_{\calE(X/\frakS),1}$ (resp. $\Ox_{\underline{\calE}(X/\frakS),1},$ resp. $\Ox_{\calE_{\mathrm{strict}}(X/\frakS),1}$ resp. $\Ox_{\underline{\calE}_{\mathrm{strict}}(X/\frakS),1}$) simply by $\Ox_{\calE(X/\frakS)}$ (resp. $\Ox_{\underline{\calE}(X/\frakS)},$ resp. $\Ox_{\calE_{\mathrm{strict}}(X/\frakS)},$ resp. $\Ox_{\underline{\calE}_{\mathrm{strict}}(X/\frakS)}$).
\end{definition}

\begin{proposition}
For every positive integer $n,$ the presheaves $\Ox_{\calE(X/\frakS),n}$ and $\Ox_{\underline{\calE}(X/\frakS),n}$ are sheaves for the étale and log flat topologies.
\end{proposition}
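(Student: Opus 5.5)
For each positive integer $n$, I will show that $\Ox_{\calE(X/\frakS),n}$ and $\Ox_{\underline{\calE}(X/\frakS),n}$ satisfy the sheaf condition for every covering family of the log flat pretopology (\ref{era4logflattop}). Since the log flat topology is finer than the étale topology, this gives both statements at once. The two cases are formally identical: the ring only depends on the formal scheme $\frakT$, not on $u$, and fiber products in both categories are computed by $\frakT_1\times^{\op{log}}_{\frakT}\frakT_2$ (\ref{era3fiberparag}). So I treat them together.

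First, I use that the topology is generated by a pretopology. Let $((U_i,\frakT_i,u_i)\ra (U,\frakT,u))_{i\in I}$ be a log flat covering. It suffices to prove exactness of
$$0\ra \Gamma(\frakT_n,\Ox_{\frakT_n})\ra \prod_i\Gamma(\frakT_{i,n},\Ox_{\frakT_{i,n}})\rightrightarrows \prod_{i,j}\Gamma\left ((\frakT_i\times^{\op{log}}_{\frakT}\frakT_j)_n,\Ox\right ).$$
Reduction modulo $p^n$ commutes with the fs fiber product, so $(\frakT_i\times^{\op{log}}_{\frakT}\frakT_j)_n=\frakT_{i,n}\times^{\op{log}}_{\frakT_n}\frakT_{j,n}$. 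The statement therefore reduces to the following claim about schemes: the structure sheaf $T\mapsto\Gamma(T,\Ox_T)$ on fs log schemes over $\frakT_n$ is a sheaf for the log flat topology of (\cite{Kat19} 2.3).

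Second, I prove that claim, which is the main obstacle. The difficulty is that $\frakT_{i,n}\times^{\op{log}}_{\frakT_n}\frakT_{j,n}$ is not the scheme-theoretic fiber product. It is a closed subscheme of an integralization and saturation, possibly after log-blowup-type modifications of the underlying schemes, so ordinary fpqc descent does not apply directly. My first attempt is to invoke Kato's theorem that $\Ox$ (more generally any quasi-coherent module) is a sheaf for the log flat topology (\cite{Kat19}, the log flat descent for $\Ox$). I expect the paper to cite this or to quote its own log flat descent result from \cite{SF1}. If a direct argument is needed, I would work étale locally with charts. A log flat covering can be refined so that each map is the composite of a strict flat morphism and a base change of a Kummer-type map $A[P]\ra A[Q]$ with $P\ra Q$ injective and saturated-integral-compatible. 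For the strict flat part, ordinary fpqc descent applies, because strict base change agrees with scheme fiber products. For the Kummer part, I would check the equalizer explicitly on the monoid algebras. There, $\Z[P]\ra\Z[Q]$ is split injective as $\Z[P]$-modules, being a direct summand graded by $Q^{gp}/P^{gp}$-cosets, and this gives the exactness.

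Finally, I assemble the pieces. Exactness for each $n$ gives the sheaf property for the log flat topology on $\calE(X/\frakS)$ and $\underline{\calE}(X/\frakS)$. The étale case follows, either because étale coverings are log flat coverings or directly by strict étale descent, using that $(\frakT_i\ra\frakT)$ is strict étale in a cartesian family. The restrictions to the strict subcategories follow in the same way.
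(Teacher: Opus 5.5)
Your proposal is correct and matches the paper's proof. The paper also reduces to the log flat topology, which is finer than the étale one, applies the log flat descent result for $\Ox_{\frakT_n}$ from (\cite{SF1} 13.10) to the covering $(\frakT_{i,n}\ra\frakT_n)_{i\in I}$, and takes global sections; your fallback chart-by-chart sketch is unnecessary, and as written it would not suffice, because log blow-ups are log flat coverings in the sense of (\cite{Kat19} 2.3) whose charts have $P^{gp}=Q^{gp}$, $\Z[P]\ra\Z[Q]$ is not split injective for them, and no single chart is surjective.
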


\begin{proof}
Since the log flat topology if finer then the étale topology, it is sufficient to prove the result for the log flat topology. Let $n$ be a positive integer and 
$((U_i,\frakT_i,u_i) \xrightarrow{(f_i,g_i)} (U,\frakT,u))_{i\in I}$
a covering for the log flat topology. For any $i,j\in I,$ let $f_{ij}:\frakT_{ij}=\frakT_i\times_{\frakT}^{\op{log}}\frakT_j \ra \frakT$ be the canonical morphism. Since $(\frakT_{i,n} \xrightarrow{f_i}\frakT_n)_{i\in I}$ is a covering for the log flat topology (\ref{era4logflattop}), the sequence
$$0 \ra \Ox_{\frakT_n} \ra \prod_{i\in I}f_{i*}f_i^*\Ox_{\frakT_n} \rightrightarrows \prod_{i,j\in I}f_{ij*}f_{ij}^*\Ox_{\frakT_n}$$
is exact by (\cite{SF1} 13.10). We deduce the exactness of the sequence
$$0 \ra \Gamma \left (\frakT_n,\Ox_{\frakT_n} \right ) \ra \prod_{i\in I}\Gamma \left (\frakT_{i,n},\Ox_{\frakT_{i,n}} \right ) \rightrightarrows \prod_{i,j\in I} \Gamma\left (\frakT_{ij,n},\Ox_{\frakT_{ij,n}}\right ).$$
\end{proof}

\begin{parag}\label{lindescentdata}
Let $n$ be a positive integer, $(U,\frakT,u)$ an object of $\calE(X/\frakS)$ (resp. $\underline{\calE}(X/\frakS)$) and $V$ an étale $U$-scheme. Let
$(V,\frakT_V,u_V)=\alpha_{(U,\frakT,u)}(V).$
Then
\begin{alignat*}{2}
\Gamma \left (V,\left (\Ox_{\calE(X/\frakS),n} \right )_{(U,\frakT,u)} \right ) = \Gamma \left (\frakT_{V,n},\Ox_{\frakT_{V,n}} \right ) 
= \Gamma \left (V, u_{V*}\Ox_{\frakT_{V,n}} \right ).
\end{alignat*}
It follows that
\begin{equation}\label{eqKoko18171}
\left (\Ox_{\calE(X/\frakS),n} \right )_{(U,\frakT,u)}=u_*\Ox_{\frakT_n}.
\end{equation}
The same is true for $\underline{\calE}(X/\frakS).$

Let $\F$ be a module of $\left (\widetilde{\calE}(X/\frakS),\Ox_{\calE(X/\frakS)}\right )$ or $\left (\widetilde{\underline{\calE}}(X/\frakS),\Ox_{\underline{\calE}(X/\frakS)}\right )$ and $\left (\F_{(U,\frakT,u)},\gamma_{\F,(f,g)} \right )$ the associated descent data (\ref{era3propdescentdata}). A morphism $(f,g):(U_1,\frakT_1,u_1) \ra (U_2,\frakT_2,u_2)$ induces a morphism of ringed topoi
\begin{equation}\label{Kokoftilda}
\widetilde{f}:\left (U_{1,\text{ét}},u_{1*}\Ox_{T_{1}} \right ) \ra \left ( U_{2,\text{ét}}, u_{2*}\Ox_{T_{2}} \right ).
\end{equation}
Note that this morphism $\widetilde{f}$ does not depend on $f:\frakT_1 \ra \frakT_2$ but rather on its special fiber.
The morphism
$\gamma_{\F,(f,g)}:g^{-1}\F_{(U_2,\frakT_2,u_2)} \ra \F_{(U_1,\frakT_1,u_1)}$
then induces a $u_{1*}\Ox_{T_1}$-linear morphism
\begin{equation}\label{cflindata}
c_{\F,(f,g)}: \widetilde{f}^*\F_{(U_2,\frakT_2,u_2)} \ra \F_{(U_1,\frakT_1,u_1)}.
\end{equation}
By \ref{era3propdescentdata}, if $(f,g)$ is cartesian, then
$$
\gamma_{\Ox_{\calE(X/\frakS)},(f,g)}:g^{-1}\Ox_{\calE(X/\frakS),(U_2,\frakT_2,u_2)} \ra \Ox_{\calE(X/\frakS),(U_1,\frakT_1,u_1)}
$$
is an isomorphism. This means that the morphism
$$
\gamma_{\Ox_{\calE(X/\frakS)},(f,g)}:g^{-1}u_{2*}\Ox_{U_2} \ra u_{1*}\Ox_{U_1}
$$
is an isomorphism. It follows that, if $(f,g)$ is cartesian,
$
\widetilde{f}^{-1}=\widetilde{f}^*.
$
By \ref{era3propdescentdata} and \ref{era4prop1723}, an $\Ox_{\calE(X/\frakS)}$-module (resp. $\Ox_{\underline{\calE}(X/\frakS)}$-module) $\F$ is equivalent to the data:
\begin{enumerate}
\item For every object $(U,\frakT,u)$ of $\calE(X/\frakS)$ (resp. $\underline{\calE}(X/\frakS)$), a module $\F_{(U,\frakT,u)}$ of $(U_{\text{ét}},u_*\Ox_T).$
\item For every morphism $(f,g):(U_1,\frakT_1,u_1) \ra (U_2,\frakT_2,u_2),$ a $u_{1*}\Ox_{T_1}$-linear morphism
$$c_{\F,f,g)}:\widetilde{f}^*\F_{(U_2,\frakT_2,u_2)} \ra \F_{(U_1,\frakT_1,u_1)},$$
\end{enumerate}
satisfying the following conditions:
\begin{enumerate}[(i)]
\item If $(f,g)$ is the identity then so is $c_{\F,(f,g)}.$
\item For all composable morphisms $(f_1,g_1):(U_1,\frakT_1,u_1) \ra (U_2,\frakT_2,u_2)$ and $(f_2,g_2):(U_2,\frakT_2,u_2) \ra (U_3,\frakT_3,u_3)$ in $\calE(X/\frakS)$ (resp. $\underline{\calE}(X/\frakS)$),
$$c_{\F,(f_1,g_1)} \circ \widetilde{f_1}^*c_{\F,(f_2,g_2)}=c_{\F,(f_2,g_2)\circ (f_1,g_1)}.$$
\item If $(f,g)$ is a cartesian morphism, then $c_{\F,(f,g)}$ is an isomorphism.
\end{enumerate}
This equivalence remains true if we replace $\calE(X/\frakS)$ and $\underline{\calE}(X/\frakS)$ with $\calE_{\mathrm{strict}}(X/\frakS)$ and $\underline{\calE}_{\mathrm{strict}}(X/\frakS)$ respectively.
We call $\left (\F_{(U,\frakT,u)},c_{(f,g)} \right )$ the \emph{linearized descent data associated with $\F$}.
\end{parag}

\begin{definition}\label{defcrys}
Let $\F$ be a module of $\widetilde{\calE}(X/\frakS),$ $\widetilde{\underline{\calE}}(X/\frakS),$
$\widetilde{\calE}_{lf}(X/\frakS),$ $\widetilde{\underline{\calE}}_{lf}(X/\frakS),$ $\widetilde{\calE}_{\mathrm{strict}}(X/\frakS)$ or $\widetilde{\underline{\calE}}_{\mathrm{strict}}(X/\frakS)$ and $\left (\F_{(U,\frakT,u)},c_{\F,(f,g)} \right )$ the associated linearized descent data (\ref{lindescentdata}).
\begin{enumerate}
\item We say that $\F$ is \emph{quasi-coherent} if $\F_{(U,\frakT,u)}$ is a quasi-coherent $u_*\Ox_{T}$-module for every object $(U,\frakT,u).$
\item We say that $\F$ is a crystal if $c_{\F,(f,g)}$ is an isomorphism for every morphism $(f,g).$
\end{enumerate}
We denote by $\calC(X/\frakS),$ $\underline{\calC}(X/\frakS),$ $\calC_{lf}(X/\frakS),$ $\underline{\calC}_{lf}(X/\frakS),$ $\calC_{\mathrm{strict}}(X/\frakS)$ and $\underline{\calC}_{\mathrm{strict}}(X/\frakS)$ the full subcategories of crystals of the categories of modules of $\widetilde{\calE}(X/\frakS),$ $\widetilde{\underline{\calE}}(X/\frakS),$
$\widetilde{\calE}_{lf}(X/\frakS),$ $\widetilde{\underline{\calE}}_{lf}(X/\frakS),$ $\widetilde{\calE}_{\mathrm{strict}}(X/\frakS)$ or $\widetilde{\underline{\calE}}_{\mathrm{strict}}(X/\frakS)$ respectively.
We also denote by $\calC^{\text{qcoh}}(X/\frakS),$ $\underline{\calC}^{\text{qcoh}}(X/\frakS),$ $\calC_{lf}^{\text{qcoh}}(X/\frakS),$ $\underline{\calC}_{lf}^{\text{qcoh}}(X/\frakS),$ $\calC_{\mathrm{strict}}^{\text{qcoh}}(X/\frakS)$ and $\underline{\calC}_{\mathrm{strict}}^{\text{qcoh}}(X/\frakS)$ the full subcategories of quasi-coherent crystals respectively.
\end{definition}

\begin{proposition}\label{equivcrysShiho}
The restriction functor
$$
R:\widetilde{\calE}(X/\frakS) \ra \widetilde{\calE}_{\mathrm{strict}}(X/\frakS) \ \left (\text{resp.}\ \underline{\widetilde{\calE}}(X/\frakS) \ra \underline{\widetilde{\calE}}_{\mathrm{strict}}(X/\frakS) \right )
$$
induces an equivalence of categories
$$
R_{\mathrm{crys}}:\begin{Bmatrix}\mathrm{Crystals\ of\ }\widetilde{\calE}(X/\frakS) \end{Bmatrix} \xrightarrow{\sim} \begin{Bmatrix}\mathrm{Crystals\ of\ }\widetilde{\calE}_{\mathrm{strict}}(X/\frakS) \end{Bmatrix}$$
$$\left ( \mathrm{resp.}\ \begin{Bmatrix}\mathrm{Crystals\ of\ }\widetilde{\underline{\calE}}(X/\frakS) \end{Bmatrix} \xrightarrow{\sim} \begin{Bmatrix}\mathrm{Crystals\ of\ }\widetilde{\underline{\calE}}_{\mathrm{strict}}(X/\frakS) \end{Bmatrix} \right ).
$$
\end{proposition}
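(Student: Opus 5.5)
The strategy is to construct an explicit quasi-inverse to $R_{\mathrm{crys}}$. It rests on one observation: every object of $\calE(X/\frakS)$ (resp. $\underline{\calE}(X/\frakS)$) is covered, after étale localisation on $U$, by objects of the strict subcategory, or receives a morphism from one that determines it. I treat $\calE(X/\frakS)$ in detail; the case of $\underline{\calE}(X/\frakS)$ is identical, replacing $u:T\ra U$ by $u:\underline{T}\ra U$ and using $\Delta_u$ where needed.

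The key construction is a canonical \emph{strictification}. Given $(U,\frakT,u)$, let $U^{\sharp}$ denote the underlying scheme of $U$ equipped with the log structure $u$-compatible from $T$. The simplest choice is to keep $U$ but modify the log structure on $T$: set $T^{s}$ to be the underlying scheme of $T$ with log structure $u^*\calM_U$. Then $u$ factors as $T\ra T^{s}\ra U$ with $T^s\ra U$ strict. One must lift $T^s$ to a log flat $p$-adic formal $\frakS$-scheme $\frakT^s$ with a morphism $\frakT\ra\frakT^s$. This is the delicate point, and an alternative route avoids it. Instead, I would use that $\F$ is a crystal on the strict site together with the linearized descent data of \ref{lindescentdata}. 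Define, for $(U,\frakT,u)$ arbitrary, $\F_{(U,\frakT,u)}$ by choosing, étale locally on $U$, a morphism to or from a strict object and transporting via $c$.

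Concretely, the plan has four steps.

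\textbf{Step (a).} Show $R_{\mathrm{crys}}$ is fully faithful. Given crystals $\F,\calG$ on the big site and a morphism $R\F\ra R\calG$, I must extend it uniquely. For any $(U,\frakT,u)$, the identity $(U,\frakT,u)$ is reached from a strict object by a morphism $(f,g)$ whose $\widetilde f$ is an isomorphism of ringed topoi on $U_{\text{ét}}$. Concretely, take the object $(U,\frakT^{\flat},u)$ with $\frakT^{\flat}$ the underlying formal scheme of $\frakT$ given the log structure making $u$ strict. Since $\widetilde f$ depends only on the special fibre and the underlying morphism of schemes is the identity, $c_{\F,(f,g)}$ identifies $\F_{(U,\frakT,u)}$ with $\F_{(U,\frakT^{\flat},u)}$. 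Hence both existence and uniqueness of the extension follow from the crystal condition.

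\textbf{Step (b).} Check that $\frakT^{\flat}$ is admissible. Its log structure is $u^{-1}\calM_U$ lifted to $\frakT$; this lift exists étale locally because $\frakT\ra T$ is a homeomorphism and units lift. One must then verify that $\frakT^{\flat}$ is fs and log flat over $\frakS$ and that the comparison $\frakT\ra\frakT^{\flat}$, or its reverse, is a morphism over $\frakS$.

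\textbf{Step (c).} Prove essential surjectivity. For a crystal $\F$ on the strict site, define $\F_{(U,\frakT,u)}:=\F_{(U,\frakT^{\flat},u)}$. Transition maps $c_{(f,g)}$ are induced by the morphism $\frakT_1^{\flat}\ra\frakT_2^{\flat}$, which is functorial because $\flat$ depends only on $u$. The cocycle conditions (i)--(iii) are inherited, and \ref{era4prop1723} shows we get a sheaf.

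\textbf{Step (d).} Verify that the two constructions are mutually inverse, which is formal.

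I expect Step (b) to be the main obstacle. It is not clear that pulling back $\calM_U$ gives a log flat object over $\frakS$ with a compatible morphism, since the direction of $\frakT\ra\frakT^{\flat}$ requires $u^{-1}\calM_U\ra\calM_T$ to lift to $\frakT$. If this fails, I would instead work étale locally with charts: use the log flatness of $\frakT$ to produce, locally, a strict object mapping to $(U,\frakT,u)$ by a log flat covering morphism. I would then glue via the crystal property and uniqueness in Step (a), using \ref{era3stablebc} and \ref{propalphacomfiber} to handle overlaps.
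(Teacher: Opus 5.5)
\textbf{Verdict: there is a genuine gap.} Your Step (b) does not merely need care; it fails, and the strictification strategy fails with it. If $u$ is not strict, the special fibre of any candidate $\frakT^{\flat}$ must be $T$ with the log structure $u^*\calM_U$. This is in general not log flat over $S$, whatever lift you choose.

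Here is an example. Take $\frakS=\op{Spf}W$ and $X=U=\Sp k[x,y]$ with chart $\N^2$. Take $\frakT=\op{Spf}W\langle t\rangle$ with chart $1\mapsto t$, which is log smooth over $\frakS$. Let $u$ be given by $x,y\mapsto t$ and $\N^2\ra\N$, $(a,b)\mapsto a+b$.
\begin{itemize}
\item Then $(U,\frakT,u)$ is an object of $\calE(X/\frakS)$.
\item But $u^*\calM_U$ has $\ov{\calM}_0\simeq\N^2$ at the origin of the curve $T$.
\item Log flatness over the trivial log point would require, by Kato's criterion, flatness at the origin of $T\ra\Sp k[Q]$ for some chart $Q$. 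This is impossible: the target local ring there has dimension at least $2$, while $\Ox_{T,0}$ has dimension $1$.
\end{itemize}
So $(U,\frakT^{\flat},u)$ is not an object, and there is nothing to compare with. Even when a lift of $u^*\calM_U$ together with a comparison map to $\calM_{\frakT}$ exists, it is not canonical. The functoriality you claim in Step (c) therefore breaks down as well.

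Your fallback does not repair this, because it points the wrong way. A strict object mapping \emph{to} $(U,\frakT,u)$ only lets you compute its value from $\F_{(U,\frakT,u)}$, not conversely. To recover $\F_{(U,\frakT,u)}$ you would need descent along $T_i\ra T$, which is not available here:
\begin{itemize}
\item the crystals in the statement are arbitrary modules, with no quasi-coherence, so arguments like Lemma \ref{era3lem2} do not apply;
\item they are sheaves only for the étale topology, whose coverings are cartesian and so cannot turn a non-strict $u$ into a strict one.
\end{itemize}

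The missing idea is to map \emph{out of} $(U,\frakT,u)$ into a strict object, keeping $\frakT$ and changing the target instead. This is the paper's argument:
\begin{enumerate}
\item Work étale locally, so $U$ is affine. Being log smooth over $S$, $U$ lifts to a log smooth formal $\frakS$-scheme $\frakU$ with $\frakU\times_{\frakS}S=U$.
\item Since $u$ is affine, $T$ is affine. The infinitesimal lifting property of $\frakU\ra\frakS$ along the strict nilpotent thickenings $T\hookrightarrow\frakT_n$ gives $\varphi:\frakT\ra\frakU$ reducing to $u$.
\item This is a morphism $(\op{Id}_U,\varphi):(U,\frakT,u)\ra(U,\frakU,\op{Id}_U)$ whose target is strict.
\item The crystal condition then forces $\F_{(U,\frakT,u)}\simeq\widetilde{\varphi}^*\F_{(U,\frakU,\op{Id}_U)}$, which gives faithfulness at once.
\item For fullness and essential surjectivity, define the value on $(U,\frakT,u)$ by this formula. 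Check independence of $\varphi$ and of $\frakU$: two lifts $\frakU_1,\frakU_2$ are connected by an $\frakS$-morphism reducing to $\op{Id}_U$, again by log smoothness. Then glue over affine covers using $\alpha_{(U,\frakT,u)}$.
\end{enumerate}
With $(U,\frakU,\op{Id}_U)$ replacing your $(U,\frakT^{\flat},u)$, the overall shape of your Steps (a), (c) and (d) then goes through.
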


\begin{proof}
We just prove the result for $\calE(X/\frakS)$ as the other case is similar. Let $f_1,f_2:\F \ra \calG$ be morphisms of crystals of $\widetilde{\calE}(X/\frakS)$ such that $R(f_1)=R(f_2).$ We want to prove that $f_1=f_2.$ Let $(U,\frakT,u)$ be an object of $\calE(X/\frakS).$ To prove that $f_{1,(U,\frakT,u)}=f_{2,(U,\frakT,u)},$ we can suppose that $U$ is affine. It follows that $T$ is also affine. Since $U\ra S$ is log smooth, there exists a log smooth logarithmic formal scheme $\frakU$ over $\frakS,$ fitting into the cartesian square
$$
\begin{tikzcd}
U \ar{r} \ar{d} & \frakU \ar{d} \\
S\ar{r} & \frakS.
\end{tikzcd}
$$
There exists then a morphism $\varphi:\frakT \ra \frakU$ fitting into the commutative diagram
$$
\begin{tikzcd}
U\ar{rr} & & \frakU \ar{d} \\
T\ar{r} \ar{u} & \frakT \ar{r} \ar{ur}{\varphi} & \frakS.
\end{tikzcd}
$$
We then get a morphism
$$
\left ( \op{Id}_U,\varphi \right ):(U,\frakT,u) \ra (U,\frakU,\op{Id}_U)
$$
in $\calE(X/\frakS)$ and we obtain a commutative diagram
$$
\begin{tikzcd}
\widetilde{\varphi}^*\F_{(U,\frakU,\op{Id}_U)} \ar{r} \ar[swap]{d}{\widetilde{\varphi}^*f_{i,(U,\frakU,\op{Id}_U)}} & \F_{(U,\frakT,u)} \ar{d}{f_{i,(U,\frakT,u)}} \\
\widetilde{\varphi}^*\calG_{(U,\frakU,\op{Id}_U)} \ar{r} & \calG_{(U,\frakT,u)},
\end{tikzcd}
$$
where $\widetilde{\varphi}$ is given in \eqref{Kokoftilda}. Since $\F$ and $\calG$ are crystals of $\widetilde{\calE}(X/\frakS),$ the horizontal morphisms are isomorphisms. Since $R(f_1)=R(f_2)$ and $(U,\frakU,\op{Id}_U)$ is an object of $\calE_{\mathrm{strict}}(X/\frakS),$ we have $f_{1,(U,\frakU,\op{Id}_U)}=f_{2,(U,\frakU,\op{Id}_U)}.$ It follows that
$
f_{1,(U,\frakT,u)}=f_{2,(U,\frakT,u)}.
$
We deduce that $f_1=f_2$ and $R_{\mathrm{crys}}$ is faithful.

Now let $\F$ and $\calG$ be crystals of $\widetilde{\calE}(X/\frakS)$ and $g:R(\F) \ra R(\calG)$ a morphism. We want to prove that there exists a morphism $f:\F \ra \calG$ in $\widetilde{\calE}(X/\frakS)$ such that $R(f)=g.$ Let $(U,\frakT,u)$ and $\varphi:\frakT \ra \frakU$ be as defined above. Just as above, we suppose that $U$ is affine. Note that the morphism of topoi
$$
\widetilde{\varphi}:\left (U_{\text{ét}},u_*\Ox_T \right ) \ra \left (U_{\text{ét}},\Ox_U \right )
$$
depends only on $u:T\ra U.$
We set
$$
g_{(U,\frakU,\op{Id}_U)}=f_{(U,\frakU,\op{Id}_U)},\ g_{(U,\frakT,u)}=\widetilde{\varphi}^*g_{(U,\frakU,\op{Id}_U)}.
$$
This is independant of the lifting $\varphi.$ We also prove that it is independant of the lifting $\frakU.$ Let $\frakU_1$ and $\frakU_2$ be log smooth logarithmic formal schemes over $\frakS$ fitting into cartesian squares
$$
\begin{tikzcd}
U \ar{r} \ar{d} & \frakU_i \ar{d} \\
S\ar{r} & \frakS.
\end{tikzcd}
$$
There exists morphisms $\varphi_1:\frakT\ra \frakU_1$ and $\varphi_{12}:\frakU_1 \ra \frakU_2$ fitting into the commutative diagram
$$
\begin{tikzcd}
 & & \frakU_2 \ar{d} \\
U \ar{r} \ar[bend right=-30]{urr} & \frakU_1 \ar{r}  \ar{ur}{\varphi_{12}} & \frakS \\
T \ar{u} \ar{r} & \frakT \ar{u}{\varphi_1} \ar{ur} & 
\end{tikzcd}
$$
Set $\varphi_2 =\varphi_{12} \circ \varphi_1.$
We hence obtain morphisms
$$
\left ( \op{Id}_U,\varphi_2 \right ):(U,\frakT,u) \xrightarrow{(\op{Id}_U,\varphi_1)} (U,\frakU_1,\op{Id}_U) \xrightarrow{(\op{Id}_U,\varphi_{12})} (U,\frakU_2,\op{Id}_U).
$$
Since $\F$ and $\calG$ are crystals of $\widetilde{\calE}(X/\frakS),$ we get a canonical isomorphism
$
\widetilde{\varphi}_{12}^*g_{(U,\frakU_2,\op{Id}_U)} \xrightarrow{\sim} g_{(U,\frakU_1,\op{Id}_U)}
$
and then, by applying $\widetilde{\varphi}_1^*,$ a canonical isomorphism
\begin{equation}\label{Shihoeq10}
\widetilde{\varphi}_2^*g_{(U,\frakU_2,\op{Id}_U)} \xrightarrow{\sim} \widetilde{\varphi}_1^*g_{(U,\frakU_1,\op{Id}_U)}.
\end{equation}
Now let $(U,\frakT,u)$ an object of $\calE(X/\frakS)$ where $U$ is not necessarily affine. Let $U=\bigcup_{i\in I}U_i$ an affine open cover of $U.$ By \ref{era3paralphaKoko} (1), we have an étale covering $\left (\alpha_{(U,\frakT,u)}\left (U_i \right ) \ra (U,\frakT,u) \right )_{i\in I}.$ The isomorphisms \eqref{Shihoeq10} satisfy the cocycle condition and hence prove that the morphisms $g_{\alpha_{(U,\frakT,u)}(U_i)}$ glue together into a morphism $g_{(U,\frakT,u)}.$ We then obtain the desired morphism $g:\F\ra \calG.$ The functor $R_{\mathrm{crys}}$ is hence full.

To prove the essential surjectivity, we consider a crystal $\calG$ of $\widetilde{\calE}_{\mathrm{strict}}(X/\frakS).$ For an object $(U,\frakT,u)$ of $\calE(X/\frakS)$ such that $U$ is affine, we consider $\varphi:\frakT \ra \frakU$ as defined above and set
$$
\F_{(U,\frakT,u)}=\widetilde{\varphi}^*\calG_{(U,\frakU,\op{Id}_U)}.
$$
By an argument similar to the one used for fullness, we define $\F_{(U,\frakT,u)}$ for any object $(U,\frakT,u)$ of $\calE(X/\frakS)$ and then obtain a crystal $\F$ of $\widetilde{\calE}(X/\frakS)$ extending $\calG.$
\end{proof}

\begin{parag}\label{equivRQDef}
Suppose that $X\ra S$ lifts to a log smooth morphism of framed logarithmic formal schemes $(\frakX,Q) \ra (\frakS,P).$ Consider the logarithmic formal schemes $R_{\frakX,1}$ and $Q_{\frakX}$ defined in \ref{parag86}. 
Let $R_1$ and $Q_1$ be the special fibers of $R_{\frakX,1}$ and $Q_{\frakX}$ and $\calR_1$ and $\calQ_1$ the corresponding Hopf algebras respectively, $q_1,q_2:R_{\frakX,1} \ra \frakX$ and $q_1',q_2':Q_{\frakX} \ra \frakX$ the canonical projections and $\lambda_R:R_1 \ra X$ and $\lambda_Q:\underline{Q_1} \ra X$ the morphisms given in (\cite{SF1} 11.4) and \ref{erafprop13}.
By \ref{Xreduced} and \ref{Qlogflat}, $(X,\frakX,\op{Id}_X)$ and $(X,R_{\frakX,1},\lambda_R)$ are objects of $\calE(X/\frakS)$ (resp. $(X,\frakX,\op{Id}_X)$ and $(X,Q_{\frakX},\lambda_Q)$ are objects of $\underline{\calE}(X/\frakS)$).
Let $\mathfrak{E}$ be a crystal of $\calC(X/\frakS)$ (resp. $\underline{\calC}(X/\frakS)$). We set $\calE=\mathfrak{E}_{(X,\frakX,\op{Id}_X)}.$ Since $\mathfrak{E}$ is a crystal, the morphisms $(\op{Id_X},q_i):(X,R_{\frakX,1},\lambda_R) \ra (X,\frakX,\op{Id}_X)$ (resp. $(\op{Id_X},q_i'):(X,Q_{\frakX},\lambda_Q) \ra (X,\frakX,\op{Id}_X)$) induce isomorphisms of $\Ox_{R_1}$-modules (resp. $\Ox_{Q_1}$-modules)
$$c_{(\op{Id}_X,q_i)}:q_i^*\calE \xrightarrow{\sim} \mathfrak{E}_{(X,R_{\frakX,1},\lambda_R)}\quad
\left (\text{resp.}\ c_{(\op{Id}_X,q_i')}:q_i'^*\calE \xrightarrow{\sim} \mathfrak{E}_{(X,Q_{\frakX},\lambda_Q)}\right ).
$$
Let $\epsilon$ be the composition
$$\epsilon = \left (c_{\op{Id}_X,q_1} \right )^{-1} \circ c_{\op{Id}_X,q_2}:q_2^*\calE \xrightarrow{\sim} \mathfrak{E}_{(X,R_{\frakX,1},\lambda_R)} \xrightarrow{\sim} q_1^*\calE$$
$$
\left (\text{resp.}\ \epsilon = \left (c_{\op{Id}_X,q_1'} \right )^{-1} \circ c_{\op{Id}_X,q_2'}:q_2'^*\calE \xrightarrow{\sim} \mathfrak{E}_{(X,Q_{\frakX},\lambda_Q)} \xrightarrow{\sim} q_1'^*\calE \right ).
$$
Then $\epsilon$ is an $\calR_1$-stratification (resp. $\calQ_1$-stratification) on $\calE$ and so we obtain a functor
\begin{equation}\label{funct1} 
\begin{array}[t]{clc c}
\calC(X/\frakS) \left ( \text{resp.\ } \underline{\calC}(X/\frakS) \right )& \ra & \begin{Bmatrix}\Ox_{X}\text{-}\mathrm{modules\ with\ an}\\\calR_1\text{-}\mathrm{stratification} \end{Bmatrix} & \left (\mathrm{resp.} \begin{Bmatrix}\Ox_{X}\text{-}\mathrm{modules\ with\ an}\\\calQ_1\text{-}\mathrm{stratification} \end{Bmatrix} \right )\\
\mathfrak{E} & \mapsto & (\calE,\epsilon). &
\end{array}
\end{equation}
\end{parag}

\begin{proposition}\label{equivRQ}
Suppose that $X\ra S$ lifts to a log smooth morphism of framed logarithmic formal schemes $(\frakX,Q) \ra (\frakS,P).$ Consider the logarithmic formal schemes $R_{\frakX,1}$ and $Q_{\frakX}$ defined in \ref{parag86}. The functor \eqref{funct1} is an equivalence of categories.
\end{proposition}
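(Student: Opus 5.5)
We will follow the classical strategy for identifying crystals on a site with stratified modules on a covering object (as in Oyama 1.3 and Berthelot–Ogus), treating the $\calE(X/\frakS)$ and $\underline{\calE}(X/\frakS)$ cases in parallel. Throughout, $R_{\frakX,1}$ and $Q_{\frakX}$ play the role of the "self-product" of $(X,\frakX,\op{Id}_X)$.

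The key preliminary is a universal property of the three objects $(X,\frakX,\op{Id}_X)$, $(X,R_{\frakX,1},\lambda_R)$ and $(X,Q_{\frakX}(2),\ldots)$; the $Q$-case is analogous with $\lambda_Q$ and $Q_{\frakX}(r)$.

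First, for any $(U,\frakT,u)$ with $U$ affine, we need a morphism $(U,\frakT,u)\ra (X,\frakX,\op{Id}_X)$. We obtain it by lifting $T\ra U\ra X$ to $\frakT\ra\frakX$ using log smoothness of $\frakX/\frakS$, after shrinking $\frakT$ to be affine. In the $\underline{\calE}$ case, the condition is only $\underline{T}\ra X$, so we lift through $X\ra\frakX$ instead, via the composition $\underline{T}\hookrightarrow T$. This needs some care, since we need a morphism $\frakT\ra\frakX$ whose reduction restricts correctly on $\underline{T}$; we lift a morphism $T\ra X$ extending $u$ where possible. Otherwise we first take any map and use the dilatation universal property below.

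Second, given two morphisms $a_1,a_2:(U,\frakT,u)\ra(X,\frakX,\op{Id}_X)$, the pair $(a_1,a_2):\frakT\ra\frakX\times_{\frakS}^{\op{log}}\frakX$ factors through $\frakY$, since $\frakY\ra\frakX\times^{\op{log}}_{\frakS}\frakX$ is log étale and the diagonal lifts on the reduction. It then factors uniquely through $R_{\frakX,1}$ (resp. $Q_{\frakX}$) by the universal property of the dilatation (\cite{SF1} 10.2, resp. \ref{erafdil2}), because $\frakT$ is flat over $\op{Spf}W$ (\ref{Wflat}, log flatness) and $T\ra Y$ (resp. $\underline{T}\ra Y$) factors through the diagonal $X$. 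The same argument with $\frakY(2)$ gives the analogous statement for triples.

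With this, we construct a quasi-inverse. Given $(\calE,\epsilon)$ and $(U,\frakT,u)$ with $U$ affine, choose $a:(U,\frakT,u)\ra(X,\frakX,\op{Id}_X)$ and set $\mathfrak{E}_{(U,\frakT,u)}=\widetilde{a}^*\calE$. For two choices $a_1,a_2$, the induced map $h:(U,\frakT,u)\ra(X,R_{\frakX,1},\lambda_R)$ satisfies $q_i\circ h=a_i$. Pulling back $\epsilon$ by $\widetilde h$ then gives an isomorphism $\widetilde{a}_2^*\calE\simeq\widetilde{a}_1^*\calE$. The cocycle condition for $\epsilon$, pulled back through $R_{\frakX,1}(2)\simeq R_{\frakX,1}\times_{\frakX}R_{\frakX,1}$ (resp. $Q_{\frakX}(2)$), shows that these isomorphisms are compatible for three choices. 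We then glue over affine covers using \ref{era3paralphaKoko} and \ref{era4prop1723}. The transition maps $c_{(f,g)}$ are defined by composing $a_2$ with $(f,g)$ and using the comparison isomorphisms. They are automatically isomorphisms, so we obtain a crystal.

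Checking that the two functors are mutually quasi-inverse reduces to unwinding definitions. Applying the construction to $\mathfrak{E}$ recovers it via $c_{\mathfrak{E},a}$, and the stratification is recovered by taking $a_1=q_1$, $a_2=q_2$ on $R_{\frakX,1}$ itself. Full faithfulness follows the same pattern as in \ref{equivcrysShiho}.

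The main obstacle is the $\underline{\calE}$ case of the first two steps: establishing the existence of morphisms to $(X,\frakX,\op{Id}_X)$ compatible with $u:\underline{T}\ra U$ rather than $T\ra U$, and verifying that the relevant composite factors through $X$ only after passing to $\underline{T}$. For this, we plan to use the inseparability of $T\ra\underline{T}$ together with \cite{Ogus2018} IV 3.3.7, as in \ref{era3paralpha}, to lift along étale maps. We also plan to use the defining property of $Q_{\frakX}$ in \ref{erafdil2}(2) for the factorization.
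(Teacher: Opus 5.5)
Your proposal is correct and follows essentially the paper's own route (which defers to \cite{DXU19} 8.10): on affine $U$ you lift to $\varphi:\frakT\to\frakX$ by log smoothness, set $\mathfrak{E}_{(U,\frakT,u)}=\widetilde{\varphi}^*\calE$, compare two liftings via the induced map to $R_{\frakX,1}$ (resp.\ $Q_{\frakX}$) and $\epsilon$, define $c_{\mathfrak{E},(f,g)}$ using $\varphi_2\circ f$, and glue, and your dilatation universal-property argument spells out the independence of the lifting that the paper only asserts. The $\underline{\calE}$ step you flag as the main obstacle is in fact immediate, so neither your fallback nor \cite{Ogus2018} IV 3.3.7 is needed: $\underline{T}\hookrightarrow T$ is a strict nil-immersion, $T\hookrightarrow\frakT$ is a $p$-adic thickening and $\frakT$ is affine, so log smoothness of $\frakX/\frakS$ lifts $\underline{T}\xrightarrow{u}U\to X$ directly to $\frakT\to\frakX$.
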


\begin{proof}
This is similar to (\cite{DXU19} 8.10) so we just outline the proof for $\calC(X/\frakS)$ (the case $\underline{\calC}(X/\frakS)$ is similar).
Let $(\calE,\epsilon)$ be an $\Ox_X$-module equipped with an $R_{\frakX,1}$-stratification. Set
$$\calE = \mathfrak{E}_{(X,\frakX,\op{Id}_X)}.$$
Let $(U,\frakT,u)$ be an object of $\calE(X/\frakS)$ such that $U$ is affine. Since $u:T\ra U$ is by definition affine, so is $T.$ Since $\frakX \ra \frakS$ is log smooth and $T$ is affine, there exists a morphism $\varphi:\frakT \ra \frakX$ fitting into the commutative diagram
$$
\begin{tikzcd}
U \ar{rr} & & \frakX\ar{d} \\
T\ar{u}{u} \ar{r} & \frakT\ar{ur}{\varphi} \ar{r} & \frakS.
\end{tikzcd}
$$
We obtain a morphism $\varphi:(U,\frakT,u) \ra (X,\frakX,\op{Id}_X).$ We set
$$\mathfrak{E}_{(U,\frakT,u)}=\widetilde{\varphi}^*\mathfrak{E}_{(X,\frakX,\op{Id}_X)},$$
where $\widetilde{\varphi}$ is given in \eqref{Kokoftilda}.
This definition of $\mathfrak{E}_{(U,\frakT,u)}$ is independant of the choice of the lifting $\varphi$ up to a canonical isomorphism induced by the stratification $\epsilon.$

Let $(f,g):(U_1,\frakT_1,u_1) \ra (U_2,\frakT_2,u_2)$ be a morphism of $\calE(X/\frakS)$ such that $U_1$ and $U_2$ are affine. We have to define a $u_{1*}\Ox_{T_1}$-linear morphism
$$
\widetilde{f}^*\frakE_{(U_2,\frakT_2,u_2)} \ra \frakE_{(U_1,\frakT_1,u_1)}.
$$
By the argument above, there exists a morphism $\varphi_2$ fitting into the commutative diagram
$$
\begin{tikzcd}
U_2 \ar{rr} & & \frakX\ar{d} \\
T_2\ar{u}{u_2} \ar{r} & \frakT_2\ar{ur}{\varphi_2} \ar{r} & \frakS.
\end{tikzcd}
$$
Let $\varphi_1$ be the composition
$\varphi_1:\frakT_1 \xrightarrow{f} \frakT_2 \xrightarrow{\varphi_2} \frakX.$
We set
$$c_{\mathfrak{E},(f,g)}:\widetilde{f}^*\mathfrak{E}_{(U_2,\frakT_2,U_2)}=\widetilde{f}^*\widetilde{\varphi_2}^*\calE \xrightarrow{\sim}  \mathfrak{E}_{(U_1,\frakT_1,u_1)},$$
where the isomorphism is the canonical one. The isomorphism $c_{\mathfrak{E},(f,g)}$ is independant of the choices so 
we can glue together the morphisms $c_{\mathfrak{E},(f,g)}$ for $U_1$ and $U_2$ affine. The cocycle condition of $\epsilon$ implies that of the morphisms $c_{\mathfrak{E},(f,g)}.$ We thus obtain linearized descent datum $\left (\mathfrak{E}_{(U,\frakT,u)},c_{\mathfrak{E},(f,g)}\right )$ and then a crystal $\mathfrak{E}$ of $\widetilde{\calE}(X/\frakS).$
The correspondance $(\calE,\epsilon) \mapsto \mathfrak{E}$ is functorial and quasi-inverse to the one given in \ref{equivRQDef}.
\end{proof}

\begin{parag}
We have a functor
\begin{equation}\label{era3rhoShiho}
\rho:\begin{array}[t]{clc}
\underline{\calE}(X/\frakS) & \ra & \calE(X'/\frakS) \\
(U,\frakT,u) & \mapsto & \left ( U',\frakT,\Delta_{u} \right ),
\end{array}
\end{equation}
where $\Delta_u$ is defined in \ref{propShiho185}.
We will show in \ref{thmproof2} that, if $X\ra S$ is of Cartier type (or equivalently saturated, by \cite{Ogus2018} III 2.5.4), the functor $\rho$ is continuous and cocontinuous for the étale and log flat topologies. It follows, by (\cite{SGA43} IV 4.7), that $\rho$ defines two morphisms of topoi
\begin{alignat}{2}
C_{X/\frakS}&:\widetilde{\underline{\calE}}(X/\frakS) \ra \widetilde{\calE}(X'/\frakS), \label{era3C} \\
C_{X/\frakS,lf}&:\widetilde{\underline{\calE}}_{lf}(X/\frakS) \ra \widetilde{\calE}_{lf}(X'/\frakS). \label{era3Clf}
\end{alignat}
For any object $(U,\frakT,u)$ of $\underline{\calE}(X/\frakS),$ we have
\begin{alignat*}{2}
\left (C_{X/\frakS}^{-1}\Ox_{\calE(X'/\frakS)} \right )(U,\frakT,u) &= \Ox_{\calE(X'/\frakS)} \circ \rho (U,\frakT,u) \\
&= \Ox_{\calE(X'/\frakS)}(U',\frakT,\Delta_u) \\
&= \Gamma(T,\Ox_T) \\
&= \Ox_{\underline{\calE}(X/\frakS)}(U,\frakT,u).
\end{alignat*}
Then
$
C_{X/\frakS}^{-1}\Ox_{\calE(X'/\frakS)}=\Ox_{\underline{\calE}(X/\frakS)}.
$
Similarly, we have
$
C_{X/\frakS,lf}^{-1}\Ox_{\calE(X'/\frakS)}=\Ox_{\underline{\calE}(X/\frakS)}.
$
It follows that $C_{X/\frakS}$ and $C_{X/\frakS,lf}$ are morphisms of ringed topoi.
\end{parag}

\begin{lemma}\label{lemfiberprodcom}
Suppose that $X\ra S$ is of Cartier type. Then the functor $\rho:\underline{\calE}(X/\frakS) \ra \calE(X'/\frakS)$ defined in \eqref{era3rho} commutes with fiber products.
\end{lemma}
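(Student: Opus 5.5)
We have to show that $\rho$ sends a fiber product in $\underline{\calE}(X/\frakS)$, whenever it exists, to the fiber product in $\calE(X'/\frakS)$. We restrict to the fiber products given in \ref{era3fiberparag}, namely those where one of the two morphisms is log flat. These are the ones that appear in the pretopologies, so they are the ones needed to apply \ref{Kokolemcocont}.

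Let $(U_1,\frakT_1,u_1)\ra (U,\frakT,u)\leftarrow (U_2,\frakT_2,u_2)$ be morphisms in $\underline{\calE}(X/\frakS)$, one of them log flat. By \ref{era3fiberparag} their fiber product is
$$\left(U_1\times_UU_2,\ \frakT_1\times_{\frakT}^{\op{log}}\frakT_2,\ v\right).$$
Applying $\rho$ gives $\left((U_1\times_UU_2)',\ \frakT_1\times^{\op{log}}_{\frakT}\frakT_2,\ \Delta_v\right)$.

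The images $\rho(U_i,\frakT_i,u_i)=(U_i',\frakT_i,\Delta_{u_i})\ra (U',\frakT,\Delta_u)$ are morphisms in $\calE(X'/\frakS)$, and the same one is log flat. So their fiber product also exists by \ref{era3fiberparag}, and it equals
$$\left(U_1'\times_{U'}U_2',\ \frakT_1\times^{\op{log}}_{\frakT}\frakT_2,\ w\right),$$
where $w$ is the morphism induced by $\Delta_{u_1}$ and $\Delta_{u_2}$. This uses that $\Delta_{u_1},\Delta_{u_2}$ are compatible with $\Delta_u$, which is the functoriality proposition following \ref{propShiho185}. The formal scheme components of the two triples already agree. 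It therefore remains to identify the other two components.

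First, I will show that $(U_1\times_UU_2)'\ra U_1'\times_{U'}U_2'$ is an isomorphism. The morphisms $U_i\ra U$ are étale and strict, and these fiber products of strict étale $X$-schemes coincide with the log fiber products, as noted in \ref{era3fiberparag}. Since $X\ra S$ is of Cartier type, $U'=U''$ for each of these schemes (\ref{PFrob}). Moreover $U''=U\times_{S,F_S}S$ is a fine base change, which commutes with fiber products over $U$ because all the morphisms $U_i\ra U$ are strict. Alternatively, one can use that the exact relative Frobenius of a strict étale $U$-scheme $V$ satisfies $V'=V\times_UU'$ (compare the cartesian squares used in \ref{era3paralpha}), and then
$$(U_1\times_UU_2)'=U_1\times_UU_2\times_UU'=U_1'\times_{U'}U_2'.$$
I expect this to be the main point where the Cartier type hypothesis enters. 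I will check it on strict étale charts via \cite{Ogus2018} III 2.5.4.

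Second, I will show that $\Delta_v=w$ under this identification. Both are morphisms $T_1\times^{\op{log}}_TT_2\ra U_1'\times_{U'}U_2'$. By the universal property of the fiber product, it suffices to check that their compositions with the projections to $U_i'$ agree. For $w$, these compositions are $\Delta_{u_i}\circ\mathrm{pr}_i$ by construction. For $\Delta_v$, apply the functoriality proposition after \ref{propShiho185} to the projection morphisms $(U_1\times_UU_2,\frakT_1\times\frakT_2,v)\ra (U_i,\frakT_i,u_i)$ of $\underline{\calE}(X/\frakS)$. This gives that the square with $\Delta_v$ and $\Delta_{u_i}$ commutes, that is, $\mathrm{pr}_i'\circ\Delta_v=\Delta_{u_i}\circ \mathrm{pr}_i$.

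The same compatibility shows that the canonical comparison map is a morphism of $\calE(X'/\frakS)$, so it is an isomorphism of objects. Hence $\rho$ commutes with these fiber products. Finally, the uniqueness in \ref{propShiho185} (log étaleness of $U'\ra U''$) makes everything canonical, so no choices are involved.
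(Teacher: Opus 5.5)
Your proof is correct. It follows the same skeleton as the paper's: both fiber products are computed via \ref{era3fiberparag}, the formal-scheme components coincide, Cartier type ($U'=U''$ for every strict \'etale $X$-scheme involved) identifies $(U_1\times_UU_2)'$ with $U_1'\times_{U'}U_2'$, and what remains is $\Delta_v=w$.

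You differ from the paper in that last step. The paper uses Cartier type a second time. Since $(U_1\times_UU_2)'$ is then the fine base change $(U_1\times_UU_2)''$, the morphism $\Delta_v$ is characterized by \eqref{morphismShiho2} as the unique $S$-morphism whose composite with the projection to $U_1\times_UU_2$ is $v\circ G_{T_1\times^{\op{log}}_TT_2}$. The paper then checks directly that this composite equals $(u_1\times u_2)\circ(G_{T_1}\times G_{T_2})$, so $w$ satisfies the same characterization. You instead compare projections to $U_i'$, applying the functoriality proposition following \ref{propShiho185} to the two projection morphisms, which lie in $\underline{\calE}(X/\frakS)$ by construction.

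Your route is more modular. Once the bases are identified compatibly with the projections, $\Delta_v=w$ needs no further use of Cartier type, only the uniqueness of $\Delta$ coming from the log \'etaleness of $U'\ra U''$. The paper's route avoids the functoriality proposition and works straight from the definition of $\Delta_v$.

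You are also more careful than the paper in restricting to pairs with one log flat morphism. That is exactly where \ref{era3fiberparag} guarantees representability, and it is all that \ref{thmproof2} requires.
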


\begin{proof}
Let $(f_i,g_i):(U_i,\frakT_i,u_i) \ra (U,\frakT,u),$ $i=1,2,$ be two morphisms of $\underline{\calE}(X/\frakS).$ Since $X\ra S$ is of Cartier type,
$$
U_1'=U_1'',\ U_2'=U_2'',\ U'=U''.
$$
By definition,
$$(U_1,\frakT_1,u_1) \times_{(U,\frakT,u)}(U_2,\frakT_2,u_2)=(U_1\times_UU_2,\frakT_1\times_{\frakT}^{\op{log}}\frakT_2,v),$$
where $v$ is the composition
$$v:\underline{T_1\times_T^{\op{log}}T_2} \ra \underline{T_1}\times_{\underline{T}}^{\op{log}}\underline{T_2} \ra U_1\times_UU_2$$
of the canonical morphism $\underline{T_1\times_T^{\op{log}}T_2} \ra \underline{T_1}\times_{\underline{T}}^{\op{log}}\underline{T_2}$ (\ref{era3functoriality}) and the morphism induced by $u_1$ and $u_2.$ It follows that
$$\rho((U_1,\frakT_1,u_1) \times_{(U,\frakT,u)}(U_2,\frakT_2,u_2))=(U_1'\times_{U'}U_2',\frakT_1\times_{\frakT}^{\op{log}}\frakT_2,\Delta_v).$$
On the other hand,
$$\rho(U_1,\frakT_1,u_1) \times_{\rho(U,\frakT,u)}\rho(U_2,\frakT_2,u_2)=(U_1'\times_{U'}U_2',\frakT_1\times_{\frakT}^{\op{log}}\frakT_2,w),$$
where $w:T_1\times_T^{\op{log}}T_2 \ra U_1'\times_{U'}U_2'$ is induced by $\Delta_{u_1}:T_1 \ra U_1',$ $\Delta_{u_2}:T_2 \ra U_2'$ and $\Delta_{u}:T \ra U'.$
The composition
$$
T_1\times_T^{\op{log}}T_2 \xrightarrow{G_{T_1\times_T^{\op{log}}T_2}}  \underline{T_1\times_T^{\op{log}}T_2} \xrightarrow{v} U_1\times_UU_2,
$$
where $G_{T_1\times_T^{\op{log}}T_2}$ is defined in \ref{era4prop15},
is equal to the composition
$$
T_1\times_T^{\op{log}}T_2 \ra \underline{T_1}\times_{\underline{T}}^{\op{log}}\underline{T_2} \ra U_1\times_UU_2
$$
of the morphism induced by $G_{T_i}$ and $G_T$ and the morphism induced by $u_i$ and $u.$
The diagram
$$
\begin{tikzcd}
T_1\times_T^{\op{log}}T_2 \ar{r}{G_{T_1\times_T^{\op{log}}T_2}} \ar{dr}{w} \ar[bend right=30]{ddr} & \underline{T_1\times_T^{\op{log}}T_2} \ar{dr}{v} & \\
 & U_1'\times_{U'}U_2' \ar{r} \ar{d} & U_1\times_UU_2 \ar{d} \\
& S \ar{r}{F_S} & S
\end{tikzcd}
$$
is then commutative. It follows, by definition of $\Delta_v,$ that $\Delta_v=w.$
\end{proof}

\begin{lemma}\label{lemcoco}
Suppose that $X\ra S$ is of Cartier type.
Let $(U,\frakT,u)$ be an object of $\underline{\calE}(X/\frakS)$ and $g:(W,\frakZ,w) \ra \rho(U,\frakT,u)$ a morphism of $\calE(X'/\frakS).$ Then there exists a morphism $f:(V,\frakZ,v) \ra (U,\frakT,u)$ of $\underline{\calE}(X/\frakS)$ such that $\rho(f)=g.$ In addition, if $g$ is cartesian (resp. log flat), then so is $f.$
\end{lemma}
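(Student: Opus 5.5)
The plan is to build $V$ explicitly from $W$ using the étale equivalence between $X$ and $X'$. Since $X\ra S$ is of Cartier type, $U'=U''$, and $\pi:U'\ra U$ is a universal homeomorphism by \ref{piunivhomeo}. The morphism $W\ra U'$ is étale and strict. By the topological invariance of the étale site (\cite{SP} 04DZ), it is therefore the base change of a unique étale scheme $V\ra U$, with $W\simeq V\times_U U'$; we give $V$ the pullback log structure. Since $V\ra U$ is strict and étale, its exact relative Frobenius is the base change of that of $U$, so $V'=V\times_UU'$. Hence $W\simeq V'$ canonically as log schemes over $U'$, and $V$ is étale and strict over $X$. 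We keep $\frakZ$ unchanged and write $h:\frakZ\ra\frakT$ for the formal component of $g$, so that $g=(h,W\ra U')$.

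\textbf{Constructing $v$.} We need a morphism $v:\underline{Z}\ra V$ with $\Delta_v=w$ under $W=V'$. The composite $\underline{Z}\ra\underline{T}\xrightarrow{u}U$ and the morphism
\[
Z\xrightarrow{w}V'\ra V
\]
satisfy, by the compatibility $\Delta_u\circ h_1=(W\ra U')\circ w$, that $Z\xrightarrow{G_Z}\underline{Z}\ra\underline{T}\ra U$ equals $Z\ra V'\ra V\ra U$. This is the same diagram chase as in \ref{era3paralpha}. Now $G_Z:Z\ra\underline{Z}$ is inseparable by \ref{era4prop15}, and $V\ra U$ is étale and strict, so (\cite{Ogus2018} IV 3.3.7) gives a unique $v:\underline{Z}\ra V$ with $v\circ G_Z=(Z\ra V'\ra V)$ and compatible with $u$. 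Equivalently, one can mimic \ref{remShiho1} and the construction of $u_V$ in \ref{era3paralpha}. Affineness of $v$ follows because $v$ and $\underline{Z}\ra U$ differ by the separated morphism $V\ra U$, and $\underline{Z}\ra\underline{T}\ra U$ is affine: $u$ is affine, and $\underline{Z}\ra \underline{T}$ is affine since $w$ is affine and $V'\ra U'$ is étale. To check that $\Delta_v=w$, we use the uniqueness in \ref{propShiho185}: $w$ fits into diagram \eqref{morphismShiho1} for $v$, as seen by composing with $V'\ra V''=V'$ and using the defining property of $T\ra U''$. Then $f=(h,V\ra U)$ is a morphism of $\underline{\calE}(X/\frakS)$, because $u\circ\underline{h_1}=(V\ra U)\circ v$ by construction, and $\rho(f)=g$.

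\textbf{Cartesian and log flat cases.} If $g$ is log flat then $h$ is log flat, and $f$ has the same formal component, so $f$ is log flat. If $g$ is cartesian in $\calE(X'/\frakS)$, then $Z\simeq T\times_{U'}W=T\times_{U'}V'$ via $\Delta_u$, which is exactly the square required in \ref{era3defcart} (2). It remains to see that $h:\frakZ\ra\frakT$ is strict and étale. Its special fiber $Z\ra T$ is a base change of the étale strict map $V'\ra U'$. Since $\frakZ$ and $\frakT$ are flat over $\op{Spf}W$, a morphism that is strict étale modulo $p$ is strict étale; this uses topological invariance of étale morphisms along the thickenings $\frakT_1\subset\frakT_n$ together with flatness to compare $\frakZ$ with the unique étale lift of $Z\ra T$. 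This step is the one I expect to need care. Should it fail in general, the fallback is to replace $\frakZ$ by the étale lift $\frakT_V$ of $T_V=T\times_{U'}V'$ as in \ref{era3paralpha}, which is isomorphic to $\frakZ$ by the argument of \ref{propalphacocont}.
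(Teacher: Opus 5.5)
Your route matches the paper's proof. You descend $W$ to an étale $V$ by topological invariance of the étale site; you use $U'\ra U$ where the paper uses $\pi:X'\ra X$, and both work. You obtain $v$ from the inseparability of $G_Z$ and (\cite{Ogus2018} IV 3.3.7). You get $\Delta_v=w$ from uniqueness; the relevant uniqueness is that of the morphism $Z\ra V''=V'$ in \eqref{morphismShiho2}, not of $T\ra U''$.

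Your cartesian case is more careful than the paper's ``it is clear''. Condition (2) of \ref{era3defcart} also requires $\frakZ\ra\frakT$ to be strict and étale, and your argument supplies this, so the fallback is unnecessary. Compare $\frakZ$ with the étale lift of $Z\ra T$: the comparison map is an isomorphism modulo $p$, hence an isomorphism, since both are $p$-adic and flat over $\op{Spf}W$. Strictness can be read on $\ov{\calM}$, which is insensitive to the strict nilpotent thickenings $Z\subset\frakZ_n$.

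\textbf{The gap is the affineness of $v$.} Both facts you invoke can fail.
\begin{itemize}
\item An étale morphism $V\ra U$ need not be separated.
\item $\underline{Z}\ra\underline{T}$ need not be affine. The relation $\Delta_u\circ h_1=(V'\ra U')\circ w$ only says that $Z\ra U'$ is the affine $w$ followed by an étale, possibly non-affine, morphism.
\end{itemize}
Here is a counterexample. Take $X=U=\mathbb{A}^2_k$ with trivial log structure, $\frakT=\widehat{\mathbb{A}}^2_W$ and $u=\op{Id}$, so that $\Delta_u=F_{T/S}$. Let $W=U'\setminus\{0\}$, let $\frakZ$ be the corresponding open formal subscheme of $\frakT$, and let $w$ be the restriction of $F_{T/S}$. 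Then $g$ is even cartesian. Yet $\underline{Z}\ra\underline{T}$ and $\underline{Z}\ra U$ are the non-affine inclusion $\mathbb{A}^2\setminus\{0\}\subset\mathbb{A}^2$. So affineness of $v$ cannot be obtained by passing through $U$.

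The argument has to go through $V'$, as in the paper.
\begin{itemize}
\item The composite $v\circ G_Z=(V'\ra V)\circ w$ is affine, because $w$ is affine and $V'=V''\ra V$ is affine.
\item $G_Z$ is the identity on underlying spaces, and $\underline{Z}\hookrightarrow Z$ is a closed immersion.
\item Hence, for an affine open $A\subset V$, the open subscheme $v^{-1}(A)$ of $\underline{Z}$ is a closed subscheme of the affine open $(v\circ G_Z)^{-1}(A)$ of $Z$, and so is affine.
\end{itemize}
Without this step, $(V,\frakZ,v)$ has not been shown to be an object of $\underline{\calE}(X/\frakS)$.
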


\begin{proof}
We have a commutative diagram
$$
\begin{tikzcd}
Z \ar{r} \ar[swap]{d}{w}  & T \ar{d}{\Delta_u} \\
W \ar{r} & U'.
\end{tikzcd}
$$
By \ref{piunivhomeo}, the morphism $\pi:X' \ra X$ \eqref{diag51} is a universal homeomorphism. It follows that the functor
$$
\text{ét}_{/X} \ra \text{ét}_{/X'},\ Y\mapsto Y\times_XX',
$$
induced by $\pi,$ is an equivalence of categories (\href{https://stacks.math.columbia.edu/tag/04DZ}{Theorem 04DZ}). Let $V$ be the unique étale $X$-scheme such that $V\times_XX'=W.$ We then have cartesian squares
$$
\begin{tikzcd}
W \ar{r} \ar{d} & V \ar{d} \\
X' \ar{r}{\pi} \ar{d} & X \ar{d} \\
S \ar{r}{F_S} & S.
\end{tikzcd}
$$ 
It follows that $W=V'.$ We have an $X'$-morphism $W=V' \ra U'$ and there exists a unique $X$-morphism $V \ra U$ whose base change by $\pi$ is $V' \ra U'.$
Consider the diagram
\begin{equation}\label{diagSH1}
\begin{tikzcd}
V' \ar{rrr} & & & V \ar{d} \\
Z \ar{u}{w} \ar{r}{G_Z} & \underline{Z} \ar[dashed]{urr}{v} \ar{r} & \underline{T} \ar{r}{u} & U,
\end{tikzcd}
\end{equation}
where $G_Z$ is defined in \ref{era4prop15}. The composition
$
Z \xrightarrow{G_Z} \underline{Z} \ra \underline{T} \xrightarrow{u} U
$
is equal to
$
Z \ra T \xrightarrow{G_T} \underline{T} \xrightarrow{u} U.
$
By definition of $\Delta_u$ \eqref{propShiho185} and \eqref{morphismShiho2}, this is equal to
$
Z \ra T \xrightarrow{\Delta_u} U' \ra U.
$
This is the same as
$
Z \ra V' \ra U' \ra U.
$
This is equal to
$
Z \ra V' \ra V \ra U
$
and the diagram \eqref{diagSH1} is commutative. Since $G_Z$ is inseparable, the dashed morphism $v:\underline{Z} \ra V$ exists and is unique by (\cite{Ogus2018} IV 3.3.7). In addition, $v$ is affine since $V' \ra V$ and $w$ are affine. Hence we have an object $(V,\frakZ,v)$ of $\underline{\calE}(X/\frakS)$ and a morphism $g:(V,\frakZ,v) \ra (U,\frakT,u).$ It remains to prove that
$$
\rho(V,\frakZ,v)=(W,\frakZ,w).
$$
This is equivalent to proving that $w=\Delta_v.$
Since $X\ra S$ is of Cartier type, $V'=V''.$ The morphism $\Delta_v$ is hence the unique morphism fitting into the diagram \eqref{morphismShiho2}, which we recall here:
$$
\begin{tikzcd}
Z \ar{r}{G_Z} \ar{dr}{\Delta_v} \ar[bend right=30]{ddr} & \underline{Z} \ar{dr}{v} & \\
 & V' \ar{r} \ar{d} & V \ar{d} \\
 & S \ar{r}{F_S} & S.
\end{tikzcd}
$$
The morphism $w:Z\ra V'$ fits in the same commutative diagram by \eqref{diagSH1} and the fact that $w$ is an $S$-morphism. It is clear that $g$ is cartesian (resp. log flat) if and only if $f$ is.
\end{proof}

\begin{proposition}\label{thmproof2}
Suppose that $X\ra S$ is of Cartier type. The functor $\rho:\underline{\calE}(X/\frakS) \ra \calE(X'/\frakS)$ defined in \eqref{era3rhoShiho} is continuous and cocontinuous for the étale topology \eqref{parettop} and the log flat topology \eqref{era4logflattop}.
\end{proposition}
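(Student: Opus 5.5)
The plan is to apply Lemma \ref{Kokolemcocont} to $\rho$, once with both categories carrying the étale pretopology \eqref{parettop} and once with both carrying the log flat pretopology \eqref{era4logflattop}. That lemma needs three conditions: $\rho$ commutes with fiber products, $\rho$ sends covering families to covering families, and every covering family of an object $\rho(U,\frakT,u)$ is the image of a covering family of $(U,\frakT,u)$. Once these are checked, the lemma gives continuity, cocontinuity and the morphisms of topoi $C_{X/\frakS}$ and $C_{X/\frakS,lf}$, with inverse image given by composition with $\rho$.

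Condition 1 is Lemma \ref{lemfiberprodcom}, which uses the Cartier type hypothesis. One point needs care: in the paper the fiber products are only described along log flat or cartesian morphisms, namely in \ref{era3fiberparag} and \ref{era3stablebc}. In a pretopology, covering morphisms are exactly of that kind, so the lemma applies to all the fiber products that actually arise.

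For condition 2, take a covering $((U_i,\frakT_i,u_i)\ra (U,\frakT,u))_{i\in I}$. Its image under $\rho$ is $((U_i',\frakT_i,\Delta_{u_i})\ra (U',\frakT,\Delta_u))$.
\begin{enumerate}
\item The family $(U_i'\ra U')$ is an étale covering, since it is the base change of the étale covering $(U_i\ra U)$ by $X'\ra X$.
\item In the étale case, each morphism of the original family is cartesian in $\underline{\calE}(X/\frakS)$ (Definition \ref{era3defcart} (2)). So the square formed by $T_i\ra T$ over $U_i'\ra U'$ via $\Delta_{u_i},\Delta_u$ is cartesian, i.e. $T_i\simeq T\times_{U'}U_i'$. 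This is exactly the statement that the image morphism is cartesian in $\calE(X'/\frakS)$ (Definition \ref{era3defcart} (1)).
\item In the log flat case, the formal schemes $\frakT_i$ and $\frakT$ are unchanged by $\rho$, so the condition that $(\frakT_{i,n}\ra\frakT_n)$ is a log flat covering for every $n$ transfers verbatim.
\end{enumerate}

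For condition 3, take a covering $(g_i:(W_i,\frakZ_i,w_i)\ra \rho(U,\frakT,u))$. Lemma \ref{lemcoco} produces morphisms $f_i:(V_i,\frakZ_i,v_i)\ra (U,\frakT,u)$ with $\rho(f_i)=g_i$, where $V_i$ is the étale $X$-scheme with $V_i'=W_i$. The same lemma says $f_i$ is cartesian, resp. log flat, whenever $g_i$ is.

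The main obstacle is checking that $(V_i\ra U)$ is an étale covering in condition 3. By Proposition \ref{piunivhomeo}, $\pi:X'\ra X$ is a universal homeomorphism, so base change along it gives an equivalence $\text{ét}_{/X}\simeq \text{ét}_{/X'}$ that preserves surjectivity. Surjectivity of $\coprod V_i'\ra U'$ therefore gives surjectivity of $\coprod V_i\ra U$. I also need to confirm the remaining part of \emph{cartesian} in $\underline{\calE}$, namely that $\frakZ_i\ra\frakT$ is strict étale. This holds because the cartesian morphism $g_i$ in $\calE(X'/\frakS)$ makes $Z_i\ra T$ a base change of the strict étale map $W_i\ra U'$. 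A strict étale morphism of special fibers lifts uniquely to a strict étale morphism of $p$-adic formal schemes, so $\frakZ_i\ra\frakT$ is strict étale. With these checks, Lemma \ref{Kokolemcocont} finishes both cases.
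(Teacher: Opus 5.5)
Your proof is correct and follows the paper's argument. The paper checks that $\rho$ preserves and reflects coverings and commutes with fiber products (Lemma \ref{lemfiberprodcom}), which gives continuity via SGA 4 III 1.6, and it derives cocontinuity from Lemma \ref{lemcoco} via SGA 4 III 2.1; this is exactly what Lemma \ref{Kokolemcocont} packages.

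One step is stated too quickly: the strict étaleness of $\frakZ_i\ra\frakT$. Uniqueness of the étale lift of $Z_i\ra T$ does not by itself identify $\frakZ_i$ with that lift. You also need that $\frakZ_i$ is flat over $\op{Spf}W$, which holds because $\frakZ_i$ is log flat over $\frakS$, so \ref{Wflat} applies. Then the comparison morphism from $\frakZ_i$ to the lift is an isomorphism modulo $p$ between $p$-torsion-free $p$-adic formal schemes, hence an isomorphism.
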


\begin{proof}
A family $((U_i,\frakT_i,u_i) \ra (U,\frakT,u))_{i\in I}$ of $\underline{\calE}(X/\frakS)$ is a covering for the log flat topology (resp. étale topology) if and only if $(\rho(U_i,\frakT_i,u_i) \ra \rho(U,\frakT,u))_{i\in I}$ is a covering for the log flat topology (resp. étale topology). In addition, $\rho$ commutes with fiber products (\ref{lemfiberprodcom}). It follows, by (\cite{SGA43} III 1.6), that $\rho$ is continuous for both topologies. The cocontinuity follows from (\cite{SGA43} III 2.1) and \ref{lemcoco}.
\end{proof}

\begin{lemma}\label{lem1741}
For every object $(U,\frakT,u)$ of $\underline{\calE}(X/\frakS),$ the diagram
$$
\begin{tikzcd}
\text{ét}_{/U} \ar{rr}{\alpha_{(U,\frakT,u)}} \ar[swap, sloped]{d}{\sim} & & \underline{\calE}(X/\frakS) \ar{d}{\rho} \\
\text{ét}_{/U'} \ar[swap]{rr}{\alpha_{\rho(U,\frakT,u)}} & & \calE(X'/\frakS)
\end{tikzcd}
$$
is commutative, where the left vertical arrow is $V \mapsto V'.$
\end{lemma}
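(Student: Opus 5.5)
We have to show that for every étale $U$-scheme $V$ there is a natural identification $\rho(\alpha_{(U,\frakT,u)}(V))=\alpha_{\rho(U,\frakT,u)}(V')$, compatible with morphisms $V_1\ra V_2$. We unwind both sides using the explicit constructions in \ref{era3paralpha} and compare the three components of each triple separately. We use throughout that $V'\ra U'$ is étale and strict, being a base change of $V\ra U$ as in the proof of \ref{lemcoco}. By \ref{piunivhomeo} and (\href{https://stacks.math.columbia.edu/tag/04DZ}{Theorem 04DZ}), $V\mapsto V'$ is an equivalence $\text{ét}_{/U}\xrightarrow{\sim}\text{ét}_{/U'}$.

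First we write out the left-hand side. By definition, $\alpha_{(U,\frakT,u)}(V)=(V,\frakT_V,u_V)$, where $T_V=T\times_{U',\Delta_u}V'$ and $\frakT_V$ is the unique étale strict formal $\frakT$-scheme with special fiber $T_V$. Applying $\rho$ gives $(V',\frakT_V,\Delta_{u_V})$.

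Next the right-hand side. We have $\rho(U,\frakT,u)=(U',\frakT,\Delta_u)$, an object of $\calE(X'/\frakS)$, so the simpler construction for $\calE$ applies to $V'$. The special fiber of $\frakT_{V'}$ is $T\times_{U',\Delta_u}V'$, which is exactly $T_V$. By uniqueness of the étale strict lift of an étale strict morphism $T_V\ra T$ to $\frakT$, we get $\frakT_{V'}=\frakT_V$ canonically. The structure map is the projection $T_V\ra V'$.

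The main point is then to check that $\Delta_{u_V}$ equals the projection $\mathrm{pr}:T_V\ra V'$. By the uniqueness part of \ref{propShiho185}, $\Delta_{u_V}$ is the unique morphism $T_V\ra V'$ whose composite with $V'\ra V''$ is the canonical map $T_V\ra V''$ of \eqref{morphismShiho2}, and whose restriction to $\underline{T_V}$ is $\underline{T_V}\xrightarrow{u_V}V\ra V'$. (The latter holds because $V'\ra V''$ is log étale and $\underline{T_V}\ra T_V$ is a strict nilpotent immersion.) So it suffices to verify both properties for $\mathrm{pr}$.

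For the first property, the composite $T_V\ra V''$ is determined by its images in $V$ and in $S$. Those are $T_V\xrightarrow{G_{T_V}}\underline{T_V}\xrightarrow{u_V}V$ and the structure map. In \ref{era3paralpha}, $u_V$ was constructed precisely so that $u_V\circ G_{T_V}$ equals $T_V\xrightarrow{\mathrm{pr}}V'\ra V$, which settles it.

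For the second property, we compare $\underline{T_V}\ra T_V\xrightarrow{\mathrm{pr}}V'$ with $\underline{T_V}\xrightarrow{u_V}V\ra V'$. Both maps agree after composing with $V'\ra U'$, since they reduce to the corresponding statement for $\Delta_u$ in \ref{propShiho185}. Both also agree after composing with $V'\ra V$. When $X\ra S$ is of Cartier type, $V'=V\times_UU'$, and this is enough. In general we would instead argue by uniqueness: $V'\ra U'$ is étale and strict, and $\underline{T_V}\ra T_V$ is a strict nilimmersion, so (\cite{Ogus2018} IV 3.3.7) applies. This second property is the step that needs the most care.

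Finally, for a morphism $V_1\ra V_2$, both functors send it to the unique morphism of the $\frakT_{V_i}$ over $\frakT$ lifting $T_{V_1}\ra T_{V_2}$. So the identifications above are natural in $V$, and the diagram commutes.
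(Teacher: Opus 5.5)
Your proof is correct and is the paper's ``clear from the definitions'' argument written out in full. The one real point is $\Delta_{u_V}=\mathrm{pr}\colon T_V\to V'$, and you correctly obtain it from the uniqueness in \ref{propShiho185} together with the defining property $u_V\circ G_{T_V}=(V'\to V)\circ\mathrm{pr}$ of $u_V$ from \ref{era3paralpha}.

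Your ``in general'' fallback via (\cite{Ogus2018} IV 3.3.7) does not work as written. Uniqueness along the nilimmersion $\underline{T_V}\to T_V$ compares maps out of $T_V$, not two maps out of $\underline{T_V}$. The fallback is also unnecessary. The identification $V'=V\times_UU'$ holds for every strict étale $V\to U$, with or without the Cartier type hypothesis, and the paper uses this when it calls $V'\to U'$ strict and calls $V\mapsto V'$ an equivalence. So your projection argument already covers all cases.
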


\begin{proof}
This is clear from the definitions of $\alpha_{(U,\frakT,u)}$ and $\alpha_{\rho(U,\frakT,u)}$ \eqref{eqKoko1861}.
\end{proof}

\begin{lemma}\label{Kokolem1833}
Suppose that $X\ra S$ is of Cartier type. Consider the morphisms of topoi
$$C_{X/\frakS}:\left ( \widetilde{\underline{\calE}}(X/\frakS), \Ox_{\underline{\calE}(X/\frakS)}\right ) \ra \left ( \widetilde{\calE}(X'/\frakS), \Ox_{\calE(X'/\frakS)}\right )$$
and
$$C_{X/\frakS,lf}:\left ( \widetilde{\underline{\calE}}_{lf}(X/\frakS), \Ox_{\underline{\calE}(X/\frakS)}\right ) \ra \left ( \widetilde{\calE}_{lf}(X'/\frakS), \Ox_{\calE(X'/\frakS)}\right ),$$
defined in \eqref{era3Clf}. Let $(U,\frakT,u)$ be an object of $\underline{\calE}(X/\frakS)$ and $\F$ a module of $\widetilde{\calE}(X'/\frakS)$ (resp. $\widetilde{\calE}_{lf}(X'/\frakS)$). We identify $\text{ét}_{/U}$ and $\text{ét}_{/U'}$ via the exact relative Frobenius $U\ra U'.$ We denote $C_{X/\frakS}$ (resp. $C_{X/\frakS,lf}$) by $C.$ Then
$$
(C^*\F)_{(U,\frakT,u)} = \F_{\rho(U,\frakT,u)}.
$$
In addition, if $(f,g)$ is a morphism in $\underline{\calE}(X/\frakS),$ then $\rho(f,g)=(f,g')$ and
$$c_{C^*\F,(f,g)}=c_{\F,(f,g')}.$$
\end{lemma}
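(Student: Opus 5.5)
The plan is to compute $C^{*}\F$ directly from the fact that $C^{-1}$ is composition with $\rho$ (\ref{thmproof2} together with \ref{Kokolemcocont}). We then read off its linearized descent data using \ref{era3propdescentdata} and \ref{lindescentdata}.

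\emph{Underlying sheaf.} First I will treat the underlying sheaf of sets. By definition, $C^{-1}\F=\F\circ\rho$. For an object $(U,\frakT,u)$ of $\underline{\calE}(X/\frakS)$, \ref{era3propdescentdata} gives
$$(C^{-1}\F)_{(U,\frakT,u)}=\F\circ\rho\circ\alpha_{(U,\frakT,u)}.$$
By \ref{lem1741}, $\rho\circ\alpha_{(U,\frakT,u)}=\alpha_{\rho(U,\frakT,u)}\circ(V\mapsto V')$. Identifying $\text{ét}_{/U}$ with $\text{ét}_{/U'}$, this yields
$$(C^{-1}\F)_{(U,\frakT,u)}=\F\circ\alpha_{\rho(U,\frakT,u)}=\F_{\rho(U,\frakT,u)}.$$

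\emph{Transition maps.} Next, let $(f,g):(U_1,\frakT_1,u_1)\ra(U_2,\frakT_2,u_2)$ be a morphism. Its image $\rho(f,g)$ has the same formal component $f$, and second component $g'$, the base change of $g$ by the Frobenius. The maps $\gamma$ are induced by $\beta$ in \eqref{defbeta115}. Applying $\rho$ to $\beta_{(f,g)}$ gives $\beta_{(f,g')}$ under the identification of \ref{lem1741}: both are built from $T_{1_V}=T_1\times_{U_1'}V'\ra T_{2_V}$ and the unique étale lift to $\frakT_{i_V}$. Hence
$$\gamma_{C^{-1}\F,(f,g)}=\gamma_{\F,(f,g')}.$$

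\emph{Passing to modules.} The key point is that $C^{-1}\Ox_{\calE(X'/\frakS)}=\Ox_{\underline{\calE}(X/\frakS)}$, as computed just before \ref{lemfiberprodcom}. Therefore $C^{*}\F=C^{-1}\F\otimes_{C^{-1}\Ox}\Ox=C^{-1}\F$, and the pullback introduces no extension of scalars. The ring sheaves on the two sides also agree: $(\Ox_{\underline{\calE}})_{(U,\frakT,u)}=u_{*}\Ox_T$ and $(\Ox_{\calE(X'/\frakS)})_{\rho(U,\frakT,u)}=\Delta_{u*}\Ox_T$, and these coincide on $U_{\text{ét}}\simeq U'_{\text{ét}}$. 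Consequently the ringed topos morphisms $\widetilde f$ of \eqref{Kokoftilda} for $(f,g)$ and for $(f,g')$ coincide, and the linearizations satisfy $c_{C^*\F,(f,g)}=c_{\F,(f,g')}$. The log flat case is identical, since it uses the same functor $\rho$ and the same ring.

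\emph{Expected obstacle.} The main care is the compatibility of $\beta$ with $\rho$. This requires checking that the fiber products $T_1\times_{U_1',\Delta_{u_1}}V'$ defining $\alpha$ on the $\underline{\calE}$ side are literally those defining $\alpha_{\rho(\cdot)}$ on the $\calE(X'/\frakS)$ side. It also requires that the identification of étale sites via $U\ra U'$ (a universal homeomorphism, \ref{piunivhomeo}) is compatible with $g\mapsto g'$. Both follow from the definitions and the Cartier type hypothesis, which gives $U'=U''$.
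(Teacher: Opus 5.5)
Your proposal is correct and takes the same route as the paper, whose proof is the one-line computation $(C^*\F)_{(U,\frakT,u)}=(C^*\F)\circ\alpha_{(U,\frakT,u)}=\F\circ\rho\circ\alpha_{(U,\frakT,u)}=\F\circ\alpha_{\rho(U,\frakT,u)}=\F_{\rho(U,\frakT,u)}$, using \ref{era3propdescentdata} and \ref{lem1741}. You also spell out two points the paper leaves implicit: that $C^*=C^{-1}$ because $C^{-1}\Ox_{\calE(X'/\frakS)}=\Ox_{\underline{\calE}(X/\frakS)}$, and that $\rho$ sends $\beta_{(f,g)}$ to $\beta_{(f,g')}$, which is what gives the equality of the transition maps $c$.
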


\begin{proof}
This is a consequence of \ref{era3propdescentdata} and \ref{lem1741} as shown in the following computation:
\begin{alignat*}{2}
(C^*\F)_{(U,\frakT,u)} &= (C^*\F)\circ \alpha_{(U,\frakT,u)} = \F \circ \rho \circ \alpha_{(U,\frakT,u)} \\
&= \F \circ \alpha_{\rho(U,\frakT,u)} = \F_{\rho(U,\frakT,u)}.
\end{alignat*}
\end{proof}

\begin{proposition}\label{era3preserve}
Suppose that $X\ra S$ is of Cartier type. Then the inverse image functors of
$$C_{X/\frakS}:\left ( \widetilde{\underline{\calE}}(X/\frakS), \Ox_{\underline{\calE}(X/\frakS)}\right ) \ra \left ( \widetilde{\calE}(X'/\frakS), \Ox_{\calE(X'/\frakS)}\right )$$
and
$$C_{X/\frakS,lf}:\left ( \widetilde{\underline{\calE}}_{lf}(X/\frakS), \Ox_{\underline{\calE}(X/\frakS)}\right ) \ra \left ( \widetilde{\calE}_{lf}(X'/\frakS), \Ox_{\calE(X'/\frakS)}\right ),$$
defined in \eqref{era3Clf}, preserve crystals (resp. quasi-coherent modules).
\end{proposition}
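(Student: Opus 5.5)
The proof reduces entirely to Lemma \ref{Kokolem1833}, which gives the linearized descent data of $C^*\F$ in terms of that of $\F$. Denote by $C$ either $C_{X/\frakS}$ or $C_{X/\frakS,lf}$. Let $\F$ be a module of $\widetilde{\calE}(X'/\frakS)$ (resp. $\widetilde{\calE}_{lf}(X'/\frakS)$) and $\left (\F_{(U',\frakT',u')},c_{\F,(f,g)}\right )$ its linearized descent data \eqref{lindescentdata}. By \ref{Kokolem1833}, after identifying $\text{ét}_{/U}$ with $\text{ét}_{/U'}$ via the exact relative Frobenius (legitimate by \ref{piunivhomeo} and Theorem 04DZ), the linearized descent data of $C^*\F$ is
$$(C^*\F)_{(U,\frakT,u)}=\F_{\rho(U,\frakT,u)},\qquad c_{C^*\F,(f,g)}=c_{\F,\rho(f,g)}.$$
Before using this, I would check that the structure rings agree under the identification. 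The ring on the left is $\left (\Ox_{\underline{\calE}(X/\frakS)}\right )_{(U,\frakT,u)}=u_*\Ox_T$ on $U_{\text{ét}}$, by \eqref{eqKoko18171} (here $u_*\Ox_T$ means the pushforward along $\underline{T}\ra U$ of $\Ox_T$, viewed via the homeomorphism $T\simeq \underline{T}$). The ring on the right is $\Delta_{u*}\Ox_T$ on $U'_{\text{ét}}$. These coincide because $C^{-1}\Ox_{\calE(X'/\frakS)}=\Ox_{\underline{\calE}(X/\frakS)}$, as computed after \eqref{era3Clf}. For the same reason, the morphism of ringed topoi $\widetilde{f}$ attached to $(f,g)$ in \eqref{Kokoftilda} is identified with the one attached to $\rho(f,g)=(f,g')$: both depend only on the special fiber $f_1:T_1\ra T_2$ together with the compatible maps on étale sites.

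With this in hand, both statements are formal. For crystals: if every $c_{\F,(f',g')}$ is an isomorphism, then in particular $c_{\F,\rho(f,g)}$ is an isomorphism for every morphism $(f,g)$ of $\underline{\calE}(X/\frakS)$, so $c_{C^*\F,(f,g)}$ is an isomorphism and $C^*\F$ is a crystal. For quasi-coherence: if $\F_{(U',\frakT,\Delta_u)}$ is a quasi-coherent $\Delta_{u*}\Ox_T$-module on $U'_{\text{ét}}$, then transporting it along the equivalence of ringed sites $(U_{\text{ét}},u_*\Ox_T)\simeq (U'_{\text{ét}},\Delta_{u*}\Ox_T)$ gives a quasi-coherent module, so $C^*\F$ is quasi-coherent. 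The log flat case is identical: the Lemma covers it, and the descent-data description of modules also holds there, since sheafiness was only used through \ref{era3propdescentdata}.

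The one step I expect to need care is the compatibility of the identification with the linearization. The linearized map $c$ is defined from $\gamma$ via $\widetilde{f}^*$, so I must check that under $\text{ét}_{/U}\simeq\text{ét}_{/U'}$ the morphism $g^{-1}$ corresponds to $g'^{-1}$, and the ring maps $g^{-1}u_{2*}\Ox_{T_2}\ra u_{1*}\Ox_{T_1}$ correspond to those for $\Delta_{u_i}$. This follows because $\rho$ does not change $\frakT$ or $f$, and because $g'$ is the base change of $g$ by $\pi$. Lemma \ref{lem1741} guarantees that $\rho\circ\alpha_{(U,\frakT,u)}=\alpha_{\rho(U,\frakT,u)}$, so the morphisms $\beta_{(f,g)}$ of \eqref{defbeta115}, and hence the maps $\gamma$, are carried to one another.
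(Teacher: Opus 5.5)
Your proposal is correct and follows the paper's proof: the paper also argues directly from Lemma~\ref{Kokolem1833}, using $(C^*\F)_{(U,\frakT,u)}=\F_{\rho(U,\frakT,u)}$ and $c_{C^*\F,(f,g)}=c_{\F,(f,g')}$ to conclude at once that quasi-coherence and the crystal condition are inherited. Your extra checks are exactly what the paper leaves implicit: that the structure rings match via $C^{-1}\Ox_{\calE(X'/\frakS)}=\Ox_{\underline{\calE}(X/\frakS)}$, that the morphisms $\widetilde{f}$ correspond under $\text{ét}_{/U}\simeq\text{ét}_{/U'}$, and that the log flat topos admits the same descent-data description.
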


\begin{proof}
In this proof, and for simplicity, we will denote $C_{X/\frakS}$ (resp. $C_{X/\frakS,lf}$) by $C.$ Let $(U,\frakT,u)$ be an object of $\underline{\calE}(X/\frakS)$ and $\F$ a module of $\widetilde{\calE}(X'/\frakS)$ (resp. $\widetilde{\calE}_{lf}(X'/\frakS)$). We canonically identify $\text{ét}_{/U}$ and $\text{ét}_{/U'}.$ By \ref{Kokolem1833}, we have
\begin{alignat*}{2}
(C^*\F)_{(U,\frakT,u)} &= \F_{\rho(U,\frakT,u)}.
\end{alignat*}
In addition, if $(f,g)$ is a morphism in $\underline{\calE}(X/\frakS),$ then $\rho(f,g)=(f,g')$ and
$$c_{C^*\F,(f,g)}=c_{\F,(f,g')}.$$
It follows that if $\F$ is a quasi-coherent module (resp. crystal), so is $C^*\F.$
\end{proof}

\begin{theorem}\label{thmKoko1926}
Suppose that $X\ra S$ is of Cartier type and that it lifts to a log smooth morphism of framed logarithmic formal schemes $(\frakX,Q) \ra (\frakS,P).$ We also suppose that the exact relative Frobenius $F_1:X\ra X'$ lifts to an $(\frakS,P)$-morphism of framed logarithmic formal schemes $(\frakX,Q) \ra (\frakX',Q'),$ such that $\frakX' \ra \frakS$ is log smooth and where $Q'$ is the monoid defined in (\cite{SF1} 3.1). Consider the logarithmic formal schemes $R_{\frakX',1}$ and $Q_{\frakX}$ defined in \ref{parag86}, the morphism of formal groupoids $\nu:Q_{\frakX}\ra R_{\frakX',1}$ given in \ref{lem92}. Denote by $\nu_1:Q_1\ra R_1'$ the special fiber of $\nu$ and by $\mathcal{V}:\calR_1' \ra \calQ_1$ the corresponding morphism of Hopf algebras. Consider the diagram
\begin{equation}\label{diag185}
\begin{tikzcd}
\calC(X'/\frakS) \ar{r}{C_{X/\frakS}^*} \ar[swap, sloped]{d}{\sim} & \underline{\calC}(X/\frakS)  \ar[sloped]{d}{\sim} \\
\begin{Bmatrix}\Ox_{X'}\text{-modules\ with\ an}\\ \calR_1'\text{-stratification} \end{Bmatrix} \ar{r}{\Psi_0} & \begin{Bmatrix}\Ox_{X}\text{-modules\ with\ a}\\ \calQ_1\text{-stratification} \end{Bmatrix},
\end{tikzcd}
\end{equation}
where the vertical equivalences are given in \ref{equivRQ}, $C_{X/\frakS}$ is given in \eqref{era3C} and \ref{era3preserve} and $\Psi_0$ is defined by
$$\Psi_0:(\calE',\epsilon') \mapsto (F_1^*\calE',\mathcal{V}^*\epsilon').$$
Then the diagram \eqref{diag185} is commutative up to a canonical isomorphism.
\end{theorem}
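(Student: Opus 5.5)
The plan is to compute both paths of \eqref{diag185} on a crystal $\mathfrak{E}'$ of $\calC(X'/\frakS)$, using the description of $C_{X/\frakS}^*$ in \ref{Kokolem1833}. The vertical arrow on the left sends $\mathfrak{E}'$ to $(\calE',\epsilon')$, where $\calE'=\mathfrak{E}'_{(X',\frakX',\op{Id}_{X'})}$ and $\epsilon'=c_{(\op{Id},q_1)}^{-1}\circ c_{(\op{Id},q_2)}$ is computed on the object $(X',R_{\frakX',1},\lambda_{R'})$. The right vertical arrow is applied to $C^*\mathfrak{E}'$. By \ref{Kokolem1833}, its value on an object $(U,\frakT,u)$ is $\mathfrak{E}'_{\rho(U,\frakT,u)}$, and its transition maps are the $c_{\mathfrak{E}',\rho(f,g)}$. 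So everything reduces to computing $\rho$ on the objects $(X,\frakX,\op{Id}_X)$ and $(X,Q_{\frakX},\lambda_Q)$ and on the projections between them, and then comparing with the objects and morphisms used for $\mathfrak{E}'$.

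\emph{Step 1 (underlying module).} By \ref{Xreduced} we have $\underline{X}=X$. By \ref{propShiho185}(1), $\rho(X,\frakX,\op{Id}_X)=(X',\frakX,F_1)$. Since $F:\frakX\ra\frakX'$ lifts $F_1$, the pair $(F,\op{Id}_{X'})$ is a morphism $(X',\frakX,F_1)\ra(X',\frakX',\op{Id}_{X'})$ in $\calE(X'/\frakS)$. Its associated ringed-topos morphism $\widetilde{F}$ is just $F_1$ on the étale site, after identifying the sites of $X$ and $X'$. The crystal property then gives an isomorphism
$$c_{\mathfrak{E}',(F,\op{Id})}:F_1^*\calE'\xrightarrow{\sim}(C^*\mathfrak{E}')_{(X,\frakX,\op{Id}_X)}.$$

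\emph{Step 2 (stratification).} By definition, $\rho(X,Q_{\frakX},\lambda_Q)=(X',Q_{\frakX},\Delta_{\lambda_Q})$. I would show that $(\nu,\op{Id}_{X'})$ is a morphism $(X',Q_{\frakX},\Delta_{\lambda_Q})\ra(X',R_{\frakX',1},\lambda_{R'})$, which amounts to proving $\lambda_{R'}\circ\nu_1=\Delta_{\lambda_Q}$. I would also show that $q_i^{R'}\circ\nu=F\circ q_i'$ for $i=1,2$, so that $(\nu,\op{Id})$ is compatible with the projections and with the morphism $(F,\op{Id})$ of Step 1. Given these identities, the cocycle condition (ii) of \ref{lindescentdata} gives, for $i=1,2$,
$$c_{\mathfrak{E}',(\op{Id},q_i')}\circ\widetilde{q_i'}^{*}c_{\mathfrak{E}',(F,\op{Id})}=c_{\mathfrak{E}',(\nu,\op{Id})}\circ\widetilde{\nu}^*c_{\mathfrak{E}',(\op{Id},q_i^{R'})}.$$
Composing the $i=2$ identity with the inverse of the $i=1$ identity, the factor $c_{(\nu,\op{Id})}$ cancels. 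What remains says that, after transport by the isomorphism of Step 1, the $\calQ_1$-stratification of $C^*\mathfrak{E}'$ equals $\nu_1^*\epsilon'=\mathcal{V}^*\epsilon'$. The isomorphisms $c_{(F,\op{Id})}$ are natural in $\mathfrak{E}'$, which yields the canonical isomorphism of functors.

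\emph{Main obstacle.} The hard part is the identities of morphisms in Step 2. For $q_i^{R'}\circ\nu=F\circ q_i'$, I would use the square in \ref{lem92} together with the fact that $G:\frakY\ra\frakY'$ is induced by $F$ and therefore commutes with the projections. For $\lambda_{R'}\circ\nu_1=\Delta_{\lambda_Q}$, I would use the uniqueness part of \ref{propShiho185}: $X'\ra S$ is of Cartier type, so $X'=X''$, and $\Delta_{\lambda_Q}$ is the unique morphism $Q_1\ra X'$ whose composite with $X'\ra X$ equals $\lambda_Q\circ G_{Q_1}$ and which lies over $S$. I would then check that $\lambda_{R'}\circ\nu_1$ has the same two composites. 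This is the diagram chase of \ref{groupprop17}, using $\pi\circ F_1=F_X$, $\rho\circ\Delta'=\Delta\circ\pi$, and the fact that $Q_1\ra Y\xrightarrow{F_Y}Y$ factors through $\Delta$. Since $\Delta$ is a monomorphism, comparing both composites after applying $\Delta$ will suffice.
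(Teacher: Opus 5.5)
Your proposal is correct and follows the paper's proof. You identify $C_{X/\frakS}^*\mathfrak{E}'$ on $\rho(X,\frakX,i_X)=(X',\frakX,F_1)$ and on $\rho(X,Q_{\frakX},\lambda_Q)=(X',Q_{\frakX},\Delta_{\lambda_Q})$, obtain $c_{F_1}$ from $(F,\op{Id}_{X'})$, and use that $\nu$ is a morphism $(X',Q_{\frakX},\Delta_{\lambda_Q})\ra(X',R_{\frakX',1},\lambda_{R'})$ (Lemma \ref{lemKhaminei1}); your cocycle argument with $r_i'\circ\nu=F\circ q_i$ makes explicit why the paper's two squares commute.

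Two small points:
\begin{itemize}
\item \ref{Xreduced} only applies over $\Sp k$ with trivial log structure. In this generality you should use the object $(X,\frakX,i_X)$, as the paper does; \ref{propShiho185}(1) still gives $\rho(X,\frakX,i_X)=(X',\frakX,F_1)$.
\item The extra diagram chase through $F_Y$ is unnecessary. Functoriality of $\rho$ applied to $q_i$ already gives $\Delta_{\lambda_Q}=F_1\circ q_i$, and $\lambda_{R'}$ is the reduction of either $r_i'$. So $\lambda_{R'}\circ\nu_1=\Delta_{\lambda_Q}$ follows directly from $r_i'\circ\nu=F\circ q_i$.
\end{itemize}
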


\begin{proof}
If $\F$ is a module on $\calE(X'/\frakS)$ and $(f,g):(U_1,\frakT_1,u_1) \ra (U_2,\frakT_2,u_2)$ is a morphism of $\calE(X'/\frakS)$ then we have a $u_{1*}\Ox_{T_1}$-linear morphism \eqref{cflindata}
\begin{equation}\label{Oh181}
c_{\F,(f,g)}:\widetilde{f}^*\F_{(U_2,\frakT_2,u_2)} \ra \F_{(U_1,\frakT_1,u_1)},
\end{equation}
where 
$
\widetilde{f}:\left (U_{1,\text{ét}},u_{1*}\Ox_{T_1}\right ) \ra \left (U_{2,\text{ét}},u_{2*}\Ox_{T_2}\right )
$
is the morphism of ringed topoi \eqref{Kokoftilda}. The morphism \eqref{Oh181} is, by definition, equal to
\begin{equation}\label{Oh183}
c_{\F,(f,g)}:u_{1*}\Ox_{T_1} \otimes_{g^{-1}u_{2*}\Ox_{T_2}}g^{-1}\F_{(U_2,\frakT_2,u_2)} \ra \F_{(U_1,\frakT_1,u_1)},
\end{equation}
where the morphism $g^{-1}u_{2*}\Ox_{T_2} \ra u_{1*}\Ox_{T_1}$ is induced by the special fiber $f_1:T_1\ra T_2$ of $f.$
Let $Q_1$ and $R'_1$ be the special fibers of $Q_{\frakX}$ and $R_{\frakX',1}$ respectively, $q_1,q_2:Q_{\frakX} \ra \frakX$ and $r_1',r_2':R_{\frakX',1} \ra \frakX'$ the canonical projections and $\lambda_Q:\underline{Q_1} \ra X$ and $\lambda_{R'}:R'_1 \ra X'$ the canonical morphisms \eqref{erafprop13}.
Consider the object $(X,Q_{\frakX},\lambda_Q)$ of $\underline{\calE}(X/\frakS)$ and the objects $(X',R_{\frakX',1},\lambda_{R'})$ and $(X',\frakX',\op{Id}_{X'})$ of $\calE(X'/\frakS).$ As in \ref{equivRQDef}, $(X,\frakX,i_X)$ is an object of $\underline{\calE}(X/\frakS),$ where $i_X:\underline{X} \ra X$ is the canonical immersion, and the projections $q_1,q_2,r_1'$ and $r_2'$ define morphisms
\begin{alignat*}{2}
q_1,q_2:&(X,Q_{\frakX},\lambda_Q) \ra (X,\frakX,i_X), \\
r_1',r_2':&(X',R_{\frakX',1},\lambda_{R'}) \ra (X',\frakX',\op{Id}_{X'})
\end{alignat*}
of $\underline{\calE}(X/\frakS)$ and $\calE(X'/\frakS)$ respectively.
Let $\mathfrak{E}'$ be a crystal of $\calC(X'/\frakS).$ Denote by $\calR_1'$ the Hopf algebra corresponding to $R_1'.$ Then $\calR_1'=\lambda_{R'*}\Ox_{R_1'}.$ The morphisms \eqref{Oh183} corresponding to $r_1'$ and $r_2'$ are then equal to
$$
c_{\frakE',r_2'}:\calR_1'\otimes_{\Ox_{X'}}\frakE'_{(X',\frakX',\op{Id}_{X'})} \xrightarrow{\sim} \frakE'_{(X',R_{\frakX',1},\lambda_{R'})},
$$
$$
c_{\frakE',r_1'}:\frakE'_{(X',\frakX',\op{Id}_{X'})} \otimes_{\Ox_{X'}} \calR_1'\xrightarrow{\sim} \frakE'_{(X',R_{\frakX',1},\lambda_{R'})}.
$$
On one hand, by \ref{equivRQ}, the corresponding $\Ox_{X'}$-module with an $R_{\frakX',1}$-stratification $(\calE',\epsilon')$ is given as follows:
$$\calE'=\mathfrak{E}'_{(X',\frakX',\op{Id}_{X'})},$$
$$\epsilon':\calR_1'\otimes_{\Ox_{X'}}\calE' \xrightarrow{c_{\frakE',r_2'}} \mathfrak{E}'_{(X',R_{\frakX',1},\lambda_{R'})} \xrightarrow{c_{\frakE',r_1'}^{-1}}\calE' \otimes_{\Ox_{X'}} \calR_1'.$$
Its image by $\Psi_0$ is then the module
$$F_1^*\calE'=F_1^*\mathfrak{E}'_{(X',\frakX',\op{Id}_{X'})},$$
equipped with the stratification
$$\mathcal{V}^*\epsilon':\calQ_1\otimes_{\Ox_{X'}}\calE' \xrightarrow{\mathcal{V}^*c_{\frakE',r_2'}} \mathcal{V}^*\mathfrak{E}'_{(X',R_{\frakX',1},\lambda_{R'})}\xrightarrow{\mathcal{V}^*c_{\frakE',r_1'}^{-1}}\calE' \otimes_{\Ox_{X'}}\calQ_1.$$
This is equal to
$$
\mathcal{V}^*\epsilon':\calQ_1\otimes_{\Ox_{X}}(F_1^*\calE') \xrightarrow{\mathcal{V}^*c_{\frakE',r_2'}} \mathcal{V}^*\mathfrak{E}'_{(X',R_{\frakX',1},\lambda_{R'})}\xrightarrow{\mathcal{V}^*c_{\frakE',r_1'}^{-1}} (F_1^*\calE') \otimes_{\Ox_{X}}\calQ_1.
$$
On the other hand, by \ref{equivRQ}, the $\Ox_X$-module with a $Q_{\frakX}$-stratification $(\calE,\epsilon)$ corresponding to $\frakE:=C_{X/\frakS}^*\mathfrak{E}'$ is given as follows:
$$\calE=\left (C_{X/\frakS}^*\mathfrak{E}' \right )_{(X,\frakX,i_X)},$$
$$\epsilon:\calQ_1 \otimes_{\Ox_X} \calE\xrightarrow{c_{\frakE,q_2}} \left (C_{X/\frakS}^*\mathfrak{E}' \right )_{(X,Q_{\frakX},\lambda_Q)} \xrightarrow{c_{\frakE,q_1}^{-1}} \calE \otimes_{\Ox_X}\calQ_1.$$
By \ref{propShiho185}, we have
$
\rho(X,\frakX,i_X)=(X',\frakX,F_1).
$
By \ref{Kokolem1833}, we have
$$\calE=\left (C_{X/\frakS}^*\mathfrak{E}' \right )_{(X,\frakX,i_X)}=\mathfrak{E}'_{\rho(X,\frakX,i_X)}=\mathfrak{E}'_{(X',\frakX,F_1)}$$
and
$$
\left (C_{X/\frakS}^*\mathfrak{E}' \right )_{(X,Q_{\frakX},\lambda_Q)}=\frakE'_{\rho(X,Q_{\frakX},\lambda_Q)}=\frakE'_{(X',Q_{\frakX},\Delta_{\lambda_Q})}.
$$
The lifting $F:\frakX\ra \frakX'$ induces a morphism $(F,\op{Id}_{X'}):(X',\frakX,F_1) \ra (X',\frakX',\op{Id}_{X'})$ in $\calE(X'/\frakS).$ We then get a morphism of ringed topoi
$$
\widetilde{F}:(X'_{\text{ét}},F_{1*}\Ox_X) \ra (X'_{\text{ét}},\Ox_{X'}).
$$
Note that, since we identify $X_{\text{ét}}$ and $X'_{\text{ét}}$ via $F_1,$ we have $\widetilde{F}^*=F_1^*.$
We then get an isomorphism \eqref{cflindata}
\begin{equation}\label{zzcF}
c_{F_1}:F_1^*\calE' = F_1^*\mathfrak{E}'_{(X',\frakX',\op{Id}_{X'})} \xrightarrow{\sim} \mathfrak{E}'_{(X',\frakX,F_1)}=\calE.
\end{equation}
By \ref{lemKhaminei1}, $\nu:Q_{\frakX} \ra R_{\frakX',1}$ defines a morphism
\begin{equation}\label{zabb222}
\nu:\rho(X,Q_{\frakX},\lambda_Q)=(X',Q_{\frakX},\Delta_{\lambda_Q}) \ra (X',R_{\frakX',1},\lambda_{R'})
\end{equation}
in $\calE(X'/\frakS).$ We then have a morphism of ringed topoi
$$
\widetilde{\nu}: \left (X'_{\text{ét}}, \left (\Delta_{\lambda_Q} \right )_*\Ox_{Q_1} \right ) \ra \left (X'_{\text{ét}},\lambda_{R'*}\Ox_{R_1'} \right )=\left (X'_{\text{ét}},\calR_1' \right ).
$$
By \ref{lemKhaminei1}, we have
$$
\left (\Delta_{\lambda_Q} \right )_*\Ox_{Q_1}=F_{1*}q_{1*}\Ox_{Q_1}=F_{1*}\calQ_1.
$$
Since we identify $X_{\text{ét}}$ and $X'_{\text{ét}}$ via $F_1$ and $\frakE'$ is a crystal, we get an isomorphism
$$c_{\frakE',\nu}:\mathcal{V}^*\mathfrak{E}'_{(X',R_{\frakX',1},\lambda_{R'})} = \calQ_1 \otimes_{\calR_1'} \mathfrak{E}'_{(X',R_{\frakX',1},\lambda_{R'})} \xrightarrow{\sim} \mathfrak{E}'_{\rho(X,Q_{\frakX},\lambda_Q)},$$
fitting into the commutative diagram
$$
\begin{tikzcd}
\calQ_1 \otimes_{\Ox_X} \left (F_1^*\calE' \right ) \ar{rr}{\mathcal{V}^*c_{\frakE',r_2'}} \ar[swap]{d}{\op{Id} \otimes c_{F_1}}  & & \mathcal{V}^*\mathfrak{E}'_{(X',R_{\frakX',1},\lambda_{R'})}\ar{rr}{\mathcal{V}^*c_{\frakE,r_1'}^{-1}} \ar{d}{c_{\frakE',\nu}} & & \left (F_1^*\calE'\right ) \otimes_{\Ox_X}\calQ_1 \ar{d}{c_{F_1} \otimes \op{Id}} \\
\calQ_1 \otimes_{\Ox_X} \calE\ar{rr}{c_{\frakE,q_2}} & & \mathfrak{E}'_{\rho(X,Q_{\frakX},\lambda_Q)} \ar{rr}{c_{\frakE,q_1}^{-1}} & & \calE \otimes_{\Ox_X} \calQ_1.
\end{tikzcd}
$$
This proves that the isomorphism $c_{F_1}:F_1^*\calE' \xrightarrow{\sim} \calE$ \eqref{zzcF} underlies an isomorphism of stratified modules
$
(F_1^*\calE',\nu^*\epsilon') \xrightarrow{\sim} (\calE,\epsilon).
$
This finishes the proof.
\end{proof}

\begin{lemma}\label{lemKhaminei1}
Keep the hypothesis of \ref{thmKoko1926} and let $q_1,q_2:Q_1 \ra X$ be the canonical projections. Then $\Delta_{\lambda_Q}=F_1\circ q_1=F_1\circ q_2$ and
the diagram
\begin{equation}\label{Khaminei4}
\begin{tikzcd}
Q_1 \ar{r} \ar[swap]{d}{\Delta_{\lambda_Q}} & R_1' \ar{dl}{\lambda_{R'}} \\
X' & 
\end{tikzcd}
\end{equation}
is commutative.
\end{lemma}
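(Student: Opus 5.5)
We first prove $\Delta_{\lambda_Q}=F_1\circ q_1$, then deduce the equality with $F_1\circ q_2$ and the commutativity of \eqref{Khaminei4}. The main tool is the uniqueness statement in \ref{propShiho185}. Since $X\ra S$ is of Cartier type, $X'=X''$, so $\Delta_{\lambda_Q}$ is the unique morphism $Q_1\ra X'$ that is an $S$-morphism (over $F_S$ on the base) and whose composition with the projection $\pi:X'\ra X$ equals $\lambda_Q\circ G_{Q_1}$, as in \eqref{morphismShiho2}. It therefore suffices to check that $F_1\circ q_i$, for $i=1,2$, satisfies the same two conditions.

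\textbf{Projection to $S$.} The composition $Q_1\xrightarrow{q_i}X\xrightarrow{F_1}X'\ra S$ equals $Q_1\ra X\ra S$. This is the structure morphism of $Q_1$, which is what \eqref{morphismShiho2} requires.

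\textbf{Projection to $X$.} We have $\pi\circ F_1\circ q_i=F_X\circ q_i=q_i\circ F_{Q_1}=q_i\circ i\circ G_{Q_1}$, where $i:\underline{Q_1}\ra Q_1$ is the canonical immersion and we used \ref{era4prop15}. So we must show $q_i\circ i=\lambda_Q$ as morphisms $\underline{Q_1}\ra X$. By \ref{erafprop13}, $\lambda_Q$ is characterized by $\Delta\circ\lambda_Q=(Q_1\ra Y)\circ i$, where $\Delta:X\ra Y$ is the strict diagonal immersion. Composing with the projections $p_i:Y\ra X$ and using $p_i\circ\Delta=\op{Id}_X$ gives $\lambda_Q=p_i\circ(Q_1\ra Y)\circ i=q_i\circ i$. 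This holds for both $i=1,2$, so the uniqueness in \ref{propShiho185} yields $\Delta_{\lambda_Q}=F_1\circ q_1=F_1\circ q_2$.

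\textbf{Commutativity of \eqref{Khaminei4}.} We use $\lambda_{R'}=r_1'\circ(R_1'\ra\ldots)$, that is, the composition of $R'_1\ra Y'$ with either projection $Y'\ra X'$; this follows from the factorization of $R'_1\ra Y'$ through $\Delta'$ given in (\cite{SF1} 11.4). From \ref{lem92} we have $G\circ(Q_{\frakX}\ra\frakY)=(R_{\frakX',1}\ra\frakY')\circ\nu$. Since $G$ is induced by $F$ on both factors, $p'_1\circ G=F\circ p_1$. Reducing mod $p$ gives $\lambda_{R'}\circ\nu_1=p_1'\circ G_1\circ(Q_1\ra Y)=F_1\circ p_1\circ(Q_1\ra Y)=F_1\circ q_1=\Delta_{\lambda_Q}$.

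The step most likely to need care is the second one: keeping the fine versus fs fiber products straight (using $X'=X''$ from the Cartier type hypothesis) and verifying that $\lambda_Q$ agrees with both $q_1\circ i$ and $q_2\circ i$. This works because $\Delta$ is a monomorphism and, by \ref{erafprop13}, the factorization of $\underline{Q_1}\ra Y$ through $\Delta$ is unique.
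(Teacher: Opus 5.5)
Your proposal is correct and follows the paper's proof. You characterize $\Delta_{\lambda_Q}$ by the universal property of $X'=X''$ (using the Cartier type hypothesis), check the projection to $S$, and check the projection to $X$ via $\pi\circ F_1=F_X$, naturality of Frobenius and $F_{Q_1}=i\circ G_{Q_1}$; you then deduce the triangle from the square of \ref{lem92}. Your extra justifications of $q_i\circ i=\lambda_Q$ (via $p_i\circ\Delta=\op{Id}_X$) and of $\lambda_{R'}\circ\nu_1=F_1\circ q_1$ (via $p_1'\circ G=F\circ p_1$) spell out what the paper abbreviates as ``by definition of $\lambda_Q$'' and ``by definition of $\nu$''.
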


\begin{proof}
Since $X\ra S$ is of Cartier type, the relative Frobenius $F_{X/S}$ is exact and hence $F_{X/S}=F_1.$
Then, by \eqref{morphismShiho2}, $\Delta_{\lambda_Q}$ is the unique morphism fitting into the commutative diagram
\begin{equation}\label{Khaminei1}
\begin{tikzcd}
Q_1\ar{r}{G_{Q_1}} \ar[swap]{dr}{\Delta_{\lambda_Q}} \ar[bend right=30]{rdd} & \underline{Q_1} \ar{dr}{\lambda_Q} & \\
 & X'\ar{d}\ar{r}{\pi} & X\ar{d} \\
 & S \ar{r}{F_S} & S.
\end{tikzcd}
\end{equation}
The composition $$Q_1 \xrightarrow{q_i} X \xrightarrow{F_1} X' \ra S,$$ where $X' \ra S$ is the canonical projection, and $Q_1 \ra S$ are clearly equal. It is hence sufficient to prove that the compositions
\begin{equation}\label{Khaminei2}
Q_1 \xrightarrow{q_i} X \xrightarrow{F_1} X'\xrightarrow{\pi} X
\end{equation}
and
\begin{equation}\label{Khaminei3}
Q_1 \xrightarrow{G_{Q_1}} \underline{Q_1} \xrightarrow{\lambda_{Q}} X
\end{equation}
are equal.
The composition \eqref{Khaminei2} is equal to
$$
Q_1\xrightarrow{q_i} X \xrightarrow{F_X} X.
$$
This is equal to
$$
Q_1 \xrightarrow{F_{Q_1}} Q_1 \xrightarrow{q_i} X
$$
and then to
$$
Q_1 \xrightarrow{G_{Q_1}} \underline{Q_1} \hookrightarrow Q_1 \xrightarrow{q_i} X.
$$
By definition of $\lambda_Q,$ this is equal to \eqref{Khaminei3}.
The commutativity of \eqref{Khaminei4} follows from the definition of $\nu$ and the fact that $\Delta_{\lambda_Q}=F_1\circ q_i.$
\end{proof}

\section{Logarithmic Oyama topoi and the Cartier transform over a trivial log point}

Let $\frakS=\op{Spf}W$ equipped with the trivial logarithmic structure and $X\ra S$ a log smooth morphism of logarithmic schemes which is of Cartier type.

\begin{parag}
By \ref{rem178} and \ref{propfT/S}, for an object $(U,\frakT,u)$ of $\underline{\calE}(X/\frakS),$ we have the following commutative diagram
$$
\begin{tikzcd}
U \ar[swap]{d}{F_{U/S}} & & \underline{T}\ar[swap]{ll}{u} \ar[swap]{d}{F_{\underline{T}/S}} \ar[hook]{rr} & & T\ar{d}{F_{T/S}} \ar{dll}{f_{T/S}} \\
U' & & \underline{T}' \ar{ll}{u'} \ar[hook]{rr} & & T',
\end{tikzcd}
$$
where the vertical arrows are the exact relative Frobenius morphisms (\ref{PFrob}), $U'=U\times_{S,F_S}S,$ $u':\underline{T}' \ra U'$ is the morphism induced by $u$ and $f_{T/S}$ is defined in \ref{propfT/S}. The commutativity of the right upper triangle follows from the commutativity of the right square, the lower right triangle and the fact that $\underline{T}' \ra T'$ is an immersion, hence a monomorphism.
In this case, by \ref{propShiho185} (3), the functor \eqref{era3rhoShiho} is given by
\begin{equation}\label{era3rho}
\rho:\begin{array}[t]{clc}
\underline{\calE}(X/\frakS) & \ra & \calE(X'/\frakS) \\
(U,\frakT,u) & \mapsto & (U',\frakT,u'\circ f_{T/S}).
\end{array}
\end{equation}
We will see in \ref{era3equivlfcrystals} that the direct image and inverse image functors of $C_{X/\frakS,lf}$ are equivalences of categories quasi-inverse to each other.  We will also see in \ref{lemdirectimage} that $C_{X/\frakS}^*$ induces a fully faithful functor between crystals. We start with some lemmas.
\end{parag}

\begin{lemma}[\cite{DXU19} 8.5]\label{era3lem1}
Let $u:T\ra U$ be an affine morphism of schemes. The functors
$$u_*:(T_{\text{ét}},\Ox_T) \ra (U_{\text{ét}},u_*\Ox_T),\ u^*:(U_{\text{ét}},u_*\Ox_T) \ra (T_{\text{ét}},\Ox_T)$$
induce equivalences of categories quasi-inverse to each other between the category of quasi-coherent $\Ox_T$-modules of $T_{\text{ét}}$ and the category of quasi-coherent $u_*\Ox_T$-modules of $U_{\text{ét}}.$
\end{lemma}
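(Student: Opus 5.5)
The plan is to reduce the statement to affine pieces and invoke the standard correspondence between quasi-coherent sheaves on an affine scheme-over-$U$ and modules over its pushed-forward algebra. The argument has three steps.

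First, the claim is étale local on $U$ and Zariski local on $T$ (both notions of quasi-coherence are local). So I may assume $U=\Sp A$ is affine. Since $u$ is affine, $T=\Sp B$ is affine as well, with $B=\Gamma(U,u_*\Ox_T)$.

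Second, I would identify the two categories concretely.

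1. A quasi-coherent $\Ox_T$-module on $T_{\text{ét}}$ is determined by its Zariski restriction. Hence it corresponds to a $B$-module $M$, with value $M\otimes_B \Gamma(T_V,\Ox_{T_V})$ on an étale $T$-scheme $T_V$.
2. A quasi-coherent $u_*\Ox_T$-module on $U_{\text{ét}}$, meaning one locally given by a presentation by free $u_*\Ox_T$-modules, corresponds likewise to a $B$-module $N$. Its value on an étale $V\ra U$ is $N\otimes_B\Gamma(V,u_*\Ox_T)$.

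The key input is étale base change for the affine morphism $u$. For $V\ra U$ étale, $T_V=T\times_UV$ is étale over $T$. We have $\Gamma(V,u_*\Ox_T)=\Gamma(T_V,\Ox_{T_V})=B\otimes_A\Gamma(V,\Ox_V)$, because $u$ is affine and quasi-coherent cohomology commutes with flat base change. With this in hand, $u_*$ sends the module attached to $M$ to the one attached to $M$. Also, $u^*\calG=u^{-1}\calG\otimes_{u^{-1}u_*\Ox_T}\Ox_T$ sends the module attached to $N$ back to the one attached to $N$.

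Third, I would check that the adjunction maps $u^*u_*\F\ra\F$ and $\calG\ra u_*u^*\calG$ are isomorphisms. By the previous step this reduces to the identities $M\otimes_BB=M$ and $N\otimes_BB=N$ on global sections over affines, and then globalizes by locality.

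The main obstacle is the quasi-coherence of $u_*\F$ as a $u_*\Ox_T$-module on the étale site, that is, producing the local presentation. I would get it by pushing forward a presentation $\Ox_T^{(I)}\ra\Ox_T^{(J)}\ra\F\ra0$ on affine $T$. This uses that $u_*$ is exact on quasi-coherent modules for affine $u$, and that it commutes with direct sums. Since this statement is exactly (\cite{DXU19} 8.5), I would mainly cite it and only indicate these verifications.
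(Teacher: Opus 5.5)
Your proposal is correct and is essentially the paper's argument. Both reduce to $U$ affine, show that $u_*$ is exact on quasi-coherent modules (surjectivity on sections over affine étale $V$, where $T\times_U V$ is affine), push forward a presentation using the compatibility of $u_*$ with direct sums, and compare presentations; the paper does this last step as a five-lemma check on the adjunction maps rather than through your dictionary with $B$-modules.

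The one point to tighten: identifying an arbitrary quasi-coherent $u_*\Ox_T$-module on affine $U$ with a $B$-module needs étale descent. Alternatively, as the paper does for $\calN\to u_*u^*\calN$, localize further in the étale topology until a global presentation exists.
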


\begin{proof}
Recall that, for every scheme $X,$ $\op{QCoh}(X_{\text{zar}})$ and $\op{QCoh}(X_{\text{ét}})$ are canonically equivalent (\href{https://stacks.math.columbia.edu/tag/03DX}{Proposition 03DX}).

We start by proving that $u_*$ is exact. Let $g:\calM \ra \calN$ be a surjective morphism of quasicoherent modules of $(T_{\text{ét}},\Ox_T).$ Let $V \ra U$ be an étale morphism such that $V$ is affine. The projection $T\times_UV \ra V$ is affine and $\calM$ and $\calN$ are quasi-coherent so the morphism $(u_*g)(V)=g(T\times_UV)$ is surjective. Since $u_*$ is also left exact, it is exact.

Now let $\calM$ be a quasicoherent module of $(T_{\text{ét}},\Ox_T).$ We will prove that the adjunction morphism $u^*u_*\calM \ra \calM$ is an isomorphism. For this, we may suppose that $U$ is affine. The scheme $T$ is then also affine and hence we have an exact sequence
$$\Ox_T^{\oplus I} \ra \Ox_T^{\oplus J} \ra \calM \ra 0.$$
Since $u_*$ is exact, we deduce the exactness of the sequence
$$u_*\left (\Ox_T^{\oplus I} \right ) \ra u_* \left (\Ox_T^{\oplus J} \right ) \ra u_*\calM \ra 0.$$
The quasi-coherent module $\Ox_T^{\oplus I}$ is associated to the $\Gamma(T,\Ox_T)$-module $\bigoplus_{i\in I}\Gamma(T,\Ox_T).$ Similarly, the quasi-coherent module $\left (u_*\Ox_T \right )^{\oplus I}$ is associated to the $\Gamma(U,\Ox_U)$-module $\bigoplus_{i\in I} \Gamma(U,u_*\Ox_T)=\bigoplus_{i\in I}\Gamma(T,\Ox_T).$ It follows that the canonical morphism
$$\left (u_*\Ox_T \right )^{\oplus I} \ra u_*\left ( \Ox_T ^{\oplus I} \right )$$
is clearly an isomorphism.
Hence the quasi-coherence of $u_*\calM.$
Consider the following commutative diagram of $\Ox_T$-modules
$$
\begin{tikzcd}
u^*\left (u_*\left (\Ox_T \right )^{\oplus I}\right )  \ar{r} \ar{d} & u^* \left (u_* \left (\Ox_T\right )^{\oplus J}\right )  \ar{r} \ar{d} & u^*u_*\calM \ar{d}\ar{r} & 0 \\
\Ox_T^{\oplus I} \ar{r} & \Ox_T^{\oplus J} \ar{r}& \calM \ar{r} & 0
\end{tikzcd}
$$
Since $u_*$ and $u^*$ are both right exact, the rows are exact. The middle and left vertical arrows are clearly isomorphisms so the right vertical arrow is also an isomorphism.

Now let $\calN$ be a quasi-coherent $u_*\Ox_T$-module of $U_{\text{ét}}.$ We will prove that the adjunction morphism $\calN \ra u_*u^*\calN$ is an isomorphism. For this, we may work étale locally on $U$ and suppose that we have an exact sequence
$$\left (u_*\Ox_T\right )^{\oplus I} \ra \left (u_*\Ox_T \right )^{\oplus J} \ra \calN \ra 0.$$
Consider the following commutative diagram of $u_*\Ox_T$-modules
$$
\begin{tikzcd}
\left (u_*\Ox_T \right )^{\oplus I} \ar{r} \ar{d} & \left (u_*\Ox_T\right )^{\oplus J} \ar{r} \ar{d} & \calN \ar{r} \ar{d} & 0 \\
u_*u^*\left (\left (u_*\Ox_T\right )^{\oplus I}\right ) \ar{r} & u_*u^*\left (\left (u_*\Ox_T\right )^{\oplus J} \right ) \ar{r} & u_*u^*\calN \ar{r} & 0
\end{tikzcd}
$$
The rows are exact and the left and middle vertical arrows are clearly isomorphisms. The result follows.
\end{proof}

\begin{lemma}\label{era3lem2}
Any crystal $\F$ of $\calC^{\text{qcoh}}(X/\frakS)$ (resp. $\underline{\calC}^{\text{qcoh}}(X/\frakS)$) is a sheaf for the log flat topology of $\calE(X/\frakS)$ (resp. $\underline{\calE}(X/\frakS)$). 
\end{lemma}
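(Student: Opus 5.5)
We have to show that for every log flat covering $\left((U_i,\frakT_i,u_i)\xrightarrow{(f_i,g_i)}(U,\frakT,u)\right)_{i\in I}$ the sequence
$$0\ra \F(U,\frakT,u)\ra \prod_i \F(U_i,\frakT_i,u_i)\rightrightarrows \prod_{i,j}\F(U_{ij},\frakT_{ij},u_{ij})$$
is exact, where $(U_{ij},\frakT_{ij},u_{ij})$ is the fiber product \eqref{era3fiberparag}. The plan is to reduce this to faithfully flat (log flat) descent for quasi-coherent modules on the special fibers, using the crystal property and Lemma \ref{era3lem1}.

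\textbf{Reduction to a single log flat cover of $\frakT$.} First, $\F$ is already a sheaf for the étale topology. A log flat covering factors through the étale covering $(\alpha_{(U,\frakT,u)}(U_i)\ra (U,\frakT,u))$, using \ref{era3paralphaKoko} (2) for the second step. So by the usual transitivity of covers it suffices to treat two kinds of coverings. The first kind are étale coverings, which are already handled. The second kind are coverings in which every $U_i\ra U$ is an isomorphism (after refining, $U_i=U$) and $(\frakT_{i,1}\ra \frakT_1)$ is a log flat covering. For the $\underline{\calE}$ case the same factorization works with the $\alpha$ of \ref{era3paralpha}.

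\textbf{Translation into modules.} Write $\calM=u^*\F_{(U,\frakT,u)}$, which is a quasi-coherent $\Ox_T$-module on $T_{\text{ét}}$ by Lemma \ref{era3lem1}; here $u:T\ra U$ (resp. $\underline{T}\ra U$) is affine and the ring $u_*\Ox_T$ is as in \eqref{eqKoko18171}. The crystal condition says $c_{\F,(f_i,g_i)}$ is an isomorphism. Applying Lemma \ref{era3lem1} on both sides, this identifies $\F_{(U_i,\frakT_i,u_i)}$ with $u_{i*}f_{i,1}^*\calM$, and similarly for the double fiber products $T_{ij}=T_i\times^{\op{log}}_TT_j$, compatibly with the transition maps. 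Taking global sections, the sequence becomes
$$0\ra \Gamma(T,\calM)\ra \prod_i\Gamma(T_i,f_{i,1}^*\calM)\rightrightarrows\prod_{i,j}\Gamma(T_{ij},f_{ij,1}^*\calM).$$
In the $\underline{\calE}$ case the relevant ring is still $\Gamma(T,\Ox_T)$, so the same argument applies verbatim with $\underline{T}$ only entering through the affineness of $u$.

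\textbf{Descent.} Exactness of this last sequence is the sheaf property of the quasi-coherent module $\calM$ for the log flat covering $(T_i\ra T)$. I would deduce it from the exactness of the Amitsur sequence for log flat coverings, (\cite{SF1} 13.10), already used for $\Ox_{\calE}$. That result is stated for structure sheaves, so it has to be upgraded to an arbitrary quasi-coherent $\calM$. To do this, work Zariski locally with $T$ affine, take a presentation $\Ox_T^{\oplus J'}\ra\Ox_T^{\oplus J}\ra \calM\ra 0$, and tensor the Amitsur complex for $\Ox_T$ with $\calM$. I expect this upgrade to be the main obstacle. Log flatness does not give ordinary flatness, so one cannot simply tensor an exact complex. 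I would instead argue as in the proof of (\cite{SF1} 13.10) itself: reduce to a Kummer-type log flat cover $T\times_{A[P]}A[Q]$, where the underlying morphism is a base change of $\Sp \Z[Q]\ra \Sp\Z[P]$. For such a cover, $\Ox_{T_i}$ is a direct sum of $\Ox_T$-modules graded by $Q^{gp}/P^{gp}$ and contains $\Ox_T$ as a direct summand, and this grading should make the Amitsur complex split, hence stay exact after $\otimes\calM$. If that fails, I would invoke the log flat descent for morphisms from \cite{SF1}, as the abstract suggests, applied to the quasi-coherent module $\calM$.
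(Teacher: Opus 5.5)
Your argument is the paper's: the crystal isomorphisms $c_{(f_i,g_i)}$, $c_{(f_{ij},g_{ij})}$ and Lemma \ref{era3lem1} turn the sheaf sequence into the sequence $0\ra\calG\ra\prod_i f_{i*}f_i^*\calG\rightrightarrows\prod_{i,j}f_{ij*}f_{ij}^*\calG$ for the quasi-coherent module $\calG=\F_{(U,\frakT,u)}$ on $T$. The paper then cites (\cite{SF1} 13.10) directly for this arbitrary quasi-coherent $\calG$, so the ``upgrade'' you single out as the main obstacle is not needed; your Kummer-grading sketch would in any case have to handle the fs fiber products $T_i\times^{\op{log}}_TT_j$, not ordinary tensor products.

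You should also drop your preliminary reduction to étale plus ``pure'' coverings. It is unnecessary, because your translation step works verbatim for an arbitrary log flat covering. It is also not justified as stated: a single morphism $(U_i,\frakT_i,u_i)\ra\alpha_{(U,\frakT,u)}(U_i)$ is not a covering, since $\frakT_i\ra\frakT_{U_i}$ need not be surjective (only the whole family $(\frakT_i\ra\frakT)$ is jointly surjective), so transitivity of covers does not apply.
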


\begin{proof}
We just prove the result for a crystal $\F$ of $\calC^{\text{qcoh}}(X/\frakS)$ as the proof for $\underline{\calC}^{\text{qcoh}}(X/\frakS)$ is similar. Let $(\F_{(U,\frakT,u)},c_{(f,g)})$ be the linearized descent data associated with $\F$ and let $((U_i,\frakT_i,u_i) \xrightarrow{(f_i,g_i)} (U,\frakT,u))_{i\in I}$ be a log flat covering, where $f_i:\frakT_i \ra \frakT$ and $g_i:U_i \ra U.$ For any $i,j\in I,$ let
$$(U_{ij},\frakT_{ij},u_{ij})=(U_i,\frakT_i,u_i)\times_{(U,\frakT,u)}(U_j,\frakT_j,u_j)$$
and
$$(f_{ij},g_{ij}):(U_{ij},\frakT_{ij},u_{ij}) \ra (U,\frakT,u)$$
the canonical morphism.
We will abusively denote by $f_i:T_i\ra T$ and $f_{ij}:T_{ij} \ra T$ the special fiber of $f_i$ and $f_{ij}$ respectively.
We have to prove that the sequence
\begin{equation}\label{era3exseq1}
0 \ra \F(U,\frakT,u) \ra \prod_{i\in I}\F(U_i,\frakT_i,u_i) \rightrightarrows \prod_{i,j\in I}\F(U_{ij},\frakT_{ij},u_{ij})
\end{equation}
is exact.
By \ref{era3lem1}, we can consider $\F_{(U_i,\frakT_i,u_i)},$ $\F_{(U,\frakT,u)}$ and $\F_{(U_{ij},\frakT_{ij},u_{ij})}$ as modules of $(T_{i,\text{ét}},\Ox_{T_i}),$ $(T_{\text{ét}},\Ox_T)$ and $(T_{ij,\text{ét}},\Ox_{T_{ij}})$ respectively. The sequence \eqref{era3exseq1} becomes equal to
\begin{equation}\label{era3exseq1c}
0 \ra \F_{(U,\frakT,u)}(T) \ra \prod_{i\in I}\F_{(U_i,\frakT_i,u_i)}(T_i) \rightrightarrows \prod_{i,j\in I}\F_{(U_{ij},\frakT_{ij},u_{ij})}(T_{ij}).
\end{equation}
Since $\F$ is a crystal, the morphisms
$$c_{(f_i,g_i)}:f_i^*\F_{(U,\frakT,u)} \xrightarrow{\sim} \F_{(U_i,\frakT_i,u_i)}$$
$$c_{(f_{ij},g_{ij})}:f_{ij}^*\F_{(U,\frakT,u)} \xrightarrow{\sim} \F_{(U_{ij},\frakT_{ij},u_{ij})}$$
are isomorphisms. Let $\calG=\F_{(U,\frakT,u)}.$ The sequence \eqref{era3exseq1c} becomes equal to
\begin{equation}
0 \ra \calG(T) \ra \prod_{i\in I}(f_i^*\calG)(T_i) \rightrightarrows \prod_{i,j\in I} (f_{ij}^*\calG)(T_{ij}).
\end{equation}
It is thus sufficient to prove the exactness of the sequence
\begin{equation}\label{era3exseq2}
0 \ra \calG \ra \prod_{i\in I}f_{i*}f_i^*\calG \rightrightarrows \prod_{i,j\in I} f_{ij*}f_{ij}^*\calG.
\end{equation}
This follows from (\cite{SF1} 13.10).
\end{proof}

\begin{proposition}\label{era3propequivlfetale}
The canonical functors
$$\widetilde{\calE}_{lf}(X/\frakS) \ra \widetilde{\calE}(X/\frakS),\ \widetilde{\underline{\calE}}_{lf}(X/\frakS) \ra \widetilde{\underline{\calE}}(X/\frakS)$$
induce equivalences of categories
$$\calC_{lf}^{\text{qcoh}}(X/\frakS) \xrightarrow{\sim} \calC^{\text{qcoh}}(X/\frakS),\ \underline{\calC}_{lf}^{\text{qcoh}}(X/\frakS) \xrightarrow{\sim} \underline{\calC}^{\text{qcoh}}(X/\frakS).$$
\end{proposition}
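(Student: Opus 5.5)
The plan is to show that restriction along the morphism of topoi $\widetilde{\calE}_{lf}(X/\frakS) \ra \widetilde{\calE}(X/\frakS)$ identifies the two categories of quasi-coherent crystals; the underlined case is handled the same way. Since the log flat topology is finer than the étale topology, every log flat sheaf is an étale sheaf. So there is a canonical fully faithful inclusion of the category of log flat sheaves of $\Ox_{\calE(X/\frakS)}$-modules into the category of étale sheaves of modules, and its inverse image sends a log flat sheaf to itself as a presheaf. The ring $\Ox_{\calE(X/\frakS)}$ is a sheaf for both topologies, so modules over it are simply presheaves of modules satisfying the relevant sheaf condition.

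The first step is to check that the linearized descent data \eqref{lindescentdata} attached to a module does not depend on which topology one uses. The data $\F_{(U,\frakT,u)}=\F\circ\alpha_{(U,\frakT,u)}$ and the morphisms $c_{\F,(f,g)}$ are defined purely from $\F$ as a presheaf (\ref{era3propdescentdata}). Consequently, the conditions of being quasi-coherent and of being a crystal (\ref{defcrys}) depend only on the underlying presheaf. Hence a quasi-coherent crystal of $\widetilde{\calE}_{lf}(X/\frakS)$, viewed as an étale sheaf, is a quasi-coherent crystal of $\widetilde{\calE}(X/\frakS)$. This gives a well-defined functor $\calC_{lf}^{\text{qcoh}}(X/\frakS) \ra \calC^{\text{qcoh}}(X/\frakS)$, and it is fully faithful because morphisms in both categories are just morphisms of presheaves of modules.

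The remaining point, essential surjectivity, is exactly Lemma \ref{era3lem2}: every $\F$ in $\calC^{\text{qcoh}}(X/\frakS)$ (resp. $\underline{\calC}^{\text{qcoh}}(X/\frakS)$) is already a sheaf for the log flat topology. Viewed as an object of $\widetilde{\calE}_{lf}(X/\frakS)$, it is still a quasi-coherent crystal by the topology-independence from the first step, and it maps to $\F$ itself. The substantive input is therefore Lemma \ref{era3lem2}, which rests on log flat descent (\cite{SF1} 13.10) together with \ref{era3lem1}; taking that lemma as given, the proposition follows formally. The only care needed is to confirm that the module structures agree, since the structure ring is the same presheaf in both topologies.
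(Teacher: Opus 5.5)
Your proposal is correct and is essentially the paper's proof. The inclusion of log flat sheaves into étale sheaves is fully faithful and carries quasi-coherent crystals to quasi-coherent crystals, since these conditions only involve the underlying presheaf and its descent data. The proposition therefore reduces to showing that every object of $\calC^{\text{qcoh}}(X/\frakS)$ (resp. $\underline{\calC}^{\text{qcoh}}(X/\frakS)$) is already a log flat sheaf, which is exactly Lemma \ref{era3lem2}, the lemma the paper cites at this step.
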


\begin{proof}
We only prove the result for $\calE(X/\frakS)$ as the proof for $\underline{\calE}(X/\frakS)$ is similar. It is clear that $\widetilde{\calE}_{lf}(X/\frakS) \ra \widetilde{\calE}(X/\frakS)$ induces a functor
\begin{equation}\label{eraf17291}
\calC_{lf}^{\text{qcoh}}(X/\frakS) \ra \calC^{\text{qcoh}}(X/\frakS).
\end{equation}
The functor $\widetilde{\calE}_{lf}(X/\frakS) \ra \widetilde{\calE}(X/\frakS)$ is clearly fully faithful and hence so is \eqref{eraf17291}. It is thus sufficient to prove that a crystal of $\calC^{\text{qcoh}}(X/\frakS)$ is a sheaf for the log flat topology. This is proved in \ref{era3lem2}.
\end{proof}

\begin{proposition}[\cite{Oyama} 4.2.1]\label{Oyamathm}
Let $\calC$ and $\calD$ be two sites, such that the topology of $\calD$ is defined by a pretopology, and $u:\calC\ra \calD$ a functor. If $u$ satisfies the following conditions:
\begin{enumerate}
\item $u$ is fully faithful.
\item $u$ is continuous and cocontinuous.
\item For any object $Y$ of $\calD,$ there exists a covering of the form $(u(X_i)\ra Y)_{i\in I}.$
\end{enumerate}
then $u$ induces an equivalence of topoi
$$u=(u^*,u_*):\widetilde{\calC} \ra \widetilde{\calD},$$
such that $u^*$ is composition with $u$ and $u_*$ is a right adjoint of $u^*$ for presheaves.
\end{proposition}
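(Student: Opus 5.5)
The statement is the comparison lemma of Oyama, which is SGA4-style. I would deduce it from the comparison lemma in (\cite{SGA43} III 4.1), or reprove it directly along the following lines.

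The first step is to show that $u^*$, composition with $u$, sends sheaves on $\calD$ to sheaves on $\calC$. Since $u$ is continuous, composition with $u$ preserves sheaves (\cite{SGA43} III 1.2). Since $u$ is also cocontinuous, the functor $u^*$ has a right adjoint $u_*$. This $u_*$ is the restriction to sheaves of the right Kan extension ${}_su$ of presheaves, and it preserves sheaves by (\cite{SGA43} III 2.2). This gives a morphism of topoi with $u^*=-\circ u$. Because $u^*$ already preserves sheaves, $u^*$ is also left adjoint to ${}_su$ at the level of presheaves, which is the description of $u_*$ in the statement.

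The second step is to prove that the unit $\F\ra u_*u^*\F$ is an isomorphism for every sheaf $\F$ on $\calC$. At an object $X$ of $\calC$ we have $(u_*u^*\F)(X)=\varprojlim_{(Y,\,u(Y)\ra u(X))}\F(Y)$. Full faithfulness of $u$ identifies morphisms $u(Y)\ra u(X)$ with morphisms $Y\ra X$. The indexing category therefore has the final object $(X,\op{Id})$, so the limit is $\F(X)$.

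The third step is to prove that the counit $u^*u_*\calG\ra\calG$ is an isomorphism for every sheaf $\calG$ on $\calD$. By condition 3, every $Y$ in $\calD$ has a covering $(u(X_i)\ra Y)$. Refining the fiber products $u(X_i)\times_Y u(X_j)$ by further coverings of the same form, we see that $\calG$ is determined by its values on objects $u(X)$ together with the restriction maps between them. Each such restriction map comes from a morphism of $\calC$ by full faithfulness. So it suffices to show that the counit is an isomorphism on objects of the form $u(X)$, where it reduces to the computation of the second step.

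The main obstacle is this last descent step. Concretely, one shows that the functor ``restrict $\calG$ along $u$'' is fully faithful on sheaves of $\calD$, and that every sheaf of $\calC$ arises this way by gluing along the coverings of condition 3. I expect the hardest point to be checking that a section defined on each $u(X_i)$ is compatible on the overlaps $u(X_i)\times_Y u(X_j)$. Since these overlaps need not be in the image of $u$, I would handle them through the covering sieves generated by objects $u(X)$, and then use cocontinuity to pull those sieves back to coverings in $\calC$.
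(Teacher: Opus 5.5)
The paper does not prove this statement; it quotes it from Oyama (4.2.1) with no argument. So there is no route in the paper to compare against. Your direct proof is the standard comparison-lemma argument and is correct in substance, but the unit and counit are interchanged.

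With $u^*=-\circ u\colon\widetilde{\calD}\ra\widetilde{\calC}$ and $u_*={}_pu$, the map ``$\F\ra u_*u^*\F$ for a sheaf $\F$ on $\calC$'' in your second step is not even defined. What you actually compute, $\varprojlim_{(Y,\,u(Y)\ra u(X))}\F(Y)$, is $(u^*u_*\F)(X)=(u_*\F)(u(X))$. So the second step shows that the counit $u^*u_*\F\ra\F$ is an isomorphism, which gives essential surjectivity of $u^*$. The third step must then concern the unit $\eta_{\calG}\colon\calG\ra u_*u^*\calG$ for $\calG$ on $\calD$. Its bijectivity at objects $u(X)$ follows from the triangle identity $\epsilon_{u^*\calG}\circ u^*(\eta_{\calG})=\mathrm{id}$ together with the second step. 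With the labels swapped back, both steps are right.

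The overlap difficulty you anticipate in the third step is easier than you fear, and cocontinuity plays no role there. It is needed only to make ${}_pu$ preserve sheaves. Suppose a morphism of sheaves $\eta\colon\calG\ra\calG'$ on $\calD$ is bijective at every $u(X)$.
\begin{enumerate}
\item By condition 3 and separatedness of $\calG$, $\eta$ is injective at every object of $\calD$. This includes the fiber products $u(X_i)\times_Y u(X_j)$, which exist because the topology comes from a pretopology.
\item For surjectivity at $Y$, lift the restrictions of a section of $\calG'$ to each $u(X_i)$. The lifts agree on overlaps because their images do and $\eta$ is injective there, so they glue.
\end{enumerate}
No separate gluing construction of sheaves on $\calD$ from sheaves on $\calC$ is needed, since the counit already gives $\F\simeq u^*(u_*\F)$.

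If you would rather simply cite SGA~4 III 4.1, note that it is stated for the topology induced on $\calC$ via $u$. You must therefore check that the given topology is the induced one. By full faithfulness, a sieve $R$ on $X$ equals the pullback under $u$ of the sieve generated by $u(R)$ on $u(X)$. Cocontinuity then shows that $R$ covers whenever that generated sieve covers. Continuity gives the converse, since the generated sieve becomes isomorphic to $h_{u(X)}$ after sheafification.
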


\begin{proposition}\label{thmproof1}
The functor $\rho:\underline{\calE}(X/\frakS) \ra \calE(X'/\frakS)$ defined in \eqref{era3rho} is fully faithful.
\end{proposition}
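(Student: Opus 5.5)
To prove that $\rho$ is fully faithful, I would work directly with the description \eqref{era3rho}: $\rho(U,\frakT,u)=(U',\frakT,u'\circ f_{T/S})$ and $\rho(f,g)=(f,g')$, where $g'=g\times_{S,F_S}S$. The plan is to treat faithfulness and fullness separately, reducing both to two facts. The first is that base change along $\pi:X'\ra X$ is an equivalence $\text{ét}_{/X}\xrightarrow{\sim}\text{ét}_{/X'}$, by \ref{piunivhomeo} and Theorem 04DZ, exactly as in the proof of \ref{lemcoco}. The second is that $f_{T/S}:T\ra\underline{T}'$ is an epimorphism of logarithmic schemes, by \ref{propfT/S}.

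\emph{Faithfulness.} Let $(f_1,g_1),(f_2,g_2):(U_1,\frakT_1,u_1)\ra(U_2,\frakT_2,u_2)$ satisfy $\rho(f_1,g_1)=\rho(f_2,g_2)$. Then $f_1=f_2$ immediately, and $g_1'=g_2'$. Since $V\mapsto V'$ is an equivalence on étale $X$-schemes, in particular faithful, this gives $g_1=g_2$.

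\emph{Fullness.} Take objects $(U_i,\frakT_i,u_i)$ of $\underline{\calE}(X/\frakS)$, $i=1,2$, and a morphism $(f,h):\rho(U_1,\frakT_1,u_1)\ra\rho(U_2,\frakT_2,u_2)$ in $\calE(X'/\frakS)$. Here $h:U_1'\ra U_2'$ is an $X'$-morphism and $f:\frakT_1\ra\frakT_2$ is an $\frakS$-morphism satisfying
\[u_2'\circ f_{T_2/S}\circ f_1=h\circ u_1'\circ f_{T_1/S}.\]
By the equivalence of étale sites there is a unique $X$-morphism $g:U_1\ra U_2$ with $g'=h$. It then remains to check that $(f,g)$ is a morphism of $\underline{\calE}(X/\frakS)$, that is, $u_2\circ\underline{f_1}=g\circ u_1$. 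Applying $\rho$ would then give $(f,g')=(f,h)$.

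The verification goes as follows. By functoriality of the relative Frobenius (in fact of $f_{\bullet/S}$, using uniqueness in \ref{propfT/S} together with the functoriality in \ref{era3functoriality}), we have $f_{T_2/S}\circ f_1=\underline{f_1}'\circ f_{T_1/S}$. Substituting this into the displayed identity gives
\[(u_2\circ\underline{f_1})'\circ f_{T_1/S}=(g\circ u_1)'\circ f_{T_1/S}.\]
Since $f_{T_1/S}$ is an epimorphism in logarithmic schemes, it follows that $(u_2\circ\underline{f_1})'=(g\circ u_1)'$ as morphisms $\underline{T_1}'\ra U_2'$. Finally we descend along the base change $(-)'=(-)\times_{S,F_S}S$. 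Because $k$ is perfect and $S$ carries the trivial log structure, $F_S$ is a strict isomorphism of logarithmic schemes. Hence base change by $F_S$ is an equivalence of categories of logarithmic $S$-schemes (it is isomorphic to "twisting the structure map"), so two $S$-morphisms with equal base changes are equal. This yields $u_2\circ\underline{f_1}=g\circ u_1$.

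I expect the main obstacle to be this last step: making precise that the compatibility can be checked after $(-)'$, and that $f_{T/S}$ is natural in $T$. If the equivalence argument via $F_S$ is awkward to state, a fallback is to compose with $\pi_{U_2}:U_2'\ra U_2$ (the projection, which is an isomorphism here since $F_S$ is an isomorphism) and with the isomorphism $\underline{T_1}'\cong\underline{T_1}$. Either route uses only that $k$ is perfect.
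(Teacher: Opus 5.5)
Your proof is correct and follows the paper's argument essentially step for step. Faithfulness comes from $g_1'=g_2'\Rightarrow g_1=g_2$. Fullness comes from lifting $h$ to $g$, using the naturality of $f_{T/S}$ and the fact that $f_{T_1/S}$ is an epimorphism to get $(u_2\circ\underline{f_1})'=(g\circ u_1)'$, and then using that $T\mapsto T'$ is faithful. You also justify two steps the paper only asserts: the commutativity $f_{T_2/S}\circ f_1=\underline{f_1}'\circ f_{T_1/S}$, and the faithfulness of $(-)'$, which holds because $F_S$ is an isomorphism of log schemes when $k$ is perfect and the log structure is trivial.
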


\begin{proof}
Let $(f_1,g_1),(f_2,g_2):(U_1,\frakT_1,u_1) \ra (U_2,\frakT_2,u_2)$ be two morphisms in $\underline{\calE}(X/\frakS)$ such that $\rho(f_1,g_1)=\rho(f_2,g_2).$ Since
$$\rho(f_i,g_i)=(f_i,g_i')$$
for $i=1,2,$ we get $g_1'=g_2'$ and so $g_1=g_2.$ The functor $\rho$ is then faithful. We now prove its fullness. Let $(U_1,\frakT_1,u_1)$ and $(U_2,\frakT_2,u_2)$ be two objects of $\underline{\calE}(X/\frakS)$ and $(f,h):\rho(U_1,\frakT_1,u_1) \ra \rho(U_2,\frakT_2,u_2)$ a morphism in $\calE(X'/\frakS).$ There exists a unique morphism $g:U_1 \ra U_2$ such that $h=g'.$ In the diagram
$$
\begin{tikzcd}
T_1 \ar{r}{f_1} \ar[swap]{d}{f_{T_1/S}}  & T_2\ar{d}{f_{T_2/S}} \\
\underline{T_1'} \ar{r}{\underline{f_1}'} \ar[swap]{d}{u_1'} & \underline{T_2'}\ar{d}{u_2'} \\
U_1' \ar{r}{g'} & U_2',
\end{tikzcd}
$$
the rectangle and the upper square are commutative.
Since $f_{T_1/S}$ is an epimorphism (\ref{propfT/S}), the lower square is commutative. Since the functor $T \mapsto T'$ is faithful, we get
$$u_2\circ \underline{f_1} = g\circ u_1.$$
\end{proof}

\begin{lemma}\label{thmproof3}
For any object $(W,\frakT,v)$ of $\calE(X'/\frakS),$ there exists a covering of $(W,\frakT,v)$ for the log flat topology \eqref{era4logflattop} of the form
$$\left ( \rho(U_i,\frakT_i,u_i) \ra (W,\frakT,v)\right )_{i\in I}.$$
\end{lemma}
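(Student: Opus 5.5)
The plan is to work locally on $W$ and on $\frakT$, and to produce the covering by formally adjoining $p$-th roots, so that the structure morphism becomes a Frobenius twist. Throughout, $\frakS=\op{Spf}W$ with the trivial logarithmic structure and $k$ is perfect.

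\emph{Step 1: reduction to the affine case.} By \ref{piunivhomeo} and the equivalence of étale sites, $W=V'$ for a unique étale $X$-scheme $V$. Covering $V$ by affine strict étale $V_j$ with charts, and using the cartesian morphisms $\alpha_{(W,\frakT,v)}(V_j')\ra (W,\frakT,v)$ of \ref{era3paralphaKoko}, we may assume the following. First, $V=\Sp B$ is affine with a chart $Q\ra \calM_V$, and $B$ is finitely generated over $k$. Second, $\frakT=\op{Spf}A$ is affine with a chart $P\ra \calM_{\frakT}$ (refining by strict étale covers of $\frakT$, which are log flat coverings). Since $F_k$ is an isomorphism, $V'\ra V$ is an isomorphism of schemes and $\calM_{V'}=\calM_V$. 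Hence $v:T\ra V'$ amounts to a ring map $h:B\ra A/p$, semilinear via $F_k$, together with a compatible map of monoids $Q\ra \Gamma(T,\calM_T)$.

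\emph{Step 2: the $p$-th root cover.} Choose generators $b_1,\hdots,b_r$ of $B$ and lifts $\widetilde a_i\in A$ of $h(b_i)$. Using the chart $P$, write the image of each generator $q_l\in Q$ as $\theta(n_l)+\lambda(w_l)$, with $n_l\in P$ and $w_l$ a unit, and lift the $w_l$ to units $\widetilde w_l$ of $A$. Define
$$\frakT_1=\op{Spf}\Big(A\{y_1,\hdots,y_r,z_1,\hdots\}/(y_i^p-\widetilde a_i,\ z_l^p-\widetilde w_l)\Big)\times_{A[P],F_P}A[P],$$
where $F_P$ is multiplication by $p$ on $P$ and the log structure comes from the second factor. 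The first factor is finite free over $A$, hence flat and strict. The second is a base change of the Kummer morphism $A[P]\xrightarrow{F_P}A[P]$, hence log flat (and surjective) modulo every $p^n$. Thus $\frakT_1\ra \frakT$ is a log flat covering and $\frakT_1$ is log flat over $\op{Spf}W$. On $T_1$, every $h(b_i)$ becomes a $p$-th power $y_i^p$, and every image of $q_l$ becomes $p$ times a section of $\calM_{T_1}$, up to a unit that is itself a $p$-th power $z_l^p$.

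\emph{Step 3: descending the structure map.} Let $v_1:T_1\ra T\ra V'$. By the explicit description in the proof of \ref{propfT/S}, $f_{T_1/S}$ is given on rings by the injective map $(A_1/I)\otimes_{k,F_k}k\ra A_1$, $\ov x\otimes a\mapsto ax^p$. It is given on stalks of monoids by the injective map $(\ov m,t)\mapsto pm+t$. By Step 2, the images of the ring and monoid generators of $V'$ under $v_1$ lie in the image of $f_{T_1/S}$. Injectivity then makes the induced factorization $B\ra (A_1/I)\otimes_{k,F_k}k$ a well-defined ring homomorphism: the relations among the $b_i$ hold after $p$-th powers, and $I$ is exactly the kernel of $x\mapsto x^p$. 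The same injectivity, together with the compatibility with $\alpha$ checked as in \ref{era3logimagedef}, gives the factorization on monoids. This yields $w_1:\underline{T_1}'\ra V'$ with $w_1\circ f_{T_1/S}=v_1$. Since $F_S$ is an isomorphism, $w_1=u_1'$ for a unique $u_1:\underline{T_1}\ra V$, which is affine because both source and target are affine.

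\emph{Conclusion and main obstacle.} By \eqref{era3rho}, $\rho(V,\frakT_1,u_1)=(V',\frakT_1,u_1'\circ f_{T_1/S})=(V',\frakT_1,v_1)$, and $(\op{Id},\frakT_1\ra\frakT)$ is a morphism $\rho(V,\frakT_1,u_1)\ra (W,\frakT,v)$ whose underlying map on $W$ is the identity. Running this over the affine pieces of Step 1 gives the required log flat covering. The hard step is the logarithmic part of Step 3. I must check that the monoid map $Q\ra \calM_{T_1}$ genuinely factors through $\calM_{\underline{T_1}'}$ compatibly with $\alpha$, and that the sheafification defining $\calM_{\underline{T_1}}$ causes no trouble. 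I would handle this on stalks using \ref{remstalk} and the strictness results \ref{Mfs} and \ref{era4prop15}. I must also confirm that the Kummer base change in Step 2 stays fs and flat over $W$.
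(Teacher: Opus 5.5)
Your proof is correct in outline, but it takes a different route from the paper's.

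\textbf{The paper's route.} The paper works étale locally so that $V$ and $V'$ lift to log smooth formal schemes $\frakV,\frakV'$ over $\frakS$. It lifts both the Frobenius $F_1:V\ra V'$ and $v:T\ra V'$, by log smooth lifting on affines. It then takes the single cover $\frakZ=\frakT\times^{\op{log}}_{\frakV'}\frakV\ra\frakT$, with $u:\underline{Z}\hookrightarrow Z\ra V$.
\begin{itemize}
\item Log flatness and surjectivity come from (\cite{Ogus2018} IV 4.2.2): $Z\ra T$ is the base change of $F_1$, which is a log flat universal homeomorphism.
\item The identity $v\circ(Z\ra T)=u'\circ f_{Z/S}$ is automatic by naturality of the relative Frobenius, since $u'\circ f_{Z/S}=F_{V/S}\circ(Z\ra V)$.
\end{itemize}

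\textbf{Your route.} You force the factorization through $f_{T_1/S}$ directly. You extract $p$-th roots of finitely many ring generators with a finite free cover, and of the log generators with a Kummer cover. You then use that $f_{T_1/S}$ is injective on rings and on monoids, by \ref{propfT/S}, and that $k$ is perfect, so the image of $f^{\#}_{T_1/S}$ is exactly $A_1^p$.

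\textbf{What each buys.} Your argument needs no lift of $V$, $V'$ or of Frobenius, and never uses the log flatness of $F_1$; it only needs an fs chart and finite type. The price is explicit chart bookkeeping, which the paper's base-change argument avoids entirely.

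\textbf{Three points to tighten.}
\begin{itemize}
\item Choose the chart $P$ of $\frakT$ with $P^{gp}$ torsion free, e.g.\ $P\cong\ov{\calM}_{\bar t}$ étale locally. Otherwise $F_P$ need not be injective, hence not Kummer, and its base change need not be log flat.
\item The decomposition $\theta(n_l)+\lambda(w_l)$ of the image of $q_l$ only exists étale locally on $T$. So localize $\frakT$ after choosing the $q_l$.
\item Your ``main obstacle'' disappears if you build the log part from the chart $Q$ of $V'$. Let $\theta_1$ be the chart of $T_1$ from the second factor, so $\theta=p\,\theta_1$ on $T_1$, and send $q_l\mapsto\ov{\theta_1(n_l)+\lambda(z_l)}$.
  \begin{itemize}
  \item This is well defined exactly because $\ov x=\ov y$ in the monoidal Frobenius image iff $px=py$.
  \item Compatibility with $\alpha$ follows from injectivity of $f^{\#}_{T_1/S}$ on $\Gamma(T_1,\Ox_{T_1})$, since both $\underline{T_1}'$ and $V'$ are affine.
  \end{itemize}
  No stalkwise or sheafification argument is then needed.
\end{itemize}
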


\begin{proof}
Let $F_1:X\ra X'$ be the relative Frobenius morphism of $X$ over $S$ (which is automatically exact since $X\ra S$ is saturated by \cite{Ogus2018} III 2.5.4). 
There exists a unique strict étale morphism $V \ra X$ such that $W=V\times_XX'=V'.$ The assertion of the proposition being étale local on $W,$ we may suppose that there exists a $p$-adic logarithmic formal scheme $\frakV$ and $\frakV'$ log smooth over $\frakS,$ fitting into the cartesian squares
$$
\begin{tikzcd}
V \ar{r} \ar{d} & \frakV \ar{d} & & V' \ar{r} \ar{d} & \frakV' \ar{d} \\
S \ar{r}& \frakS & & S \ar{r} & \frakS
\end{tikzcd}
$$
We may also suppose that the restriction $F_1:V \ra V'$ and $v:T \ra V'$ lift, respectively, to morphisms $F:\frakV \ra \frakV'$ and $\frakT \ra \frakV'$ of $p$-adic logarithmic formal schemes such that the diagrams
$$
\begin{tikzcd}
V' \ar{rr} & & \frakV' \ar{d} & & V' \ar{rr} & & \frakV' \ar{d} \\
V \ar{r} \ar{u}{F_1} & \frakV \ar{r} \ar{ur}{F} & \frakS & & T\ar{u}{v} \ar{r} & \frakT \ar{ur} \ar{r} & \frakS
\end{tikzcd}
$$
are commutative.
Set $\frakZ=\frakT \times_{\frakV'}^{\op{log}} \frakV$ and $u:\underline{Z} \ra Z \ra V$ the composition of the canonical immersion $\underline{Z} \ra Z$ and the morphism $Z \ra V$ induced by the projection $\frakZ \ra \frakV.$ Since $v:T \ra V'=W$ and the canonical morphism $Z \ra T\times_{V'}V$ are affine (\cite{Ogus2018} III 2.1.6), the morphism $u$ is also affine. We obtain an object $(V,\frakZ,u)$ of $\underline{\calE}(X/\frakS)$ and a morphism
$$\rho(V,\frakZ,u)=(V',\frakZ,u'\circ f_{Z/S}) \ra (V',\frakT,v),$$
given by the projection $\frakZ \ra \frakT$ and $\op{Id}_{V'}.$ The result then follows from (\cite{Ogus2018} IV 4.2.2) applied to $F$ and the fact that the projection $Z \ra T$ is the base change of $F_1:V \ra V',$ which is a log flat covering.
\end{proof}

\begin{theorem}\label{era3equivlfcrystals}
The functor $\rho:\underline{\calE}(X/\frakS) \ra \calE(X'/\frakS),$ defined in \eqref{era3rho}, induces an equivalence of topoi
$$
C_{X/S,lf}:\widetilde{\underline{\calE}}_{lf}(X/\frakS) \ra \widetilde{\calE}_{lf}(X'/\frakS).
$$
\end{theorem}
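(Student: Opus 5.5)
The plan is to apply Oyama's comparison criterion \ref{Oyamathm} to the functor $\rho:\underline{\calE}(X/\frakS) \ra \calE(X'/\frakS)$ of \eqref{era3rho}, with both categories equipped with the log flat topology \eqref{era4logflattop}. The topology of the target is given by a pretopology, as noted in \ref{era4logflattop}, so it suffices to check the three hypotheses of \ref{Oyamathm}. Once they hold, the resulting equivalence of topoi $(u^*,u_*)$ has $u^*$ equal to composition with $\rho$. This coincides with $C_{X/\frakS,lf}^{-1}$ as constructed in \ref{thmproof2}, so the equivalence obtained is exactly $C_{X/S,lf}$.

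The three hypotheses are checked as follows.
\begin{enumerate}
\item Full faithfulness of $\rho$ is \ref{thmproof1}. That argument uses that $f_{T/S}$ is an epimorphism (\ref{propfT/S}) and that $U\mapsto U'$ is faithful on étale $X$-schemes, since $\pi$ is a universal homeomorphism (\ref{piunivhomeo}).
\item Continuity and cocontinuity of $\rho$ for the log flat topology is \ref{thmproof2}. It applies because $X\ra S$ is of Cartier type; this is automatic here, since the log structure on $S$ is trivial and so $X\ra S$ is saturated. Concretely, $\rho$ commutes with fiber products by \ref{lemfiberprodcom}, it preserves and reflects covering families, and coverings of an object $\rho(U,\frakT,u)$ lift along $\rho$ by \ref{lemcoco}.
\item Every object $(W,\frakT,v)$ of $\calE(X'/\frakS)$ admits a log flat covering by objects in the image of $\rho$; this is \ref{thmproof3}.
\end{enumerate}

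The main obstacle is the third condition, because this is the only place where the log flat topology, rather than the étale one, is genuinely needed. The covering produced in \ref{thmproof3} is a base change of a Frobenius lift $F:\frakV\ra\frakV'$. Such an $F$ is log flat but not étale, which explains why the statement concerns $C_{X/S,lf}$ and not $C_{X/\frakS}$.

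A second point needs care: in the cited version of \ref{Oyamathm}, the covering in condition (3) should be a covering family in the pretopology, not merely a family generating a covering sieve. In \ref{thmproof3} the covering has the required form. That it is indeed log flat modulo every $p^n$ follows from the log flatness of $F$ (\cite{Ogus2018} IV 4.2.2) together with stability of log flatness under base change. Granting these verifications, the theorem follows directly from \ref{Oyamathm}.
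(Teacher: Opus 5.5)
Your proposal is correct and matches the paper's argument exactly: the theorem follows from Oyama's criterion \ref{Oyamathm}, with full faithfulness from \ref{thmproof1}, continuity and cocontinuity for the log flat topology from \ref{thmproof2}, and the covering condition from \ref{thmproof3}. Your extra remarks are consistent with the paper and only spell out what it leaves implicit: that Cartier type is automatic over the trivial log point, and that the covering in \ref{thmproof3}, obtained after étale localization on $W$, is a genuine covering family of the pretopology.
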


\begin{proof}
This is a consequence of \ref{Oyamathm}, \ref{thmproof1}, \ref{thmproof2} and $\ref{thmproof3}.$
\end{proof}

\begin{theorem}\label{lemdirectimage}
The functor $C_{X/\frakS,lf}^*$ induces a fully faithful functor
$$\calC^{\text{qcoh}}(X'/\frakS) \ra \underline{\calC}^{\text{qcoh}}(X/\frakS).$$
\end{theorem}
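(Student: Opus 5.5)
The plan is to transport everything to the log flat topoi, where $C_{X/\frakS,lf}$ is an equivalence by \ref{era3equivlfcrystals}. We then compare back to the étale topoi using \ref{era3propequivlfetale}. Concretely, I would consider the square
$$
\begin{tikzcd}
\calC_{lf}^{\text{qcoh}}(X'/\frakS) \ar{r}{C_{X/\frakS,lf}^*} \ar{d}{\sim} & \underline{\calC}_{lf}^{\text{qcoh}}(X/\frakS) \ar{d}{\sim} \\
\calC^{\text{qcoh}}(X'/\frakS) \ar{r}{C_{X/\frakS}^*} & \underline{\calC}^{\text{qcoh}}(X/\frakS),
\end{tikzcd}
$$
where the vertical arrows are the equivalences of \ref{era3propequivlfetale}, that is, the forgetful functors viewing a log flat sheaf as an étale sheaf. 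The top arrow is well defined because $C_{X/\frakS,lf}^*$ preserves quasi-coherent crystals by \ref{era3preserve}.

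The first step is to check that the square commutes. Both $C_{X/\frakS}^{-1}$ and $C_{X/\frakS,lf}^{-1}$ are given by composition with the same functor $\rho$, by \ref{thmproof2} and \ref{Kokolemcocont}. The rings also match: $C^{-1}\Ox_{\calE(X'/\frakS)}=\Ox_{\underline{\calE}(X/\frakS)}$ in both cases. Hence $C^*=C^{-1}$ on modules, and both are composition with $\rho$ at the level of presheaves. Since the vertical functors are just the inclusion of log flat sheaves into étale sheaves, both paths send $\F$ to $\F\circ\rho$. This is essentially immediate from \ref{Kokolem1833}.

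The second step shows that the top arrow is fully faithful. Since $C_{X/\frakS,lf}$ is an equivalence of topoi and of ringed topoi, $C^*_{X/\frakS,lf}$ is an equivalence between the full categories of modules, with quasi-inverse $C_{X/\frakS,lf,*}$. Its restriction to the full subcategory $\calC_{lf}^{\text{qcoh}}(X'/\frakS)$ is therefore fully faithful into $\underline{\calC}_{lf}^{\text{qcoh}}(X/\frakS)$.

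Combining the two steps, $C^*_{X/\frakS}$ on quasi-coherent crystals is isomorphic to a composite of equivalences and a fully faithful functor, hence is fully faithful. The only delicate point, which I expect to be the main obstacle, is making sure that $\Hom$ sets computed in the étale topos agree with those in the log flat topos for these objects. This is exactly the full faithfulness of the inclusion of sheaves, already used in \ref{era3propequivlfetale}. It applies because quasi-coherent crystals are log flat sheaves by \ref{era3lem2}, which itself rests on log flat descent \cite{SF1} 13.10. I would therefore invoke \ref{era3lem2} explicitly on both sides, for $X'$ and for $\underline{\calE}(X/\frakS)$.
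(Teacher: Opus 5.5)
Your proposal is correct and is essentially the paper's own proof: the same square with the vertical equivalences of \ref{era3propequivlfetale}, the top arrow well defined by \ref{era3preserve}, and full faithfulness coming from the equivalence of topoi \ref{era3equivlfcrystals}. Your explicit check that both $C_{X/\frakS}^{-1}$ and $C_{X/\frakS,lf}^{-1}$ are composition with $\rho$, so that the square commutes strictly, supplies the commutativity that the paper asserts without further argument.
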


\begin{proof}
For a ringed topos $(T,\Ox),$ we denote by $\op{Mod}(T,\Ox)$ the category of $\Ox$-modules of $T.$
By \ref{era3preserve} and \ref{era3propequivlfetale}, the commutative diagram
$$
\begin{tikzcd}
\op{Mod}\left ( \widetilde{\calE}_{lf}(X'/\frakS), \Ox_{\calE(X'/\frakS)}\right ) \ar{r}{C_{X/\frakS,lf}^*} \ar[hook]{d} & \op{Mod}\left ( \widetilde{\underline{\calE}}_{lf}(X/\frakS), \Ox_{\underline{\calE}(X/\frakS)}\right ) \ar[hook]{d} \\
\op{Mod}\left ( \widetilde{\calE}(X'/\frakS), \Ox_{\calE(X'/\frakS)}\right ) \ar{r}{C_{X/\frakS}^*} & \op{Mod}\left ( \widetilde{\underline{\calE}}(X/\frakS), \Ox_{\underline{\calE}(X/\frakS)}\right )
\end{tikzcd}
$$
induces
$$
\begin{tikzcd}
\calC^{\text{qcoh}}_{lf}(X'/\frakS) \ar{r}{C_{X/\frakS,lf}^*} \ar[swap,sloped]{d}{\sim} & \underline{\calC}^{\text{qcoh}}_{lf}(X/\frakS) \ar[sloped]{d}{\sim} \\
\calC^{\text{qcoh}}(X'/\frakS)\ar{r}{C_{X/\frakS}^*} & \underline{\calC}^{\text{qcoh}}(X/\frakS).
\end{tikzcd}
$$
By \ref{era3equivlfcrystals}, $C_{X/\frakS,lf}^*$ and the functor it induces between crystals is fully faithful, hence the result.
\end{proof}

\end{document}